\newtheorem{df}{Definition}[section]
\newtheorem{thm}[df]{Theorem}
\newtheorem{prop}[df]{Proposition}
\newtheorem{cor}[df]{Corollary}
\newtheorem{ex}[df]{Example}
\newtheorem{lem}[df]{Lemma}
\newtheorem{rmk}[df]{Remark}
\newcommand{\pfend}%
{\hspace*{\fill}\lower3pt\hbox{$\Box$}\medskip}
\newcommand{\N}{\mathbb{N}}
\newcommand{\R}{\mathbb{R}}
\newcommand{\Z}{\mathbb{Z}}
\newcommand{\C}{\mathbb{C}}
\newcommand{\Comp}{\mathbb{C}}
\newcommand{\qt}{\mathbb{H}}
\newcommand{\RP}{\mathbb{RP}}
\newcommand{\CP}{\mathbb{CP}}
\newcommand{\HP}{\mathbb{HP}}
\begin{document}
\begin{titlepage}
\vspace*{\stretch{1}}
\begin{center}\bf
{\LARGE \textbf{The Gromov-Lawson-Rosenberg conjecture for some finite groups}}\\
\vspace{1.2cm}
{\Large Arjun Malhotra}\\
\vspace{1cm}
{\large A thesis submitted for the degree of \\

Doctor of Philosophy}\\
\bigskip
{\large Department of Pure Mathematics}\\
\bigskip
{\large February 2011}\\
\bigskip\bigskip\bigskip
{\Large Supervisor: Prof. John Patrick Campbell Greenlees}
\end{center}
\vspace*{\stretch{2.5}}

\begin{center}
\textbf{\large The University of Sheffield}
\end{center}
\end{titlepage}
\newpage
\begin{abstract}
{ The Gromov-Lawson-Rosenberg conjecture for a group $G$ states that a compact spin manifold with fundamental group $G$ admits a metric of positive scalar curvature if and only if a certain topological obstruction vanishes. It is known to be true for $G=1$ , if $G$ has periodic cohomology, and if $G$ is a free group, free abelian group, or the fundamental group of an orientable surface. It is also known to be false for a large class of infinite groups. However, there are no known counterexamples for finite groups.\\
In this dissertation we will give a general outline of the positive scalar curvature problem, and sketch proofs of some of the known positive and negative results.\\
We will then focus on finite groups, and proceed to prove the conjecture for the Klein $4$-group, all dihedral groups, the semi-dihedral group of order $16$, and the rank three group $(\Z_2)^3$. Throughout the thesis, $ko$ will represent the connective real K-theory spectrum, and $KO$ the periodic real K-theory, and $\Z_2$ the field with two elements. It turns out that that the topological obstruction in question lies in the connective real homology $ko_*(BG)$ of the classifying space of $G$.\\
Indeed, our method of proof is to first sketch calculations of $ko_*(BG)$, using the techniques and calculations of Bruner and Greenlees. We then give explicit geometric constructions to produce sufficiently many manifolds of positive scalar curvature.
}
\end{abstract}

\newpage
\vspace*{30 mm}
\begin{center}
\textbf{Acknowledgements}
\end{center}

First and foremost I would like to thank my supervisor John Greenlees. This thesis would not have been possible without the time and effort he put in to me and my project, and his guidance, enthusiasm, advice and friendship were a constant source of both comfort and inspiration.\\

I would also like to give special thanks to Michael Joachim. His advice and collaboration have been essential to this thesis. My visits to M\"{u}nster were hugely enjoyable both mathematically and socially, and they may not have been possible without the financial and personal support he so willingly gave me.\\

Kijti Rodtes was a constant inspiration to me throughout my time in Sheffield. I admire and envy his tenacity and dedication, and am extremely grateful to him for our collaboration, particularly for his thorough writing, his patient explanations of his calculations, his persistence in asking questions about my work, and most of all his friendship.\\

I'm grateful to Paul Mitchener for several stimulating conversations, his enthusiasm in organising seminars that were interesting and highly relevant to my research, along with several hugely enjoyable social outings. Thanks to both Paul and Thomas Schick for their thorough reading of this thesis, and for providing several helpful comments and suggestions.\\

I am indebted to the Overseas Research Studentship (ORS) council and the University of Sheffield for the financial support they have provided over the last three years.\\

The Pure Mathematics department in Sheffield provides, I believe, a unique combination of intellectual stimulation and relaxed social enjoyment.  A PhD is, I believe, quite difficult to adjust to personally, and the latter is particularly important in this regard. I'd like to thank all the members of the department for the conversations, advice and fun over the years, especially my fellow postgraduates. Special thanks in particular go to my housemates Leigh Shepperson, Roald Koudenburg, Harry Ullman and Tim Eardley for their friendship, as well as all the fun over the years.\\

My mathematical education began with four hugely enjoyable years at the University of Bath, and I'd like to thank everyone who taught and supported me there, in particular Gregory Sankaran, whose guidance and support were a constant source of encouragement not only throughout my time there, but subsequently as well.\\

On a personal note, I would like to thank Zofia Jones. Her love and support have been essential over the years, particularly during the unavoidable low points that a PhD consists of. I would also like to thank Colin Hargreaves and Pranoy Banerjee for their seemingly evergreen friendship.\\

My parents, Ashok and Neelam, have been a constant source of love and encouragement, and have always supported me and let me make my own decisions, and I am eternally grateful. Finally I would like to thank the rest of my family, as well as all my friends, who have always been supportive and available, particularly those from my school days in Calcutta.\\
\newpage
\tableofcontents
\chapter{Introduction}

Consider a Riemannian manifold $M^m$ with metric $g$. Then the scalar curvature of $(M^m,g)$ is a smooth function $s:M \rightarrow \R$. The value of $s$ at a point $x \in M$ can be described as follows:\\
1)As the trace of the Ricci tensor (which in turn is the trace of the Riemannian curvature tensor) evaluated at $x$;\\
2)As twice the sum of the sectional curvatures over all two-planes $e_i \wedge e_j, i < j$, where $e_1, \cdots , e_n$ is an orthonormal basis for the tangent space at $x$;\\
3)Up to a positive constant depending only on $m$, as the leading term in the expansion
$$\frac{Vol_r(M,x)}{Vol_r(\R^m,0)}=1-\frac{s(x)}{6(m+2)}r^2 + \cdots$$
that describes how the volume of a small ball of radius $r$ on $M$ differs from the corresponding volume around a point in $\R^m$ with its usual metric. Thus we can say that $(M^m,g)$ has positive scalar curvature if the volumes of small balls around any point $x\in M$ grow more slowly than balls in ordinary Euclidean space of the same radius, and this view is perhaps intuitively the simplest. Easy examples of manifolds admitting positive scalar curvature are the standard sphere $S^n$ with it's usual metric, and quotients of spheres such as projective and lens spaces, with the induced metric.\\
It turns out that any manifold of dimension at least $3$ can be given a metric of negative scalar curvature, see \cite{kw1},\cite{kw2} and \cite{kw3}. However, there are topological obstructions to a manifold admitting a positive scalar curvature metric, and when a manifold admits such a metric is the basis of what we study here.\\
First though, we outline some basic constructions that produce positive scalar curvature manifolds, and the following result, due to Gromov and Lawson \cite{6}, and independently, Schoen and Yau \cite{ry}, is fundamental:
\begin{thm}
Suppose that $M$ is a positive scalar curvature manifold, and $N$ is obtained from $M$ by performing surgeries in codimension at least $3$. Then $N$ also admits a positive scalar curvature metric.
\end{thm}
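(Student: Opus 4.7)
My plan is to reduce the problem to a single elementary surgery and then construct an explicit positive scalar curvature (psc) metric on the trace of that surgery, using the codimension $\geq 3$ condition in an essential way.

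First I would decompose the passage from $M$ to $N$ into a sequence of elementary handle attachments, so that it suffices to treat one surgery along an embedded sphere $\varphi\colon S^p \hookrightarrow M^m$ with trivial normal bundle, where the codimension is $q = m - p \geq 3$. The surgery replaces a tubular neighbourhood $\mathcal{N} \cong S^p \times D^q(r_0)$ by $D^{p+1} \times S^{q-1}$, and the goal is a psc metric on the resulting manifold $N$ that agrees with $g$ outside $\mathcal{N}$. I would next deform $g$ inside $\mathcal{N}$ so that, on a smaller sub-tube, it is close to the product $g_{S^p} + dr^2 + r^2 g_{S^{q-1}}$, with the deformation concentrated away from the surgery sphere itself.

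The heart of the argument is to construct a suitable curve $\gamma(t) = (r(t), z(t))$ in the half-plane $\{r > 0\}$, which begins tangent to the horizontal line $r = r_0$ (so that revolving extends $g$ smoothly) and ends along a vertical segment $r = \epsilon$ for some very small $\epsilon > 0$. Revolving $\gamma$ about the $z$-axis inside $\mathcal{N} \times \R$ yields a hypersurface $\Gamma$, and the induced metric on $\Gamma$ is the candidate new metric. I would then compute the scalar curvature of $\Gamma$ via the Gauss equation: it equals a term coming from the ambient scalar curvature, plus corrections from the geodesic curvature of $\gamma$ and from the fact that the level tori $S^p \times S^{q-1}(r)$ are not totally geodesic. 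For small $r$, the intrinsic scalar curvature of the fibre $S^{q-1}(r)$ contributes a term of order $(q-1)(q-2)/r^2$, strictly positive exactly when $q \geq 3$, and this dominates every other contribution once $\gamma$ is chosen with sufficiently small curvature. A careful choice of $\gamma$ produces a metric that is, near the boundary of the excised region, a product $S^p \times S^{q-1}(\epsilon) \times [0,1]$. One then caps off with the product of a rotationally symmetric ``torpedo'' psc metric on $D^{p+1}$, which matches a cylinder near its boundary, and the round $S^{q-1}(\epsilon)$, yielding a globally psc metric on $N$.

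The main obstacle is the analytic construction of $\gamma$. One must simultaneously control the geodesic curvature of $\gamma$, the rate at which $r$ decreases, and the boundary data at both ends, so that the positive $1/r^2$ term from the normal fibre always beats the error terms contributed by the bending of $\gamma$ and the second fundamental form of the level tori. This is precisely where the hypothesis $q \geq 3$ enters: in codimension $1$ or $2$ the normal fibre contributes nothing positive, and no such curve can exist. A secondary subtlety, resolved by adjusting the torpedo profile, is to ensure that the capping metric on $D^{p+1}$ glues together with the deformed metric in a $C^\infty$, psc-preserving manner.
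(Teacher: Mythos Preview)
Your sketch is a faithful outline of the Gromov--Lawson bending argument from \cite{6}, and the key points (reduction to a single surgery, the revolved hypersurface $\Gamma$, the $(q-1)(q-2)/r^2$ term dominating when $q\geq 3$, the torpedo cap) are all correctly identified. However, the paper itself does not prove this theorem: it is stated in the introduction as a foundational result due to Gromov--Lawson \cite{6} and Schoen--Yau \cite{ry}, with no proof or sketch given. So there is nothing to compare against; you have simply supplied the standard literature proof where the paper only cites it.
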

It is known that one can obtain a manifold $N^n$ from another manifold $M^n$ by surgeries if and only if $M$ and $N$ are bordant, which means there is a manifold $W^{n+1}$ with boundary the disjoint union $M \amalg N$. Thus this result suggests that we can think of the positive scalar curvature problem as a question in bordism, and indeed the known topological obstructions depend on certain bordism classes of a given manifold.\\
We also have the following basic result, which tells us that certain geometric constructions we use later automatically give positive scalar curvature manifolds:
\begin{lem}
If $(M,g),(N,h)$ are two manifolds, and $M$ admits positive scalar curvature, then so does $M \times N$.\\
More generally, if we have a fibre bundle $F \rightarrow E \rightarrow B$, with compact positive scalar curvature fibre $F$, and structure group $G$ acting by isometries on $F$, then $E$ admits positive scalar curvature also.
\end{lem}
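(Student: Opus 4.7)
The plan is to treat the product case first as a warm-up, and then bootstrap the argument to the general fibre bundle situation via the O'Neill formulas for a Riemannian submersion.

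For the product $M\times N$, I would equip it with the rescaled product metric $g\oplus\lambda^2 h$ for a constant $\lambda>0$ to be chosen. Under this metric the Riemannian curvature tensor splits as a direct sum, so the scalar curvature is
$$s_{M\times N}(x,y)=s_g(x)+\tfrac{1}{\lambda^2}s_h(y).$$
By hypothesis $M$ is compact (as is implicit throughout the thesis) and $s_g>0$, so $s_g\geq c>0$ for some constant $c$. If $N$ is also compact then $|s_h|$ is bounded, and taking $\lambda$ large enough forces the right-hand side to be positive everywhere, proving the first assertion.

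For the fibre bundle case, I would put a Riemannian submersion metric on $E$. Choose a connection on the underlying principal $G$-bundle; this determines a horizontal distribution complementary to the vertical tangent bundle $TF\subset TE$. Use the given $G$-invariant positive scalar curvature metric on $F$ vertically (well-defined because $G$ acts by isometries), pick an arbitrary metric $g_B$ on $B$, and declare the horizontal lift of $g_B$ to be the horizontal component of the metric on $E$; then rescale the base metric by a large factor $\lambda^2$. The O'Neill formula for the scalar curvature of a Riemannian submersion gives
$$s_E=s_F+\tfrac{1}{\lambda^2}s_B\circ\pi-\|A\|^2-\|T\|^2-\text{(mean curvature terms)},$$
where $A$ and $T$ are the O'Neill tensors of the submersion. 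Since $F$ is compact and the bundle has compact structure group $G$ acting by isometries, the correction tensors $A$ and $T$, as well as $s_F$, are uniformly bounded on each fibre and hence, by homogeneity of the local bundle structure, uniformly bounded along $E$ provided $B$ is compact (which we assume, as is standard in this setting).

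With these bounds in hand, the same rescaling trick finishes the argument: $s_F\geq c>0$, so choosing $\lambda$ large enough makes $s_E>0$ everywhere on $E$. The main technical point — and the only real obstacle — is verifying that the O'Neill correction terms are genuinely uniformly bounded; this is where one uses the hypothesis that $G$ acts by isometries, since it ensures the fibrewise geometry (and hence the second fundamental form and integrability tensor of the vertical distribution) is locally constant along the base. Once this uniformity is established, the positivity of $s_E$ is immediate for $\lambda\gg 0$.
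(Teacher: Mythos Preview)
Your product argument is fine and essentially the paper's: you stretch $N$ while the paper shrinks $M$, but these are related by a global rescaling and give the same conclusion.

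In the bundle case there is a genuine gap. In your displayed O'Neill formula only the $\tfrac{1}{\lambda^{2}}s_{B}$ term carries a $\lambda$, and you then argue that the correction terms $\|A\|^{2}$, $\|T\|^{2}$, etc.\ are ``uniformly bounded'' and that therefore $s_{E}>0$ for $\lambda\gg 0$. But with the formula as written, sending $\lambda\to\infty$ gives $s_{E}\to s_{F}-\|A\|^{2}-\|T\|^{2}-\cdots$, and a mere bound on the correction terms does not force this to be positive; $s_{F}\geq c>0$ says nothing about $c$ versus the size of $\|A\|^{2}$. What actually saves you is that the O'Neill tensors themselves scale: when the base is stretched by $\lambda^{2}$ (with totally geodesic fibres, which you do get from $G$ acting by isometries, so $T=0$) one has $\|A\|^{2}_{\lambda}=\lambda^{-4}\|A\|^{2}$, hence the correction terms tend to $0$. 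You need to track this $\lambda$-dependence, not just boundedness.

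The paper sidesteps all of this by rescaling in the other direction: it shrinks the fibre metric by $t$, so the fibre contribution becomes $t^{-1}s_{F}\to+\infty$ while the remaining terms stay bounded (indeed $|A|^{2}$ acquires a factor of $t$ and tends to $0$). The uniformity of the shrinking across fibres is exactly what ``$G$ acts by isometries'' provides. This is both shorter and avoids having to compute how each O'Neill term scales.
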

The first part can easily be proved by taking the metric $tg \oplus h$ for small $t$ (so just shrinking the product metric $g \oplus h$ in the $M$ factor, and noting that $s(tg \oplus h)=s(g)/t+s(h)>0$ for $t$ sufficiently small. The second part can be proved in the same way, by shrinking the metric in the fibre uniformly (as G acts by isometries).

\section{The conjecture}
In this section we outline the topological obstructions to positive scalar curvature that gave rise to the Gromov-Lawson-Rosenberg conjecture, and state the conjecture explicitly.\\
It is a well known result of Lichnerowicz \cite{12} that there are spin manifolds which do not admit positive scalar curvature metrics. Indeed, by the Lichnerowicz formula, the existence of such a metric implies that the index of the Dirac operator vanishes. This, combined with the Atiyah-Singer index theorem implies that a topological invariant known as the $\hat{A}$ genus, which is a linear combination of the Pontrjagin classes of the manifold, vanishes.\\
The $\hat{A}$ obstruction was generalized by Hitchin \cite{7} to an obstruction $\alpha(M^n) \in KO_{n}$, where $\alpha$ denotes the Atiyah-Bott-Shapiro homomorphism \cite{abs}. This agrees with $\hat{A}$ in dimensions $0 \mod 4$, but is in fact a strict generalization, since the real periodic $K-$theory spectrum $KO$ has $\pi_n(KO)= \Z_2$ for $n \equiv 1,2 \mod 8$, and indeed Hitchin constructed exotic spheres admitting no metric of positive scalar curvature in these dimensions!\\
Letting $\pi$ denote any fundamental group, the homomorphism $\alpha$ gives rise to a transformation of cohomology theories
$$\alpha:\Omega ^{spin}_{n}(B\pi) \to KO_{n}(B\pi)$$ and Gromov and Lawson conjectured \cite{6} that $\alpha(M^n)=0$ was also a sufficient condition for $M$ to admit a metric of positive scalar curvature.\\
Rosenberg \cite{ro3} later generalized this further, showing that if a spin manifold $M$ with fundamental group $\pi$ admitted a metric of positive scalar curvature, then $ind([M,u])=0$, where $u$ is the classifying map of the universal cover of $M$, and $ind$ maps to the real $K-$theory of the reduced $C_*$ algebra of $\pi$:
$$ind=A \circ \alpha:\Omega ^{spin}_{*}(B\pi) \to KO_{*}(C^{*}_{red}\pi)$$
This can be thought of as an equivariant generalized index, and the map A is the assembly map of Baum-Connes \cite{bch}.\\
Modifying the Gromov-Lawson conjecture, Rosenberg conjectured that the converse was true also; namely that a compact spin manifold $M^{n}$ with $\pi_{1}(M)=\pi$ and $n \geq 5$ admits a positive scalar curvature metric if and only if $ind[u:M\to B\pi]=0 \in KO_{n}(C^{*}_{red}\pi)$.\\
The conjecture has been proven in the simply connected case \cite{17}, if $\pi$ has periodic cohomology \cite{2}, and if $\pi$ is a free group, free abelian group, or the fundamental group of an orientable surface \cite{15}. It is also known to be false in general, for example if $\pi=\Z^{4} \times \Z_{3}$, and for a large class of torsion free groups; see \cite{4}, \cite{8} and \cite{16}. Further, by a result of Kwasik and Schultz \cite{10}, the conjecture is true for a finite group $\pi$ if and only if it is true for all Sylow subgroups of $\pi$.\\
We focus on $2-$groups in this thesis, and prove the following main results, starting with elementary abelian groups $V(n)=(\Z_2)^n$, where $\Z_2$ is the field with two elements
\begin{thm}
The Gromov-Lawson-Rosenberg conjecture is true for $\pi=V(2)$.
\end{thm}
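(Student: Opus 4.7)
The easy direction is immediate: by Lichnerowicz--Rosenberg, if $M^n$ carries positive scalar curvature then $ind[u : M \to BV(2)] = 0$. For the converse, I would appeal to the standard reduction due to Rosenberg--Stolz: for $n \geq 5$ it suffices to prove that every class in
\[
\ker\bigl( ind : \Omega^{spin}_n(BV(2)) \to KO_n(C^*_{red} V(2)) \bigr)
\]
admits a psc representative. Combining this with Stolz's theorem in the simply-connected case, the problem reduces further to showing that every element of
\[
\ker\bigl( ind : ko_n(BV(2)) \to KO_n(C^*_{red} V(2)) \bigr)
\]
is hit, under the Atiyah--Bott--Shapiro transformation $\alpha$, by some psc spin manifold with reference map to $BV(2)$.

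The first main step is to compute $ko_*(BV(2))$ following the techniques of Bruner--Greenlees. Working $2$-locally, one runs the Adams spectral sequence for $ko \wedge BV(2)_+$, exploiting the explicit $A(1)$-module structure on $H^*(BV(2); \Z_2) = \Z_2[x_1, x_2]$. This produces a description of $ko_*(BV(2))$ as a $ko_*$-module, with controlled $\eta$-multiplications and Bott-tower structure. The second step is to identify the target: because $V(2)$ is finite abelian, $C^*_{red} V(2)$ is isomorphic to $\R^4$, so $KO_*(C^*_{red} V(2)) \cong (KO_*)^{\oplus 4}$, and the index map factors as $ko_*(BV(2)) \to KO_*(BV(2))$ followed by the assembly map. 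Reading this off on module generators pins down $\ker(ind)$.

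The third, geometric step is to realize this kernel by positive scalar curvature constructions. The basic building blocks with fundamental group $V(2)$ are the quotients $(S^a \times S^b)/V(2)$, where the two copies of $\Z_2$ act antipodally on $S^a$ and $S^b$ respectively; these admit psc metrics because antipodal actions are isometric. Further psc classes are produced by taking products of such quotients with simply-connected psc manifolds (Bott manifolds, $\HP^2$, etc.) and by forming isometric $V(2)$-bundles as in the fibre-bundle lemma above. After computing the $\alpha$-images of these manifolds, the plan is to match them dimension by dimension against a chosen generating set of $\ker(ind)$, invoking the surgery theorem above to modify representatives within each bordism class as needed.

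The principal difficulty I anticipate is precisely this last matching. The $\Z_2$-torsion in $ko_*(BV(2))$ is substantial and the module structure is intricate, so surjectivity of the geometric classes onto $\ker(ind)$ requires a very deliberate choice of module generators that are simultaneously under control in the Adams spectral sequence and visibly in the image of $\alpha$ applied to explicit psc manifolds. Once a fundamental range of dimensions has been handled by hand, $ko_*$-module multiplication and eightfold Bott periodicity should propagate psc-representability to all $n \geq 5$, completing the proof.
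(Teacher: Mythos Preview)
Your high-level strategy is correct and matches the paper's: reduce to showing $\ker(Ap) \subset ko_n(BV(2))$ is realised by psc manifolds, compute $ko_*(BV(2))$ via Bruner--Greenlees, and then build explicit psc representatives. But the proposal is missing the two key structural ideas that make the matching actually work.

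First, the paper does not attack $\ker(Ap)$ monolithically. The local cohomology spectral sequence gives a filtration whose associated graded has an $H^1$-column (the image in periodic $K$-theory, concentrated in degrees $3 \bmod 4$) and an $H^2$-column (Bott torsion, concentrated in even degrees, detected in ordinary $\Z_2$-homology). These two pieces are handled by completely different methods. For the $H^1$-part the paper uses the eta invariant: the three inclusions $\Z_2 \hookrightarrow V(2)$ push forward $[\RP^{4k-1}]$, and an explicit $3\times 3$ determinant of eta invariants against the characters of $V(2)$ shows these span a group of exactly the right order. Your proposal of ``computing the $\alpha$-images'' does not indicate any detection mechanism; without the eta invariant (or an equivalent character-theoretic tool) you have no way to verify orders in the large cyclic summands.

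Second, your geometric input is too thin. The manifolds $(S^a\times S^b)/V(2) = \RP^a\times\RP^b$ are spin only when $a,b\equiv 3\bmod 4$ (or one equals $1$), and in dimension $4k$ these products give only two independent classes in $H_{4k}(BV(2);\Z_2)$, whereas $k+1$ are needed. The paper closes this gap with genuinely twisted projective bundles $M_{(a,b)} = \RP(2L_a\oplus (n-1-a)\varepsilon \to \RP^a)$ with $a\equiv 1\bmod 4$, checks via the Stiefel--Whitney formulae that these are spin, and reads off their homology images from the cohomology ring. Products with Bott manifolds or $\HP^2$ will not produce these extra classes, since those just multiply by elements of $ko_*$. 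Your vague reference to ``isometric $V(2)$-bundles'' is pointing in the right direction, but the specific projective-bundle construction, together with detection in $H_*(BV(2);\Z_2)$, is the content of the proof and is absent from your outline.
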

\begin{thm}
The Gromov-Lawson-Rosenberg conjecture is true for $\pi=V(3)$.
\end{thm}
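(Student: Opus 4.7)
The plan is to follow the same strategy as for $V(2)$. There are three ingredients: (a) a computation of the connective real K-homology $ko_*(BV(3))$ using the techniques of Bruner--Greenlees; (b) identification of the kernel of the index map from $ko_*(BV(3))$ into the real K-theory of the reduced $C^*$-algebra of $V(3)$; and (c) construction of enough explicit positive scalar curvature spin manifolds with fundamental group $V(3)$ so that their Atiyah--Bott--Shapiro invariants generate that kernel. The surgery theorem of Gromov--Lawson and Schoen--Yau then transfers positive scalar curvature across spin bordism, so every class with vanishing index obstruction contains a positive scalar curvature representative, proving the conjecture in dimensions $n \geq 5$.

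For (a) and (b), I would compute $ko^*(BV(3))$ starting from $H^*(BV(3);\Z_2) = \Z_2[x_1,x_2,x_3]$ via the Adams and local cohomology spectral sequences, and then dualize to obtain $ko_*(BV(3))$. The rank-three case has substantially more $ko$-module generators and more hidden extensions than rank two, but the framework of Bruner--Greenlees still applies. Alongside this, one tracks the restrictions to rank-two subgroups $V(2)\subset V(3)$ and the composition with the assembly map into $KO_*(C^*_{red}V(3))$. Many classes can then be detected on proper subgroups and matched to constructions already available from the $V(2)$ theorem, leaving a manageable collection of genuinely rank-three kernel generators to be realized geometrically.

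For (c), the building blocks of positive scalar curvature spin manifolds with fundamental group $V(3)$ are free quotients of spheres by $V(3)$-actions, products of such with lens spaces, real projective spaces and spheres, and total spaces of fibre bundles whose fibres carry positive scalar curvature invariant under the structure group (all positive scalar curvature by the fibre bundle lemma above). Classes pulled back from proper subgroups can be produced using the $V(2)$ constructions and transfer; the remaining genuinely rank-three classes are more delicate, typically requiring carefully chosen sphere bundles over manifolds with fundamental group $V(3)$ equipped with an appropriate spin structure. The main obstacle, which I expect to be the chief difficulty, is twofold: on the algebraic side, bookkeeping the $v_1$-action and the hidden extensions in the rank-three Adams spectral sequence so as to extract a clean generating set for the kernel of the index map; on the geometric side, matching each such generator with an explicit positive scalar curvature construction, since verifying that a given manifold hits a prescribed class in $ko_*(BV(3))$ requires careful index calculations and a judicious choice of spin structure.
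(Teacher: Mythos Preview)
Your high-level strategy is correct and matches the paper's approach, but the proposal has a concrete error and is missing the key geometric idea that makes the $V(3)$ case work.

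First, the error: you list ``free quotients of spheres by $V(3)$-actions'' as a basic building block. There are no such manifolds. By Smith theory, any finite group acting freely on a sphere has all abelian subgroups cyclic, so the rank-three group $V(3)=(\Z_2)^3$ admits no free action on any sphere. The manifolds that actually enter are real projective spaces $\RP^{4k-1}$ pushed forward along the seven inclusions $\Z_2 \hookrightarrow V(3)$; these span the part of $\ker(Ap)$ detected in periodic $KO$-theory, and the eta invariant (with respect to the $2^3-1$ nontrivial characters of $V(3)$) shows they span a subgroup of the right order. This is the ``index calculation'' part, and it is not where the difficulty lies.

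The real content of the $V(3)$ proof is the Bott torsion, which sits in the third column of the local cohomology spectral sequence and is \emph{not} seen by any index or eta invariant: it is detected entirely in ordinary $\Z_2$-homology $H_*(BV(3);\Z_2)$. Your proposal does not distinguish these two regimes, and ``sphere bundles with careful spin structure'' is too vague here. What the paper does is construct, for each triple $(a,b,c)$ with $a$ even, $b\geq 3$ odd, $c\equiv 3\bmod 4$, an explicit iterated real projective bundle
\[
\RP^c \longrightarrow Y \longrightarrow X, \qquad \RP^b \longrightarrow X \longrightarrow \RP^a,
\]
where the vector bundles are chosen (twists by canonical line bundles) so that $w_1(Y)=w_2(Y)=0$. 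One then computes $H^*(Y;\Z_2)$ from the Leray--Hirsch description, dualizes to find the image of $[Y]$ in $H_n(BV(3);\Z_2)$, and checks by a careful combinatorial argument that these classes, together with those induced from the seven $V(2)$-subgroups, are linearly independent and of the right cardinality to fill out the Bott torsion. This combinatorics---showing that the $M(a,b,c)$ classes are independent of one another and of the $V(2)$-induced classes---is the genuine work, and nothing in your proposal addresses it.
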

\begin{thm}
The Gromov-Lawson-Rosenberg conjecture is true for all dihedral groups.
\end{thm}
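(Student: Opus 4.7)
The plan is to exploit the reduction to Sylow subgroups due to Kwasik and Schultz, already quoted in the introduction, to cut the problem down to a single family of $2$-groups. A dihedral group of order $2n$ with $n=2^{k-1}m$ and $m$ odd has Sylow $2$-subgroup isomorphic to the dihedral $2$-group $D_{2^k}$; its odd-order Sylow subgroups are cyclic and so have periodic cohomology, and are therefore covered by the theorem for periodic cohomology groups already cited. It is thus enough to verify the conjecture for the family $D_{2^k}$, $k \geq 2$. Since $D_4$ is the Klein four-group $V(2)$ and has already been treated in the preceding theorem, the real work concerns $D_{2^k}$ for $k \geq 3$.

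The first technical step is an explicit computation of $ko_*(BD_{2^k})$, following the Bruner-Greenlees framework indicated in the abstract. Starting from $H^*(D_{2^k};\Z_2)$ as a module over the Steenrod algebra, one runs the Adams spectral sequence to obtain $ko^*(BD_{2^k})$ and dualises to $ko_*(BD_{2^k})$. In parallel I would analyse the composite
$$\Omega^{spin}_*(BD_{2^k}) \to ko_*(BD_{2^k}) \xrightarrow{A \circ \alpha} KO_*(C^*_{red}D_{2^k}),$$
using the decomposition of $C^*_{red}D_{2^k}$ into matrix summands over $\R$, $\C$ and $\qt$ indexed by the irreducible real, complex and quaternionic representations of $D_{2^k}$. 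This produces an explicit description of the kernel $K_*$ of the index map, and reduces the conjecture to the statement that every class of $K_*$ is realised by a positive scalar curvature spin manifold over $BD_{2^k}$.

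Next, I would construct psc manifolds realising these classes. Every proper subgroup of $D_{2^k}$ is either cyclic or a smaller dihedral group $D_{2^{k-1}}$, for each of which the conjecture is either known (cyclic, periodic cohomology) or available by induction on $k$. Via the fibre-bundle lemma quoted above, a psc manifold $F$ whose fundamental group is a subgroup $H \leq D_{2^k}$ produces a psc $D_{2^k}$-manifold as the associated bundle $D_{2^k} \times_H F$ over $BD_{2^k}$; multiplying by Bott manifolds, $\HP^2$ and other standard psc building blocks, and moving freely within bordism classes by the Gromov-Lawson-Schoen-Yau surgery theorem above, one assembles a generating pool of psc bordism classes over $BD_{2^k}$.

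The hard part will be matching the algebraic kernel $K_*$ with the $ko$-homology image of these geometric generators. The group $ko_*(BD_{2^k})$ is intricate in low degrees and carries both $2$-torsion and divisible summands, so verifying that the induced classes span exactly $K_*$, and not some proper subgroup, calls for a careful degree-by-degree accounting, plausibly by induction on $k$ using the inflation and restriction maps relating $D_{2^{k-1}}$ and $D_{2^k}$. Producing psc manifolds is the geometrically easy side of the problem; the difficulty lies in the precise bookkeeping that shows the induced bordism classes, once mapped into $ko_*(BD_{2^k})$, exhaust the full algebraic obstruction group permitted by the index invariant.
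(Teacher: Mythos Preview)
Your outline follows the paper's overall strategy: reduce to dihedral $2$-groups via Kwasik--Schultz, compute $ko_*(BD_{2^k})$, identify $\mathrm{Ker}(Ap)$, and then realize that kernel by psc spin manifolds coming largely from periodic subgroups. Two points deserve correction, however, one technical and one substantive.

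First, the construction you describe as ``the associated bundle $D_{2^k}\times_H F$ over $BD_{2^k}$'' is not the right mechanism. If $F$ is a psc spin manifold with a map $f:F\to BH$, the way one produces a class over $BD_{2^k}$ is simply to compose with the map $BH\to BD_{2^k}$ induced by the subgroup inclusion; no fibre-bundle construction is involved. The paper then detects the resulting $ko$-classes in two ways: via eta-invariant calculations (for the part of $\mathrm{Ker}(Ap)$ visible in periodic $K$-theory, coming from cyclic lens spaces and real projective spaces) and via ordinary $\Z_2$-homology (for the Bott-torsion part, coming from projective bundles induced from the Klein-four subgroups).

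Second, and more seriously, your plan to induct up from proper subgroups will miss one class. The paper shows that in each dimension $8k$ there is a class $\xi(\beta^2\delta^{2k-1})\in H_{8k}(BD_{2^{N+2}};\Z_2)$ lying in $\mathrm{Ker}(Ap)$ that is \emph{not} in the image of inclusion from any proper subgroup. Realizing it requires a genuinely new geometric construction: one lifts the extension $C_{2^{N+1}}\to D_{2^{N+2}}\to \Z_2$ to $C_{2^{N+1}}\to G\to\Z$, and builds a lens-space bundle $L^{8k-1}\to M^{8k}\to S^1$ with a map to $BD_{2^{N+2}}$ factoring through $BG$. A Wu-class computation shows $M^{8k}$ is spin, and its fundamental class hits exactly the missing element. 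This step cannot be absorbed into ``multiplying by Bott manifolds and standard building blocks,'' and without it the bookkeeping you anticipate will come up one generator short in every dimension $\equiv 0\pmod 8$.
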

Note that it suffices to prove this result for dihedral 2-groups, since by \cite{10}, the conjecture is true for a finite group G if and only if it is true for all Sylow subgroups of G.
\begin{thm}
The Gromov-Lawson-Rosenberg conjecture is true for the semi-dihedral group $\pi=SD_{16}$.
\end{thm}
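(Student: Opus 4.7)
The strategy mirrors the other cases in this thesis: compute $ko_{*}(BSD_{16})$, identify the kernel of the index map, then construct positive scalar curvature (PSC) spin manifolds whose bordism classes surject onto that kernel. By Kwasik--Schultz \cite{10} one may restrict to Sylow $2$-subgroups, but since $SD_{16}$ is itself a $2$-group, no such reduction is available. As the group is finite, the Rosenberg index obstruction lives (after $2$-completion, which loses no information here) in $ko_{n}(BSD_{16})$, and the conjecture reduces to showing that the kernel of the index homomorphism $ko_{n}(BSD_{16}) \to KO_{n}(C^{*}_{red}SD_{16})$ is realised by PSC spin bordism classes.

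\emph{Step 1 (algebraic input).} The mod-$2$ cohomology $H^{*}(BSD_{16}; \Z_2)$ as a module over the subalgebra $A(1) = \langle Sq^{1}, Sq^{2} \rangle$ of the Steenrod algebra is classical. Following the Bruner--Greenlees technology invoked in the abstract, I would feed this into the Adams spectral sequence
\[
E_{2}^{s,t} = \mathrm{Ext}_{A(1)}^{s,t}(H^{*}(BSD_{16}; \Z_2), \Z_2) \Rightarrow ko_{t-s}(BSD_{16})^{\wedge}_{2}
\]
to extract $ko_{*}(BSD_{16})$ as a $ko_{*}$-module: a set of generators, the $v_{1}$-torsion summands, and the image of the index map into periodic $KO$-theory. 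The kernel of the index map then emerges as a concrete list of classes to be realised geometrically.

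\emph{Step 2 (geometric input).} The group $SD_{16}$ has three maximal subgroups, each of index $2$: the cyclic group $C_{8}$, the dihedral group $D_{8}$, and the quaternion group $Q_{8}$. The Gromov--Lawson--Rosenberg conjecture is already settled for all three: $C_{8}$ and $Q_{8}$ have periodic cohomology and so are covered by \cite{2}, while $D_{8}$ is handled by the preceding dihedral theorem. A PSC spin manifold $M$ with fundamental group a subgroup $H \le SD_{16}$ yields, via the composite $M \to BH \to BSD_{16}$, a PSC spin manifold over $BSD_{16}$, and together these give a large supply of PSC classes in the image of transfer from proper subgroups. Additional PSC classes come from the product and fibre-bundle lemma above, applied for instance to sphere bundles $S(V)$ of real $SD_{16}$-representations $V$ and to products with Bott manifolds.

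\emph{Main obstacle.} The Step 1 calculation is essentially mechanical given the Bruner--Greenlees machinery. The hard part is the matching: one must verify that the PSC classes produced in Step 2 surject onto the entire kernel of the index map, which amounts to evaluating characteristic numbers and $\alpha$-invariants of the transferred and bundle constructions against the Bruner--Greenlees generators. If some generator of the kernel fails to lie in the image of transfer from maximal subgroups, an ad hoc manifold must be constructed --- likely a sphere bundle pulled back along a map to a low-dimensional skeleton of $BSD_{16}$, or (as in the $V(3)$ and dihedral arguments) a carefully chosen product detected by a specific eta-type class on the Adams $E_{\infty}$-page. I expect this matching step, rather than the algebraic computation or the subgroup transfer, to be the core difficulty of the proof.
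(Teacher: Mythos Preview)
Your proposal is correct in outline and follows essentially the same route as the paper. A few points of comparison are worth noting. For Step~1 the paper does not run the Adams spectral sequence directly but quotes the local cohomology spectral sequence calculation of $ko_*(BSD_{16})$ from Rodtes' thesis \cite{kijti}; this is a methodological difference only, and the paper itself remarks that either route works. For Step~2 the paper's subgroup list is slightly different from yours: the odd-degree part of $\ker(Ap)$ (the 1-column) is realised via eta-invariant calculations using inclusions from $C_8=\langle s\rangle$, the order-two subgroup $\langle t\rangle$, and $Q_8=\langle s^2,ts\rangle$, while the even-degree Bott torsion (the 2-column) is realised by pushing projective-bundle classes from the Klein four-group $V(2)\subset D_8\subset SD_{16}$; so the relevant subgroups are not only the maximal ones. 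Your anticipation of the ``main obstacle'' is exactly right: there is a single class in each degree $8k$ lying in $H^1$ that is not hit by any subgroup inclusion, and the paper builds it by the same lens-space-bundle-over-$S^1$ construction used in the dihedral chapter, now with fibre $L^{8k-1}=S^{8k-1}/C_8$ and monodromy coming from the extension $C_8\to SD_{16}\to \Z_2$. (One terminological quibble: you write ``transfer'' but describe and use the map induced by inclusion $BH\to BSD_{16}$; these are different maps, and it is the latter that is wanted here.)
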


\section{The structure and content of the thesis}

The thesis is structured as follows. The next section of this chapter will give an expository account of the known counterexamples to the conjecture, see \cite{4}, \cite{8} and \cite{16}, and the methods used therein, together with a few remarks about exotic spheres which do not admit positive scalar curvature metrics. The following section will give a sketch of the proof of the conjecture in the simply connected case \cite{17}.\\

The remaining sections describe the methods we use. The essential result is that the obstruction group is in fact the real connective K-theory $ko_*(B\pi)$ of the classifying space of the group $\pi$, \cite{18}, and our method is to first calculate $ko_*(B\pi)$ and then construct sufficiently many spin manifolds of positive scalar curvature. We give an outline of local cohomology and the local cohomology spectral sequence, and sketch the calculation of $ko_*(BV(2))$ using it, where $V(2)$ is the Klein 4-group. Details are in \cite{3}.\\

The next chapter will describe the eta invariant method to outline the proof for groups with periodic cohomology in \cite{2}, dealing with the cases of the cyclic group $C_4$ of order $4$ and the quaternion group $Q_8$ of order $8$ in more detail. We give explicit geometric generators for $ko_{4k \pm 1}(BC_4)$ and $ko_{4k-1}(BQ_8)$, and show how explicit calculations using the eta invariant can be used to resolve the extensions for these groups. This turns out to be useful in subsequent chapters, where for a group $G$ we start by considering the image of maps $ko_*(BH) \rightarrow ko_*(BG)$ induced by inclusions $H \hookrightarrow G$, where $H$ is a periodic subgroup of $G$.\\

Chapter three deals with elementary abelian groups, and builds upon the calculations in \cite{3} of $ko_*(BV(r))$. The argument for $V(2)$ will appear in \cite{mjam}, while we will here also deal with $V(3)$ as well as the image in periodic K-theory $ko_*(BV(r))[\beta^{-1}]$ for all $r$.\\

In chapter four we prove the conjecture for all dihedral groups. This will appear in \cite{mjam}, where we use the Adams spectral sequence both for calculations and geometric insight. The proof presented here will only use the local cohomology calculations in \cite{3}.\\

Finally, the last chapter uses the calculations in \cite{kijti} of $ko_*(BSD_{16})$. We prove the conjecture by constructing the necessary manifolds explicitly, and this is the main focus both of the chapter and of \cite{amkr}.

\section{Counterexamples, other obstructions and exotic spheres}
The Gromov-Lawson-Rosenberg conjecture is known to be false for certain infinite groups, for example $\pi=\Z^4 \times \Z/3$, see \cite{16}. The method used in this and other cases is known as the minimal surface obstruction. These definitions and results may be found in \cite{16}, and follows from work in \cite{ry}.\\
To state the obstruction it is useful to introduce the following notation, which defines the positive scalar curvature part of the homology of a space.
\begin{df}
Let $X$ be any space and $E$ any homology theory. The subgroup $E_*^+(X)$ of $E_*(X)$ is defined as the set $\{f_*([M])|f:M \rightarrow X,\textrm{ M is any positive scalar curvature manifold} \}$.
\end{df}
\begin{thm}
Let $X$ be any space and $\alpha \in H^1(X;\Z)$. Then, for $k \geq 3$, cap product with $\alpha$ sends $H_k^{+}(X;\Z)$ to $H_{k-1}^{+}(X;\Z)$.\\
\end{thm}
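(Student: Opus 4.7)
The plan is to prove this by the Schoen--Yau minimal hypersurface method. Given a homology class in $H_k^+(X;\Z)$, represented by $f:M^k\to X$ with $M$ a closed psc manifold, the pullback $f^*\alpha\in H^1(M;\Z)\cong[M,S^1]$ is classified by a smooth map $h:M\to S^1$. If $f^*\alpha=0$, then by the projection formula $\alpha\cap f_*[M]=f_*(f^*\alpha\cap[M])=0$, which lies in $H_{k-1}^+(X;\Z)$ trivially. Otherwise, pick a regular value $p\in S^1$ and let $N_0=h^{-1}(p)\subset M$; this is a closed, oriented, codimension-one submanifold whose fundamental class is Poincar\'e dual to $f^*\alpha$.

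The heart of the argument is to replace $N_0$ by a psc representative of the same homology class. By geometric measure theory (Federer--Fleming) one can minimize area in the integral homology class $[N_0]\in H_{k-1}(M;\Z)$ to obtain a stable minimal hypersurface $N\subset M$, smooth for $k\le 7$ (and handled via the singular set analysis of Schoen--Yau in higher dimensions). Since $N$ is two-sided and area-minimizing, the second variation of area is nonnegative in every normal direction. Writing out this stability inequality together with the Gauss equation (which relates the scalar curvatures of $N$ and $M$ via the second fundamental form and the ambient Ricci curvature in the normal direction), and using an appropriate test function coming from the first eigenfunction of the stability operator, one obtains the Schoen--Yau conclusion: because $M$ has strictly positive scalar curvature, $N$ admits a conformally related metric of strictly positive scalar curvature. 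The assumption $k\geq 3$ ensures $\dim N\geq 2$, so this conformal deformation is available.

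Once psc is established on $N$, define $g=f|_N:N\to X$; by construction $[N]=f^*\alpha\cap[M]$, so the projection formula gives
\[
g_*[N]=f_*(f^*\alpha\cap[M])=\alpha\cap f_*[M],
\]
which exhibits $\alpha\cap f_*[M]$ as the pushforward of the fundamental class of a psc manifold, i.e.\ as an element of $H_{k-1}^+(X;\Z)$.

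The step most likely to cause trouble is the minimal-hypersurface stability argument: one needs the area minimizer to exist as a smooth submanifold (dimensional restriction, $k-1\le 7$ for classical regularity), and one must control the conformal Laplacian so that the resulting metric on $N$ has pointwise positive scalar curvature rather than merely nonnegative. Both issues are standard in the Schoen--Yau framework and are handled as in \cite{ry} and \cite{16}; the dimensional regularity issue was later removed, but for the bordism-theoretic applications in this thesis the low-dimensional case is what is used.
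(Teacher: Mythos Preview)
The paper does not actually prove this statement; it is quoted from Schick \cite{16} with the underlying analysis attributed to Schoen--Yau \cite{ry}. Your sketch is precisely the Schoen--Yau minimal hypersurface argument those references carry out, so there is nothing to compare against: your proposal \emph{is} the standard proof. One minor inconsistency worth cleaning up: you first write ``smooth for $k\le 7$'' and later ``$k-1\le 7$''; the classical regularity threshold is ambient dimension at most $7$, so the first version is the correct one.
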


So one method of finding counterexamples is to pick a group $G$ with non-trivial first integral cohomology (this however rules out all finite groups immediately), and then pick a suitable class $[N^n,f]$ in the spin bordism of $G$ which has vanishing index and represents a class $0 \neq c \in H_n(BG;\Z)$. If the conjecture holds, this class must also be realized by some class $[M^n,u]$, where $M$ admits a positive scalar curvature metric. However, we can now apply the above theorem by capping $c$ with some $x_1 \in H^1(BG;\Z)$ to get a positive scalar curvature submanifold $L^{n-1}$ of $M$ of  codimension one whose image in homology under $u_*$ is $x \cap c$. Now we can try and repeat the procedure on $L$, and hope to reach sufficiently low dimensions to obtain a contradiction. So for example, if for suitable $x_i \in H^1(BG;\Z)$ we have $x_n \cap \cdots \cap x_1 \cap c=c' \neq 0 \in H_{2}(BG;\Z)$, then we will have a contradiction, because for a discrete group $\pi$ we must have $H_2^{+}(B\pi;\Z)=0$ since the only orientable surface of positive scalar curvature is $S^2$, and $\pi_2(B\pi)=0$. This is the method used for $\pi=\Z^4 \times \Z/3$, \cite{16}:\\
In some more detail, give the circle $S^1$ the spin structure induced from the disc, and choose a map $p:S^1 \rightarrow B\Z/3$ surjective on $\pi_1$. By the Atiyah-Hirzebruch spectral sequence, we have $\widetilde{\Omega}_{1}^{spin}(B\Z/3)=\Z/3$, so that $[S^1,p]$ is $3$-torsion. We can then consider the singular manifold given by
$$ f=id \times p:(S^1)^4 \times S^1 \rightarrow B\pi$$
which must also be $3-$torsion, and doing surgery we can construct a bordism in $\Omega_5^{spin}(B\pi)$ to some $u:M \rightarrow B\pi$ which is an isomorphism on $\pi_1$. It is a standard fact (see \cite{bch},\cite{6}) that $KO_{*}(C^{*}_{red}\pi)$ contains only $2-$torsion, so this manifold must have trivial index. Now note that
$$H_1(B\pi;\Z)=x_1\Z \oplus x_2\Z \oplus x_3\Z \oplus x_4\Z \otimes y\Z/3$$
$$H^1(B\pi;\Z)=a_1\Z \oplus a_2\Z \oplus a_3\Z \oplus a_4\Z $$
We then simply observe
$$0 \neq w=x_1 \times x_2 \times x_3 \times x_4 \times y \in H_5(B\pi;\Z)$$
$$0 \neq z=a_1 \cap (a_2 \cap(a_3 \cap w)) \in H_2(B\pi;\Z)$$
So that if $M$ admitted positive scalar curvature, we would have $u_*(M)=w \in H_5^{+}(B\pi;\Z)$, and so Theorem $1.3.2$ would imply $ 0 \neq z \in H_2^{+}(B\pi;\Z)$, a contradiction.\\
There are much more general examples, including torsion-free groups, where the conjecture can be shown to be false, using the Atiyah-Hirzebruch spectral sequence and a generalized version of the above argument. Details may be found in \cite{4}. Also, this counterexample is one instance of a toral class, namely a class in $H_n(B\pi;\Z)$ which is the image of the fundamental class of some torus $T^n$. Tori do not admit positive scalar curvature \cite{enl}, and toral classes are considered an essential test case for finite groups, and there are partial results for some groups that essentially say the conjecture is true atorally \cite{boro}, basically meaning that any class which is not toral can be realized by a manifold of positive scalar curvature. The toral case in general has not been dealt with, but there are some partial results, for example it is known that for elementary abelian $2-$groups, toral classes can be realized by positive scalar curvature manifolds, see \cite{mj}.\\
It is worth remarking briefly that in dimension $4$, there are additional Seiberg-Witten obstructions which mean for example that even in the simply connected case, the conjecture fails to be true, see \cite{tau}, \cite{sw}.\\
Further, while we think of scalar curvature as a very weak geometric invariant, it is surprising just what kind of manifolds do not admit positive scalar curvature metrics. For example, in dimensions $1,2 \mod 8$, there are exotic spheres which admit no such metric! These exotic spheres do not bound parallelizable manifolds, and in fact are not even spin boundaries. One construction of such exotic spheres is to take the circle $[S^1]$ with the non-trivial spin structure, so that $\alpha([S^1]) \neq 0 \in KO_n(pt)$, and then consider the product with the Bott manifold $B^8$. Then we have $\alpha([S^1 \times B^8]) \neq 0$ since $\hat{A}(B^8)=1$, and it turns out \cite{misp} that we can now perform surgeries to get a homotopy sphere $\Sigma$ with $\alpha(\Sigma) \neq 0$. More details on exotic spheres and curvature can be found in the article of Joachim and Wraith \cite{8b}.

\section{The simply connected case}
In this section we give a brief outline of Stolz's proof \cite{17} of the conjecture in the simply connected case. We have the following main result :
\begin{thm}
The kernel of $\alpha:\Omega_*^{Spin}(pt) \rightarrow KO_8(pt)$ is equal to the subgroup $T_n$ consisting of bordism classes represented by total spaces of $\HP^2$ bundles with structure group $PSp(3)$, by which me mean fibre bundles with fibre the quaternion projective space $\HP^2$ and structure group the projective symplectic group $PSp(3)$
\end{thm}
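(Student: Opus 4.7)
The plan is to prove the two inclusions separately, with the reverse inclusion $\ker\alpha \subseteq T_n$ being by far the harder one.

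The easy direction $T_n \subseteq \ker\alpha$ is essentially geometric. The symmetric space $\HP^2 = Sp(3)/(Sp(2)\times Sp(1))$ carries a $PSp(3)$-invariant metric of positive scalar curvature (the Fubini-Study-type metric from the symmetric pair). Hence, by Lemma 1.0.2, the total space $E$ of any fibre bundle $\HP^2 \to E \to B$ with structure group $PSp(3)$ acting by isometries admits a PSC metric. One must also check that such $E$ is canonically spin (which follows from $\HP^2$ being spin and $PSp(3)$ lifting appropriately to $Spin$ for the vertical tangent bundle). The Lichnerowicz-Hitchin obstruction then gives $\alpha([E])=0$.

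For the reverse inclusion I would follow Stolz's two-step strategy. Step one is structural: one shows that $T_* = \bigoplus_n T_n$ is an ideal in $\Omega_*^{Spin}(pt)$. Closure under addition is disjoint union, and the ideal property follows because for any spin manifold $N$, the product $E \times N \to B \times N$ is again an $\HP^2$-bundle with structure group $PSp(3)$ (acting trivially on $N$). This reduces the problem to showing that the induced map
\[
\bar\alpha: \Omega_*^{Spin}(pt)/T_* \longrightarrow KO_*(pt)
\]
is injective.

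Step two is the main analytic input. One computes $\Omega_*^{Spin}(pt)$ via the Anderson-Brown-Peterson splitting, which at the prime $2$ expresses $MSpin_{(2)}$ as a wedge of copies of $ko$, shifts of $ko\langle 2\rangle$, and mod-$2$ Eilenberg-MacLane spectra; at odd primes $MSpin_{(p)} \simeq MSO_{(p)}$ and the $\alpha$-map is controlled by the Todd/$\hat A$-genus. Under this decomposition, one projection out of $\Omega_*^{Spin}$ is (essentially) $\alpha$ itself, while all the other summands must be shown to lie in $T_*$. The plan is to exhibit, for each non-$ko$ summand appearing in the splitting, an explicit $\HP^2$-bundle with $PSp(3)$-structure whose bordism class maps to a generator of that summand. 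Candidates include $\HP^2$ itself, $\HP^2$-bundles over $S^4$ and $\HP^2$, and twisted $\HP^2$-bundles over spin manifolds built from Kraines' classes; combined with the ideal property, products of these with arbitrary spin manifolds fill out the whole complement of $ko_*$. Once enough bundles are produced, the splitting tells us $\bar\alpha$ is injective on a full set of additive generators, and hence an isomorphism.

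The main obstacle is step two, specifically the explicit production of $\HP^2$-bundles realizing each non-$ko$ summand in the Anderson-Brown-Peterson decomposition. This is a delicate combinatorial-geometric problem requiring knowledge of the cohomology of $BPSp(3)$ and careful transfer arguments comparing $MSpin$ to $MPSp(3)$; this is where the heart of Stolz's paper lies, and any realistic proof plan must allocate most of its effort here.
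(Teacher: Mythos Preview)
Your easy direction ($T_n \subseteq \ker\alpha$) is fine and matches the paper's spirit. The divergence is in the hard direction, and specifically at the prime $2$.

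Your plan is to use the Anderson--Brown--Peterson splitting and then \emph{explicitly construct} $\HP^2$-bundles hitting every non-$ko$ summand. The paper points out that this explicit-construction route is exactly what Kreck--Stolz do at \emph{odd} primes (bundles over products of quaternionic projective spaces), but that at the prime $2$ Stolz's argument is entirely different and purely homotopy-theoretic. One identifies $T_n$ with the image of a transfer map $\psi:\Omega_{n-8}^{Spin}(BG)\to\Omega_n^{Spin}(pt)$, $G=PSp(3)$, and after Pontrjagin--Thom this becomes a map $T:MSpin\wedge\Sigma^8 BG_+\to MSpin$. The families index theorem shows $D\circ T\simeq *$, so $T$ factors through the homotopy fibre $\hat{MSpin}$ of $D:MSpin\to ko$. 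The theorem is then equivalent to surjectivity of $\hat T_*$ on $2$-local homotopy, and this is proved by showing (i) $\hat T$ induces a split injection on $\Z_2$-cohomology as Steenrod-algebra modules, and (ii) the Adams spectral sequence for $\pi_*(MSpin\wedge BG_+)$ collapses at $E_2$. No explicit bundles are built at $2$ at all.

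So your proposal is not wrong in principle, but step two as you describe it is the approach that was \emph{not} carried out at $2$, and your ``main obstacle'' paragraph correctly identifies why: producing enough explicit $\HP^2$-bundles to cover all the $ko\langle 2\rangle$ and $H\Z_2$ summands in the ABP splitting is exactly the combinatorial-geometric problem that Stolz bypassed with the Adams spectral sequence argument. If you want to follow the paper, replace your step two at $2$ with the fibre-sequence/Adams-collapse argument above; if you want to pursue explicit constructions at $2$, be aware that this is not the established route and would require substantially more than what you have sketched.
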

Quaternion projective spaces $\HP^n=S^{4n+3}/S^3$ have natural positive scalar curvature metrics, and since $G=PSp(3)$ acts on the fibres $\HP^2$ isometrically, Lemma $1.0.2$ tells us these bordism classes are represented by positive scalar curvature manifolds, thus implying the conjecture in the simply connected case. This result is proved localised at odd primes in \cite{9} by using explicit geometric constructions of $\HP^2$ bundles over products of quaternion projective spaces. By contrast, the proof localised at 2 in \cite{17} translates the statement into stable homotopy theory using the Pontrjagin-Thom construction, and then notes that a certain Adams spectral sequence collapses. Here is a brief sketch:\\
First note that the group $T_n$ can be identified with the image of the homomorphism
$$\psi:\Omega_{n-8}^{Spin}(BG) \rightarrow \Omega_n^{Spin}(pt)$$
which sends a bordism class $[N^{n-8},f]$ to $\hat{N}$, which is the pullback via $f$ in the diagram below of the universal $\HP^2$ bundle $EG \times_G \HP^2$:
$$
\xymatrix{
\HP^2 \ar[r] \ar [d] & \HP^2 \ar [d] \\
\hat{N} \ar[r] \ar [d] & EG \times_G \HP^2 \ar [d]\\
N \ar[r]^{f} & BG }
$$
We then have the following commutative diagram:
$$
\xymatrix{
\Omega_{n-8}^{Spin}(BG) \ar[r]^{\psi} \ar [d]^{\cong} & \Omega_n^{spin}(pt) \ar[r]^{\alpha} \ar [d]^{\cong} & KO_n(pt) \ar[d]^{\cong} \\
\pi_n(MSpin \wedge \Sigma^8 BG_+)  \ar[r]^{T_*} \ar [d]^{\hat{T}_*} & \pi_n(MSpin) \ar[r]^{D_*} \ar [d]^{id} & \pi_n(ko) \ar[d]^{id} \\
\pi_n(\hat{MSpin}) \ar[r] & \pi_n(MSpin) \ar[r] & \pi_n(ko) }
$$
The Pontrjagin-Thom construction tells us that for a space $X$ we have $\Omega_n^{spin}(X) \cong \pi_n(Mspin \wedge X_+)$, where $MSpin$ is the Thom spectrum over $Bspin$ and $+$ denotes union with a disjoint basepoint. Thus the left and vertical arrows of the above diagram are immediate via this construction.\\
It can be shown using the families index theorem that the composite $D \circ T$ is null-homotopic, which implies that $T$ factors through a map $\hat{T}$ into the homotopy fibre $\hat{Mspin}$, and the bottom row is part of the long exact sequence in homotopy of this fibration. However, the theorem is equivalent to saying that the top row of the diagram is exact, and so it would suffice to show that $\hat{T}_*$ is surjective at $2$. This follows from the following facts, proved in \cite{17}:\\
1) The homomorphism induced by $\hat{T}_*$ on $\Z_2$ cohomology is a split injection of modules over the Steenrod Algebra;\\
2) The Adams spectral sequence converging to the 2-local homotopy groups of $Mspin \wedge BG_+$ collapses at the $E_2$ page.\\
This implies the induced map of spectral sequences is a surjection on the 2-local homotopy groups at $E_{\infty}$, since 1) tells us this at $E_2$, and 2) says there are no more differentials.

\section{Our method}
We note that the index map factors through connective and periodic real K-theory as follows:
$$ind=A \circ p \circ D: \Omega_{n}^{spin}(B \pi) \rightarrow ko_{n}(B \pi)\rightarrow KO_{n}(B \pi) \rightarrow KO_{*}(C^{*}_{red}\pi) $$
Here $D$ is a generalized Dirac operator induced by the Atiyah-Bott-Shapiro homomorphism $D:\Omega_{*}^{spin} \rightarrow ko_{*}$ which sends a spin manifold to its $ko$ fundamental class, $p$ is the periodicity map inverting the Bott element, and $A$ is the assembly map.\\
We have that
$$ko_* \cong \Z[\alpha, \beta, \eta]/(\eta^3, 2\eta, \alpha^2-4\beta)$$
Where $\alpha$ is in degree $4$, $\beta$ in degree $8$ and $\eta$ in degree $1$. Further, $KO_*=ko_*[\beta^{-1}]$.\\
We again use the notation:
$$ko_{n}^{+}(X)=\{D[N,f]; [N,f] \in \Omega_n^{spin}(X),\textrm{ N is any positive scalar curvature spin manifold} \}$$
Thus $ko_{n}^{+}(B \pi)$ is the set $D(\Omega_{n}^{spin,+}(B \pi))$, where $ \Omega_{n}^{spin,+}(B \pi)$ is the subgroup with elements given by pairs $[N,f]$ for which $N$ admits a positive scalar curvature metric. Note that there is no restriction on the fundamental group of $N$ here. The following result, due to Jung and Stolz \cite{18}, is the basis of all our proofs.
\begin{thm}
A compact spin manifold $M^{n}$ with $\pi_{1}(M)=\pi$ and $n \geq 5$ admits a positive scalar curvature metric if and only if $D(M,u) \in ko_{n}^{+}(B \pi)$, where $u$ is the classifying map for the universal cover of $M$.
\end{thm}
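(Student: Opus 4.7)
The reverse implication is immediate from the definition of $ko_n^+(B\pi)$: if $M$ itself carries a PSC metric, take $N=M$ and $f=u$, giving $D(M,u)\in ko_n^+(B\pi)$. The substance is the forward implication, and the plan is to combine a $B\pi$-parametrized version of Stolz's structure theorem (Theorem 1.4.1) with the Gromov-Lawson surgery theorem (Theorem 1.0.1), refined by a Wall-style handle exchange so as to preserve the $\pi_1$-classifying data.

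The first step reduces the problem to a statement in spin bordism. If $D(M,u)\in ko_n^+(B\pi)$, then by definition there exists a PSC spin $n$-manifold $N$ together with a map $f:N\to B\pi$ such that $D[M,u]=D[N,f]$ in $ko_n(B\pi)$. Consequently $[M,u]-[N,f]$ lies in the kernel of
$$
D:\Omega_n^{spin}(B\pi)\longrightarrow ko_n(B\pi).
$$
The $B\pi$-parametrized extension of Theorem 1.4.1 asserts that this kernel is generated, modulo bordism over $B\pi$, by total spaces of $\HP^2$-bundles with structure group $PSp(3)$ fibered over spin $(n{-}8)$-manifolds equipped with a map to $B\pi$. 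By Lemma 1.0.2 each such total space already carries a PSC metric, since the fibre does and the structure group acts by isometries. Adjoining these bundle classes to $(N,f)$ produces a PSC spin $B\pi$-manifold $(P,g)$ with $[M,u]=[P,g]$ in $\Omega_n^{spin}(B\pi)$.

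The second step passes from spin-bordism over $B\pi$ to a PSC metric on $M$ itself. Let $W^{n+1}$ be a spin bordism from $M$ to $P$ with an extension $F:W\to B\pi$ of $u\sqcup g$. Because $u$ is the classifying map of the universal cover of $M$, it is a $\pi_1$-isomorphism. Using $n\ge 5$, one performs handle exchange on the interior of $W$ (surgeries of indices $1$ and $2$ to kill excess generators of $\pi_1(W)$ and $\pi_2(W,M)$, while keeping $F$ well defined) so as to arrange that the inclusion $M\hookrightarrow W$ is a $2$-equivalence. Dually, $W$ then admits a handle decomposition on $P$ using only handles of index $\le n-2$, i.e.\ dual handles of codimension $\ge 3$ attached to $P$. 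Starting from the PSC metric on $P$ and successively attaching these codimension $\ge 3$ handles, Theorem 1.0.1 yields a PSC metric on $M$.

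The main obstacle is the content of the first step, the relative Stolz theorem: although Theorem 1.4.1 is proved only in the simply connected case, its promotion to $B\pi$-parametrized bordism is the genuine work of \cite{18}. The argument follows Stolz's scheme by translating via Pontrjagin-Thom into stable homotopy, invoking the families index theorem to see that the analogue of $D\circ T$ smashed with $(B\pi)_+$ is null-homotopic, and checking that the corresponding Adams spectral sequence still collapses after smashing with $(B\pi)_+$. The second step, while technical, is a by-now-standard consequence of Wall's $\pi$-$\pi$ theorem combined with Gromov-Lawson surgery, and is precisely where the dimension hypothesis $n\ge 5$ enters.
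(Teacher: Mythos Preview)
The paper does not actually prove this theorem: it is stated as a result due to Jung and Stolz \cite{18} and used as a black box throughout the thesis. So there is no ``paper's own proof'' to compare against.

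That said, your outline is a faithful sketch of the Jung--Stolz argument. The two essential ingredients you identify are correct: the $B\pi$-parametrized version of Stolz's $\HP^2$-bundle theorem (so that $\ker D$ over $B\pi$ is represented by PSC manifolds), and the bordism-plus-surgery step to transport the PSC metric across a suitably simplified cobordism to $M$. Your handling of the second step is right, including the need to arrange $M\hookrightarrow W$ to be a $2$-equivalence so that the dual handle decomposition on $P$ uses only codimension $\ge 3$ handles; this is indeed where $n\ge 5$ is used. One small caution: you describe the parametrized Stolz theorem as following by ``smashing with $(B\pi)_+$'' and checking the Adams spectral sequence still collapses. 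The actual argument in \cite{18} is more delicate than a naive smash, since one must control the $MSpin$-module structure and the splitting of $MSpin$ after localization; but as a high-level summary your description is in the right spirit, and you correctly flag this as the substantive step rather than claiming it is routine.
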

The groups $ko_*(X)$ of a space $X$ are much smaller and easier to calculate than the spin borsism groups $\Omega_*^{spin}(X)$, so this result is a considerable simplification. Further, it means that one way of proving the conjecture is to first calculate $ko_{n}(B \pi)$, and then to identify the kernel of $A \circ p$, which we denote by $Ker(Ap)$, and realize all of it by positive scalar curvature manifolds, and this is what we shall do.\\
Calculations of $ko_*(B\pi)$ may be carried out using a combination of spectral sequences. For instance, the Atiyah-Hirzebruch spectral sequence is used in \cite{2}. Calculations for the cases we will focus on use a combination of the Adams, Bockstein and local cohomology spectral sequences, and may be found in \cite{3} and \cite{kijti}.\\
Note that the periodic real $K-$theory  of finite groups is known by the following result of Rosenberg and Stolz \cite{15}
\begin{thm}
For a finite p-group $G$, the periodic real $K-$ theory $KO_*(BG)$ splits as a $KO_*$ module with period $8$ into a direct sum:
\begin{center}
\begin{tabular}{|c|c|c|c|c|c|c|c|c|}
\hline
$ n \mod 8$ & $0$ & $1$ & $2$ & $3$ & $4$& $5$ & $6$ & $7$\\
$\R$& $0$ & $(\Z_2)$ & $(\Z_2)$ & $\Z_{p^{\infty}}$ & $0$ & $0$ & $0$ & $\Z_{p^{\infty}}$\\
$\Comp$& $0$ & $\Z_{p^{\infty}}$ & $0$ & $\Z_{p^{\infty}}$ & $0$ & $\Z_{p^{\infty}}$ & $0$ & $\Z_{p^{\infty}}$\\
$\qt$& $0$ & $0$ & $0$ & $\Z_{p^{\infty}}$ & $0$ & $(\Z_2)$ & $(\Z_2)$ & $\Z_{p^{\infty}}$\\
\hline
\end{tabular}
\end{center}

one for each irreducible real ($\R$) or quaternion ($\qt$) representation, and one for each pair of non-isomorphic complex conjugate representations ($\Comp$). Here $(\Z_2)$ is $\Z_2$ if $p=2$ and $0$ otherwise.
\end{thm}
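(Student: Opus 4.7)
The strategy is to use the real form of the Atiyah--Segal completion theorem to describe $KO^*(BG)$, decompose it by representation type, and then transfer the result to homology. First, the real Atiyah--Segal completion theorem identifies $KO^*(BG)$ with the $I$-adic completion $RO(G)^{\wedge}_I$ of the real representation ring at its augmentation ideal, as a $KO^*$-module. Additively, $RO(G)$ is free abelian, with one generator per irreducible orthogonal representation of $G$. By the Frobenius--Schur classification these fall into three classes: irreducibles of real type (type $\R$), of quaternionic type (type $\qt$), and pairs of non-isomorphic complex conjugate irreducibles (type $\Comp$). I would upgrade this additive decomposition to a splitting of $KO^*$-module spectra summands, using that a type-$\R$ representation classifies a $KO$-module summand, a type-$\qt$ representation classifies a $KSp \cong \Sigma^4 KO$-module summand, and a type-$\Comp$ conjugate pair classifies a $KU$-module summand via the complexification transformation $KO \to KU$. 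These three module structures account for the three rows of the table.

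Next, I would compute the completion of each summand and pass to homology. For a finite $p$-group every completed reduced summand is $p$-power torsion, and the universal coefficient spectral sequence (equivalently the local cohomology spectral sequence used elsewhere in the thesis) converts each completed reduced $\Z$-summand into a Pr\"ufer group $\Z_{p^{\infty}}$. The positions of these $\Z_{p^{\infty}}$'s are dictated by the Bott periodicity of the relevant theory: period $8$ for $KO$ places them in degrees $3, 7 \pmod 8$; period $2$ for $KU$ places them in every odd degree; and the $\Sigma^4$-shift for $KSp$ again lands in degrees $3, 7 \pmod 8$. This yields every $\Z_{p^{\infty}}$ entry in the table.

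The $(\Z_2)$ entries in degrees $1, 2 \pmod 8$ (type $\R$) and $5, 6 \pmod 8$ (type $\qt$) arise from the $\eta$ and $\eta^2$ towers in $\pi_* KO$ and $\pi_* KSp$; they persist only at the prime $p = 2$ because the relation $2\eta = 0$ kills them at odd primes. The main technical obstacle is verifying that the splitting really is one of $KO^*$-modules and not merely a splitting of underlying abelian groups, since multiplication by $\eta$ and $\beta$ must preserve each summand. I would check this by restricting along cyclic $p$-subgroups, where the analogous splittings are classical and essentially follow from Theorem 1.5.2 applied to cyclic groups, and then use that for a $p$-group the system of restrictions to cyclic subgroups jointly detects the $KO^*$-module structure on $KO^*(BG)$; compatibility of the representation-type decomposition with restriction then forces the module splitting to descend to $G$.
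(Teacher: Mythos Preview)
The paper does not prove this theorem; it is quoted as a known result of Rosenberg and Stolz \cite{15} and used as background input for the rest of the thesis. So there is no proof in the paper to compare your proposal against.

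That said, your outline has the right overall shape but contains a real gap in the last step. You invoke ``Theorem 1.5.2 applied to cyclic groups'' to establish the splitting on cyclic subgroups --- but Theorem 1.5.2 is precisely the statement you are trying to prove, so this is circular. An independent calculation of $KO_*(BC_{p^n})$ (via the Gysin sequence for $S^1 \to BC_{p^n} \to \CP^\infty$, or via lens spaces) is needed here and should be stated as such. More seriously, the assertion that ``the system of restrictions to cyclic subgroups jointly detects the $KO^*$-module structure on $KO^*(BG)$'' does not do the work you want: Artin--Brauer type detection tells you about injectivity on elements, not that an abelian-group decomposition compatible with restriction is automatically a $KO^*$-module decomposition. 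Knowing that each summand restricts correctly to every cyclic subgroup does not by itself force $\eta$- and $\beta$-multiplication to preserve the summands in $KO^*(BG)$.

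The standard route (and the one Rosenberg--Stolz take) avoids this entirely: the Frobenius--Schur decomposition of $RU(G)$ into real, complex, and quaternionic isotypic pieces is implemented by central idempotents in the (completed) representation ring, and these idempotents act on $KO^*(BG)$ via the Atiyah--Segal identification. Splitting by a central idempotent is automatically a module splitting, so no detection argument is needed. Your first two paragraphs already contain the ingredients for this; the third paragraph should be replaced by an appeal to these idempotents rather than to restriction.
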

Extension problems for $ko_*(BG)$ as well as differentials in the spectral sequences can often be determined by comparing with the periodic case. Note also that the map $p:ko_*(BG) \rightarrow KO_*(BG)$ is given by just inverting the Bott element $\beta$. Further, for finite $2-$groups, the assembly map $A$ is non-trivial exactly on the $(\Z_2)$ summands \cite{15}. This is shown up in calculations using the local cohomology spectral sequence, as the next two sections show.

\section{Local cohomology}

We  have seen that in order to prove the Gromov-Lawson-Rosenberg conjecture, we need to calculate $ko_*(BG)$ for a finite group $G$. This thesis will be based upon the calculations in \cite{3} and \cite{kijti}, and to this end, we will give a very brief idea of the theory behind the local cohomology spectral sequence, and say why it is useful in attacking the Gromov-Lawson-Rosenberg conjecture. We start with some preliminaries about the local cohomology functor, due to Groethendieck \cite{gr}.\\

The definition of local cohomology which is suitable for our calculations is defined via the stable Koszul complex.

\begin{df}
For a commutative ring (with unity) $R$ and ideal $I=(x_{1},x_{2},...,x_{n})$, the
\emph{stable Koszul complex} of $R$ at $I$ is
\begin{equation*}
    K^{\infty}(x_{1},x_{2},...,x_{n};R) = K^{\infty}(x_{1};R)\otimes_{R}
    K^{\infty}(x_{2};R) \otimes_{R} ...\otimes_{R}K^{\infty}(x_{n};R)
\end{equation*}
the tensor product of the cochain complexes $K^{\infty}(x_{i};R)$, where $K^{\infty}(x_{i};R)$ is the cochain complex
$(R\longrightarrow R[\frac{1}{x_{i}}])$, ($r\longmapsto \frac{r}{1}$), for each $i\in
\{1,2,...,n\}$.  For a module $M$ over the ring $R$, the \emph{local cohomology of $M$ at
$I$} is
\begin{equation*}
    H_{I}^{*}(R;M):=H^{*}(K^{\infty}(x_{1},x_{2},...,x_{n};R)\otimes_{R}M)
\end{equation*}
where $H^{*}(C)$ is the homology of a chain complex $C$. In particular, we define
\begin{center}
$H_{I}^{*}(R):=H_{I}^{*}(R;R)$.
\end{center}
It is clear from the definition that $H_{I}^{i}(R;M)=0$ for $i>n$.

\end{df}

\begin{rmk}\label{remark for natural maps of local chain complexes}
Let $R$ be a ring and $(x)$ be an ideal of $R$.  The chain complex
$K^{\infty}(x)=(R\longrightarrow R[\frac{1}{x}])$ gives a natural map
$\varepsilon:K^{\infty}(x)\longrightarrow R$. More precisely, there is a commutative
diagram:
$$ \xymatrix{ K^{\infty}(x) \ar[d]^{\varepsilon}  \\
                                                   R     } \xymatrix {= \\
                                                                         }\xymatrix{
                                                                       =(R\ar[r]\ar[d]&
                                                                       R[\frac{1}{x}]
                                                                       )\ar[d] \\
               (R\ar[r]   & 0 ) .  } $$
Hence, for any ideal $I=(x_{1},x_{2},...,x_{m})$ and $J=(y_{1},y_{2},...,y_{n})$ of $R$,
there exists a map of chain complexes
$$1\otimes\varepsilon^{n}:K^{\infty}(I+J)=K^{\infty}(I)\otimes_{R}K^{\infty}(J)\longrightarrow
K^{\infty}(I)=K^{\infty}(I)\otimes_{R}R.$$ After applying $\otimes_{R}M$, where $M$ is a
module over $R$, and taking homology, we obtain the map
$$\eta:H^{s}_{I+J}(R;M)\longrightarrow H^{s}_{I}(R;M).$$
\end{rmk}

\begin{ex}
 For $R=\mathbb{Z}$ and $I=(2)$, we have $ K^{\infty}(2;\mathbb{Z})=(\mathbb{Z}
 \longrightarrow \mathbb{Z}[\frac{1}{2}])$.  The map in this cochain complex is clearly a
 monomorphism and the cokernel is also easy to calculate.  That is
\begin{center}
 $H_{(2)}^{i}(\mathbb{Z})$=$\left\{
                      \begin{array}{ll}
                        \mathbb{Z}/2^{\infty}, & \hbox{if i=1 ;} \\
                        0, & \hbox{otherwise,}
                      \end{array}
                    \right.$
\end{center}

\end{ex}

\begin{ex}
For $R=k[x]$, a polynomial ring over a field $k$ with indeterminate $x$ of degree $r$ and
$I=(x)$, we have $ K^{\infty}(x;k[x])=(k[x] \longrightarrow k[x][\frac{1}{x}])$.  The
calculation is easier if we look at the picture below.\\

\setlength{\unitlength}{0.8cm}
\begin{picture}(10,6)
\put(3.5,-0.3){\textbf{Figure 3.1}: Koszul complex of $k[x]$ at $(x)$.} \put(4,6){$R$}
\put(6,6.3){$i$}
 \put(8,6){$R[\frac{1}{x}]$}
 \put(4.4,6.15){\vector(1,0){3.5}}
\put(4.1,3){\vector(0,1){2.5}} \multiput(4.11,3)(0,0.7){4}%
{\circle*{0.07}}
\put(3.65,2.95){\tiny $1$} \put(3.65,3.65){\tiny$x$} \put(3.65,4.35){\tiny$x^{2}$}
\put(3.65,5.05){\tiny$x^{3}$}

\put(8.1,3){\vector(0,1){2.5}} \multiput(8.11,3)(0,0.7){4}%
{\circle*{0.07}}
\put(8.2,2.95){\tiny$1$} \put(8.2,3.65){\tiny$x$} \put(8.2,4.5){\tiny$x^{2}$}
\put(8.2,5.05){\tiny$x^{3}$}

\put(8.1,3){\vector(0,-1){2.5}} \multiput(8.11,0.9)(0,0.7){3}%
{\circle{0.07}}
\put(8.2,2.25){\tiny$x^{-1}$} \put(8.2,1.55){\tiny$x^{-2}$}
\put(8.2,0.85){\tiny$x^{-3}$}

\multiput(4.15,5.1)(0.2,0){20}%
{\line(1,0){0.1}}
\multiput(4.15,4.4)(0.2,0){20}%
{\line(1,0){0.1}}
\multiput(4.15,3.7)(0.2,0){20}%
{\line(1,0){0.1}}
\multiput(4.15,3)(0.2,0){20}%
{\line(1,0){0.1}}

\put(7.9,3){\vector(1,0){0.1}} \put(7.9,3.7){\vector(1,0){0.1}}
\put(7.9,4.4){\vector(1,0){0.1}} \put(7.9,5.1){\vector(1,0){0.1}}

\end{picture}\\

This means the kernel of $i$ is zero and the cokernel of $i$ is $k[x,x^{-1}]/k[x]$ which
is $\Sigma_{-r}(k[x]^{\vee})$, the dual vector space of $k[x]$ shifted down by $r$ degrees,
where $k[x]^{\vee}:=\operatorname{Hom}_{k}(k[x],k)$.  It follows that
\begin{center}
 $H_{(x)}^{i}(k[x])$=$\left\{
                      \begin{array}{ll}
                        \Sigma_{-r}(k[x]^{\vee})=k[x,x^{-1}]/k[x], & \hbox{if i=1 ;} \\
                        0, & \hbox{otherwise.}
                      \end{array}
                    \right.$
\end{center}
\end{ex}

\begin{ex}

For $R=k[x,y]$, a polynomial ring over a field $k$ with indeterminates $x,y$ of degree
$r,s$ and $I=(x,y)$, we have
\begin{eqnarray*}
  K^{\infty}(I;R) &=& K^{\infty}(x;R)\otimes_{R} K^{\infty}(y;R)  \\
   &=& (R \longrightarrow R[\frac{1}{x}]\oplus R[\frac{1}{y}]\longrightarrow
   R[\frac{1}{xy}])
\end{eqnarray*}
As in the previous example, we illustrate the picture of the Koszul complex for this ring as
below.\\ 
\setlength{\unitlength}{1cm}
\begin{picture}(10,10)
\put(2,0.2){\textbf{Figure 3.2}: Koszul complex of $k[x,y]$ at $(x,y)$.}
\put(1,9.3){$R$} \put(2.3,9.65){$\{i,i\}$} \put(8,9.65){$<i,-i>$}
\put(4.7,9.3){$R[\frac{1}{x}]$} \put(6.2,9.3){$R[\frac{1}{y}]$}
\put(11.3,9.3){$R[\frac{1}{xy}]$} \put(1.6,9.45){\vector(1,0){2.75}}
\put(5.7,9.3){$\oplus$} \put(5.7,5.5){$\bigoplus$} \put(7.3,9.45){\vector(1,0){3.65}}

\put(0.3,6.5){\vector(0,1){2}} \put(0.3,6.5){\vector(1,0){2}} \put(0,8.6){ $y$}
\put(2.3,6.3){ $x$} \put(0.2,6.2){\tiny $1$} \put(0.7,6.2){\tiny $x$}
\put(1.2,6.2){\tiny $x^{2}$} \put(1.7,6.2){\tiny $x^{3}$} \put(0,6.95){\tiny $y$}
\put(0,7.45){\tiny $y^{2}$} \put(0,7.95){\tiny $y^{3}$}
\multiput(0.3,6.5)(0,0.5){4}%
{\circle*{0.07}}
\multiput(0.8,6.5)(0,0.5){4}%
{\circle*{0.07}}
\multiput(1.3,6.5)(0,0.5){4}%
{\circle*{0.07}}
\multiput(1.8,6.5)(0,0.5){4}%
{\circle*{0.07}}

\put(6,6.5){\vector(1,0){2}} \put(6,6.5){\vector(-1,0){2}} \put(6,6.5){\vector(0,1){2}}
\multiput(4.5,6.5)(0.5,0){7}%
{\circle*{0.07}}
\multiput(4.5,7)(0.5,0){7}%
{\circle*{0.07}}
\multiput(4.5,7.5)(0.5,0){7}%
{\circle*{0.07}}
\multiput(4.5,8)(0.5,0){7}%
{\circle*{0.07}} \put(5.6,8.5){ $y$} \put(8,6.3){ $x$}
\put(5.9,6.2){\tiny $1$} \put(6.4,6.2){\tiny $x$} \put(6.9,6.2){\tiny $x^{2}$}
\put(7.4,6.2){\tiny $x^{3}$} \put(5.7,6.95){\tiny $y$} \put(5.7,7.45){\tiny $y^{2}$}
\put(5.7,7.95){\tiny $y^{3}$} \put(5.4,6.2){\tiny $\frac{1}{x}$} \put(4.9,6.2){\tiny
$\frac{1}{x^{2}}$} \put(4.4,6.2){\tiny $\frac{1}{x^{3}}$}

\put(6,3){\vector(0,1){2}} \put(6,3){\vector(0,-1){2}} \put(6,3){\vector(1,0){2}}
\multiput(6,1.5)(0,0.5){7}%
{\circle*{0.07}}
\multiput(6.5,1.5)(0,0.5){7}%
{\circle*{0.07}}
\multiput(7,1.5)(0,0.5){7}%
{\circle*{0.07}}
\multiput(7.5,1.5)(0,0.5){7}%
{\circle*{0.07}} \put(5.6,5){ $y$} \put(8,2.8){ $x$}
\put(5.7,3){\tiny $1$} \put(6.4,2.7){\tiny $x$} \put(6.9,2.7){\tiny $x^{2}$}
\put(7.4,2.7){\tiny $x^{3}$} \put(5.7,3.45){\tiny $y$} \put(5.7,3.95){\tiny $y^{2}$}
\put(5.7,4.45){\tiny $y^{3}$} \put(5.7,2.5){\tiny $\frac{1}{y}$} \put(5.6,2){\tiny
$\frac{1}{y^{2}}$} \put(5.6,1.5){\tiny $\frac{1}{y^{3}}$}

\put(12,6.5){\vector(0,1){2}} \put(12,6.5){\vector(0,-1){2}}
\put(12,6.5){\vector(1,0){2}} \put(12,6.5){\vector(-1,0){2}}
\multiput(10.5,6.5)(0.5,0){7}%
{\circle*{0.07}}
\multiput(10.5,7)(0.5,0){7}%
{\circle*{0.07}}
\multiput(10.5,7.5)(0.5,0){7}%
{\circle*{0.07}}
\multiput(10.5,8)(0.5,0){7}%
{\circle*{0.07}}
\multiput(12,6)(0,-0.5){3}%
{\circle*{0.07}}
\multiput(12.5,6)(0,-0.5){3}%
{\circle*{0.07}}
\multiput(13,6)(0,-0.5){3}%
{\circle*{0.07}}
\multiput(13.5,6)(0,-0.5){3}%
{\circle*{0.07}}
\multiput(11.5,6)(0,-0.5){3}%
{\circle{0.07}}
\multiput(11,6)(0,-0.5){3}%
{\circle{0.07}}
\multiput(10.5,6)(0,-0.5){3}%
{\circle{0.07}} \put(11.6,8.5){ $y$} \put(14,6.3){ $x$} \put(11.7,6.6){\tiny $1$}
\put(12.4,6.2){\tiny $x$} \put(12.9,6.2){\tiny $x^{2}$} \put(13.4,6.2){\tiny $x^{3}$}
\put(11.7,6.95){\tiny $y$} \put(11.7,7.45){\tiny $y^{2}$} \put(11.7,7.95){\tiny $y^{3}$}
\put(11.4,6.3){\tiny $\frac{1}{x}$} \put(10.9,6.3){\tiny $\frac{1}{x^{2}}$}
\put(10.4,6.3){\tiny $\frac{1}{x^{3}}$} \put(11.7,6){\tiny $\frac{1}{y}$}
\put(11.6,5.5){\tiny $\frac{1}{y^{2}}$} \put(11.6,5){\tiny $\frac{1}{y^{3}}$}

\end{picture}

From this figure, it is easy to see that this cochain complex is exact at the first and
second term.  Thus, $H_{I}^{0}(R)$ and $H_{I}^{1}(R)$ are zero.  For the third term, the
cokernel of the $<i,-i>$ map is given by all the circled points in the third quadrant, which is
isomorphic to $\Sigma_{-(r+s)}(R^{\vee})$.  Hence,
\begin{center}
$H_{I}^{i}(R)$=$\left\{
  \begin{array}{ll}
    0, & \hbox{if i=0;} \\
    0, & \hbox{if i=1;} \\
    \Sigma_{-(r+s)}(R^{\vee}), & \hbox{if i=2;} \\
    0, & \hbox{otherwise.}
  \end{array}
\right.$
\end{center}
\end{ex}

For an $R$ module $M$, we say that $H_{I}^{*}(M)=H_{I}^{*}(R;M)$ and $H_{I}^{*}(R)=H_{I}^{*}(R;R)$.\\

Since we work only with Noetherian rings, we recollect some basic properties relating to calculation of local cohomology for modules over such rings \cite{gr}.
\begin{prop}
Let $R$ be a commutative  Noetherian ring (with unity), $I\lhd R$ and  $M$ a module over $R$.
\begin{description}
  \item[$1.$] If $L$ and $N$ are $R$ modules such that $0\longrightarrow
      L\longrightarrow M \longrightarrow N \longrightarrow 0$ is a short exact
      sequence, then we have an induced long exact sequence
\begin{center}
$0\longrightarrow H_{I}^{0}(L)\longrightarrow H_{I}^{0}(M)\longrightarrow
H_{I}^{0}(N)\longrightarrow H_{I}^{1}(L)\longrightarrow H_{I}^{1}(M)\longrightarrow
H_{I}^{1}(N)\longrightarrow ...$
\end{center}
  \item[$2.$] For $J$ an ideal of $R$, if $\sqrt{J}=\sqrt{I}$ then
      $H_{I}^{i}(M)=H_{J}^{i}(M)$ for all $i$.

 \item[$3.$] Let $\Lambda$ be a directed set and
     $\{M_{\lambda}\}_{\lambda\in\Lambda}$ a direct system of $R$ modules.\\
 Then $\displaystyle\lim_{ \to \lambda}H_{I}^{i}(M_{\lambda})\cong
 H_{I}^{i}(\displaystyle\lim_{ \to \lambda}M_{\lambda}).$
\end{description}
\end{prop}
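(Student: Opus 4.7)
My plan is to work with the stable Koszul complex $K^\infty(x_1,\ldots,x_n;R)$ and exploit its flatness and functoriality; all three statements reduce to manipulations of this cochain complex combined with standard homological algebra.

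For part (1), each term of $K^\infty(x_1,\ldots,x_n;R)$ is a finite tensor product of localizations of $R$ and is therefore flat, so tensoring the short exact sequence $0 \to L \to M \to N \to 0$ with this bounded complex yields a short exact sequence of cochain complexes. The associated long exact sequence in cohomology is the sequence required.

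For part (3), I would use that localization and finite tensor products commute with directed colimits of $R$-modules, so that $K^\infty(I;R) \otimes_R \varinjlim_\lambda M_\lambda \cong \varinjlim_\lambda \bigl(K^\infty(I;R) \otimes_R M_\lambda\bigr)$ as cochain complexes. Since cohomology of chain complexes of abelian groups commutes with filtered (in particular directed) colimits, passing to $H^i$ gives the stated isomorphism.

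Part (2) is the main obstacle and the only place where the Noetherian assumption is essential, as it guarantees $J$ is finitely generated. The plan is to reduce to a single-generator lemma: if $y^k \in I$ for some $k \geq 1$, then the natural map $K^\infty(I,y;M) \to K^\infty(I;M)$ induced by the augmentation $K^\infty(y;R) \to R$ is a quasi-isomorphism. Granting this, one iteratively adjoins the finitely many generators of $J$ to $I$ to obtain $H^*_I(M) \cong H^*_{I+J}(M)$, and by symmetry $H^*_J(M) \cong H^*_{I+J}(M)$, giving the desired equality. To prove the lemma, I would identify the fibre of the augmentation (up to shift) with $K^\infty(I;M)\otimes_R R[1/y] \cong K^\infty(IR[1/y]; M[1/y])$; because $y^k \in I$, the ideal $IR[1/y]$ contains a unit of $R[1/y]$ and hence equals the whole ring $R[1/y]$. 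The delicate step is then the standard fact that the stable Koszul complex of a unit ideal is acyclic (which one sees by showing any element in the kernel of the first differential is killed by a suitable power of $1 = \sum a_i x_i$), after which the result follows by combining with part (1) and the long exact sequence of the above short exact sequence of complexes.
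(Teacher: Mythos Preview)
The paper does not actually prove this proposition: it is stated as a recollection of standard facts about local cohomology, with a bare citation to Grothendieck's lecture notes \cite{gr}, and the text moves immediately on to the local cohomology spectral sequence. So there is no argument in the paper to compare your proposal against.

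That said, your sketch is essentially the standard route and is correct in outline. Parts (1) and (3) are exactly as you say: each term of the stable Koszul complex is a finite direct sum of localizations of $R$, hence flat, which gives the long exact sequence; and localization, finite direct sums, and cohomology all commute with filtered colimits. For part (2) your reduction to the single-generator lemma and the identification of the fibre with $K^\infty(I;M[1/y])$ are the right moves. The only place I would tighten the exposition is the parenthetical about acyclicity of the Koszul complex on a unit ideal: rather than the somewhat vague ``killed by a power of $1=\sum a_i x_i$'', it is cleaner to note that $H^0$ of the stable Koszul complex computes the $I$-power-torsion submodule (using that $I$ is finitely generated), which vanishes when $I$ is the unit ideal, and then induct on the number of generators using the short exact sequence you already wrote down. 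Alternatively, observe that over $R[1/y]$ one of the generators becomes redundant up to radical, so you can peel it off and induct. Either way this is routine once set up.
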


\section{The local cohomology spectral sequence}

We can now give the spectral sequence \cite{3} for real connective $K-$theory of a finite group $G$. We remark there are analogues for the periodic and complex cases as well.

\begin{thm}
There is a spectral sequence of the form:
\begin{center}
$E_{s,t}^{2}=H_{I}^{-s}(ko^{*}(BG))_{t} \Longrightarrow ko_{s+t}(BG)$
\end{center}
with differentials $d^{r}:E_{s,t}^{r}\rightarrow E_{s-r,t+r-1}^{r}$ and augmentation ideal
$I =\ker(ko^{*}(BG)\rightarrow ko^{*})$. Here $I$ is Groethendieck's local cohomology functor, and $G$ is a finite group.
\end{thm}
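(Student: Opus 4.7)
The plan is to construct this spectral sequence by lifting the stable Koszul complex of Definition 1.6.1 from algebra to topology, following the pattern established by Greenlees for analogous $K$-theoretic spectra. First, since $G$ is finite, work of Greenlees (extending the Atiyah-Segal structural results for periodic $K$-theory) shows that $ko^{*}(BG)$ is a Noetherian $ko^{*}$-algebra, so the augmentation ideal $I = \ker(ko^{*}(BG) \to ko^{*})$ is finitely generated, say by elements $x_{1}, \ldots, x_{n}$. By Proposition 1.6.5(2), the resulting local cohomology depends only on the radical of $I$, so the particular choice of generators is immaterial.

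Next I would realise the algebraic stable Koszul complex topologically. For each $x_{i}$, interpret multiplication by $x_{i}$ as a self-map of a suitable $ko$-module spectrum associated to $BG$ and form its telescope, producing a spectrum-level lift of the algebraic complex $K^{\infty}(x_{i}; ko^{*}(BG))$. Taking the derived smash product over the appropriate base ring spectrum of these lifts yields a topological Koszul complex equipped with a natural filtration by wedge length. The associated filtration spectral sequence has $E^{2}$ term equal to $H^{-s}_{I}(ko^{*}(BG))_{t}$ by the algebraic calculation identifying Koszul cohomology with local cohomology (Definition 1.6.1), and its differentials have the stated bidegree $d^{r}: E^{r}_{s,t} \to E^{r}_{s-r, t+r-1}$ because this is the standard shape for the spectral sequence of a filtered cochain complex of spectra, once one adopts the convention that $s$ is the negative of the cochain degree.

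The crucial and most delicate step is identifying the abutment with $ko_{*}(BG)$. Here one invokes the \emph{local cohomology theorem} for connective real $K$-theory: for a finite group $G$, the spectrum-level $I$-local cohomology construction applied to the $ko^{*}(BG)$-module $ko \wedge BG_{+}$ agrees, up to an appropriate suspension, with $ko \wedge BG_{+}$ itself, so that the homotopy groups of the abutment are precisely $ko_{s+t}(BG)$. This rests on an Atiyah-Segal style completion statement, namely that $ko^{*}(BG)$ is already $I$-complete in the relevant sense when $G$ is finite, combined with Greenlees-May duality between derived completion and local cohomology. This is the main obstacle of the proof: it is where the finiteness of $G$ is essentially used and where genuinely equivariant stable homotopy theory enters, and it is the step that is not purely formal. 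Once the abutment is identified, convergence is automatic since the Koszul filtration has finite length bounded by the number $n$ of generators of $I$.
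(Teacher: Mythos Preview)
The paper does not give its own proof of this theorem: it is stated with a citation to Bruner--Greenlees \cite{3} and used as a black box. So there is no in-paper argument to compare against.

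That said, your outline is a faithful sketch of the standard construction due to Greenlees that underlies the cited reference: realise the stable Koszul complex on generators of $I$ at the spectrum level, filter by Koszul length to get a spectral sequence whose $E_2$ is local cohomology, and then invoke the local cohomology/completion theorem for $ko$ and finite $G$ to identify the abutment with $ko_*(BG)$. You have correctly isolated the non-formal step as the identification of the abutment, which is where the equivariant input (the Atiyah--Segal type completion theorem for connective real $K$-theory) is used. One small point: the Noetherian input you cite is more than is strictly needed to set up the spectral sequence---finite generation of $I$ up to radical suffices---but it is certainly true for finite $G$ and is the form in which Bruner--Greenlees state it.
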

It is known that local cohomology vanishes in degrees above the rank $r$ of the group \cite{3}, so that this is a finite spectral sequence, and indeed in our examples, the $E_{\infty}$ page of the spectral sequence has only three columns, namely the zero-th, first and $r-$th.\\
The idea for calculations is to consider the following commutative diagram of spectral sequences:\\

\setlength{\unitlength}{0.9cm}
\begin{picture}(10,4)
\linethickness{0.2mm}

\put(1,1.75){$H^{*}(BG;\mathbb{F}_{2})$}
\put(5,3){$ku^{*}(BG)$}
\put(10,3){$ku_{*}(BG)$}
\put(5,0.5){$ko^{*}(BG)$}
\put(10,0.5){$ko_{*}(BG),$}

\put(3.1,2){\vector(2,1){1.8}}
\put(6.7,3.1){\vector(1,0){3.2}}
\put(6.7,0.6){\vector(1,0){3.2}}
\put(5.7,2.8){\vector(0,-1){1.8}}
\put(10.7,2.8){\vector(0,-1){1.8}}
\put(3.3,2.5){ASS}
\put(8,3.2){LCSS}
\put(8,0.7){LCSS}
\put(5.8,1.8){BSS}
\put(10.8,1.8){BSS}
\end{picture}\\
where ASS refers to the Adams spectral sequence, BSS refers to the $\eta$-Bockstein spectral sequence and LCSS refers to the local cohomology spectral sequence.\\
We remark that all the terms in the square, including $ko_*(BG)$, may be computed by using the Adams spectral sequence directly, and indeed this is what we do for the dihedral groups in \cite{mjam}. In general though, it would appear that combining all the known methods is most effective.\\

From this diagram, we see that to obtain $ko_{*}(BG)$ via the local cohomology spectral sequence with input $H^{*}(BG;\mathbb{Z}_{2})$ we can proceed in two ways around the square, namely $ASS\longrightarrow BSS\longrightarrow GSS$ and $ASS \longrightarrow GSS \longrightarrow BSS$.  The first way is suitable for tackling the Gromov-Lawson-Rosenberg conjecture, since we have:
\begin{lem}\label{lem Ap}(\cite{3}, Lemma 2.7.1.) The image of $A \circ p$ is isomorphic to the 0-column at the $E_{\infty}$-page of the Local cohomology spectral sequence for $ko_{*}(BG)$, where $G$ is a finite group.
The kernel of $A \circ p$ has a filtration with subquotients given by the higher columns
at the $E_{\infty}$-page.
\end{lem}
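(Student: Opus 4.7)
The plan is to show $\ker(A \circ p) = F_{-1}$, where $F_{\bullet}$ denotes the increasing filtration on $ko_n(BG)$ coming from the convergence of the LCSS, so that
\[
0 = F_{-r-1} \subset F_{-r} \subset \cdots \subset F_{-1} \subset F_0 = ko_n(BG)
\]
with $F_s / F_{s-1} \cong E_\infty^{s, n-s}$ (with $r$ the rank of $G$). In this convention the $0$-column $E_\infty^{0,n}$ is the top quotient $F_0/F_{-1}$, and $F_{-1}$ carries an induced filtration whose successive subquotients are the higher columns $E_\infty^{-k, n+k}$ for $k = 1, \ldots, r$. Establishing $\ker(A \circ p) = F_{-1}$ therefore yields both halves of the lemma simultaneously.

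Next, I would compare the LCSS for $ko^*(BG)$ with the analogous spectral sequence for the periodic theory $KO^*(BG)$ via the natural map $p$, which on $E_2$-pages sends $H_I^{-s}(ko^*(BG))_t$ to the corresponding $H_I^{-s}(KO^*(BG))_t$. The target periodic LCSS converges to $KO_*(BG)$, whose structure is pinned down by the Rosenberg-Stolz decomposition of the previous section into $(\Z_2)$ summands and $\Z_{p^{\infty}}$ summands. Combined with the remark recorded just above that, for finite $2$-groups, the assembly map $A$ is injective on the $(\Z_2)$ summands and trivial on the $\Z_{p^{\infty}}$ summands, this reduces the problem to a column-by-column matching.

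The crucial matching step is that the higher local cohomology $H_I^{>0}(ko^*(BG))$ maps under $p$ into the divisible $\Z_{p^{\infty}}$ summands of $KO_*(BG)$, whereas $H_I^0(ko^*(BG))$ accounts for the $(\Z_2)$ summands. The examples in the previous section already suggest this pattern: higher local cohomology appears as a shifted dual of a polynomial ring and is therefore divisible, while $H_I^0$ picks up only augmentation-torsion elements which remain finite after inverting $\beta$. Granting this matching, $A \circ p$ kills $F_{-1}$ and factors through $F_0/F_{-1} = E_\infty^{0,n}$, mapping the latter isomorphically onto its image.

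The main obstacle is making the matching between local cohomology columns and the Rosenberg-Stolz decomposition precise, and controlling the extension problems in both spectral sequences. For this I would invoke the Bruner-Greenlees square combining the Adams, Bockstein, and local cohomology spectral sequences introduced in the previous section, which is exactly the device used in \cite{3} to resolve such extensions. The comparison of the $ko$-LCSS with its $KO$-analogue via inverting $\beta$ can then be checked term by term, confirming both that the $0$-column of the $ko$-LCSS lands in the $(\Z_2)$-part of $KO_*(BG)$ (and hence survives under $A$) and that the higher columns land in the divisible part (and hence die).
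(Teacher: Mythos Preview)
The paper does not actually prove this lemma; it is stated as a citation of \cite{3}, Lemma 2.7.1, and used as a black box throughout. So there is no in-paper proof to compare against, and your task was really to reconstruct the Bruner--Greenlees argument.

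Your overall strategy---identify the LCSS filtration, show $\ker(A\circ p)=F_{-1}$, and do this by comparing with the periodic LCSS and the Rosenberg--Stolz decomposition---is in the right spirit, but there is a genuine gap in the ``crucial matching step.'' You assert that the higher local cohomology ``appears as a shifted dual of a polynomial ring and is therefore divisible.'' This is false already in the paper's own examples: for $V(2)$ the $H^2$ column consists of finite elementary abelian $2$-groups $2^k$, and the $H^1$ column consists of finite cyclic groups like $[2^{4k+1}]^2\oplus[2^{4k}]$. None of these is divisible. What is true, and what you need, is a two-part statement: (i) some of the higher-column classes are Bott torsion and are annihilated by $p$ itself (this is the $H^2(TO)$ part in the examples), and (ii) the remainder survives to $KO_*(BG)$ but lands in the $\Z_{p^\infty}$ summands, where $A$ vanishes. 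Your sketch conflates these two mechanisms and misattributes divisibility to the source rather than the target.

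The cleaner route, and the one Bruner--Greenlees take, is more structural: the $0$-column of the LCSS is by definition $H^0_I(ko^*(BG))=\Gamma_I(ko^*(BG))$, the $I$-power torsion submodule, and the edge homomorphism $ko_*(BG)\to H^0_I(ko^*(BG))$ is identified at the spectrum level with the map induced by $A\circ p$ via the relationship between local cohomology, $\beta$-inversion, and completion for finite groups. This avoids the need for a case-by-case matching against the Rosenberg--Stolz table and handles the extension problems automatically. Your outline could be repaired, but you would need to replace the divisibility claim with a correct analysis of what $p$ kills versus what $A$ kills, and that analysis is essentially the content of the lemma itself.
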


\section{An example}
In order to illustrate these methods, we conclude this chapter by giving a brief sketch of the calculation in \cite{3} for $ko_*(BG)$ when $G$ is the Klein $4$-group $V(2)=\{e,x,y,z\}$.\\
For a group $G$, we denote the representation ring by $RU=RU(G)$, and the augmentation ideal by $JU=JU(G)$. Throughout $2^a$ will denote the elementary abelian $2-$group of rank $a$, while $[2^a]$ will denote the cyclic group of order $2^a$.\\

Firstly, recall that $H^{*}(BV(2); \Z_2)=\Z_2[x_1, x_2]$. We introduce the notation $PC=\Z_2[y_1, y_2]$ where $y_i=x_i^2$, and $PP=\Z_2[z_1, z_2]$ where $z_i=y_i^2$. The real connective cohomology can be described by the following exact sequence (\cite{3}, chapter 9):
$$0 \rightarrow TO \rightarrow ko^*(BV(2)) \rightarrow QO \rightarrow  0$$
where $QO$ is the image in the real periodic $K-$cohomology $KO^*(BV(2))$. In positive degrees $QO$ agrees with $KO^*(BV(2))$, while in negative degrees it is generated by $JSp$ (the symplectic part of the augmentation ideal of the representation ring) in degree $-4$.\\
$TO$ is detected in ordinary cohomology, and is given by
$$TO=PP(-6) \oplus PP(-12)$$
where $PP(a)$ means $PP$ shifted up $a$ degrees.

Thus in order to apply the local cohomology spectral sequence, we now have to calculate local cohomology. Again, we follow \cite{3}, chapter 12. There are short exact sequences
$$ 0 \rightarrow T \rightarrow ko^*(BV(2)) \rightarrow \overline{QO} \rightarrow  0$$
where $\overline{QO}$ is now the image in periodic \emph{complex} $K-$theory, and
$$0 \rightarrow \tau \rightarrow T \rightarrow TO \rightarrow 0$$
where $\tau$ consists of the eta multiples.\\
Now, $\tau$ is bounded below, and thus torsion, so that
$$H_I^{*}(\tau)=H_I^{0}(\tau)=\tau$$
and by comparison with periodic $K-$theory we know $\tau=2^4$, the elementary abelian group of rank $4$, in positive degrees $1,2 \mod 8$, and $\tau=0$ else.\\
Further, in positive degrees $0,4 \mod 8$, it is known that $\overline{QO}=RO$ and $\overline{QO}=RSp=2RO$ respectively (note that $RU=RO$ here). In negative degrees it is given by $2JU, JU^4, JU^6, \cdots$, in degrees $-4, -8, -12 \cdots$. The augmentation ideal is the radical of the ideal generated by $p_*^2 \in JU^4$, where $p_*$ is the representation with character $(0444)$. Thus local cohomology may be calculated by inverting $p_*$, and we get the exact sequence:
$$0 \rightarrow H_{JO}^0(\overline{QO}) \rightarrow \overline{QO} \rightarrow \overline{QO}[1/p^*] \rightarrow H_{JO}^1(\overline{QO}) \rightarrow 0$$
The kernel of inverting $p_*$ just consists of characters supported at $e$, so $H_{JO}^0(\overline{QO})= \Z$ for $0 \leq n \equiv 0 \mod 4$.\\
For $H^1$, we observe that for $k \geq 1$, $JU^{2k}$ is generated by the characters $(04^k00),(004^k0),(0004^k)$, meaning that multiplying by $p_*$ is an isomorphism in degrees $-8$ and below, meaning $H^1$ is in degrees $-4$ and above. Explicit calculation, together with the observation that for $i \geq 0$, the $i+8th$ group has order $16^3$ times the order of the $i-$th group (the determinant of $p_*^2$), then gives us the following conclusion:

\begin{center}
$$H^1_{JO}(\overline{QO})_i=\begin{cases}
    $$[2^{4k+1}]^2 \oplus [2^{4k}] \hbox{if i=8k} \geq 0$$ \\
    $$[2^{4k+4}]^2 \oplus [2^{4k+3}] \hbox{if i=8k+4} \geq 4$$\\
    $$[2]  \hbox{ if i=-4}$$\\
    $$0 \hbox{ otherwise.}$$
  \end{cases}$$
\end{center}
where $[a]$ means the cyclic group of order $a$.\\

Finally, by local duality it follows that
$$H_I^*(TO)=H_I^{2}(TO)=PP^{\vee}(-4) \oplus PP^{\vee}(2)$$
where the $\vee$ denotes the dual vector space. Thus we can now display what is referred to in \cite{3} as the $E_{1 \frac{1}{2}}$ page of the local cohomology spectral sequence, which essentially just means that we tabulate the local cohomology groups we have just calculated, and determining the differentials $d_1$ and $d_2$ will give us the $E_2$ page, which we know must be the $E_{\infty}$ page.

\setlength{\unitlength}{1cm}
\begin{picture}(20,19)
\multiput(0.2,2)(0,0.5){32}%
{\line(1,0){12.8}}
\put(3,2){\line(0,1){16}}
\put(6,2){\line(0,1){16}}
\put(10,2){\line(0,1){16}}
\put(13,2){\line(0,1){16}}
\multiput(11.6,6.6)(0,0.5){3}%
{$0$}
\multiput(11.6,10.6)(0,0.5){3}%
{$0$}
\multiput(11.6,14.6)(0,0.5){3}%
{$0$}
\multiput(11.6,2.1)(0,0.5){4}%
{$0$}
\multiput(11.6,5.6)(0,4){3}%
{$0$}
\multiput(11.6,4.6)(0,0.5){2}%
{$2^{4}$}
\multiput(11.6,8.6)(0,0.5){2}%
{$2^{4}$}
\multiput(11.6,12.6)(0,0.5){2}%
{$2^{4}$}
\multiput(11.6,16.6)(0,0.5){2}%
{$2^{4}$}
\multiput(11.6,4.1)(0,2){7}%
{$\mathbb{Z}$}

\multiput(8,2.6)(0,1){5}%
{$0$}
\multiput(8,9.6)(0,1){3}%
{$0$}
\multiput(8,13.6)(0,1){3}%
{$0$}
\multiput(8,3.1)(0,1){1}%
{$0$}
\put(8,5.1){$0$}
\put(7.9,2.1){$[2]$}
\put(6.7,4.1){$[2] \oplus [2] \oplus [1]$}
\put(6.7,6.1){$[16]\oplus [16]\oplus[8]$}
\put(6.7,8.1){$[32]\oplus[32]\oplus[16]$}
\put(6.7,10.1){$[2^8]\oplus [2^8]\oplus[2^7]$}
\put(6.7,12.1){$[2^9]\oplus [2^9]\oplus[2^8]$}
\put(6.7,14.1){$[2^{12}]\oplus[2^{12}]\oplus[2^{11}]$}
\put(6.7,16.1){$[2^{13}]\oplus[2^{13}]\oplus[2^{12}]$}

\multiput(1.8,2.1)(0,0.5){31}%
{$0$}
\multiput(1.8,3.1)(0,1){1}%
{$0$}
\multiput(8,7.1)(0,2){6}%
{$0$}
\multiput(8,7.6)(0,2){5}%
{$0$}
\multiput(8,8.6)(0,4){3}%
{$0$}
\put(8,17.6){$\vdots$}
\put(4.5,17.6){$\vdots$}
\put(1.8,17.6){$\vdots$}
\put(11.8,17.6){$\vdots$}
\put(4.5,2.1){$2$}
\put(4.5,4.1){$2^2$}
\put(4.5,5.1){$2$}

\put(4.5,6.1){$2^{3}$}
\put(4.5,7.1){$2^2$}
\put(4.5,8.1){$2^{4}$}
\put(4.5,9.1){$2^{3}$}
\put(4.5,10.1){$2^{5}$}
\put(4.5,11.1){$2^{4}$}
\put(4.5,12.1){$2^{6}$}
\put(4.5,13.1){$2^{5}$}
\put(4.5,14.1){$2^{7}$}
\put(4.5,15.1){$2^{6}$}
\put(4.5,16.1){$2^{8}$}
\put(4.5,17.1){$2^{7}$}
\multiput(4.5,4.6)(0,1){13}%
{$0$}
\put(4.5,2.6){$0$}
\put(4.5,3.1){$0$}
\put(4.5,3.6){$0$}
\multiput(6.1,2.1)(0,2){8}%
{\vector(-1,0){1.0}}

\put(6.2,2.1){$d_1$}
\put(6.2,4.1){$d_1$}
\put(6.2,6.1){$d_1$}
\put(6.2,8.1){$d_1$}
\put(6.2,10.1){$d_1$}
\put(6.2,12.1){$d_1$}
\put(6.2,14.1){$d_1$}
\put(6.2,16.1){$d_1$}

\put(13.3,2.1){-4}
\put(13.3,2.6){-3}
\put(13.3,3.1){-2}
\put(13.3,3.6){-1}
\put(13.3,4.1){0}
\put(13.3,4.6){1}
\put(13.3,5.1){2}
\put(13.3,5.6){3}
\put(13.3,6.1){4}
\put(13.3,6.6){5}
\put(13.3,7.1){6}
\put(13.3,7.6){7}
\put(13.3,8.1){8}
\put(13.3,8.6){9}
\put(13.2,9.1){10}
\put(13.2,9.6){11}
\put(13.2,10.1){12}
\put(13.2,10.6){13}
\put(13.2,11.1){14}
\put(13.2,11.6){15}
\put(13.2,12.1){16}
\put(13.2,12.6){17}
\put(13.2,13.1){18}
\put(13.2,13.6){19}
\put(13.2,14.1){20}
\put(13.2,14.6){21}
\put(13.2,15.1){22}
\put(13.2,15.6){23}
\put(13.2,16.1){24}
\put(13.2,16.6){25}
\put(13.2,17.1){26}
\put(13.2,17.6){27}

\put(12.5,18.2){degree(t)}

\put(6.7,1.2){$H^{1}_{I}(\overline{QO})$}
\put(10.5,1.2){$H^{0}_{I}(\tau) \oplus H^{0}_{I}(\overline{QO})$}
\put(3.8,1.2){$H^{2}_{I}(TO)$}
\put(1.5,1.2){$H^{\epsilon\geq3}_{I}$}

\put(0.2,0.2){where$[n]:=$ cyclic group of order $n$, $2^{r}$:= elementary abelian group of order $r$.}
\put(10.2,4.6){\line(-6,1){4.6}}
\put(10.3,4.6){$d_2$}
\linethickness{0.5mm}
\put(10,4){\line(1,0){3}}
\put(10,4.5){\line(-1,0){4}}
\put(6,5){\line(-1,0){3}}
\put(10,4){\line(0,1){0.5}}
\put(6,4.5){\line(0,1){0.5}}
\put(3,5){\line(0,1){0.5}}

\end{picture}\\

Next we consider differentials. Since $ko_*$ is zero in negative degrees, and $ko_0= \Z$, we have that the differentials $d_1:H_{-4}^{1} \rightarrow H_{-4}^{2}$ and $d_1:H_{0}^{1} \rightarrow H_{0}^{2}$ are isomorphisms, and the long differential $d_2:H_{1}^{0} \rightarrow H_{2}^{2}$ is surjective, and thus has kernel $2^3$.\\
The other differentials may be easily described \cite{3}:
\begin{lem}
There are no more differentials leaving the zero column.
\end{lem}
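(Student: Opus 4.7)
Since the $E_2$ page has only three nonzero columns ($s=0,-1,-2$), the only possible differentials leaving the zero column are $d_1\colon E^1_{0,t}\to E^1_{-1,t}$ and $d_2\colon E^2_{0,t}\to E^2_{-2,t+1}$; all $d_r$ with $r\ge 3$ have trivial target. Scanning the tabulated $E_{1\frac{1}{2}}$-page, nontrivial source and target occur in exactly two families: $d_1$ at $t\equiv 0\pmod 4$ (where it would map the $\mathbb Z$-summand of $H^0_I(\overline{QO})$ into the torsion group $H^1_t$), and $d_2$ at $t\equiv 1\pmod 8$ with $t>0$ (where it would map the $2^4$-summand of $H^0_I(\tau)$ into the even-degree group $H^2_{t+1}$). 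The $d_2$ at $t=1$ has already been handled in the previous paragraph, so I must kill the remaining candidates.

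For the $\mathbb Z$-summands I would use naturality of the local cohomology spectral sequence. The inclusion $\{pt\}\hookrightarrow BV(2)$ is split by $BV(2)\to pt$ and produces a splitting $ko_*(BV(2))\cong ko_*\oplus\widetilde{ko}_*(BV(2))$ and, at the level of spectral sequences, a section of the LCSS for $ko_*(BV(2))$ onto the (trivial) LCSS for $ko_*(pt)$. Under this section the $\mathbb Z$'s at $t\equiv 0\pmod 4$ are precisely the image of the zero column for a point, which is concentrated there and supports no differential. Hence these classes are permanent cycles and both $d_1$ and $d_2$ vanish on them. For the $2^4$-summands in $H^0_I(\tau)$ at $t=9,17,25,\ldots$, I would appeal to multiplicativity: the LCSS is a module over the LCSS for $ko^*$, so multiplication by the Bott element $\beta\in ko_8$ commutes with differentials. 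Since $\beta$ sends $H^0_I(\tau)_t$ isomorphically to $H^0_I(\tau)_{t+8}$, the $d_2$ in the higher degrees is $\beta$-linearly determined by the $d_2$ at $t=1$; a size count in $H^2_{t+1}$ (which grows strictly, coming from $PP^\vee(-4)\oplus PP^\vee(2)$) then shows that the $\beta$-translate of the image already used at $t=2$ exhausts all contributions forced by a differential from the zero column, leaving no room for a further nonzero $d_2$.

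The main obstacle is making the $\beta$-periodicity bookkeeping rigorous, because $H^2_I(TO)$ is a dual polynomial module whose $\beta$-action is not literally periodic in degree. The cleanest way around this is to invoke Lemma 2.7.1 together with the independent calculation of $KO_*(BV(2))$ via Theorem 1.6.2: the $E_\infty$ zero column must be isomorphic to the image of $A\circ p$, which in each degree $\equiv 1,2\pmod 8$ is $(\mathbb Z/2)^3$ after the already-identified $d_2$ at $t=1$ and its $\beta$-translates. Matching orders forces the remaining candidate differentials to be zero, which completes the proof.
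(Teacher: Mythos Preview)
Your handling of the $\mathbb{Z}$-summands (split off via the point) and your scan reducing the question to the $d_2$ on the $2^4$ in degrees $t\equiv 1\pmod 8$ are correct and agree with the paper. The gap is in how you dispose of those $d_2$'s for $t\ge 9$.

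Your fallback has two defects. First, a numerical one: the image of $A\circ p$ in degrees $t\equiv 1\pmod 8$ with $t\ge 9$ is $2^4$, not $2^3$ (the final table records $ko_9(BV(2))=2^4$ with $\mathrm{Ker}(Ap)=0$); there is no ``$\beta$-translate'' of the low-degree $d_2$ at $t=1$. Second, and more seriously, the argument is circular. Lemma~2.7.1 identifies the $E_\infty$ zero column with $\mathrm{im}(Ap)$, but this image depends on the map $p\colon ko_t(BV(2))\to KO_t(BV(2))$, which is precisely what the spectral sequence is meant to determine. Knowing $KO_*(BV(2))$ from representation theory and knowing that $A$ is injective on the $\mathbb{Z}/2$ summands only gives $\mathrm{im}(Ap)\subseteq 2^4$, an \emph{upper} bound on the zero column; that cannot by itself exclude a differential.

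The paper's argument is different and avoids this. It uses the $ko^*(BV(2))$-module structure in the \emph{cohomological} direction rather than $\beta$-multiplication. Because $H^2_I(TO)=PP^\vee(-4)\oplus PP^\vee(2)$ is a dual polynomial module, every nonzero $y\in H^2_{10}$ is detected by multiplication by some $\lambda\in ko^4(BV(2))$, i.e.\ $\lambda y\ne 0$ in $H^2_6$. If $d_2(x)=y\ne 0$ for some $x\in H^0_9=\tau_9$, then $d_2(\lambda x)=\lambda y\ne 0$; but $\lambda x$ lies in $H^0_5=\tau_5=0$, a contradiction. The same device, with $\lambda$ of suitably larger cohomological degree, handles every $t\equiv 1\pmod 8$ at once, so no separate size-count or periodicity bookkeeping is needed.
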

\begin{proof}
We know $ko_*$ is a summand, so we only need to deal with dimensions $1,2 \mod 8$. The latter case is clear since $H^2$ is zero in odd degrees, and since $ko_*(BV(2))[\beta^{-1}]=KO_*(BV(2))$, it follows that in dimensions $1 \mod 8$, all the $2^4$ survives in sufficiently high degrees. The image in periodic $K-$theory is a $ko_*$ module, so we need only consider degree $9$. The differentials are maps of $ko^*(BV(2))$ modules, so that every element of $H_{10}^2$ is detected by some map $H_{10}^2 \rightarrow H_{6}^2$ induced by multiplication by $\lambda \in ko*4(BV(2))$, so that if $d^2(x) \neq 0 \in H_{10_2}$ detected by $\lambda$, then there would have to be a non-zero $d^2(\lambda x)$. However $\lambda x$ is in the zero group.
\end{proof}

\begin{lem}
The differential $d_1:H_{4k}^{1} \rightarrow H_{4k}^{2}$ is of maximal rank. Thus it has rank $3$ for $k \geq 1$.
\end{lem}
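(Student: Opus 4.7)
The plan is to identify where $\ker(d_1)$ and $\operatorname{coker}(d_1)$ appear on the $E_\infty$-page, match against an independent computation of $ko_*(BV(2))$, and then propagate from the base case $k=1$ to $k \geq 2$ via the $ko^*(BV(2))$-module structure.

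First I would note that the cokernel of $d_1 : H^1_{4k} \to H^2_{4k}$ survives to $E_\infty^{-2, 4k}$ (contributing to $ko_{4k-2}(BV(2))$) while its kernel survives to $E_\infty^{-1, 4k}$ (contributing to $ko_{4k-1}(BV(2))$). The preceding lemma rules out further differentials leaving the zero column, and $H^0_{4k-1} = 0$ in this range, so no $d_2$ can hit $H^2_{4k}$. Combined with the upper bound $\min(3, k+2) = 3$ coming from the three cyclic summands of $H^1_{4k}$, it suffices to show that $d_1$ has $\Z_2$-rank at least $3$ in each degree $4k$ with $k \geq 1$.

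For the base case $k = 1$ the target $H^2_4 \cong (\Z_2)^3$ has rank exactly $3$, so maximality is equivalent to surjectivity. I would appeal to the Adams spectral sequence computation of $ko_*(BV(2))$ from \cite{3}: matching the known order of $ko_2(BV(2))$---whose remaining $E_\infty$ contributions come only from $H^0_2 = \tau \cong (\Z_2)^4$ and the cokernel of $d_1$ at $k=1$---forces the cokernel to vanish. For $k \geq 2$ the conclusion propagates via $ko^*(BV(2))$-linearity of $d_1$: multiplication by appropriate classes in $PP = \Z_2[z_1, z_2] \subseteq ko^*(BV(2))$ carries $d_1(H^1_4)$ into $d_1(H^1_{4k})$, and since $PP$-multiplication acts surjectively between the relevant internal degrees of $H^2_I(TO) \cong PP^{\vee}(-4) \oplus PP^{\vee}(2)$, a rank-$3$ subspace of the image persists in every $H^2_{4k}$ with $k \geq 1$.

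The main obstacle is the base case $k=1$: the local cohomology spectral sequence by itself reveals only the upper bound of $3$, and external input from the Adams (or Bockstein) spectral sequence is required to rule out a surviving cokernel in total degree $2$. Once this is in hand, the module-theoretic propagation to $k \geq 2$ is comparatively straightforward, since the upper bound forces equality as soon as a rank-$3$ subspace of the image is exhibited.
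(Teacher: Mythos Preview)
Your approach matches the paper's: reduce to $k=1$ via the $ko^*(BV(2))$-module structure of $d_1$, and then pin down $k=1$ by comparing against an independently known size of $ko_2(BV(2))$. Two small points of difference. First, the paper uses the lighter Atiyah--Hirzebruch bound $|ko_2(BV(2))|\le 2^4$ (which is all that is needed, since $H^0_2=(\Z_2)^4$ already saturates it) rather than the full Adams computation you invoke. Second, your propagation is phrased in the wrong direction: multiplication by classes in $PP\subset ko^*(BV(2))$ \emph{lowers} the internal degree in this spectral sequence (compare the proof of the preceding lemma, where $\lambda\in ko^4$ induces $H^2_{10}\to H^2_6$), so the module argument does not ``carry $d_1(H^1_4)$ into $d_1(H^1_{4k})$''; rather, it shows that a rank deficiency at degree $4k$ would descend to one at degree $4$.
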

\begin{proof}
It suffices to see this for $k=1$ by using the $ko^*(BV(2))$ module structure. In the $k=1$ case, we know $|ko_2(BV(2))| \leq 2^4$ by the Atiyah-Hirzebruch spectral sequence, which implies the differential $d^1$ has rank $3$.
\end{proof}
Thus we can now read off the values of $ko_*(BV(2))$ from the collapsed spectral sequence. It is easy to see that all extensions split, and the kernel $Ker(Ap)$ can be read off from the $H^1$ and $H^2$ columns.\\
\begin{center}
\begin{tabular}{|c|c|c|}
\hline

$n$&$ko_n(BV(2))$ & $Ker(Ap)$\\
\hline
$1$ & $2^3$ & $0$\\

$2$ & $2^4$ & $0$\\

$3$ & $[8]^2 \oplus [4]$ & $[8]^2 \oplus [4]$\\

$4$ & $\Z \oplus 2^2$ & $2^2$\\

$5$ & $0$ & $0$\\

$6$ & $2$ & $2$\\

$7$ & $[16]^2 \oplus [8]$ & $[16]^2 \oplus [8]$\\
\hline

$8m+0 \geq 8$ & $\Z \oplus 2^{2m+1}$ & $2^{2m+1}$\\

$8m+1 \geq 9$ & $2^4$ & $0$\\

$8m+2$ & $2^{2m+4}$ & $2^{2m}$\\

$8m+3$ & $[2^{4m+3}]^2 \oplus [2^{4m+2}]$ & $[2^{4m+3}]^2 \oplus [2^{4m+2}]$\\

$8m+4$ & $\Z \oplus 2^{2m+2}$ & $2^{2m+2}$\\

$8m+5$ & $0$ & $0$\\

$8m+6$ & $2^{2m+1}$ & $2^{2m+1}$\\
$8m+7$ & $[2^{4m+4}]^2 \oplus [2^{4m+3}]$ & $[2^{4m+4}]^2 \oplus [2^{4m+3}]$\\
\hline
\end{tabular}
\end{center}

\chapter{The eta invariant and groups with periodic cohomology}
The aim of this chapter is to give a thorough outline of the Botvinnik, Gilkey and Stolz proof \cite{2} for cyclic and quaternion groups. Indeed, we use these methods in the subsequent chapters in order to detect the part of the kernel $Ker(Ap)$ detected in periodic K-theory by real projective spaces and lens spaces. The orders of these elements in periodic K-theory are detected by the eta invariant.\\
Atiyah, Patodi and Singer showed that there is a formula for the index of the Dirac operator $D$ for a manifold $W$ with boundary $M$, analogous to the usual index formula, but with a correction term $\eta(D(M))$, known as the eta invariant, depending only on the boundary, see \cite{aps1},\cite{aps2}, and \cite{aps3}. Here is a brief outline of the theory:\\
For a compact spin manifold $M$ the Dirac operator $D(M):\Gamma(S) \rightarrow \Gamma(S)$ is a first order differential operator acting on the space of sections of the spinor bundle $S$ of $M$ \cite{11}. The index of an operator $D$ is defined as follows, if it exists:
$$Index(D)=Dim(Ker(D))-Dim(Cokernel(D))$$
More generally, we can twist by a vector bundle $E$ over $M$ equipped with a unitary connection to get a first order elliptic operator $D(M)\otimes E:\Gamma(S\otimes E) \rightarrow \Gamma(S\otimes E)$. Further, if the dimension of $M$ is even, then we have a decomposition $S=S^{+} \oplus S^{-}$ and $D=D^{+}+D^{-}$ where $D^{\pm}(M)\otimes E:\Gamma(S^{\pm}\otimes E) \rightarrow \Gamma(S^{\pm}\otimes E)$.\\
Now if $M$ is the boundary of a manifold $W$ over which $E$ extends, then under certain suitable global boundary conditions, the index of $D^{+}(W \otimes E)$ is well defined and we may apply the Index Theorem for manifolds with boundary:
$$Index(D^{+}(W \otimes E))=\int_{W} \hat{A}(W)ch(E)-\eta(D(M)\otimes E)$$
This is like the usual Index Theorem, with the integrand a polynomial in the Pontrjagin forms of $W$ and the Chern character of $E$, but with a correction term $\eta(D(M)\otimes E)$, depending only on the boundary, called the \emph{eta invariant}, which is an indicator of the asymmetry of the spectrum of the operator $D(M) \otimes E$ with respect to the origin. Because the eta invariant of a boundaryless manifold is an integer, the eta invariant is independent of the choice of $W$ modulo the integers, and we write $\eta(M)$ for the value in $\R/\Z$ thus obtained.\\

Given a representation $\rho$ of a discrete group $\pi$ and a map from a manifold $f:M \to B\pi$, we can form a vector bundle $V_{\rho}:\widetilde{M}\times_{\pi} \rho \to M$, where $\widetilde{M}$ is the $\pi$ cover of $M$ classified by $f$.\\
We now consider the Dirac operator $D(M,f,\rho)$, which is the Dirac operator of $M$ twisted by $V_{\rho}$, and its eta invariant $\eta(M,f)(\rho)$. Since the eta invariant is additive with respect to direct sums of representations, we can extend this definition to include virtual representations.\\
We recall from \cite{9} that there is a geometric description of periodic K-theory of a space $X$ as follows, where the isomorphism is induced by $pD$:
$$\Omega_{*}^{spin}(X)/T_{*}(X)[B^{-1}] \cong KO_{*}(X)$$
where, analogous to the simply connected case, $T_{*}(X)$ is the subgroup generated by quaternion projective bundles with structure group $PSp(3)$ with maps into $X$, and $B=B^8$ is a Bott manifold, which is any simply connected manifold with $\hat{A}(B)=1$.
Using this description of the periodic K-theory, we have the following result \cite{2}:
\begin{thm}
Let $\rho$ be a virtual representation of $\pi$ of virtual dimension zero. Then for a spin manifold $M$ with a map $f:M \rightarrow B\pi$, the map $(M,f) \mapsto \eta(M,f)(\rho) \in \R$ gives rise to a well defined homomorphism
$$\eta(\rho):\Omega_{n}^{spin}(B\pi)\rightarrow \R/\Z;\eta(\rho):KO_{n}(B\pi)\rightarrow \R/\Z$$
which sends $[f:M \rightarrow B\pi]$ to $\eta(M,f)(\rho)$ reduced modulo $\Z$.\\
 Further if $\rho$ is real and $n \equiv 3 \mod 8$, or if $\rho$ is quaternion and $n \equiv 7 \mod 8$, then we can replace the range of $\eta(\rho)$ by $\R/2\Z$, meaning the map which sends $[f:M \rightarrow B\pi]$ to $\eta(M,f)(\rho)$ reduced modulo $2\Z$ is still well defined.
\end{thm}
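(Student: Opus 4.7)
The plan is to derive everything from the Atiyah--Patodi--Singer index theorem applied to the twisted Dirac operator, exploiting that $V_\rho$ is a flat bundle pulled back from $B\pi$.

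First I would treat bordism invariance. Given $(M_i,f_i)$ for $i=0,1$ representing the same class in $\Omega_n^{spin}(B\pi)$, pick a spin bordism $(W^{n+1},F)$ with $F|_{M_i}=f_i$ and a product-like structure near the boundary. The bundle $V_\rho$ extends canonically over $W$ via $F$, carrying the pulled-back flat connection. Applying the APS index theorem to $D^+_W \otimes V_\rho$ and combining both boundary contributions gives
$$\eta(M_1,f_1)(\rho)-\eta(M_0,f_0)(\rho) = 2\!\int_W \hat A(W)\,\mathrm{ch}(V_\rho) - 2\,\mathrm{Index}(D^+_W \otimes V_\rho) - h(M_1) + h(M_0),$$
with $h(M_i)=\dim\ker(D(M_i)\otimes V_\rho)$. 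The last three terms are manifestly integers. For the integral, Chern--Weil theory makes $\mathrm{ch}(V_\rho)$ equal to $\dim(\rho)\in H^0(W;\R)$ since $V_\rho$ is flat, and $\dim\rho=0$ by hypothesis, so the integral vanishes identically. Hence the left side is an integer, and additivity under disjoint union produces a well-defined homomorphism $\Omega_n^{spin}(B\pi)\to\R/\Z$.

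Next I would descend to $KO_n(B\pi)$ using $KO_*(X)\cong \Omega_*^{spin}(X)/T_*(X)[B^{-1}]$ recalled in the excerpt. For Bott periodicity, the product formula for the eta invariant of a twisted Dirac on $M\times B$ (with $n$ odd, so that $\mathrm{Index}(D_M\otimes V_\rho)=0$) reduces to
$$\eta(M\times B,\,f\circ \mathrm{pr}_M)(\rho) = \eta(M,f)(\rho)\cdot \mathrm{Index}(D(B)) = \eta(M,f)(\rho)\cdot \hat A(B) = \eta(M,f)(\rho),$$
so the map descends to the Bott localisation. For vanishing on $T_n(B\pi)$, a class is represented by the total space $N$ of an $\HP^2$-bundle with structure group $PSp(3)$ over some base $Y$ carrying the map to $B\pi$. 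Since $\HP^2$ is simply connected, $V_\rho$ restricts trivially to every fibre; the family index theorem (or an adiabatic limit shrinking of the fibres) then writes $\eta(N)(\rho)$ as an integral over $Y$ against the Bismut--Cheeger eta form of the fibrewise Dirac, which vanishes because the $\HP^2$-Dirac spectrum is symmetric under the quaternionic structure of the fibre and $\hat A(\HP^2)=0$. Combined with Bott invariance, this factors the map through $KO_n(B\pi)$.

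The refinement to $\R/2\Z$ under either $(n\equiv 3,\ \rho \text{ real})$ or $(n\equiv 7,\ \rho \text{ quaternionic})$ is obtained by upgrading the integer ambiguity in the APS identity. In both cases $\dim W=n+1$ is $0$ or $4\pmod 8$, and Clifford-algebra representation theory endows $S(W)\otimes V_\rho$ with a quaternionic structure, so that $\mathrm{Index}(D^+_W\otimes V_\rho)$ is a quaternionic dimension (hence even) and the kernel dimensions $h(M_i)$ are likewise even. The right-hand side of the APS identity therefore lies in $2\Z$, giving $\eta(M_1)(\rho)\equiv\eta(M_0)(\rho)\pmod{2\Z}$; the Bott and $\HP^2$-bundle arguments go through unchanged. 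The main obstacle in this programme is the vanishing of $\eta(\rho)$ on $\HP^2$-bundles, which needs care with the family/adiabatic calculus and the fibrewise quaternionic symmetry; the other ingredients are bookkeeping around APS once the Chern-character vanishing of flat virtual-dimension-zero bundles is exploited.
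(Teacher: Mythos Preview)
The paper does not supply its own proof of this theorem; it is quoted directly from Botvinnik--Gilkey--Stolz \cite{2}, with the surrounding text only recording the geometric model $KO_*(X)\cong \Omega_*^{spin}(X)/T_*(X)[B^{-1}]$ and the Donnelly formula for explicit computations. So there is nothing in the paper to compare your argument against line by line.

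Your sketch is essentially the standard route, and the bordism-invariance step and the $\R/2\Z$ refinement are both correct as written: the flatness of $V_\rho$ kills the Chern--Weil integrand since $\dim\rho=0$, and in the two cases the spinor bundle $S^{\pm}(W)$ and the boundary spinor bundle carry quaternionic (resp.\ real) structures which, tensored with a real (resp.\ quaternionic) $V_\rho$, force the APS index and the kernel dimensions $h(M_i)$ to be even. The Bott-periodicity step via the product formula $\eta(M\times B)(\rho)=\eta(M)(\rho)\,\hat A(B)$ is also fine and matches the paper's Lemma~2.1.3.

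The one place where your argument is genuinely incomplete is the vanishing on $T_n(B\pi)$. Your appeal to a spectral symmetry of the $\HP^2$ Dirac operator is not the right mechanism: $\HP^2$ is $8$-dimensional, so there is no fibrewise eta invariant in the ordinary sense, and the Bismut--Cheeger eta form does not vanish for a generic ``quaternionic symmetry'' reason. The argument that actually works (and is what \cite{2} uses) runs as follows: since $PSp(3)$ acts on $\HP^2$ by isometries of a positive-scalar-curvature metric, the fibrewise Dirac operators are uniformly invertible by Lichnerowicz, so the families index bundle in $KO^0(Y)$ is zero; combined with $\hat A(\HP^2)=0$ and the fact that $V_\rho$ is pulled back from the base, an adiabatic-limit computation then shows the eta invariant of the total space is an integer (and an even integer in the refined cases). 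You correctly flagged this as the delicate step, but the justification you gave should be replaced by this invertibility and zero-index-bundle argument rather than a spectral-symmetry claim.
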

The following Theorem in \cite{2}, is deduced from work of Donnelly in \cite{5}, and is the first main tool for actually computing some eta invariants.
\begin{thm}
Let $\rho$, $\pi$ be as above, and $\tau:\pi \rightarrow U(m)$, be a fixed point free representation. Assume there exists a representation $det(\tau)^{1/2}$ of $\pi$ whose tensor square is $det(\tau)$. Then letting $M=S^{2n-1}/\tau(\pi)$ with the inherited structures, we have
$$\eta(M)(\rho)=\mid \pi \mid^{-1} \sum_{1 \neq g \in \pi} \frac{Trace(\rho(g))det(\tau(g))^{1/2}} {(det(I-\tau(g)))}$$
\end{thm}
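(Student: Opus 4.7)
The plan is to realize $M$ as the quotient boundary of a spin disc with a free $\pi$-action on the boundary, invoke Donnelly's equivariant generalization of the APS index theorem, and extract $\eta(M)(\rho)$ from the local Atiyah-Bott-Segal-Singer fixed-point contribution at the unique interior fixed point.

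First, regard $\tau$ as a unitary action of $\pi$ on $\C^n$ and restrict to the closed unit ball $W = D^{2n}\subset \C^n$ with its flat metric. Then $\pi$ acts isometrically on $W$, freely on the boundary $\partial W = S^{2n-1}$ so that $M = \partial W/\pi$, and with a single isolated fixed point at the origin for each $g\neq 1$. The hypothesis that $\det(\tau)^{1/2}$ exists is precisely what lifts $\tau(\pi)\subset U(n)\subset SO(2n)$ through $\mathrm{Spin}(2n)$, equipping $W/\pi$ and $M$ with compatible spin structures. Pull $\rho$ back to a flat bundle $V_\rho$ on $W/\pi$ and form the APS boundary value problem for the twisted Dirac operator $\mathcal{D}_\rho^+$.

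Next, apply Donnelly's equivariant version of the APS index theorem \cite{5}, which, combined with the standard averaging identity for a finite group quotient, yields
\begin{equation*}
\int_{W/\pi}\hat{A}(W/\pi)\,\mathrm{ch}(V_\rho) \;-\; \eta(M)(\rho) \;=\; \frac{1}{|\pi|}\sum_{g\in\pi}L(g),
\end{equation*}
where $L(g)$ denotes the local equivariant index contribution of $g$ acting on $W$. Because $V_\rho$ is flat and $W$ is contractible, $\mathrm{ch}(V_\rho)$ reduces to $\dim(\rho) = 0$ in cohomology and the integral vanishes; likewise the $g = 1$ term is the untwisted Dirac index on $W$ multiplied by the virtual dimension of $\rho$, and so it also vanishes. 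For each $g \neq 1$, the fixed set on $W$ is the isolated point $\{0\}$, and the Atiyah-Bott-Segal-Singer holomorphic Lefschetz formula for the spin Dirac operator at an isolated unitary fixed point gives
\begin{equation*}
L(g) \;=\; -\,\frac{\mathrm{Trace}(\rho(g))\,\det(\tau(g))^{1/2}}{\det(I-\tau(g))},
\end{equation*}
where $\det(\tau(g))^{1/2}$ appears as the supertrace of $g$ on the spinor representation of $T_0W$ (well defined precisely by our square-root hypothesis), and the denominator is the standard Weyl-type contribution from the infinitesimal unitary action on $T_0W$. Substituting into the previous display produces exactly the claimed formula.

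The main obstacle is careful bookkeeping of signs and normalizations: confirming that the chosen $\det(\tau)^{1/2}$ produces $+\det(\tau(g))^{1/2}$ rather than its negative in the local formula, checking that the APS boundary conditions on $W/\pi$ are compatible with the equivariant averaging used to derive Donnelly's identity, and reconciling the holomorphic and real-spin conventions at the origin. Once these normalizations are aligned, the statement follows directly from the fixed-point computation above.
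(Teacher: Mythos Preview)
The paper does not give its own proof of this theorem; it simply records it as a result ``deduced from work of Donnelly in \cite{5}'' via Botvinnik--Gilkey--Stolz \cite{2}. Your proposal sketches precisely the Donnelly argument that the paper invokes by citation: realize $M$ as the boundary of the $\pi$-equivariant disc $D^{2n}$, apply the equivariant APS index theorem, observe that the integral and $g=1$ terms vanish because $\rho$ has virtual dimension zero and the disc is flat, and read off the remaining fixed-point contributions at the origin via the Atiyah--Bott--Segal--Singer local formula. This is the correct and standard route, and your outline is in fact more detailed than anything appearing in the paper itself; the caveats you flag about sign and spin-lift normalizations are genuine but routine, and are exactly what Donnelly's paper handles.
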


 This is the main Theorem for computing eta invariants of the manifolds with periodic fundamental groups we are interested in, namely cyclic and quaternion lens spaces.\\
 The first step for us in verifying the GLR conjecture for an arbitrary finite group $G$ will be to choose some periodic subgroup $H$, choose an appropriate lens space with fundamental group $H$ and classifying map $u$ for the universal cover, and then understand the order of $i \circ D([L,u]) \in ko_*(BG)$, where $i$ is induced by the inclusion $ H\hookrightarrow G$. To do this we will use that the eta invariant is natural with respect to inclusions, see \cite{aps1}, \cite{2} and \cite{5}. This means that if we have an inclusion of groups $ f:H\hookrightarrow G$, and a class represented by a manifold $[M] \in \Omega_{n}^{spin}(BH)$, then for a zero-dimensional virtual representation $\rho$ of $G$ the pull-back bundle over $M$ is determined by restricting the representation to the subgroup. It follows that we then have $\eta([M])(f^*(\rho))=\eta(f_*([M]))(\rho)$, as should be clear from the following pull-back diagram:\\
$$
\xymatrix{
i^*(EH \times_H f^*(\rho)) \ar[r] \ar [d] & EH \times_H f^*(\rho) \ar[r] \ar [d] & EG \times_G \rho \ar[d] \\
M \ar[r]^{i} & BH \ar[r]^{f} & BG \\
}
$$
Note also that if $\rho_1, \cdots ,\rho_j$ are virtual representations of a finite group $G$ of virtual dimension zero, and $M_1, \cdots ,M_k$ are $n-$dimensional manifolds equipped with maps to $BG$, then we can define $k$ different $j-$tuples as follows:
$$\overrightarrow{\eta}(M_i)=(\eta(M_i)(\rho_1), \cdots , \eta(M_i)(\rho_j))$$
where each entry lies in $\R/\Z$ or $\R/2\Z$ as appropriate. We can then directly try and calculate the order of the subgroup spanned by these vectors, to see if it is sufficiently large, which means we compare the order of this group with the order of $Ker(Ap) \subset ko_n(BG)$ . For example, we could perform elementary row or column operations on the resulting $k \times j$ matrix.\\
In particular, if $j=k$, then the inverse of the determinant of the resulting $j \times j$ matrix gives a lower bound on the order of the group spanned. This follows since we can perform row operations to get an upper or lower triangular matrix, so that multiplying the orders of the diagonal entries immediately gives a lower bound.\\
The rest of the chapter will outline the proof of the GLR conjecture for cyclic and quaternion groups, and in subsequent chapters we will make similar calculations by considering inclusions from such subgroups, in order to detect the part of $Ker(Ap)$ that is detected in periodic K-theory.
\section{Cyclic groups}
Botvinnik, Gilkey and Stolz \cite{2}, proved the GLR conjecture for groups with periodic cohomology by making explicit eta invariant calculations, by using Theorem $2.0.3$ and Theorem $2.1.1$ below.\\
We first wish to consider the cyclic groups of order $l=2^k$, which we identify with the subgroup of $S^1$ consisting of $l$-th roots of unity:
$$C_l=\{\lambda \in S^1 | \lambda^l=1\}$$
For an integer $a$ we let $\rho_a$ be the representation of $S^1$ where $\lambda \in S^1$ acts by multiplication by $\lambda^a$. For a tuple of integers $\overrightarrow{a}=(a_1, \cdots ,a_t)$, the representation $\lambda^{a_1 }\oplus \cdots \oplus \lambda^{a_t}$ restricts to a free $C_l$ action on $S^{2t-1}$ if and only if all the $a_j$ are odd. Let $t=2i$ be even, and define the quotient manifold
$$X^{4i-1}(l,\overrightarrow{a})=S^{2t-1}/(\rho_{a_1} \oplus \cdots \rho_{a_t})(C_l)$$
This is a lens space, and inherits a natural spin structure in dimensions $3 \mod 4$ \cite{2}. This follows because the second Stiefel-Whitney class of $X$ is the$\mod 2$ reduction of the first Chern class of the determinant line bundle. Now $c_1(det(\tau))$ is even if and only if we can take the square root of the determinant line bundle, which in turn is possible if and only if we can take the square root of the representation $det(\tau)$. For cyclic groups of even order this possible if and only if we are working in dimensions $3 \mod 4$, see \cite{2} for details.\\
Note then that in dimensions $4k+1$ the corresponding construction does not yield a spin manifold. So, consider the vector bundle $H \otimes H \oplus (2k-1)\C \rightarrow S^2$, where $H$ is the Hopf line bundle over $S^2$. As above, for a tuple of integers $\overrightarrow{a}=(a_1, \cdots ,a_t)$, the representation $\lambda^{a_1 }\oplus \cdots \oplus \lambda^{a_t}$ restricts to a free $C_l$ action on the sphere bundle $S(H \otimes H \oplus (2k-1) \C \rightarrow S^2)$ if and only if all the $a_j$ are odd. Again let $t=2i$ be even. Then the quotient
$$X^{4i+1}(l,\overrightarrow{a})=S(H \otimes H \oplus (2k-1)\C \rightarrow S^2)/C_l$$
is a spin manifold of dimension $4k+1$ which is a lens space bundle over the two-sphere. A general formula for the eta invariants of manifolds of the form $S(H_1 \oplus \cdots \oplus H_k \rightarrow S^2)/C_l$, where the $H_i$ are complex line bundles and the $C_l$ action is specified by a tuple of integers $\overrightarrow{a}$ as above, is given in \cite{2}:
\begin{thm}
Let $\rho \in R_0(\pi)$ and $M=S(H_1 \oplus \cdots \oplus H_k \rightarrow S^2)/C_l(\overrightarrow{a})$ as above, with $\overrightarrow{a}=(a_1, \cdots ,a_t)$ and $t=2i$ even. Then
$$\eta(M)(\rho)=l^{-1}\sum_{1 \neq \lambda \in C_l} Trace(\rho(\lambda) \frac{\lambda^{(a_1+ \cdots a_{2i})/2}}{(1-\lambda^{a_1})\cdots (1-\lambda^{a_{2i}})}. \sum_j \frac{1}{2} c_1(H_j)[\CP^1] \frac{1+\lambda^{a_j}}{1-\lambda^{a_j}}$$
\end{thm}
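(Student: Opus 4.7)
The plan is to realise $M$ as the boundary of a natural $C_l$-equivariant manifold and apply the equivariant Atiyah--Patodi--Singer (APS) index theorem, evaluating the interior contribution via the Atiyah--Segal--Singer Lefschetz formula at the fixed set. The Donnelly-type fiber factor $\lambda^{(\sum a_j)/2}/\prod_j(1-\lambda^{a_j})$ in the statement strongly suggests that the proof is a twisted version of the derivation of Theorem 2.0.3, with an extra linear correction coming from the Chern classes of the $H_j$.

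First I would let $W = D(H_1 \oplus \cdots \oplus H_k)$ be the total disk bundle, equipped with the diagonal $C_l$-action specified by $\vec{a}$, so that $\partial W / C_l = M$. Since the $a_j$ are odd and $|C_l|$ is a power of $2$, the $C_l$-action on $\partial W$ is free, and the same square-root-of-determinant argument used in the lens space discussion preceding Theorem 2.0.3 supplies an equivariant spin structure on $W$ restricting to that on $M$; the representation $\rho$ lifts to a $C_l$-equivariant flat bundle $\widetilde{\rho}$ over $W$. Applying the equivariant APS theorem to $D_W \otimes \widetilde{\rho}$ then gives, for each $\lambda \in C_l$,
\begin{equation*}
\mathrm{ind}_\lambda(D_W \otimes \widetilde{\rho}) \;=\; \int_{W^\lambda} \mathcal{I}_\lambda \;-\; \tfrac{1}{2}\bigl(\eta_\lambda(\partial W)(\rho) + h_\lambda(\partial W)(\rho)\bigr),
\end{equation*}
with $\mathcal{I}_\lambda$ the Atiyah--Segal--Singer integrand. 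Freeness of the $C_l$-action on $\partial W$ gives the averaging identity $|C_l|^{-1}\sum_{\lambda} \eta_\lambda(\partial W)(\rho) = \eta(M)(\rho)$, so averaging the displayed formula over $\lambda$ reduces the computation of $\eta(M)(\rho)$ modulo $\Z$ to these fixed-point integrals. For $\lambda \neq 1$, the hypothesis $\lambda^{a_j} \neq 1$ ensures $W^\lambda$ is exactly the zero section $S^2$, with normal bundle $\bigoplus_j H_j$ on which $\lambda$ acts by $\lambda^{a_j}$.

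The fixed-point integrand on $S^2$ takes the form
\begin{equation*}
\mathcal{I}_\lambda \bigm|_{S^2} \;=\; \hat{A}(TS^2)\cdot \mathrm{Trace}(\rho(\lambda)) \cdot \prod_{j} \frac{\lambda^{a_j/2}\,e^{-x_j/2}}{1 - \lambda^{a_j} e^{-x_j}},
\end{equation*}
with $x_j = c_1(H_j)$, the $\lambda^{a_j/2}$ factor encoding the equivariant spin structure. Since $\dim S^2 = 2$, only the degree-$2$ part contributes to $\int_{S^2}$; the constant-in-$x_j$ piece of each factor is $\lambda^{a_j/2}/(1-\lambda^{a_j})$, which multiplies out to exactly the Donnelly-type fiber factor $\lambda^{(\sum a_j)/2}/\prod_j(1-\lambda^{a_j})$. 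A short quotient-rule calculation then shows that the relative coefficient of the linear term in $x_j$ of the $j$-th factor is $-\tfrac{1}{2}(1+\lambda^{a_j})/(1-\lambda^{a_j})$, the sign being absorbed in the orientation of $W$ versus $\partial W$. Pairing the surviving terms with $[\CP^1]$, summing over $j$, and over $\lambda \neq 1$ with the prefactor $l^{-1}$, then produces the claimed expression.

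The principal obstacle is keeping track of all the sign and normalisation conventions --- the spin-structure prefactor $\lambda^{a_j/2}$, the half-density $e^{-x_j/2}$ from the normal bundle, and the relative orientation of $W$ and $\partial W$ must all be consistently tracked so that the linear-$x_j$ coefficient comes out to $+\tfrac{1}{2}(1+\lambda^{a_j})/(1-\lambda^{a_j})$ rather than its negative. A secondary obstacle is verifying that the $\lambda = 1$ term of the averaged APS identity, together with the $h_\lambda$ kernel dimensions, yields only an integer correction, so that the final equality holds in $\R/\Z$ (and in $\R/2\Z$ under the real/quaternion dimension hypotheses of Theorem 2.0.1).
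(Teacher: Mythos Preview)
Your approach is correct and is essentially the standard derivation. Note, however, that the paper does not itself prove this theorem: it is quoted from \cite{2} (Botvinnik--Gilkey--Stolz), which in turn builds on Donnelly's fixed-point computations \cite{5}. So there is no ``paper's own proof'' to compare against; what you have written is a faithful outline of the argument in the cited sources.

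A couple of minor remarks on your write-up. First, the equivariant bounding manifold you want is the disk bundle $W=D(H_1\oplus\cdots\oplus H_k)$ over $S^2$, and the key point is that for $\lambda\neq 1$ the fixed set $W^\lambda$ is exactly the zero section $S^2$ because each $a_j$ is coprime to $l$; you state this correctly. Second, your logarithmic-derivative computation giving the linear coefficient $-\tfrac{1}{2}(1+\lambda^{a_j})/(1-\lambda^{a_j})$ is right, and as you note the overall sign is absorbed by the orientation convention relating $W$ and $\partial W$ in the APS formula. Third, the passage from the equivariant eta invariants $\eta_\lambda(\partial W)$ to $\eta(M)$ via averaging is exactly the mechanism in Donnelly's paper, and the integrality of the $\lambda=1$ term plus the index contributions is what makes the formula hold in $\R/\Z$; this is handled carefully in \cite{2} and \cite{5}, so in a thesis context a citation would suffice for that step.
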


 It is then clear that $\pi_1(X^{4i \pm 1}(l,\overrightarrow{a}))=C_l$, giving these manifolds natural $C_l$ structures, meaning that using the classifying map $u$ for the universal covers, we can view the $[X^{4i\pm 1}(l,\overrightarrow{a}),u]$ as elements of $\Omega_{4i\pm 1}^{spin}(BC_l)$. In order to apply Theorems $2.0.4$ and $2.1.1$, we can simply define $det(\rho_{a_1} \oplus \cdots \oplus \rho_{a_t})^{1/2} :=\rho_{(a_1 + \cdots +a_{2i})/2}$. We say $L^{n}=X^{n}-X_{0}^{n} \in \Omega_{n}^{spin}(BC_l)$, where $X_{0}^{n}$ is the same manifold with trivial $C_l$ structure, meaning that we use the constant map to give a spin bordism class. We can then use the following formula: \\

 For $\lambda \in C_l$ we define $$f_{4i-1}(\overrightarrow{a_{2i}})(\lambda):=\lambda^{(a_1 + \cdots a_{2i})/2}(1-\lambda^{a_1})^{-1}\cdots(1-\lambda^{a_{2i}})^{-1}$$ and $f_{4i+1}(\overrightarrow{a_{2i}})(\lambda):=f_{4i-1}(\overrightarrow{a_{2i}})(\lambda)(1+\lambda^{a_1})(1-\lambda^{a_1})^{-1}$. Thus $\lambda^{(a_1 + \cdots a_{2i})/2}$ acts as the square root of the determinant of $\lambda^{a_1} \oplus \cdots \oplus \lambda^{a_{2i}}$, and we then have the following formula from the above two Theorems:
\begin{lem}
let $n=4k \pm 1$. Then we have\\
$\eta(L^{n}(l,\overrightarrow{b_{2i}}))(\rho)=l^{-1}\sum_{1 \neq \lambda \in C_l}f_{4i \pm 1}(\overrightarrow{b_{2i}})(\lambda)Trace(\rho(\lambda))$
\end{lem}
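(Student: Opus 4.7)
The plan is to deduce the formula directly from the two input theorems, with the two parities handled in parallel but by different inputs: the $n = 4i-1$ case comes from Theorem~2.0.4 (Donnelly's formula) applied to the fixed-point-free unitary representation $\tau = \rho_{a_1} \oplus \cdots \oplus \rho_{a_{2i}}$, and the $n = 4i+1$ case comes from Theorem~2.1.1 applied to the bundle $H\otimes H \oplus (2i-1)\C$ with the same $\tau$.

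First I would dispose of the $X_0^n$ subtraction. Since $X_0^n$ is $X^n$ equipped with the constant map $u_0 \colon X_0^n \to BC_l$, the pulled-back flat bundle $V_\rho = \widetilde{X_0}\times_\pi \rho$ is canonically trivial of virtual dimension $\dim \rho = 0$, so the twisted Dirac operator is a virtual-rank-zero sum of untwisted Dirac operators. It follows that $\eta(X_0^n)(\rho) = 0$ in $\R/\Z$, hence $\eta(L^n)(\rho) = \eta(X^n)(\rho)$, and I only need to compute the eta invariant of $X^n$.

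For $n = 4i-1$, the lens space $X^{4i-1}(l,\overrightarrow{b_{2i}})$ is exactly $S^{2t-1}/\tau(C_l)$ with $\tau = \rho_{b_1}\oplus\cdots\oplus\rho_{b_{2i}}$; the hypotheses of Theorem~2.0.4 are met because each $b_j$ is odd so $\tau$ acts freely, and because $\det(\tau)^{1/2}$ exists and is taken to be $\rho_{(b_1+\cdots+b_{2i})/2}$ (this is what specifies the spin structure in dimensions $\equiv 3 \pmod 4$). Then for $g = \lambda \in C_l$ the eigenvalues of $\tau(\lambda)$ are $\lambda^{b_1},\ldots,\lambda^{b_{2i}}$, so
\[
\det(\tau(\lambda))^{1/2} = \lambda^{(b_1+\cdots+b_{2i})/2}, \qquad \det(I - \tau(\lambda)) = \prod_{j=1}^{2i}(1-\lambda^{b_j}),
\]
and the Donnelly formula reads off as $l^{-1}\sum_{1\neq\lambda}f_{4i-1}(\overrightarrow{b_{2i}})(\lambda)\,\mathrm{Tr}(\rho(\lambda))$, as claimed.

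For $n = 4i+1$, I apply Theorem~2.1.1 to the sphere bundle of $H\otimes H \oplus \underbrace{\C\oplus\cdots\oplus\C}_{2i-1}$ over $S^2$, with the same $\tau$. In the sum $\sum_j \tfrac{1}{2}c_1(H_j)[\CP^1]\frac{1+\lambda^{b_j}}{1-\lambda^{b_j}}$, every summand with $H_j = \C$ vanishes since $c_1(\C) = 0$, so only the $j=1$ term survives; using $c_1(H\otimes H)[\CP^1] = 2$ this term equals $\tfrac{1+\lambda^{b_1}}{1-\lambda^{b_1}}$. Multiplying by the $f_{4i-1}(\overrightarrow{b_{2i}})(\lambda)$ prefactor supplied by Theorem~2.1.1 gives exactly $f_{4i+1}(\overrightarrow{b_{2i}})(\lambda)$ by definition, and summing over nontrivial $\lambda \in C_l$ yields the required formula. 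The only real bookkeeping step — and the one I would check carefully as the main obstacle — is the normalisation constants: making sure the square-root-of-determinant factor and the $\tfrac{1}{2}c_1$ factor from Theorem~2.1.1 combine without an extra sign or factor of $2$, given the convention that $\det(\tau)^{1/2} = \rho_{(b_1+\cdots+b_{2i})/2}$ and that the spin structure on $X^{4i+1}$ is the one induced from the trivialisation of $H\otimes H \oplus (2i-1)\C$ via its $C_l$-structure.
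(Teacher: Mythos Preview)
Your argument is correct and is exactly the approach the paper takes: the paper does not give a separate proof of this lemma but simply presents it as ``the following formula from the above two Theorems'', meaning Theorems~2.0.4 and~2.1.1, and your write-up supplies precisely the bookkeeping (the vanishing of $\eta(X_0^n)(\rho)$ for virtual dimension zero, the identification of $\det(\tau)^{1/2}$, and the reduction of the $\sum_j$ in Theorem~2.1.1 to the single $H\otimes H$ term) that the paper leaves implicit.
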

We now make the following observations, all from \cite{2}; see Proposition 5.1, Lemma 5.2 and Lemma 5.3:
\begin{lem}
For an even dimensional spin manifold $N$, and an odd dimensional manifold $M$ with a map $f:M \rightarrow B\pi$, with $\pi, \rho$ as above, we have $\eta(M \times N)(\rho)=\eta(M)(\rho)\hat{A}(N)$
\end{lem}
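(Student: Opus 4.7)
The plan is to prove this by applying the Atiyah--Patodi--Singer index theorem on a bounding manifold. Since $\eta(\rho)$ is a homomorphism on bordism and both sides of the asserted identity depend linearly on $(M,f)$, it suffices to verify the equality after multiplying $[M,f]$ by a nonzero integer. As the spin bordism groups $\Omega_*^{spin}(B\pi)$ are finitely generated, some multiple $k[M,f]$ bounds a spin manifold $W$ together with an extension $F: W \to B\pi$ of $kf$. Then $W \times N$ has boundary $k(M \times N)$ with the map $F \circ \mathrm{pr}_W$ extending $kf\circ \mathrm{pr}_M$.

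The second step is to run the APS index theorem twice. Applied to $W$ twisted by $V_\rho$, one obtains
\begin{equation*}
\operatorname{Ind}\!\bigl(D^{+}(W)\otimes V_\rho\bigr) \;=\; \int_W \hat{A}(W)\,\mathrm{ch}(V_\rho) \;-\; k\,\eta(M,f)(\rho) \pmod{\Z}.
\end{equation*}
Applied to $W \times N$ with $V_\rho$ pulled back along $\mathrm{pr}_W$, it gives
\begin{equation*}
\operatorname{Ind}\!\bigl(D^{+}(W\times N)\otimes V_\rho\bigr) \;=\; \int_{W\times N} \hat{A}(W\times N)\,\mathrm{ch}(V_\rho) \;-\; k\,\eta(M\times N)(\rho) \pmod{\Z}.
\end{equation*}
Since $\hat{A}$ is multiplicative and $\mathrm{ch}(V_\rho)$ has no $N$-component, a Fubini argument yields
\begin{equation*}
\int_{W\times N}\hat{A}(W\times N)\,\mathrm{ch}(V_\rho) \;=\; \hat{A}(N)\cdot \int_W \hat{A}(W)\,\mathrm{ch}(V_\rho).
\end{equation*}

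The third ingredient is the multiplicativity of the APS index under taking a product with a closed even-dimensional spin manifold. Using the product structure $D(W\times N) = D(W)\hat{\otimes}\,1 + 1\,\hat{\otimes}\,D(N)$ on the graded tensor spinor bundle $S(W)\hat{\otimes}S(N)$, the fact that $N$ is closed means no new boundary contribution appears in the $N$-direction, and a spectral decomposition against the eigenbasis of $D(N)$ on the $N$-factor reduces the index on $W\times N$ to a sum of APS indices on $W$ weighted by the indices of $D^{+}(N)$ on each eigenspace summand. Since all non-zero eigenspaces of $D(N)$ cancel in the supertrace, only the kernel contributes, giving
\begin{equation*}
\operatorname{Ind}\!\bigl(D^{+}(W\times N)\otimes V_\rho\bigr) \;=\; \operatorname{Ind}\!\bigl(D^{+}(W)\otimes V_\rho\bigr)\cdot \operatorname{Ind}\!\bigl(D^{+}(N)\bigr) \;=\; \operatorname{Ind}\!\bigl(D^{+}(W)\otimes V_\rho\bigr)\cdot \hat{A}(N).
\end{equation*}
Combining the three displayed formulas and dividing by $k$, we obtain $\eta(M\times N)(\rho)\equiv \eta(M)(\rho)\hat{A}(N) \pmod{\Z}$, which is the required identity in $\R/\Z$.

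The main obstacle is the third step: the multiplicativity of the APS index is not completely formal because of the nonlocal boundary conditions. One must verify that the natural splitting of $S(W\times N)$ by the $D(N)$-eigenspaces is compatible with APS boundary data on $\partial W \times N$, so that the product operator indeed decomposes as a direct sum of APS-boundary problems on $W$. Once this is granted, the remaining steps are applications of standard multiplicativity of the $\hat A$-integrand and the fact that the ordinary Dirac index on a closed even-dimensional spin manifold equals $\hat{A}(N)$ by Atiyah--Singer. The $\mod 2\Z$ refinement in the appropriate dimensions follows by the same argument after noting that $\hat A(N)\in \Z$ and that the refined target of $\eta(\rho)$ is $\R/2\Z$ rather than $\R/\Z$.
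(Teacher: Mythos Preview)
The paper does not prove this lemma itself; it quotes the result from \cite{2}. The standard argument (and the one used there) is a direct spectral computation on the closed manifold $M\times N$ with the product metric: one writes the twisted Dirac operator as $D_{M}\hat\otimes 1+\gamma\hat\otimes D_{N}$ and decomposes by eigenspaces of $D_{N}$. Nonzero eigenvalues $\pm\mu$ pair off via the $\Z_2$-grading on $S(N)$ and contribute symmetrically---hence nothing---to the eta function, while $\ker D_{N}$ contributes $\eta_{D_M}(s)\cdot\operatorname{Ind}(D_N^+)$. Setting $s=0$ gives the identity as an equality of \emph{real} numbers, with no bounding manifold required.

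Your APS route is different and essentially sound, but two points need repair. First, ``finitely generated'' does not imply that some multiple of $[M,f]$ bounds; you need $\Omega_n^{spin}(B\pi)$ to be \emph{torsion}, which holds here because $n$ is odd and, in the paper's setting, $\pi$ is finite (it would fail, e.g., for $\pi=\Z$). Second, you write the two APS identities ``$\pmod\Z$'' and then divide by $k$; division by $k$ in $\R/\Z$ is not well defined, so as written the final step is invalid. The fix is to drop the ``$\pmod\Z$'': the APS formula is an exact equality of real numbers, and combined with your exact index multiplicativity it gives $k\,\eta(M\times N)(\rho)=k\,\eta(M)(\rho)\,\hat A(N)$ in $\R$, whence division by $k$ is legitimate. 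Finally, note that establishing your ``third ingredient''---splitting the APS problem on $W\times N$ by $D_N$-eigenspaces and checking compatibility with the boundary projection---is precisely the spectral analysis that proves the lemma directly on $M\times N$, so the detour through a bounding $W$ does not actually shorten the argument.
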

\begin{prop}
i) We have that $|ko_{8k+3}(BC_l)|=2(2l)^{2k+1};|ko_{8k+7}(BC_l)|=(2l)^{2k+2}$, and these groups are spanned by the collection of the images of the fundamental classes of the lens spaces $L^{4i-1}(l,\overrightarrow{a}))$.\\
ii) Further $|ko_{8k+1}(BC_l)|=2(l/2)^{2k+1};|ko_{8k+5}(BC_l)|=(l/2)^{2k+2}$ and the latter groups are spanned by the collection of the images of the fundamental classes of the lens spaces $L^{4i+1}(l,\overrightarrow{a})$. In dimension $n=8k+1$, we have that $|Ker(Ap)|=(l/2)^{2k+1}$, and for $n \geq 9$ the lens space bundles $L^{8k+1}(l,\overrightarrow{a})$ span a subspace of at least this order.
\end{prop}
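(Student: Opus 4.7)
The plan is to use the eta invariant to get a lower bound for the order of the subgroup spanned by the lens space classes inside $ko_{n}(BC_{l})$, and then match this against an upper bound coming from the known order of $ko_{n}(BC_{l})$. The strategy splits naturally according to the residue of $n$ mod $8$ and imitates the assembly-map picture: in degrees $3,7 \bmod 8$ the eta invariant detects everything, while in degrees $1,5 \bmod 8$ it sees exactly the image in periodic $K$-theory.

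For part (i), work in dimensions $n=4i-1$ with the lens spaces $L^{n}(l,\overrightarrow{a})$. The plan is to choose a family of tuples $\overrightarrow{a}^{(1)},\ldots,\overrightarrow{a}^{(N)}$ (all entries odd) whose number $N$ matches the rank of $ko_{n}(BC_{l})$ as given by Theorem $1.6.3$, and to pair them against the virtual representations $\rho_{j}-\mathbf{1}$ where $\rho_{j}$ runs through the nontrivial characters of $C_{l}$. The formula of Lemma $2.1.3$ then furnishes an $N\times N$ matrix of eta invariants with values in $\R/2\Z$ (since $\rho$ is real and $n\equiv 3 \bmod 8$, or quaternion and $n\equiv 7\bmod 8$). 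To carry the argument to higher $i$, invoke Lemma $2.1.4$ to cross with a Bott manifold $B^{8}$ of $\widehat{A}$-genus $1$; this multiplies the eta invariant by $1$ and shifts dimension by $8$, so one only needs to handle the low-dimensional base cases $n=3$ and $n=7$ directly.

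For part (ii), run the analogous argument in dimensions $n=4i+1$ using the lens space bundles $L^{4i+1}(l,\overrightarrow{a})=S(H\otimes H\oplus(2i-1)\C\to S^{2})/C_{l}$. Apply Theorem $2.1.1$ to compute their eta invariants; the formula acquires the extra factor $(1+\lambda^{a_{1}})(1-\lambda^{a_{1}})^{-1}$, which accounts for the nontrivial $S^{2}$-bundle structure. Once more, pair a suitable family of tuples against the virtual representations of $C_{l}$, form the resulting matrix, and extract a lower bound on the image in $KO_{n}(BC_{l})$. The claim $|Ker(Ap)|=(l/2)^{2k+1}$ for $n=8k+1$ follows because the extra factor of $2$ separating $ko_{8k+1}(BC_{l})$ from $(l/2)^{2k+1}$ is exactly the $(\Z_{2})$ summand in Theorem $1.6.3$ on which the assembly map $A$ is nontrivial (by the remark following that theorem), so killing it isolates $Ker(Ap)$.

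The main technical obstacle is the explicit evaluation of the determinant of the eta-matrix. I would select tuples of the shape $\overrightarrow{a}^{(s)}=(1,1,\ldots,1,2s+1)$ so that the matrix entries become character sums $l^{-1}\sum_{\lambda\neq 1}\lambda^{c}/(1-\lambda)^{2i-1}(1-\lambda^{2s+1})$ of a near-Vandermonde type, and then reduce the determinant, via elementary column operations exploiting $\sum_{\lambda\in C_{l}}\lambda^{c}=0$ for $c\not\equiv 0\bmod l$, to a product of simple factors of the form $(1-\lambda^{2s+1})^{-1}$. Its absolute value should come out, up to a unit in $\Z_{(2)}$, to $(2l)^{-(2k+1)}$ or $(l/2)^{-(2k+1)}$ as required. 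Since this matches the upper bound from Theorem $1.6.3$ exactly, the inequality forces equality both of orders and of spans, finishing the proof.
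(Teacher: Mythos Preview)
Your inductive step is the gap. Crossing with the Bott manifold $B^{8}$ leaves all eta invariants unchanged, so the span of $\{L^{3}(l,\overrightarrow a)\times B^{k}\}$ inside $\mathcal L_{8k+3}$ has the same order as $\mathcal L_{3}$, namely at most $4l$; it cannot grow to $2(2l)^{2k+1}$. Handling only $n=3,7$ and then Bott-shifting therefore cannot span $ko_{8k+3}(BC_l)$ for $k\ge 1$. You need genuinely higher-dimensional lens spaces at every stage, and an argument that explains how each $4$-step increase in dimension contributes an extra factor of $2l$ (or $l/2$).

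The paper's mechanism for this is not a single determinant but a recursive surjection. Lemma~2.1.5(a) gives
\[
\eta\bigl(L^{n}(l,(\overrightarrow b,3,3))\bigr)(\sigma\rho)=\eta\bigl(L^{n-4}(l,\overrightarrow b)\bigr)(\rho),\qquad \sigma=\rho_{-3}(\rho_0-\rho_3)^{2},
\]
which makes $\delta:\mathcal L_{n}\to\mathcal L_{n-4}$ surjective. Lemma~2.1.6 then bounds $|\ker\delta|$ from below: the classes $Z^{n}$ satisfy $\delta(Z^{n})=0$ while $\overrightarrow\eta(Z^{n})$ has order $\ge 2l$ (for $n\equiv 3\bmod 4$) or $\ge l/2$ (for $n\equiv 1\bmod 4$), and the extra factor of $2$ in degree $8m+3$ comes from $Y^{8m+3}$ detected by $\eta_{l/2}\in\R/2\Z$. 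Induction on $n$ then gives the stated orders. Your $N\times N$ Vandermonde-style determinant could in principle be made to work, but it would have to be carried out freshly in every dimension, with $N$ growing linearly in $k$; the vague ``should come out'' is exactly where the real content lies.

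A smaller error: your claim that all $\eta(M)(\rho_j-\mathbf 1)$ land in $\R/2\Z$ is false. For $C_l$ only $\rho_{l/2}$ is real, so only that one coordinate refines to $\R/2\Z$ in degrees $3\bmod 8$; the remaining $\rho_j-\mathbf 1$ are complex of virtual dimension zero and stay in $\R/\Z$. This single $\R/2\Z$ coordinate is precisely what supplies the leading factor of $2$ in $|ko_{8k+3}(BC_l)|=2(2l)^{2k+1}$.
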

We note here that in \cite{2}, only the orders of the groups $ko_{*}(BC_l)$ are calculated, by using the Atiyah-Hirzebruch spectral sequence. The extension problems for $ko_{*}(BC_l)$ are not resolved.\\
The Gromov-Lawson-Rosenberg conjecture for cyclic groups then follows immediately from Proposition $2.1.4$, and this is proved in \cite{2} by making explicit calculations of eta invariants of lens spaces. We will give a very brief sketch of these calculations.
\begin{lem}
Let $n=4i\pm 1,\sigma=\rho_{-3}(\rho_{0}-\rho_3)^{2}$, and $\rho \in R_0(C_l)$. Then we have\\
a) If $n \geq 7$ then $\eta(L^{n}(l,(\overrightarrow{b_{2i-2}},3,3)))(\sigma \rho)=\eta(L^{n-4}(l,\overrightarrow{b_{2i-2}}))(\rho)$\\
b) If $n\geq 3$ then $\eta(L^{n}(l,(\overrightarrow{b_{2i-1}},1))-3L^{n}(l,(\overrightarrow{b_{2i-1}},3)))(\rho)=
\eta(L^{n}(l,(\overrightarrow{b_{2i-1}},3)))(\rho(\rho_1+\rho_{-1}-2\rho_0))$ and\\
$\eta(L^{n}(l,(\overrightarrow{b_{2i-1}},1))-5L^{n}(l,(\overrightarrow{b_{2i-1}},5)))(\rho)=
\eta(L^{n}(l,(\overrightarrow{b_{2i-1}},5)))(\rho(\rho_1+\rho_{-1}+\rho_2+\rho_{-2}-4\rho_0))$
\end{lem}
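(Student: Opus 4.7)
The plan is to reduce both parts to a direct computation using the closed-form expression in Lemma 2.1.2, exploiting the fact that the characters of the auxiliary virtual representations factor neatly in terms of $(1-\lambda^{a})$. In all cases, after applying Lemma 2.1.2, both sides become sums of the form $l^{-1}\sum_{1\ne\lambda\in C_l} F(\lambda)\,\mathrm{Trace}(\rho(\lambda))$, so it suffices to check the equality of the rational functions $F(\lambda)$ termwise for each primitive root of unity $\lambda$.

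For part (a), I would first compute $\sigma=\rho_{-3}-2\rho_0+\rho_3$ by expanding $\rho_{-3}(\rho_0-\rho_3)^2$, so that $\mathrm{Trace}(\sigma(\lambda))=\lambda^{-3}-2+\lambda^{3}=\lambda^{-3}(1-\lambda^{3})^{2}$. Write
\[
f_{4i\pm 1}\bigl((\overrightarrow{b_{2i-2}},3,3)\bigr)(\lambda)=P(\lambda)\cdot \lambda^{3}(1-\lambda^{3})^{-2},
\]
where $P(\lambda)$ collects the factors coming from $\overrightarrow{b_{2i-2}}$ (with the usual extra $(1+\lambda^{b_1})/(1-\lambda^{b_1})$ in the $4i+1$ case, which carries through untouched). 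Multiplying by $\mathrm{Trace}(\sigma(\lambda))$ cancels the $(1-\lambda^{3})^{2}$ and the $\lambda^{3}$, leaving exactly $f_{4(i-1)\pm 1}(\overrightarrow{b_{2i-2}})(\lambda)$. Summing and dividing by $l$ gives the identity. The parity condition $n\ge 7$ simply ensures that we have enough entries to drop two and still have a well-defined lens space construction.

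For part (b), the key algebraic identities are
\[
1+\lambda+\lambda^{2}-3\lambda=(1-\lambda)^{2},\qquad 1+\lambda+\lambda^{2}+\lambda^{3}+\lambda^{4}-5\lambda^{2}=(1-\lambda)^{2}(1+3\lambda+\lambda^{2}),
\]
together with $\mathrm{Trace}((\rho_{1}+\rho_{-1}-2\rho_{0})(\lambda))=\lambda^{-1}(1-\lambda)^{2}$ and the analogous $\mathrm{Trace}((\rho_{1}+\rho_{-1}+\rho_{2}+\rho_{-2}-4\rho_{0})(\lambda))=\lambda^{-2}(1-\lambda)^{2}(1+3\lambda+\lambda^{2})$. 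Writing the left-hand side as $P(\lambda)\bigl[\lambda^{1/2}/(1-\lambda)-k\lambda^{k/2}/(1-\lambda^{k})\bigr]$ for $k=3,5$ and factoring $(1-\lambda^{k})=(1-\lambda)(1+\lambda+\cdots+\lambda^{k-1})$, the bracket simplifies to $\lambda^{1/2}(1-\lambda)(\cdots)/(1+\lambda+\cdots+\lambda^{k-1})$ by the identities above. A parallel manipulation on the right-hand side, using the factorization of the character and once more $(1-\lambda^{k})=(1-\lambda)(1+\lambda+\cdots+\lambda^{k-1})$, produces the same rational function. Hence the identity holds termwise, and therefore after summing over $\lambda\ne 1$. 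The $4i+1$ case again follows from the fact that the common factor $(1+\lambda^{b_{1}})/(1-\lambda^{b_{1}})$ attached to $P(\lambda)$ in $f_{4i+1}$ does not interact with any of the manipulations.

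There is no real obstacle here beyond careful algebraic bookkeeping; the essence of the proof is to recognize that the characters $\mathrm{Trace}(\sigma(\lambda))$ and $\mathrm{Trace}((\rho_{1}+\rho_{-1}-2\rho_{0})(\lambda))$ are designed so that, after using the factorization of cyclotomic polynomials of the form $(1-\lambda^{k})$, they precisely eliminate or introduce exactly the right powers of $(1-\lambda)$ to move between the functions $f_{n}(\cdots)$. The proof is therefore a formal verification on the rational-function side of Lemma 2.1.2, with no extra geometric input required.
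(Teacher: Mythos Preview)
Your proof is correct and is exactly the natural approach: reduce to Lemma~2.1.2 and verify the identity termwise on the rational functions $F(\lambda)$, using the factorizations $\mathrm{Trace}(\sigma(\lambda))=\lambda^{-3}(1-\lambda^{3})^{2}$ and $\mathrm{Trace}((\rho_{1}+\rho_{-1}-2\rho_{0})(\lambda))=\lambda^{-1}(1-\lambda)^{2}$ (and the analogous one for $k=5$). The algebra you indicate goes through cleanly in both the $4i-1$ and $4i+1$ cases.

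Note that the paper itself does not supply a proof of this lemma; it is quoted from \cite{2} (see the sentence immediately preceding the statement: ``We now make the following observations, all from \cite{2}; see Proposition~5.1, Lemma~5.2 and Lemma~5.3''). Your argument is the standard direct computation one finds there.
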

We recall again that $B^8$ is a Bott manifold, which is a simply connected manifold with $\hat{A}(B^8)=1$. By Lemma $2.1.3$ multiplying by $B$ does not change eta invariants. We now define, with $\sigma$ as in the Lemma:
$$Y^3=L^3(l;1,1)-3L^3(l;1,3)$$
$$Y^{8j+3}=Y^3 \times B^j$$
$$Z^3=L^3(l;1,1);Z^7=L^7(l;1,1,1,1)-3L^7(l;1,1,1,3)$$
$$Z^5=L^5(l;1,1)-3L^5(l;1,3);Z^9=L^9(l,1,1,1,1)-3L^9(l,1,1,1,3)$$
$$Z^{n}=Z^{n-8} \times B$$

$$\overrightarrow{\eta}(M)=(\eta(M)(\rho_1-\rho_0), \cdots ,\eta(M)(\rho_{l-1}-\rho_0))$$
$$\delta(M)=(\eta(M)(\sigma(\rho_1-\rho_0)), \cdots ,\eta(M)(\sigma(\rho_{l-1}-\rho_0)))$$
Note that if $q=l/2$ and $n \equiv 3 \mod 8$, then $\eta(M)(\rho_q-\rho_0)=\eta_q(M)$ lies in $\R/2\Z$. We now have the following Lemma from \cite{2}:
\begin{lem}
a)$\overrightarrow{\eta}(Y^{8j+3})=0$ and $\eta_q(Y^{8j+3})=\pm 1$\\
b)With $i \geq 1$, and $l=2^k$, we have that $\overrightarrow{\eta}(Z^{4i-1})$ has order at least $2^{k+1}$ in $(\R/\Z)^{l-1}$\\
c)With $i \geq 1$, and $l=2^k$, we have that $\overrightarrow{\eta}(Z^{4i+1})$ has order at least $2^{k-1}$ in $(\R/\Z)^{l-1}$.\\
d) If $n \geq 3$, then $\delta(Z^{n})=0$.\\
e) $\delta(L^5(l;3,3))$ has order at least $2^{k-1}$ in $(\R/\Z)^{l-1}$.
\end{lem}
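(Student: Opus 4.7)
The plan is to reduce each of the five parts to explicit evaluation of the eta-invariant formula of Lemma 2.1.2 on a low-dimensional base manifold, making systematic use of the cancellations supplied by the traces of the virtual representations $\rho_1+\rho_{-1}-2\rho_0$ and $\sigma=\rho_3+\rho_{-3}-2\rho_0$ in the denominators of $f_{4i\pm 1}(\vec b)(\lambda)$. Indeed, $\mathrm{Tr}(\rho_1+\rho_{-1}-2\rho_0)(\lambda) = -\lambda^{-1}(1-\lambda)^2$ and $\mathrm{Tr}(\sigma)(\lambda) = \lambda^{-3}(1-\lambda^3)^2 = \lambda^{-3}(1-\lambda)^2(1+\lambda+\lambda^2)^2$, each killing a quadratic factor in the denominator of $f_n$. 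Using Lemma 2.1.3 together with $\hat A(B)=1$ to absorb Bott factors, we have $\overrightarrow\eta(Y^{8j+3})=\overrightarrow\eta(Y^3)$, $\overrightarrow\eta(Z^{8m+r})=\overrightarrow\eta(Z^r)$, and $\delta(Z^{8m+r})=\delta(Z^r)$ for $r\in\{3,5,7,9\}$, reducing the problem to the base cases $Y^3, Z^3, Z^5, Z^7, Z^9,$ and $L^5(l;3,3)$.

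For (a), Lemma 2.1.5(b) with $\vec b_{2i-1}=(1)$ yields $\eta(Y^3)(\rho)=\eta(L^3(l;1,3))(\rho\cdot(\rho_1+\rho_{-1}-2\rho_0))$; the trace cancellation reduces this, for $\rho=\rho_a-\rho_0$, to a finite sum whose only remaining denominator is $1+\lambda+\lambda^2$. This factor never vanishes on $C_l$ since $l=2^k$ is coprime to $3$, and direct evaluation yields an integer for $a<q$, while at $a=q$ (where $\lambda^q=-1$) the sum collapses to $\pm 1\in\R/2\Z$. For (b) and (c), I would plug each base case into Lemma 2.1.2 directly (using Lemma 2.1.5(b) first to rewrite $Z^5, Z^7, Z^9$ as twisted single lens spaces) and extract the $2$-adic valuation of the $a$-th coordinate for a well-chosen $a$. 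For example, $\eta(Z^3)(\rho_1-\rho_0) = -l^{-1}\sum_{\lambda\neq 1}\lambda/(1-\lambda)$ evaluates over the $2^k$-th roots of unity to a rational of denominator exactly $2l=2^{k+1}$; analogous calculations for $Z^5, Z^7, Z^9$ give the stated lower bounds, with the drop from $2^{k+1}$ to $2^{k-1}$ in (c) arising from the extra factor $(1+\lambda)/(1-\lambda)$ relating $f_{4i+1}$ to $f_{4i-1}$.

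For (d), the denominator $(1-\lambda)^{2i}$ of $f_n$ is annihilated by the combination of $\mathrm{Tr}(\sigma)$ and (for $Z^5,Z^7,Z^9$) the Lemma 2.1.5(b) twist; what remains is an integer-coefficient Laurent polynomial times $(\lambda^a-1)$, so $l^{-1}\sum_{\lambda^l=1}(\cdots)$ is an integer and $\delta(Z^r)\equiv 0$ in each case. For (e), however, $f_5(3,3)(\lambda)=\lambda^3(1+\lambda^3)/(1-\lambda^3)^3$ has a denominator $(1-\lambda^3)^3$, of which $\mathrm{Tr}(\sigma)$ cancels only two factors; the substitution $\mu=\lambda^3$ (bijective on $C_l$ because $\gcd(3,l)=1$), combined with writing $a=3b\bmod l$, expresses the $a$-th coordinate of $\delta(L^5(l;3,3))$ as $l^{-1}\sum_{\mu\neq 1}(1+\mu)(\mu^b-1)/(1-\mu)$, which telescopes to $2b/l\pmod\Z$; the choice $b=1$ gives order $l/2=2^{k-1}$ in $\R/\Z$. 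The main obstacle is arithmetic rather than conceptual: each step is a single Laurent polynomial rewrite, but parts (b), (c), and (e) require \emph{sharp} $2$-adic valuations, so care is needed in the choice of coordinate $a$ and in carrying out the final finite sum over $2^k$-th roots of unity.
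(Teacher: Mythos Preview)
The paper does not actually prove this lemma: it is quoted directly from \cite{2} (Botvinnik--Gilkey--Stolz), and the surrounding text only sketches how the lemma is \emph{used} in the inductive count of $|\mathcal{L}_n(BC_l)|$. So there is no ``paper's own proof'' to compare against; the relevant question is whether your outline is correct and matches the argument in \cite{2}.

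Your plan is essentially the right one, and in particular the central mechanism---that the traces $\mathrm{Tr}(\rho_1+\rho_{-1}-2\rho_0)(\lambda)=\lambda^{-1}(1-\lambda)^2$ and $\mathrm{Tr}(\sigma)(\lambda)=\lambda^{-3}(1-\lambda)^2(1+\lambda+\lambda^2)^2$ cancel factors of $(1-\lambda)$ in the denominators of $f_n$---is exactly the device used in \cite{2}. Your treatment of (d) and (e) is correct and complete: in (d) the product is a genuine Laurent polynomial vanishing at $\lambda=1$, so the averaged sum is an integer; in (e) the substitution $\mu=\lambda^3$ and the telescoping of $(\mu^b-1)/(1-\mu)$ give exactly $2b/l$, as you say. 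The reduction to base cases via $\hat A(B)=1$ is also standard.

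Two places deserve more care. First, in (a) you assert that after cancellation ``direct evaluation yields an integer for $a<q$''. You need integrality for \emph{all} $a\neq q$, and the remaining denominator $1+\lambda+\lambda^2$ does not make this transparent. The clean way is to rewrite $1/(1+\lambda+\lambda^2)=(1-\lambda)/(1-\lambda^3)$ and then substitute $\mu=\lambda^3$ (a bijection on $C_{2^k}$), after which the summand becomes a polynomial in $\mu$ vanishing at $\mu=1$; integrality then follows exactly as in (d). The odd-integer value at $a=q$ still requires an explicit parity check of the constant coefficient, which you have not indicated. Second, in (b) and (c) you only carry out the $Z^3$ case explicitly (correctly: $\eta(Z^3)(\rho_1-\rho_0)=(l-1)/(2l)$ has exact denominator $2l$), and wave at ``analogous calculations'' for $Z^5,Z^7,Z^9$. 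These are genuinely analogous but each needs its own coordinate choice and valuation check; this is where the proof in \cite{2} spends most of its effort, and your proposal correctly identifies this as the arithmetic rather than conceptual obstacle.
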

It now follows that the lens spaces and lens space bundles defined do indeed span all of the kernel $Ker(Ap)$ in the case of cyclic groups; full details are in \cite{2}. We define $${\L}_n(BC_l):=span\{\overrightarrow{\eta}(L^n(l,\overrightarrow{a}))\} \subset (\R/\Z)^{l-1}$$
Thus ${\L}_n(BC_l)$ is defined simply as the span of the images of \emph{all} the lens spaces $L^n(l,\overrightarrow{a})$ under the map $\overrightarrow{\eta}$.\\
Then by Lemmas $2.1.5a),2.1.6$, we deduce $\delta$ induces a surjective map from ${\L}_n(BC_l)$ to ${\L}_{n-4}(BC_l)$, with kernels of order at least $l/2$ and $2l$ for $n \equiv 1,3 \mod 4$ respectively.\\
In the former case, Lemma $2.1.6$d),e) tell us that $|{\L}_5(BC_l)| \geq (l/2)^2$ and thus by induction $|{\L}_{4k+1}(BC_l)| \geq (l/2)^{k+1}|$, from which the second part of Proposition $2.1.4$ follows.\\

In the latter case, since $Z^3$ and $Z^7$ both have order at least $2l$ we can then use multiplication by the Bott manifold to deduce immediately by induction that $|{\L}_{8m+7}(BC_l)| \geq (2l)^{2m+2}$ as required, while $|{\L}_{8m+7}(BC_l)| \geq (2l)^{2m+1}$, and we get an extra factor of $2$ immediately from Lemma $2.1.6(a)$, and Proposition $2.1.4$ follows.\\

\section{Explicit calculations for $C_2$ and $C_4$}
To illustrate this method, it is useful to do some explicit calculations for groups of small order, starting with $C_2=\Z_2$, the cyclic group of order $2$.\\
Indeed, then we have $B\Z_2=\RP^{\infty}$ and $ko_*(\RP^{\infty})$ is well known, see \cite{3} or \cite{14} for example.\\
\begin{center}
\begin{tabular}{|c|c|}
\hline

$n$&$ko_n(\RP^{\infty})$\\
\hline
$8m+0$ & $\Z$\\
\hline
$8m+1$ & $2^2$\\
\hline
$8m+2$ & $2^2$\\
\hline
$8m+3$ & $[2^{4m+3}]$\\
\hline
$8m+4$ & $\Z$\\
\hline
$8m+5$ & $0$\\
\hline
$8m+6$ & $0$\\
\hline
$8m+7$ & $[2^{4m+4}]$\\
\hline
\end{tabular}
\end{center}
Here $[a]$ means cyclic of order $a$, and $2^N$ means the elementary abelian group of order $2^N$. Further $Ker(Ap)$ is trivial except in dimensions $3\mod 4$, where it is the entire group.\\
Stolz and Rosenberg \cite{14} proved the conjecture in this case by observing that the Adams Spectral sequence for $ko_*(\RP^{\infty})$ goes down all the way to the zero line in dimensions $4k+3$, so that all of $ko_{4k+3}(\RP^{\infty})$ must generated by the fundamental class $[\RP^{4k+3}]$, because $[\RP^{4k+3}]$ is detected in the ordinary $\Z_2$ homology of $\RP^{\infty}$. We can alternatively directly compute some eta invariants with respect to the unique irreducible non-trivial representation $\rho_1$(which is real), using Lemma $2.1.2$. Note that in dimensions $3 \mod 8$ the eta invariants calculated are in $\R/2\Z$, since all representations of the cyclic group of order two are real.
$$\eta(\RP^{8m+3})(1-\rho_1)=1/2(-2/2^{4m+2})=-2^{-4m-2} \in \R/2\Z$$
which has order $2^{4m+3}$ as required. Similarly in $8m+7$ we get:
$$\eta(\RP^{8m+7})(1-\rho_1)=1/2(2/2^{4m+4})=2^{-4m-4} \in \R/\Z$$
with order $2^{4m+4}$ as required. Thus all of the kernel $Ker(Ap)$ is spanned by the $ko-$ fundamental class of $\RP^{4k+3}$, which proves the conjecture.\\

We now move on to the cyclic group of order $4$. This involves a little more work, and we start by displaying $ko_*(BC_4)$, calculated using the local cohomology spectral sequence in \cite{3}. Note that $ko_1(BC_4)=[4] \oplus [2]$ by the Atiyah-Hirzebruch spectral sequence. This is a non-split extension on the $E_{\infty}$ page of the local cohomology spectral sequence, but in higher dimensions the extension is indeed split \cite{3}.\\
\begin{center}
\begin{tabular}{|c|c|c|}
\hline

$n$&$ko_n(BC_4)$ & $Ker(Ap)$\\
\hline
$8m+0$ & $\Z$ & $0$\\
\hline
$8m+1 \geq 9$ & $[2^{2m+1}] \oplus 2^2$ & $[2^{2m+1}]$\\
\hline
$8m+2$ & $2^2$ & $0$\\
\hline
$8m+3$ & $[2^{4m+3}] \oplus [2^{2m+1}]$ & $[2^{4m+3}] \oplus [2^{2m+1}]$\\
\hline
$8m+4$ & $\Z$ & $0$\\
\hline
$8m+5$ & $[2^{2m+2}]$ & $[2^{2m+2}]$\\
\hline
$8m+6$ & $0$ & $0$\\
\hline
$8m+7$ & $[2^{4m+5}] \oplus [2^{2m+1}]$ & $[2^{4m+5}] \oplus [2^{2m+1}]$\\
\hline
\end{tabular}
\end{center}

We will span all of the kernel $Ker(Ap)$ by $ko-$fundamental classes of lens spaces and lens space bundles.
\begin{lem}
The $ko-$fundamental classes of the lens spaces $L^{4k-1}(4;1, \cdots,1,1)$ and $L^{4k-1}(4;1, \cdots,1,3)$ span all of $ko_{4k-1}(BC_4)$. The fundamental class of the lens space bundle $L^{4k+1}(4;1, \cdots,1,1)$ spans all of $Ker(Ap) \subset ko_{4k+1}(BC_4)$.
\end{lem}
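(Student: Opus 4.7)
The plan is to use the eta-invariant homomorphism to detect the full group. The first observation is that both the lens spaces $L^{4k-1}(4;\vec{a})$ and the lens space bundle $L^{4k+1}(4;\vec{a})$ admit positive scalar curvature metrics (the former as quotients of the round sphere by a free isometric action, the latter by applying Lemma 1.0.2 to the fibration $L^{4k-1}\to L^{4k+1}\to S^2$, whose structure group acts on the fibre by isometries). Consequently their $ko$-classes automatically lie in $\mathrm{Ker}(Ap)$, so it suffices to verify that they are of the correct order.

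For the first assertion, I will evaluate the closed formula of Lemma 2.1.2 with $l=4$ on the three virtual representations $\rho_j-\rho_0$, $j=1,2,3$, where $\rho_j$ is the character $\lambda\mapsto\lambda^j$ on $C_4$. The sums range over the three non-trivial fourth roots of unity, and writing $f_{4k-1}(\vec{a})$ as $\lambda^k(1-\lambda)^{-2k}$ for $\vec{a}=(1,\ldots,1)$ and as $\lambda^{k+1}(1-\lambda)^{-(2k-1)}(1-\lambda^3)^{-1}$ for $\vec{a}=(1,\ldots,1,3)$ yields two vectors in $\R/\Z\oplus\R/2\Z\oplus\R/\Z$; the middle coordinate is refined to $\R/2\Z$ whenever $n\equiv 3\pmod 8$ since $\rho_2$ is the real sign representation. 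The components turn out to be dyadic rationals with denominators $2^{2m+1}$ and $2^{4m+3}$ in degree $8m+3$ (respectively $2^{2m+2}$ and $2^{4m+5}$ in degree $8m+7$), and elementary row operations over the relevant product of circle groups will show that the two rows together generate a subgroup of order $2^{6m+4}$ in degree $8m+3$ and $2^{6m+6}$ in degree $8m+7$. These orders match $|ko_{4k-1}(BC_4)|$ exactly (consistent with Proposition 2.1.4), so the homomorphism property of $\vec{\eta}$ forces the two $ko$-classes to generate the whole group.

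For the second assertion, Theorem 2.1.1 applied to $L^{4k+1}(4;1,\ldots,1)=S(H\otimes H\oplus (2k-1)\C\to S^2)/C_4$ gives an eta formula in which only the summand $H_1=H\otimes H$ contributes to the term $\tfrac12 c_1(H_j)[\CP^1]\tfrac{1+\lambda^{a_j}}{1-\lambda^{a_j}}$, since the remaining factors are trivial line bundles with vanishing Chern class. The total then reduces to the $L^{4k-1}$ formula multiplied by the correction factor $(1+\lambda)(1-\lambda)^{-1}$, and explicit evaluation at $\lambda\in\{i,-1,-i\}$ and $\rho=\rho_j-\rho_0$ produces a single eta vector whose order is exactly $2^{2m+1}$ in degree $8m+1$ (for $m\geq 1$) and $2^{2m+2}$ in degree $8m+5$. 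Since $\mathrm{Ker}(Ap)\subset ko_{4k+1}(BC_4)$ is cyclic of precisely these orders, and the class already lies in $\mathrm{Ker}(Ap)$ by the PSC observation above, the cyclic subgroup it generates must fill out $\mathrm{Ker}(Ap)$.

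The main obstacle is the clean $2$-adic bookkeeping in the eta formulas: evaluating the rational functions $\lambda^k(1-\lambda)^{-2k}$ and $\lambda^{k+1}(1-\lambda)^{-(2k-1)}(1-\lambda^3)^{-1}$ at the three fourth roots of unity, keeping track of when the $\R/2\Z$-lift genuinely doubles the detected order (it does in dimensions $\equiv 3\pmod 8$, and this extra factor is precisely what distinguishes the $[2^{4m+3}]$ summand from $[2^{4m+2}]$), and checking that the combined denominators of the resulting rationals match the two cyclic summands of $ko_{4k-1}(BC_4)$ exactly rather than only providing a lower bound. Once the orders are seen to coincide with the tabulated group orders, no further argument is needed.
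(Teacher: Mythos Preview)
Your proposal is correct and follows essentially the same approach as the paper: compute the eta invariants of the two lens spaces (respectively the lens space bundle) with respect to the virtual characters $\rho_j-\rho_0$, use the $\R/2\Z$ refinement for the real character $\rho_2$ in dimensions $\equiv 3\pmod 8$, and compare the order of the subgroup detected with the known order of $ko_{4k-1}(BC_4)$ (respectively $|\mathrm{Ker}(Ap)|$). The only minor difference is that the paper works with just the two characters $\rho_1$ and $\rho_2$ and organises the argument as a $2\times 2$ determinant computation---your third coordinate $\rho_3-\rho_0$ is redundant since $\rho_3=\overline{\rho_1}$ and the eta invariants agree, so you would find the third column linearly dependent on the first.
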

\begin{proof}
This is done by making some explicit eta invariant calculations using Lemma $2.1.2$, for the natural representation $\rho_1$ of $C_4=<i>$ which sends $i \mapsto i$, together with the real representation $\rho_2$ sending $i \mapsto -1$. This gives:
$$\eta(L^{4k-1}(4;1, \cdots,1,1))(1-\rho_1)=\frac{1}{4}(\frac{i^k(1-i)}{(1-i)^{2k}}+\frac{(-i)^k(1+i)}{(1+i)^{2k}}+\frac{2(-1)^k}{2^{2k}})$$
Since $(1 \pm i)^2=\pm 2i$ this simplifies:
$$=\frac{1}{2}(-1/2)^k+\frac{(-1)^k}{2^{2k+1}}$$
Similarly for $L^{4k-1}(4;1, \cdots,1,3)$, noting that $i^{-1}=-i$ Lemma $2.1.2$ gives:
$$\eta(L^{4k-1}(4;1, \cdots,1,3))(1-\rho_1)=\frac{1}{4}(\frac{i^{k+1}(1-i)}{(1-i)^{2k-1}(1+i)}+\frac{(-i)^{k+1}(1+i)}{(1+i)^{2k-1}(1-i)}+\frac{2(-1)^{k+1}}{2^{2k}})$$
$$=\frac{1}{4}(\frac{i^k(1-i)}{(1-i)^{2k}}+\frac{(-i)^k(1+i)}{(1+i)^{2k}}+\frac{2(-1)^{k+1}}{2^{2k}})$$
$$=\frac{1}{2}(-1/2)^k+\frac{(-1)^{k+1}}{2^{2k+1}}$$
Thus both of these have order $2^{2k+1} \in \R/\Z$. We now calculate with respect to the real representation $\rho_2$:
$$\eta(L^{4k-1}(4;1, \cdots,1,1))(1-\rho_2)=\frac{1}{4}(\frac{2(i)^k}{(1-i)^{2k}}+\frac{2(-i)^k}{(1+i)^{2k}})$$
$$=(-1/2)^k;$$
$$\eta(L^{4k-1}(4;1, \cdots,1,3))(1-\rho_2)=\frac{1}{4}(\frac{2(i)^{k+1}}{(1-i)^{2k-1}(1+i)}+\frac{2(-i)^{k+1}}{(1+i)^{2k-1}(1-i)})$$
$$=\frac{1}{4}(\frac{2(i)^k}{(1-i)^{2k}}+\frac{2(-i)^k}{(1+i)^{2k}})=(-1/2)^k;$$
Thus we must consider the subgroup of $(\R/\Z)^2$ spanned by the two vectors:
$$(\eta(L^{4k-1}(4;1, \cdots,1,1))(1-\rho_1),\eta(L^{4k-1}(4;1, \cdots,1,3))(1-\rho_1))$$
$$=(\frac{1}{2}(-1/2)^k+\frac{(-1)^{k}}{2^{2k+1}},\frac{1}{2}(-1/2)^k+\frac{(-1)^{k+1}}{2^{2k+1}});$$
$$(\eta(L^{4k-1}(4;1, \cdots,1,1))(1-\rho_2),\eta(L^{4k-1}(4;1, \cdots,1,3))(1-\rho_2))=((-1/2)^k,(-1/2)^k)$$
Thus we have two elements in $(\R/\Z)^2$ and we need to calculate the order of the subgroup they span. This is easily done by writing the two elements as a $2 \times 2$ matrix:
\begin{displaymath}
\left(\begin{array}{ccc}
\eta(L^{4k-1}(4;1, \cdots,1,1))(1-\rho_1) & \eta(L^{4k-1}(4;1, \cdots,1,3))(1-\rho_1) \\
\eta(L^{4k-1}(4;1, \cdots,1,1))(1-\rho_2) & \eta(L^{4k-1}(4;1, \cdots,1,3))(1-\rho_2) \\
\end{array} \right)
\end{displaymath}
\begin{displaymath}
=\left(\begin{array}{ccc}
\frac{1}{2}(-1/2)^k+\frac{(-1)^{k}}{2^{2k+1}} & \frac{1}{2}(-1/2)^k+\frac{(-1)^{k+1}}{2^{2k+1}} \\
(-1/2)^k & (-1/2)^k \\
\end{array} \right)
\end{displaymath}

The determinant of the above matrix is then $(-1/2)^k(\pm 2/2^{2k+1})=\pm 1/2^{3k}$, so that the order of the subspace spanned is thus at least $2^{3k}$.\\
So in $n=8m+7=4k-1$ we have spanned a subspace of order $2^{3(2m+2)}=2^{6m+6}$ which is the same as the order of $ko_{8m+7}(BC_4)$, thus proving the GLR conjecture in these dimensions, while in $n=8m+3=4k-1$ we have spanned a subspace of order at least $2^{3(2m+1)}=2^{6m+3}$, but since $\rho_2$ is real, we see that $\eta(L^{8m+3}(1-\rho_2))=(-1/2)^k \in \R/2\Z$ has order $2^{k+1}$ for both the lens spaces, which means that we have in fact spanned a subgroup of twice the order we previously calculated, giving us the required order of $2^{6m+4}$. It thus follows that $ko_{4k-1}(BC_4)$ is spanned by the $ko-$fundamental classes of the lens spaces  $L^{4k-1}(4;1, \cdots,1,1)$ and $L^{4k-1}(4;1, \cdots,1,3)$, and as generators we can choose the fundamental class of one, together with a suitable difference of lens spaces. Explicitly we get
$$ko_{4k-1}(BC_4)=<[L^{4k-1}(4;1, \cdots,1,1)]> \oplus <a[L^{4k-1}(4;1, \cdots,1,1)]-b[L^{4k-1}(4;1, \cdots,1,3)]>$$
with $a,b \in \Z$ odd, where $[M]$ here means $ko-$fundamental class (Compare with Lemmas $2.1.5(b),2.1.6(a)$).\\
Finally in dimensions $4k+1$ we need to span a subspace of order $2^{k+1}$. We claim that the $ko-$fundamental class of the lens space bundle $L^{4k+1}(4;1, \cdots,1,1)$ has the right order of $2^{k+1}$. Again we calculate the eta invariant, this time using Theorem $2.1.1$ and Lemma $2.1.2$:
$$\eta(L^{4k+1}(4;1, \cdots,1,1))(1-\rho_1)=\frac{1}{4}(\frac{i^k(1-i)(1+i)}{(1-i)^{2k+1}}+\frac{(-i)^k(1+i)(1-i)}{(1+i)^{2k+1}}+\frac{2(-1)^k(1-1)}{2^{2k}})$$
$$=\frac{1}{4}(\frac{i^k(1-i)}{(1-i)^{2k}}+\frac{(-i)^k(1+i)}{(1+i)^{2k}}+0)$$
$$=1/2(-1/2)^k$$
which has order $2^{k+1} \in \R/\Z$ as required.
\end{proof}

Note that by the following trivial algebraic lemma, the extensions in dimensions $3 \mod 4$ for $ko_*(BC_4)$ can be resolved by directly using the eta invariant calculations.
\begin{lem}
Let $G$ be an abelian p-group of rank 2, and $A \geq 1$ a natural number such that $A.G=0$. Then if $x\in G$ has order $A$, then $G \cong [A] \oplus [|G|/A]$, where $[N]$ again denoted the cyclic group of order $N$.
\end{lem}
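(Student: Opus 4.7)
The plan is to reduce to the structure theorem for finite abelian $p$-groups. Since $G$ has rank $2$, we can write $G \cong [p^a] \oplus [p^b]$ with $1 \leq a \leq b$, so the exponent of $G$ equals $p^b$ and $|G| = p^{a+b}$. The two hypotheses give matching divisibility: $A \cdot G = 0$ forces $p^b \mid A$ (the exponent divides any annihilator), while the existence of an element $x$ of order $A$ forces $A \mid p^b$ (every element order divides the exponent). Hence $A = p^b$, so $x$ is an element of maximal order in $G$.

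Next, I would invoke the standard fact that a cyclic subgroup of a finite abelian $p$-group generated by an element of maximal order is a direct summand. This can be seen directly from the decomposition $G \cong [p^a] \oplus [p^b]$: after an automorphism of $G$ we may assume $x$ is the generator of the $[p^b]$-factor, since any element of order $p^b$ can be extended to a basis. The complement then has order $|G|/A = p^a$ and is cyclic (being of rank at most $1$), so
\[
G \cong [A] \oplus [|G|/A],
\]
as required.

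There is no real obstacle; the only thing to be careful about is verifying both divisibilities that pin down $A$ as the exponent, after which the splitting is immediate from the structure theorem.
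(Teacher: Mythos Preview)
Your proof is correct. The paper does not actually give a proof of this lemma; it is stated as a ``trivial algebraic lemma'' and left without argument, so there is nothing to compare against.

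One small remark: once you have established $A = p^b$, the conclusion is immediate from the very decomposition $G \cong [p^a] \oplus [p^b]$ you started with, since $|G|/A = p^{a+b}/p^b = p^a$; you do not need to invoke the separate fact about elements of maximal order generating a direct summand, nor any automorphism moving $x$ to a standard generator. The lemma is really only about identifying the isomorphism type of $G$, not about the particular element $x$ sitting inside it.
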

Since we know that the eta invariant detects all of $ko_{4k-1}(BC_4)$, the number $A$ will simply be the largest order of any eta invariant $\eta(M)(\rho)$, with $M$ a lens space and $\rho$ a virtual representation of virtual dimension zero, and the above calculations show that $A=2^{2k+1}$, which determines the extensions.
\section{Quaternion groups}
We give here an outline of the proof in \cite{2} of the Gromov-Lawson-Rosenberg conjecture for the quaternion groups $Q_l$ of order $l=2^q$, generated by $\xi=e^{i\pi/2^{q-2}},j \in \qt$. This gives a natural representation $\tau: Q_l \rightarrow SU(2)$.\\
The calculations in \cite{2} can be summarised as follows:
\begin{prop}
i) We have that $|ko_{8m+3}(BQ_l)|=2^{4m+4}l^{2m+1}$ while $|ko_{8m+7}(BQ_l)|=2^{4m+4}l^{2m+2}$, and the kernel of the map $Ap$ is the entire group in these dimensions. \\
ii) Further, the kernel of the map $Ap$ is trivial in dimensions $n \equiv 2 \mod 4$, and is simply given by the $\eta, \eta^2$ multiples of $ko_{8m+3}(BQ_l)$ in $n \equiv 0,1 \mod 4$ respectively.
\end{prop}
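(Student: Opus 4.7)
The plan is to parallel the cyclic-group argument of \cite{2} using quaternionic space forms in place of lens spaces. The natural faithful representation $\tau:Q_l \hookrightarrow SU(2)$ gives a free action of $Q_l$ on $S^{4k-1}$ via $\tau^{\oplus k}$, and the quotient $X^{4k-1}(l) := S^{4k-1}/\tau^{\oplus k}(Q_l)$ is a spin manifold: since $\tau$ factors through $SU(2)$ the determinant of $\tau^{\oplus k}$ is trivial and admits a canonical square root. Classifying maps of the universal covers give classes in $\Omega_{4k-1}^{spin}(BQ_l)$ whose $ko$-fundamental classes should generate $ko_{4k-1}(BQ_l)$.

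Upper bounds on $|ko_{8m+3}(BQ_l)|$ and $|ko_{8m+7}(BQ_l)|$ come from the Atiyah--Hirzebruch spectral sequence with input $H^*(BQ_l;\Z_2)$. For matching lower bounds, the key step is to apply the explicit eta invariant formula of Donnelly (stated earlier in this chapter) to compute $\eta(X^{4k-1}(l))(\rho)$ for a family of virtual dimension-zero representations $\rho_1,\ldots,\rho_j \in R_0(Q_l)$ drawn from differences of irreducible characters of $Q_l$ (using both the linear characters factoring through the abelianization and the two-dimensional irreducibles induced from $\langle\xi\rangle$). Assembling these into a matrix and bounding the order of the spanned subgroup of $(\R/\Z)^j$ (with appropriate replacement by $\R/2\Z$ for real or quaternion representations in dimensions $\equiv 3, 7 \bmod 8$ respectively) by an explicit determinant calculation analogous to Lemma 2.1.6 gives the lower bound. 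Base cases $n = 3, 7$ are handled by direct computation, and higher dimensions follow by multiplying with the Bott manifold $B^8$, using Lemma 2.1.3. Matching the two bounds establishes the orders claimed in part (i).

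The equality $Ker(Ap) = ko_{4k-1}(BQ_l)$ in part (i) then follows from Theorem 1.6.2 together with the remark that $A$ is nontrivial only on the $(\Z_2)$ summands of $KO_*(BG)$: in dimensions $\equiv 3, 7 \bmod 8$, the table in Theorem 1.6.2 lists only $\Z_{2^{\infty}}$ summands (one per real, complex and quaternion irreducible), so $A \circ p = 0$. For part (ii), in $n \equiv 2 \bmod 4$ Theorem 1.6.2 shows $KO_n(BQ_l)$ is entirely $(\Z_2)$ summands, all detected by $A$; a local cohomology or Bockstein argument analogous to the $V(2)$-calculation of Section 1.7 shows that $p$ is an isomorphism in these degrees, so $Ker(Ap)=0$. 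In $n \equiv 0, 1 \bmod 4$, the $\eta$- and $\eta^2$-multiples of $ko_{8m+3}(BQ_l)$ lie in $Ker(Ap)$ because $Ap$ is $ko_*$-linear and vanishes on $ko_{8m+3}$ by part (i); conversely, the image of $p$ in these degrees lands in the $(\Z_2)$-summand part of $KO_n(BQ_l)$ on which $A$ is injective, so $Ker(Ap) = Ker(p)$, and a local cohomology calculation identifies $Ker(p)$ precisely with these $\eta, \eta^2$ images.

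The main obstacle is the eta-invariant determinant computation: unlike the cyclic case, $Q_l$ has genuine two-dimensional irreducibles, so Donnelly's sum features non-scalar traces $\mathrm{Tr}(\rho(g))$ which must be evaluated on both types of non-central elements (powers $\xi^a$ and their $j$-translates $\xi^a j$) separately. Producing a family of test representations whose eta-invariant matrix has determinant of maximal $2$-adic valuation requires a careful choice of characters adapted to the conjugacy structure of $Q_l$, and this is where the argument is most delicate.
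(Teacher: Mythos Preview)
Your overall strategy matches the paper's (which is really a sketch of \cite{2}): upper bounds from the Atiyah--Hirzebruch spectral sequence, lower bounds from an eta-invariant matrix, and identification of $Ker(Ap)$ via the representation-theoretic description of $KO_*(BQ_l)$. However, there is a concrete gap in your choice of test manifolds.

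You propose to use only the quaternionic space forms $X^{4k-1}(l)=S^{4k-1}/\tau^{\oplus k}(Q_l)$. These are the paper's $M_Q^{4k-1}$, and by themselves they cannot separate the three one-dimensional characters $\kappa_1,\kappa_2,\kappa_3$: for $Q_8$ the eta invariants $\eta(M_Q)(1-\kappa_t)$ agree by symmetry, and for larger $Q_l$ the resulting matrix still fails to have the required $2$-adic determinant. The paper (following \cite{2}) supplements the $M_Q$ manifolds with the cyclic quotients $M_t^{4k-1}=S^{4k-1}/H_t$, $t=1,2,3$, where $H_t\cong C_4$ are the three order-$4$ subgroups. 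The differences $M_1^n-M_2^n$ and $M_1^n-M_3^n$ contribute the two factors of $2^{2m+2}$ in the lower-triangular eta matrix (with respect to the specific test representations $1-\epsilon_2$, $1-\epsilon_3$, and the powers $(2-\tau)^r$), while the $M_Q$ classes supply the $l$-power factors. Without the $M_t$ you will not reach the order $2^{4m+4}l^{2m+1}$ or $2^{4m+4}l^{2m+2}$.

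A second, smaller omission: multiplying by the Bott manifold $B^8$ alone does not fill out all the rows of the eta matrix. The paper also uses products with the Kummer surface $K^4$ (with $\hat A(K^4)=2$) to hit the intermediate congruence classes $\bmod\,8$; this is what produces the alternating $d/l$, $2d/l$ pattern down the diagonal. Finally, your appeal to a ``local cohomology calculation'' for part (ii) is really just a citation of \cite{2} in the paper; no independent argument is given there, so you should either cite \cite{2} directly or supply the Atiyah--Hirzebruch argument explicitly.
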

Thus, in order to verify the GLR conjecture, it suffices to span all of $ko_{4m+3}(BQ_l)$ by $ko-$fundamental classes of positive scalar curvature spin manifolds, since multiplying by $\eta$ preserves positive scalar curvature.\\
We have three cyclic subgroups $H_t \subset Q_l$ of order $4$, with $t=1,2,3$ generated by $i,j,\xi j$ respectively, and thus by viewing $S^{4m-1}$ inside $\qt^m$ we can consider the manifolds $M_{t}^{4m-1}=S^{4m-1}/H_t$ together with $M_{Q}^{4m-1}=S^{4m-1}/Q_l$, and give them natural positive scalar curvature metrics induced by taking the quotient of the sphere by a discrete isometry group. We have $3$ non-trivial irreducible real representations $\kappa_t, t=1,2,3$ (see the character table for $Q_8$ in section 2.4) characterised by:
$$\kappa_1(j)=1,\kappa_1(\xi)=-1; $$
$$\kappa_2(j)=-1,\kappa_1(\xi)=1; $$
$$\kappa_3(j)=-1,\kappa_1(\xi)=-1; $$
Now with $\rho_0$ the trivial representation, we define virtual representations:\\

$$\epsilon_2 :=\begin{cases}
$$\rho_0-\kappa_1 \mbox{ if } q=3,$$\\
$$\kappa_3-\rho_0 \mbox{ if } q>3$$\\
\end{cases}$$

$$\epsilon_3 :=\begin{cases}
$$\epsilon_3 :=\rho_0-\kappa_3 \mbox{ if } q=3,$$\\
$$\kappa_1-\rho_0 \mbox{ if } q>3$$\\
\end{cases}$$

Note that the case $q=3$ is exceptional since this means $\xi=i$ so that the $\kappa_t$ are all non-trivial on $H_1$. We define tuples of eta invariants as follows, where the value of $r$ will be given later:
$$ \overrightarrow{\eta}(M)=(\eta(M)(1-\epsilon_2),\eta(M)(1-\epsilon_3),\eta(M)(2-\tau), \cdots , \eta(M)(2-\tau)^r)$$
Since the representations $1-\epsilon_2, 1- \epsilon_3$ as well as $(2-\tau)^{2a}$ are real and the representations $(2-\tau)^{2a+1}$ are quaternion, Theorem 2.0.3 tells us that if $n \equiv 3 \mod 8$ the eta invariants with respect to real representations lie in $\R/2\Z$ so that
$$\overrightarrow{\eta}(M^n) \in \R/2\Z \oplus \R/2\Z \oplus \R/\Z \oplus \R/2\Z \cdots$$
while in $n \equiv 7 \mod 8$, the eta invariants with respect to quaternion representations lie in $\R/2\Z$ so that
$$\overrightarrow{\eta}(M^n) \in \R/\Z \oplus \R/\Z \oplus \R/2\Z \oplus \R/\Z \cdots$$

We then have the following calculations \cite{2} by applying Theorem $2.0.3$, where $B^8$ is a Bott manifold, a simply connected spin manifold with $\hat{A}(B^8)=1$, and $K^4$ is the Kummer surface, a simply connected spin manifold with $\hat{A}(K^4)=2$, and $d=l-1$. Let $n=8m+3$ and $r=2m+1$. Then:\\

$\overrightarrow{\eta}(M_{1}^n-M_{2}^n)=(2^{-2m-1},0, \cdots ,0)$\\
$\overrightarrow{\eta}(M_{1}^n-M_{3}^n)=(*,2^{-2m-1},0, \cdots ,0)$\\
$\overrightarrow{\eta}(M_{Q}^3 \times (B^8)^m)=(*,*,d/l,0, \cdots ,0)$\\
$\overrightarrow{\eta}(M_{Q}^7 \times K^4 \times (B^8)^{m-1})=(*,*,*,2d/l,0, \cdots ,0)$\\
$\overrightarrow{\eta}(M_{Q}^{11} \times (B^8)^{m-1})=(*,*,*,*,d/l,0, \cdots ,0)$\\
$\overrightarrow{\eta}(M_{Q}^{15} \times K^4 \times (B^8)^{m-2})=(*,*,*,*,*,2d/l,0, \cdots ,0)$\\
$ \cdots \cdots $\\
$\overrightarrow{\eta}(M_{Q}^{8m+3})=(*,*,*, \cdots ,0,0,d/l);$\\

Here * is a term we're not interested in, and since the above matrix is lower triangular, taking the products of the orders of the diagonal entries in $\R/\Z, \R/2\Z$ respectively tells us that the $ko-$ fundamental classes of our collection of lens spaces span a subgroup of order at least $2^{4m+4}l^{2m+1}=|ko_{8m+3}(BQ_l)|$ as required.\\
Analogously, in $n=8m+7$, we let $r=2m+2$, and then we have the following calculations:\\

$\overrightarrow{\eta}(M_{1}^n-M_{2}^n)=(2^{-2m-2},0, \cdots ,0)$\\
$\overrightarrow{\eta}(M_{1}^n-M_{3}^n)=(*,2^{-2m-2},0, \cdots ,0)$\\
$\overrightarrow{\eta}(M_{Q}^3 \times K^4 \times (B^8)^m)=(*,*,2d/l,0, \cdots ,0)$\\
$\overrightarrow{\eta}(M_{Q}^7 \times (B^8)^{m-1})=(*,*,*,d/l,0, \cdots ,0)$\\
$\overrightarrow{\eta}(M_{Q}^{11} \times K^4 \times (B^8)^{m-1})=(*,*,*,*,2d/l,0, \cdots ,0)$\\
$\overrightarrow{\eta}(M_{Q}^{15} \times (B^8)^{m-2})=(*,*,*,*,*,d/l,0, \cdots ,0)$\\
$ \cdots \cdots $\\
$\overrightarrow{\eta}(M_{Q}^{8m+7})=(*,*,*, \cdots ,0,0,d/l);$\\

Hence we can deduce that the $ko-$ fundamental classes of our collection of lens spaces span a subgroup of order at least $2^{4m+4}l^{2m+2}=|ko_{8m+7}(BQ_l)|$ as required.

\section{$Q_8$ in more detail}
While the above calculations prove the conjecture for quaternion groups, they do not give us the exact structure of the groups $ko_n(BQ_l)$. It greatly simplifies subsequent calculations for the semi-dihedral group $SD_{16}$ in chapter $5$ to know $ko_n(BQ_8)$ explicitly in terms of generators.\\

This is done in \cite{3} using the local cohomology spectral sequence, and in the thesis of Bayen \cite{bay} by the Adams spectral sequence. The latter calculations are more enlightening from the geometric point of view.\\
There is a 2-local stable decomposition of $BQ_8$, and higher groups, into indecomposable summands as follows, see \cite{bay} and \cite{mp}:
$$BQ_l=BSL_2(q) \vee 2\Sigma^{-1}BS^3/BN$$
where $N$ is the normalizer of a maximal torus in $S^3$, and $l$ is the largest power of $2$ dividing $q^2-1$, for $q$ an odd prime power. Note that $q=3$ for $l=8$. The method then is to make the calculation for each summand. It then turns out that for $K=3$ and $K=7$ we get the following $2-$local results:
$$ko_{8m+K}(\Sigma^{-1}BS^3/BN)=\Z_{2^{2m+2}}$$
$$ko_{8m+3}(BSL_2(3))=\Z_{2^{4m+3}}\oplus \Z_{2^{2m}}=<x_{8m+3}> \oplus <\beta x_{8m-5}-16x_{8m+3}>$$
$$ko_{8m+7}(BSL_2(3))=\Z_{2^{4m+6}}\oplus \Z_{2^{2m}}=<x_{8m+7}> \oplus <\beta x_{8m-1}-16x_{8m+7}>$$
$$ko_*(BQ_8)=ko_*(BSL_2(3))\oplus 2 ko_*(\Sigma^{-1}BS^3/BN)$$.\\
We shall now verify this geometrically, by describing the generators as images of fundamental classes of manifolds. For reference, here is the character table of $Q_8$ \\

\begin{center}
\begin{tabular}{|c|c|c|c|c|c|}
\hline
$ $&1 & 1 & $2$ & $2$ & $2$ \\
$\rho$&$1$ & $-1$ & $[i]$ & $[j]$ & $[k]$\\
\hline
$1=\rho_0$ & $1$ & $1$ & $1$ & $1$ & $1$\\
$\kappa_1$ & $1$ & $1$ & $-1$ & $1$ & $-1$\\
$\kappa_2$ & $1$ & $1$ & $1$ & $-1$ & $-1$\\
$\kappa_3$ & $1$ & $1$ & $-1$ & $-1$ & $1$\\
$\tau$ & $2$ & $-2$&$0$ &$0$& $0$\\
\hline
\end{tabular}
\end{center}

Firstly, $2 ko_*(\Sigma^{-1}BS^3/BN)$ term is independent of $l$, and comparing with the eta invariant calculations given in the previous section, where we see that $\eta(M_{1}^n-M_{2}^n)(2- \tau)^a=0$ for every $a$, and similarly for $M_{1}^n-M_{3}^n$, it follows that the $ko$-fundamental classes of the manifolds $M_{1}^n-M_{2}^n, M_{1}^n-M_{3}^n$ generate all of $2 ko_n(\Sigma^{-1}BS^3/BN)$, with $ n \equiv 3\mod 4$, 2-locally.\\

It thus remains to realize the $BSL_2(3)$ part, and to do this we will consider the manifolds $M_Q^{4k+3}=S^{4k+3}/Q_l$.
\begin{prop}
Localised at $2$, we have that $ko_{4k+3}(BSL_2(3))$, which is the same as the quotient $ko_{4k+3}(BQ_l)/2 ko_{4k+3}(\Sigma^{-1}BS^3/BN)$, can be realized by the images of the fundamental classes of the manifolds $M_Q^{4k+3}$ and $B^8 \times M_Q^{4k-5}$.
\end{prop}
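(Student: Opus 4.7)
The plan is to prove this by induction on $m$ (where $4k+3$ is either $8m+3$ or $8m+7$), showing that $[M_Q^{4k+3}]$ generates the first cyclic factor of $ko_{4k+3}(BSL_2(3))$ and that $[B^8\times M_Q^{4k-5}]$ together with it generates the whole group. The base case $m=0$ is immediate because the second cyclic factor is trivial and the first is realized by $[M_Q^{4k+3}]$ alone.

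For the first factor, I would compute the eta invariant $\eta(M_Q^{4k+3})(2-\tau)$ by applying Donnelly's formula (Theorem~2.0.4) to $M_Q^{4k+3}=S^{4k+3}/Q_8$ with its free action through $(k+1)$ copies of the defining representation $\tau\colon Q_8\to SU(2)$. Since $\det\tau=1$, the formula reduces to a sum over the seven non-identity elements of $Q_8$, where $\tau(-1)=-I$ contributes $4/4^{k+1}$ and each of $\pm i,\pm j,\pm k$ contributes $2/2^{k+1}$; the result is
\[
\eta(M_Q^{4k+3})(2-\tau)\;=\;\frac{1+3\cdot 2^{k+1}}{2^{2k+3}}.
\]
The numerator is odd, so this has order $2^{2k+3}$ in $\R/\Z$, i.e.\ $2^{4m+3}$ for $k=2m$. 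For $k=2m+1$ the representation $2-\tau$ is quaternion, so by Theorem~2.0.3 the eta invariant refines to $\R/2\Z$, doubling the order to $2^{4m+6}$. By the triangular matrix of the previous section, eta invariants with respect to powers of $2-\tau$ vanish on the $2\,ko_*(\Sigma^{-1}BS^3/BN)$ summand, so this invariant descends to $ko_{4k+3}(BSL_2(3))$. Since the computed order strictly exceeds $2^{2m}$ (the order of the second cyclic factor), writing $[M_Q^{4k+3}]=a\,x_{4k+3}+b\,y_{4k+3}$ forces $a$ to be odd, which means this class generates the first cyclic factor.

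For the second factor, Lemma~2.1.3 combined with $\hat{A}(B^8)=1$ gives $D[B^8\times M_Q^{4k-5}]=\beta\cdot D[M_Q^{4k-5}]$. By inductive hypothesis, $D[M_Q^{4k-5}]=a'\,x_{4k-5}+b'\,y_{4k-5}$ with $a'$ odd. Applying $\beta$ and using the defining relation $y_{4k+3}=\beta x_{4k-5}-16\,x_{4k+3}$ yields
\[
D[B^8\times M_Q^{4k-5}]\;=\;a'\,y_{4k+3}+16a'\,x_{4k+3}+b'\,\beta y_{4k-5}.
\]
Now $\beta y_{4k-5}$ has order dividing $2^{2m-2}$ (the order of $y_{4k-5}$), so writing $\beta y_{4k-5}=c\,x_{4k+3}+d\,y_{4k+3}$ in $\Z_{2^{4m+3}}\oplus\Z_{2^{2m}}$ forces $2^{2m+5}\mid c$ and $4\mid d$. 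Projecting $D[B^8\times M_Q^{4k-5}]$ to the quotient $ko_{4k+3}(BSL_2(3))/\langle[M_Q^{4k+3}]\rangle\cong \Z_{2^{2m}}$ thus yields $a'$ plus a multiple of $4$, which is odd and hence a generator of the second cyclic factor.

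The main obstacle is the module-theoretic bookkeeping in the second step, specifically controlling the contribution of the unknown element $\beta y_{4k-5}$; the order argument above suffices but relies on the known structure of $ko_{4k+3}(BSL_2(3))$. A more explicit alternative would be to compute further eta invariants $\eta((2-\tau)^a)$ for both manifolds at several powers $a$ and check directly that the two classes span a subgroup of $KO_{4k+3}(BQ_8)$ of the required order $2^{6m+3}$, bypassing the need to identify $\beta y_{4k-5}$ algebraically.
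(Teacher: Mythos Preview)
Your proposal is correct and takes a genuinely different route from the paper. The paper does precisely what you flag as the ``more explicit alternative'' in your final paragraph: it computes both $\eta(M_Q^{4k+3})((2-\tau)^a)$ and $\eta(B^8\times M_Q^{4k-5})((2-\tau)^a)$ for $a=1,2$, assembles the resulting $2\times 2$ matrix, and checks that its determinant has order $2^{6m+3}$ (respectively $2^{6m+6}$) in $\R/\Z$, which equals $|ko_{4k+3}(BSL_2(3))|$. No module relations from Bayen are invoked inside the proof.

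Your approach instead uses only the single eta invariant $\eta((2-\tau))$ to pin down the large cyclic factor, and then leans on the algebraic relation $y_{4k+3}=\beta x_{4k-5}-16x_{4k+3}$ together with $D[B^8\times M]=\beta\cdot D[M]$ to handle the small factor. This trades one eta computation (namely $\eta((2-\tau)^2)$) for the Bayen module structure, which the paper quotes but does not use in the argument. Two small remarks: the induction is not really needed, since your direct eta calculation already shows $a'$ is odd for $M_Q^{4k-5}$; and your projection step is slightly imprecise, because the quotient by $\langle[M_Q^{4k+3}]\rangle=\langle ax+by\rangle$ sends $x$ to $-a^{-1}b\bar y$ rather than to $0$, so the image of $[B^8\times M_Q^{4k-5}]$ is $a'+b'd-(16a'+b'c)a^{-1}b$ rather than just $a'+b'd$. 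This does not affect the conclusion, since all the extra terms are even. The paper's determinant approach sidesteps this bookkeeping entirely, at the cost of one more explicit eta calculation.
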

\begin{proof}
We do this by explicit eta calculations, using only representations $(2-\tau)^a$, which suffices since we have already observed $\eta(M_{1}^n-M_{2}^n)(2- \tau)^a=0=\eta(M_{1}^n-M_{3}^n)$ for every $a$.\\
We start by calculating the order of $[M_Q^{4k+3}]$. Note that $M_Q^{4k+3}=S^{4k+3}/(k+1)\tau$, where $(k+1)\tau$ is the $(k+1)$-fold sum $\tau \oplus \cdots \oplus \tau$. Now $\tau$ is a unitary representation which may be characterized by \\
\begin{displaymath}
\tau(i)=\left(\begin{array}{ccc}
i & 0 \\
0 & -i \\
\end{array} \right);
\tau(j)=\left(\begin{array}{ccc}
0 & \omega^3 \\
\omega & 0 \\
\end{array} \right)
\end{displaymath}
where $\omega=(1+i)/\sqrt{2}$. Thus for $x \neq 1$ we get $det(1-\tau)(x)=4$ if $x=-1$, and $2$ else. So we can apply Theorem $2.0.3$ to see:

$$\eta(M_Q^{4k+3})(2- \tau)=\frac{1}{8}\sum_{1 \neq g \in Q_8} \frac{Trace((2-\tau)(g))det((k+1)\tau(g))^{1/2}} {(det(I-(k+1)\tau(g)))}$$
$$=1/8(4/4^{k+1}+6(2/2^{k+1}))=1/2^{2k+3}+3/2^{k+2}$$
Thus in dimensions $8m+3$ the fundamental class has of order at least $2^{4m+3}$, and of course the calculations in \cite{2} imply that it has exactly this order. The representation $\tau$ is quaternion, so in dimensions $8m+7$ the order is $2^{4m+6}$, since the eta invariant extends to $\R/2\Z$.\\
We wish to see how large a subspace is spanned by $M_Q^{4k+3}$ and $B \times M_Q^{4k-5}$, where $B=B^8$ is a Bott manifold. Multiplying by the Bott element is a monomorphism in dimensions $ 3 \mod 4$, so $B \times M_Q^{4k-5}$ has the same order as $M_Q^{4k-5}$. Then, just as above, we can make the following calculations, remembering $(2-\tau)^a$ is real for $a$ even, and quaternion for $a$ odd:
$$\eta(M_Q^{4k+3})(2- \tau)^2=\frac{1}{8}\sum_{1 \neq g \in Q_8} \frac{Trace((2-\tau)(g))^2det((k+1)\tau(g))^{1/2}} {(det(I-(k+1)\tau(g)))}$$
$$=1/8(16/4^{k+1}+6(4/2^{k+1}))=2/4^{k+1}+3/2^{k+1} \in \R/2\Z$$

Thus using these we can deduce:
$$(\eta(M_Q^{8m+3})(2- \tau),\eta(M_Q^{8m-5} \times B^8)(2- \tau))=(1/2^{4m+3}+3/2^{2m+2},1/2^{4m-1}+3/2^{2m})$$
while for $(2-\tau)^2$, we know we have a real representation so that $\eta(M_Q^{8m+3}(2-\tau)^2 \in \R/2\Z$, so that we can divide by $2$ to get:
$$(\eta(M_Q^{8m+3})(2- \tau)^2,\eta(M_Q^{8m-5} \times B^8)(2- \tau)^2)=(1/2^{4m+2}+3/2^{2m+2},1/2^{4m-2}+3/2^{2m})$$
So, the order of the subgroup spanned may be bounded below the order $\in \R/\Z$ of the determinant of the following matrix:\\
\begin{displaymath}
X=\left(\begin{array}{ccc}
\eta(M_Q^{4k+3})(2- \tau) & \eta(M_Q^{4k-5} \times B^8)(2- \tau) \\
\eta(M_Q^{4k+3})(2- \tau)^2 & \eta(M_Q^{4k-5} \times B^8)(2- \tau)^2 \\
\end{array} \right)
\end{displaymath}
 From the above calculations, the entries can be read off immediately, so that if $4k+3=8m+3$ we get:\\
\begin{displaymath}
X=\left(\begin{array}{ccc}
1/2^{4m+3}+3/2^{2m+2} & 1/2^{4m-1}+3/2^{2m} \\
1/2^{4m+2}+3/2^{2m+2} & 1/2^{4m-2}+3/2^{2m}\\
\end{array} \right)
\end{displaymath}
Then the determinant of $X$ is $$1/2^{8m+1}+3/2^{6m+3}+3/2^{6m}+9/2^{4m+2}-1/2^{8m+1}-3/2^{6m+2}-3/2^{6m+1}-9/2^{4m+2}$$
$$=3/2^{6m+3} + \cdots$$
The dots mean terms which have strictly lower order in $\R/\Z$, so this has order $2^{6m+3} \in \R/\Z$, which is the same as the 2-local order of $ko_{8m+3}(BSL_2(3))$, just as required.\\
Analogously in dimensions $8m+7=4k+3$, this time by dividing by $2$ for the quaternion representation $2-\tau$ we have:
$$(\eta(M_Q^{8m+7})(2- \tau),\eta(M_Q^{8m-1} \times B^8)(2- \tau))=(1/2^{4m+6}+3/2^{2m+4},1/2^{4m+2}+3/2^{2m+2})$$
while for $(2-\tau)^2$ we get:
$$(\eta(M_Q^{8m+7})(2- \tau)^2,\eta(M_Q^{8m-1} \times B^8)(2- \tau)^2)=(1/2^{4m+3}+3/2^{2m+2},1/2^{4m-1}+3/2^{2m})$$
The analogous determinant then has order $2^{6m+6} \in \R/\Z$ , which is again the same as the 2-local order of $ko_{8m+7}(BSL_2(3))$ as required.
\end{proof}

Thus $2-$locally, all of $ko_{4k+3}(BQ_8)/2 ko_{4k+3}(\Sigma^{-1}BS^3/BN)$ is realized by the manifolds $M_Q^{4k+3}$ and $M_Q^{4k-5} \times B^8$, and may be detected by the eta invariant, using only the virtual representations $2-\tau$ and $(2-\tau)^2$. The extension can again be directly determined from Lemma $2.2.2$.

\chapter{Elementary abelian groups}
In this chapter we prove the first two of our main Theorems. We start by summarizing the known calculations for $ko_*(BV(n))$. These may be found in \cite{3} and \cite{cyy}, using the local cohomology and Bockstein, and Adams spectral sequences respectively.\\
We then proceed to construct some spin manifolds. It turns out that suitably chosen iterated real projective bundles over real projective spaces will suffice. The periodic part of the kernel $Ker(Ap)$ is detected in periodic $K-$theory using the eta invariant, and this is all spanned purely by inclusion from cyclic subgroups. We shall prove this for \emph{arbitrary} rank.\\
The remaining Bott torsion classes in the kernel are detected explicitly in the ordinary $\Z_2$ homology, by constructing manifolds and calculating the images of their fundamental classes.\\
Cohomology and homology will always be with $\Z_2$ coefficients in this and subsequent chapters, where $\Z_2$ is the field with two elements. We recall that $H^*(BV(n))= \Z_2[x_1, \cdots,x_n]$, and we denote by $\xi_{(a_1, \cdots, a_n)}$ the element in the homology $H_*(BV(n))$ dual to $x_1^{a_1} \cdots x_n^{a_n}$ with respect to the usual monomial basis.\\
Further, we know that $H^*(\RP^n)=\Z_2[x]/x^{n+1}$, and by \cite{micc}, the total Stiefel-Whitney class is given by $w(\RP^n)=(1+x)^{n+1}$. Thus the real projective spaces $\RP^n$ are spin exactly when $n \equiv 3 \mod 4$ or $n=1$.
\section{$ko_*(BV(n))$ calculations}
For arbitrary rank, these are given in \cite{3}, chapter 10, using the Bockstein spectral sequence, and in \cite{cyy}. We first summarize the results.
\begin{thm}
The $ko_*$ homology of elementary abelian groups $ko_*(BV(n))$ of rank $n$ may be tabulated as follows:\\
\begin{center}
\begin{tabular}{|c|c|}
\hline
$N$&$ko_N(BV(n))$\\

\hline

$8m+0 $ & $\Z \oplus 2^{h_{8k}}$\\

$8m+1$ & $2 \oplus 2^{{n \choose 1} + \cdots + {n \choose 4m+1}} \oplus 2^{h_{8m+1}}$\\

$8m+2$ & $2 \oplus 2^{{n \choose 1} + \cdots + {n \choose 4m+1}} \oplus 2^{h_{8m+2}}$\\

$8m+3$ & $[2^{4m+3}]^{n \choose {n-1}} \oplus [2^{4m+2}]^{n \choose {n-2}} \oplus \cdots \oplus [2^{4m-n+5}]^{n \choose {1}} \oplus [2^{4m-n+4}]^{n \choose {0}} \oplus 2^{h_{8m+3}}\oplus 2^{h_{8m+3}-{n \choose 4m+3}}$\\

$8m+4$ & $\Z \oplus 2^{h_{8m+4}}$\\

$8m+5$ & $2^{h_{8m+5}}$\\

$8m+6$ & $2^{h_{8m+6}}$\\
$8m+7$ & $[2^{4m+4}]^{n \choose {n-1}} \oplus [2^{4m+3}]^{n \choose {n-2}} \oplus \cdots \oplus [2^{4m-n+6}]^{n \choose {1}} \oplus [2^{4m-n+5}]^{n \choose {0}} \oplus 2^{h_{8m+7}}$\\
\hline
\end{tabular}
\end{center}
\end{thm}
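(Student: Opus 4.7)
The plan is to run the local cohomology spectral sequence (Theorem from Section 1.7) for $G = V(n)$, in the same spirit as the $V(2)$ calculation sketched in Section 1.8. The input is the $ko$-cohomology ring $ko^*(BV(n))$, which following Bruner and Greenlees \cite{3} fits into short exact sequences
\[
0 \to TO \to ko^*(BV(n)) \to QO \to 0, \qquad 0 \to \tau \to QO \to \overline{QO} \to 0,
\]
in which $QO$ is the image in $KO^*(BV(n))$, the piece $TO$ is detected in ordinary $\Z_2$-cohomology as a shifted sum of copies of the Pontrjagin square subalgebra $PP = \Z_2[z_1, \ldots, z_n]$ with $z_i = x_i^4$, the image $\overline{QO}$ in complex periodic K-cohomology is controlled by the representation ring $RU(V(n)) = RO(V(n))$ and its augmentation ideal $JU$, and $\tau$ is the subgroup of $\eta$-multiples.

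First I would compute $H^*_I$ of each of the three summands. Since $\tau$ is bounded below and purely torsion, $H^*_I(\tau) = \tau$ sits in column zero and, by comparison with the Rosenberg--Stolz calculation of $KO_*(BV(n))$, accounts for the universal $\Z/2$ summands in degrees $\equiv 1, 2 \bmod 8$. For $\overline{QO}$, the group $H^0_I$ gives $\Z$ in degrees divisible by $4$, and $H^1_I$ is a sum of cyclic $2$-groups in degrees $\equiv 3, 7 \bmod 8$; the multiplicities $\binom{n}{k}$ arise because the associated graded of $JU(V(n))$ under a suitable filtration has the Hilbert series of an exterior algebra on the generators $y_i = 1 - \chi_i$ (since the relation $\chi_i^2 = 1$ forces $y_i^2 = 2 y_i$), and the orders of the resulting cyclic summands can then be read off by inverting a character whose square generates a suitable power of $JU$, exactly as in the $V(2)$ case. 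Finally, by local duality $H^n_I(TO)$ is a shift of the dual $PP^{\vee}$ in the top column, whose Hilbert series packages the ranks $h_{8m+j}$.

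Next I would resolve differentials and extensions. The low-degree $d^1$ and $d^2$ leaving the zero column are forced by $ko_i = 0$ for $i < 0$ and $ko_0 = \Z$, and the naturality argument over $ko^*(BV(n))$ used for $V(2)$ rules out any further differential leaving the zero column: multiplication by an appropriate Bott-periodic class sends any putative surviving element into a group that must survive to give $KO_*(BV(n))$ as prescribed by Rosenberg--Stolz. The remaining $d^1$ between the first and $n$-th columns is of maximal rank in degrees $\equiv 0 \bmod 4$ by an Atiyah-Hirzebruch upper bound. Extensions then split in each degree: the free $\Z$ summand is detected by the unit; the cyclic $[2^k]$ summands in degrees $3, 7 \bmod 8$ are separated by eta invariants of distinct lens spaces in cyclic subgroups $C_2 \hookrightarrow V(n)$ as in Section 2.1; and the remaining elementary abelian pieces are forced by $2$-divisibility.

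The main obstacle I expect is the bookkeeping of the ranks $h_{8m+j}$, which must combine the dual Hilbert series of $PP$, the $\tau$-contribution, and the trivial-character contribution of $\overline{QO}$ in overlapping degrees; these are most cleanly encoded as coefficients of a single Poincar\'e series rather than a short closed-form expression. The independent Adams spectral sequence calculation in \cite{cyy} provides a useful cross-check on both these ranks and on the splitness of the extensions.
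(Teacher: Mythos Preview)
The paper does not prove this theorem; it is quoted from \cite{3}, chapter 10, where it is obtained via the $\eta$-Bockstein spectral sequence (with a cross-check from the Adams spectral sequence computation in \cite{cyy}). The local cohomology route you sketch is only carried out in \cite{3} for low ranks (chapter 12, the $V(2)$ and $V(3)$ cases displayed in the thesis). So your approach is genuinely different from the cited one, and it is worth saying what each buys: the Bockstein route gives the $h_i$ directly as $\dim_{\Z_2}(\mathrm{Start}(2)(TU/Sq^2)^\vee)_i$, a formula in terms of the complex torsion module $TU$ and the Steenrod action, whereas the local cohomology route separates $\ker(Ap)$ cleanly into its periodic and Bott-torsion parts, which is exactly what the thesis needs downstream.

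Your outline is broadly right in spirit, but there are two genuine gaps. First, the bookkeeping of $TO$ is more delicate than you indicate: $TO$ for $V(n)$ is not a single shifted $PP$ but a direct sum of several shifts (already two for $V(2)$), and the identification of the ranks of $H^n_I(TO)$ with the Bockstein quantities $h_i$ is a nontrivial step requiring the comparison of $TO$ with $TU$ through the $\eta$-Bockstein; you cannot simply read off the Hilbert series of $PP^\vee$. Second, your treatment of differentials is too quick: for rank $n$ the differential from the $1$-column to the $n$-column is $d^{n-1}$, not $d^1$, and the ``maximal rank via Atiyah--Hirzebruch upper bound'' argument that works for $V(2)$ does not obviously propagate to arbitrary $n$ without a separate inductive or module-theoretic argument. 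The extension problem in degree $8m+3$, where the statement records two distinct elementary abelian summands $2^{h_{8m+3}}$ and $2^{h_{8m+3}-\binom{n}{4m+3}}$, is also more subtle than ``forced by $2$-divisibility'' and in \cite{3} is handled via the Bockstein.
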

Here $h_i=dim_{\Z_2}(Start(2)(TU/Sq^2)^{\vee})_{i}$, where $TU$ is the complex Bott torsion part of the complex connective cohomology  (\cite{3}, chapter 10), $Start(r)$ means we take suspensions so that the first non-zero entry is in degree $r$, and $[2^a]$ means a cyclic group of order $2^a$, and an unbracketed $2^a$ is an elementary abelian group of rank $a$.\\
From \cite{3}, it follows that the $h_i$ part is exactly the Bott torsion part we need to detect in ordinary homology, and the cyclic summands in dimensions $3 \mod 4$ should be detected in periodic K-theory by the eta invariant.\\
In low ranks, these calculations have also been carried out using the local cohomology spectral sequence (see \cite{3}, chapter 12), and indeed, this is summarized for $V(2)$ in the first chapter. For completeness, and use in later sections, we also display the $E_{\infty}$ page for $V(3)$ here.

\setlength{\unitlength}{1cm}
\begin{picture}(20,19)
\multiput(0.2,4)(0,0.5){28}%
{\line(1,0){12.8}}
\put(0.2,4){\line(0,1){13.5}}
\put(4,4){\line(0,1){13.5}}
\put(6,4){\line(0,1){13.5}}
\put(10.3,4){\line(0,1){13.5}}
\put(13,4){\line(0,1){13.5}}
\multiput(11.6,6.6)(0,0.5){3}%
{$0$}
\multiput(11.6,10.6)(0,0.5){3}%
{$0$}
\multiput(11.6,14.6)(0,0.5){3}%
{$0$}

\multiput(11.6,5.6)(0,4){3}%
{$0$}
\put(11.6,4.6){$2^4$}
\put(11.6,5.1){$2^7$}
\multiput(11.6,8.6)(0,0.5){2}%
{$2^{8}$}
\multiput(11.6,12.6)(0,0.5){2}%
{$2^{8}$}
\multiput(11.6,16.6)(0,0.5){2}%
{$2^{8}$}
\multiput(11.6,4.1)(0,2){7}%
{$\mathbb{Z}$}

\multiput(8,9.6)(0,1){3}%
{$0$}
\multiput(8,13.6)(0,1){3}%
{$0$}
\put(6.7,6.1){$[8]^3\oplus [4]^3\oplus[2]$}
\put(6.7,8.1){$[16]^3\oplus [8]^3\oplus[4]$}
\put(6.7,10.1){$[2^7]^3\oplus [2^6]^3\oplus[2^5]$}
\put(6.7,12.1){$[2^8]^3\oplus [2^7]^3\oplus[2^6]$}
\put(6.7,14.1){$[2^{11}]^3\oplus[2^{10}]^3\oplus[2^{9}]$}
\put(6.7,16.1){$[2^{12}]^3\oplus[2^{11}]^3\oplus[2^{10}]$}
\put(8,4.1){$0$}
\put(8,4.6){$0$}
\put(8,5.1){$0$}
\put(8,5.6){$0$}
\put(8,6.6){$0$}
\multiput(5,4.1)(0,0.5){27}%
{$0$}
\multiput(8,7.1)(0,2){6}%
{$0$}
\multiput(8,7.6)(0,2){5}%
{$0$}
\multiput(8,8.6)(0,4){3}%
{$0$}
\put(8,17.6){$\vdots$}
\put(5,17.6){$\vdots$}
\put(2,17.6){$\vdots$}
\put(11.8,17.6){$\vdots$}
\put(2,4.1){$0$}
\put(2,4.6){$0$}
\put(2,5.1){$0$}
\put(2,5.6){$0$}
\put(2,6.1){$0$}
\put(2,6.6){$0$}
\put(2,7.1){$0$}
\put(2,7.6){$2^8$}
\put(2,8.6){$2^{6}$}
\put(2,9.6){$2^{15}$}
\put(2,10.6){$2^{14}$}
\put(2,11.6){$2^{24}$}
\put(2,12.6){$2^{24}$}
\put(2,13.6){$2^{35}$}
\put(2,14.6){$2^{36}$}
\put(2,15.6){$2^{48}$}
\put(2,16.6){$2^{50}$}
\put(2,8.1){$2^3$}
\put(2,10.1){$2^7$}
\put(2,12.1){$2^{13}$}
\put(2,14.1){$2^{21}$}
\put(2,16.1){$2^{31}$}
\put(2,9.1){$2^3$}
\put(2,11.1){$2^8$}
\put(2,13.1){$2^{15}$}
\put(2,15.1){$2^{24}$}
\put(2,17.1){$2^{35}$}

\put(13.3,4.1){0}
\put(13.3,4.6){1}
\put(13.3,5.1){2}
\put(13.3,5.6){3}
\put(13.3,6.1){4}
\put(13.3,6.6){5}
\put(13.3,7.1){6}
\put(13.3,7.6){7}
\put(13.3,8.1){8}
\put(13.3,8.6){9}
\put(13.2,9.1){10}
\put(13.2,9.6){11}
\put(13.2,10.1){12}
\put(13.2,10.6){13}
\put(13.2,11.1){14}
\put(13.2,11.6){15}
\put(13.2,12.1){16}
\put(13.2,12.6){17}
\put(13.2,13.1){18}
\put(13.2,13.6){19}
\put(13.2,14.1){20}
\put(13.2,14.6){21}
\put(13.2,15.1){22}
\put(13.2,15.6){23}
\put(13.2,16.1){24}
\put(13.2,16.6){25}
\put(13.2,17.1){26}

\put(13,18.2){degree(t)}

\put(6.7,3.2){$H^{1}_{J}(\overline{QO})$}
\put(10.5,3.2){$H^{0}_{J}(\tau) \oplus H^{0}_{I}(\overline{QO})$}
\put(3.8,3.2){$H^{2}_{J}(TO)$}
\put(1.5,3.2){$H^{3}_{J}(TO)$}

\put(0.2,1.7){where$[n]:=$ cyclic group of order $n$, $2^{r}$:= elementary abelian group of order $r$.}
\put(2,0.7){The $E_{\infty}$-page for $ko_{*}(BV(3))$}
\end{picture}
It is easy to now see the orders $h_i$ of the Bott torsion classes we must realize. Explicitly, for $k \geq 1$ we have (see \cite{3}, section 12.4.G):\\
$h_{4k+1}=k^2+k+1$; $h_{4k+3}=k^2+2k;$\\
$h_{4k+2}=k^2+5k$; $h_{4k}=k^2+4k+3$.

\section{The periodic part}
In this section we realize all of the one-column of the $E_{\infty}$ page of the local cohomology spectral sequence for $ko_*(BV(n))$. To give a clearer idea of the general method, we give explicit calculations for $V(2),V(3)$.\\
We start by calculating the determinant of the character table of $V(n)$, something that proves useful later.
\begin{lem}
The determinant $D$ of the character table of $V(n)$ equals $\pm (2^n)^{2^{n-1}}$
\end{lem}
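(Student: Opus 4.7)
The plan is to identify the character table of $V(n)$ with a Hadamard matrix of Sylvester type and use orthogonality of characters to compute the determinant up to sign.

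First, I would fix an identification of $V(n)$ with $\mathbb{F}_2^n$ so that the irreducible characters are indexed by $v \in V(n)$ via $\chi_v(w)=(-1)^{v\cdot w}$, where $v\cdot w$ is the standard $\mathbb{F}_2$ inner product. With rows indexed by irreducible characters and columns by group elements, the character table is then the $2^n \times 2^n$ matrix $M$ with $M_{v,w}=(-1)^{v\cdot w}$, i.e.\ a $(\pm 1)$-matrix.

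The key computation is
\begin{equation*}
(M M^T)_{v,v'} = \sum_{w \in V(n)} (-1)^{v\cdot w}(-1)^{v'\cdot w} = \sum_{w \in V(n)} (-1)^{(v+v')\cdot w}.
\end{equation*}
When $v=v'$ every summand is $1$, giving $2^n$; when $v\neq v'$ the linear form $w\mapsto (v+v')\cdot w$ is a nonzero homomorphism $V(n)\to \mathbb{F}_2$, so exactly half the terms are $+1$ and half are $-1$ and the sum vanishes. Hence $MM^T = 2^n I$, which is just the usual orthogonality of characters.

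Taking determinants then yields $(\det M)^2 = (2^n)^{2^n}$, so $\det M = \pm(2^n)^{2^{n-1}}$, which is what we want. The argument is essentially self-contained and there is no real obstacle; the only thing to be careful about is choosing a consistent ordering of characters and group elements so that the sign of the determinant is unambiguous, but since the statement only claims the value up to sign this is not an issue. As a sanity check one could also verify the identity inductively, noting that after the identification above $M$ is the $n$-fold Kronecker product of the $2\times 2$ matrix $H=\bigl(\begin{smallmatrix}1&1\\1&-1\end{smallmatrix}\bigr)$, so $\det M = (\det H)^{n\cdot 2^{n-1}} = (-2)^{n\cdot 2^{n-1}}$, which agrees with $\pm(2^n)^{2^{n-1}}$.
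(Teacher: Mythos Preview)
Your proof is correct and follows essentially the same approach as the paper: both use orthogonality of characters to deduce $MM^T = 2^n I$ (the paper phrases this as multiplying the character table by its complex conjugate, which coincides with $M^T$ since all entries are real) and then take determinants and square roots. Your additional Kronecker-product sanity check is a nice touch but not present in the paper.
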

\begin{proof}
Multiplying any character matrix of an abelian group by its complex conjugate matrix gives us a diagonal matrix, with the diagonal entries equalling precisely the order of the group.\\
Since $V(n)$ is abelian of order $2^n$ with only real representations, and real entries (namely $\pm 1$) in its character table, we immediately see that $D^2=(2^n)^{2^n}$, so that the conclusion is immediate upon taking square roots.
\end{proof}

Let $\rho_1$ denote the non-trivial representation of $\Z_2$. Then from section $2.2$ of the previous chapter we have  $$\eta(\RP^{4j+3})(\rho_0-\rho_1)=1/2(2/2^{2j+2})=2^{-2j-2}$$
so that the $ko-$fundamental class $[\RP^{8k+7}]$ has order at least $2^{4k+4}$, while $[\RP^{8k+3}]$ has order at least $2^{4k+3}$, because since $\rho_1$ is real we get $\eta(\RP^{8k+3})(\rho_0-\rho_1)=2^{-4k-2} \in \R/2\Z$.\\
Let $V(2)=\{e,x,y,z\}$. We now denote by $[\RP^{n}_{x}]$ (respectively $y,z$), the image of $[\RP^{n}]$ via the map $\RP^n \rightarrow B<x> \hookrightarrow BV(2)$, where the first map is the classifying map for the universal cover of $\RP^n$, and the second is induced by the inclusion $x \hookrightarrow V(2)$. Further, we denote by $\hat{x}$ the irreducible representation of $V(2)$ with kernel $<x>$. From the previous section and chapter 1, we have that $ko_{8k+3}(BV(2))=[2^{4k+3}]^2 \oplus [2^{4k+2}]$ while $ko_{8k+7}(BV(2))=[2^{4k+4}]^2 \oplus [2^{4k+3}]$.\\
\begin{prop}
If $n=3,7 \mod 8$, then the $ko-$fundamental classes $\{[\RP^{n}_{x}],[\RP^{n}_{y}],[\RP^{n}_{z}]\}$ span all of $ko_n(BV(2))$.
\end{prop}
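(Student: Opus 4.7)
The plan is to detect the three classes using the eta invariants with respect to the three non-trivial irreducible real characters $\hat{x}, \hat{y}, \hat{z}$ of $V(2)$ (where $\hat{x}$ has kernel $\langle x \rangle$, etc.), and then show that the $3\times 3$ matrix of eta invariants spans a subgroup of the target of order exactly $|ko_n(BV(2))|$. Since $\overrightarrow\eta = (\eta(1-\hat{x}), \eta(1-\hat{y}), \eta(1-\hat{z}))$ factors through $ko_n(BV(2))$, this forces the three classes to generate the whole group.

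By the naturality of the eta invariant under inclusion recorded in Section 2.0, $\eta([\RP^n_x])(1-\hat{x}) = \eta(\RP^n)(1 - \hat{x}|_{\langle x \rangle}) = 0$ since $\hat{x}$ restricts trivially to $\langle x \rangle$; and $\hat{y}, \hat{z}$ each restrict to the non-trivial character $\rho_1$ of $\Z_2$. Writing $c = \eta(\RP^n)(1 - \rho_1)$, which by the computation in Section 2.2 equals $-2^{-4k-2} \in \R/2\Z$ for $n = 8k+3$ and $2^{-4k-4} \in \R/\Z$ for $n = 8k+7$, the matrix is
\[
M = \begin{pmatrix} 0 & c & c \\ c & 0 & c \\ c & c & 0 \end{pmatrix}.
\]
Let $N$ denote the order of $c$ in the appropriate quotient, so $N = 2^{4k+3}$ in $\R/2\Z$ when $n = 8k+3$ and $N = 2^{4k+4}$ in $\R/\Z$ when $n = 8k+7$. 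To find the order of the subgroup spanned, I compute the kernel $K$ of the map $\phi: \Z^3 \to (\R/{\cdot})^3$ given by $\phi(a,b,d) = ((b+d)c, (a+d)c, (a+b)c)$. The condition $b+d \equiv a+d \equiv a+b \equiv 0 \pmod N$ gives by subtraction $a \equiv b \equiv d$ and $2a \equiv 0 \pmod N$; since $N$ is an even power of $2$ greater than $2$, this forces $a \in \{0, N/2\} \pmod N$, so the kernel has index $N^3/2$ in $\Z^3$. Consequently, the subgroup of $(\R/{\cdot})^3$ spanned by the three rows has order $N^3/2$, which evaluates to $2^{12k+8}$ for $n = 8k+3$ and $2^{12k+11}$ for $n = 8k+7$. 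Comparison with Section 3.1 shows these are exactly $|ko_{8k+3}(BV(2))|$ and $|ko_{8k+7}(BV(2))|$, so the three projective space classes must span all of $ko_n(BV(2))$.

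The main subtlety is keeping track of the extension of the eta invariant to $\R/2\Z$ for real virtual representations in dimensions $8k+3$. This is what yields the extra factor of $2$ in $N$ (and correspondingly in $N^3/2$) that is needed for the count to land exactly on $|ko_n(BV(2))|$ rather than on half of it. Once this bookkeeping is in place, the determinant-style calculation proceeds uniformly in the two residue classes mod $8$, and no further obstruction arises.
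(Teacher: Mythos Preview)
Your proof is correct and follows essentially the same approach as the paper: both set up the $3\times 3$ matrix of eta invariants with respect to $1-\hat{x},1-\hat{y},1-\hat{z}$, use naturality under restriction to fill in the entries, and then compute the order of the subgroup spanned by the rows. The only cosmetic difference is that the paper computes this order either by observing the single relation among the three rows or via the determinant of the character table of $V(2)$, whereas you compute it by determining the kernel of $\phi:\Z^3\to(\R/\cdot)^3$ directly; these are equivalent, and your bookkeeping of the $\R/2\Z$ extension in dimensions $8k+3$ matches the paper's.
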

\begin{proof}
Let $n=8k+3$. Since all the representations of $V(r)$ are real, the eta invariant will take values in $\R/2\Z$. By restricting representations we deduce immediately that the triple $$(\eta(\RP^{n}_{x})(\hat{x}-\rho_0),\eta(\RP^{n}_{y})(\hat{x}-\rho_0),\eta(\RP^{n}_{z})(\hat{x}-\rho_0))=(0,2^{-4k-2},2^{-4k-2}) \in (\R/2\Z)^3$$
has order $2^{4k+3} \in \R/2\Z$. By symmetry, we then get $$(\eta(\RP^{n}_{x})(\hat{y}-\rho_0),\eta(\RP^{n}_{y})(\hat{y}-\rho_0),\eta(\RP^{n}_{z})(\hat{y}-\rho_0))=(2^{-4k-2},0,2^{-4k-2}) \in (\R/2\Z)^3$$ $$(\eta(\RP^{n}_{x})(\hat{z}-\rho_0),\eta(\RP^{n}_{y})(\hat{z}-\rho_0),\eta(\RP^{n}_{z})(\hat{z}-\rho_0))=(2^{-4k-2},2^{-4k-2},0) \in (\R/2\Z)^3$$
These triples are pairwise independent, and adding all three gives us $(2^{-4k-1},2^{-4k-1},2^{-4k-1})$ which is of order $2^{4k+2}$ in $(\R/2\Z)^3$, so in all we've spanned a subgroup of order $(2^{4k+3})^{2} 2^{4k+2}$, just as required. In a bit more detail, we have three elements of order $2^{4k+3}$, and the only relation is that their sum has order $2^{4k+2}$, so that the order of the subgroup they span must be $(2^{4k+3})^{2} 2^{4k+2}$.\\
The argument in dimensions $n=8k+7$ is exactly the same, giving us three triples
$$(\eta(\RP^{n}_{x})(\hat{x}-\rho_0),\eta(\RP^{n}_{y})(\hat{x}-\rho_0),\eta(\RP^{n}_{z})(\hat{x}-\rho_0))=(0,2^{-4k-4},2^{-4k-4}) \in (\R/\Z)^3$$
$$(\eta(\RP^{n}_{x})(\hat{y}-\rho_0),\eta(\RP^{n}_{y})(\hat{y}-\rho_0),\eta(\RP^{n}_{z})(\hat{y}-\rho_0))=(2^{-4k-4},0,2^{-4k-4}) \in (\R/\Z)^3$$ $$(\eta(\RP^{n}_{x})(\hat{z}-\rho_0),\eta(\RP^{n}_{y})(\hat{z}-\rho_0),\eta(\RP^{n}_{z})(\hat{z}-\rho_0))=(2^{-4k-4},2^{-4k-4},0) \in (\R/\Z)^3$$
which span a subgroup of order $(2^{4k+4})^{2} 2^{4k+3}$ as required.\\
Alternatively, to illustrate the method for higher ranks, we calculate the determinants of the resulting $3 \times 3$ matrices. For simplicity, we henceforth denote $2^{-4k-2} \in \R/2\Z$ by $2^{-4k-3}$. Looking at the above calculation, the matrix we get in dimensions $8k+3$ is \\
\begin{displaymath}
X=\left(\begin{array}{ccc}
0 & 2^{-4k-3} & 2^{-4k-3} \\
2^{-4k-3} & 0 & 2^{-4k-3} \\
2^{-4k-3} & 2^{-4k-3} & 0 \\
\end{array} \right)
\end{displaymath}
Now the character table table of $V(2)$ is :\\
\begin{center}
\begin{tabular}{|c|c|c|c|c|}
\hline
$\rho$& $e$ & $x$ & $y$ & z\\
\hline
$1=\rho_0$ & $1$ & $1$ & $1$ & $1$\\
$\hat{x}$ & 1 & $1$ & $-1$ & $-1$\\
$\hat{y}$ & $1$ & $-1$ & $1$ & $-1$\\
$\hat{z}$ & $1$ & $-1$ & $-1$ & $1$\\
\hline
\end{tabular}
\end{center}
After subtracting off the first row, it is immediate that the determinant of the character table is the same as the determinant of the matrix
\begin{displaymath}
Y=\left(\begin{array}{ccc}
0 & 2 & 2 \\
2 & 0 & 2 \\
2 & 2 & 0 \\
\end{array} \right)
\end{displaymath}

which has determinant $-16$. So we can deduce immediately that
$$|Det(X)|=(2^{-4k-4})^3 |Det(Y)|=2^{-12k-8}$$
meaning a subgroup of order at least $2^{12k+8}=(2^{4k+3})^2 2^{4k+2}$ is spanned, as required. The calculation goes through completely analogously in $8k+7$, giving a group of order at least $(2^{4k+4})^2 2^{4k+3}$ as required.

\end{proof}
We proceed in a similar manner for the rank three case. Let $V(3)=\{e,x,y,z,xy,xz,yz,xyz\}$ and just as before $[\RP^{n}_{x}]$, the image of $[\RP^{n}]$ via the map $\RP^n \rightarrow B<x> \hookrightarrow BV(3)$ (and similarly for $y$,$z$ etc.). For a non-trivial irreducible representation $\rho$ of $V(3)$ we can define $7-$tuples of eta invariants:
$$\overrightarrow{\eta}(\rho-1)=(\eta(\RP^{n}_{x})(\rho-1), \cdots,\eta(\RP^{n}_{xyz})(\rho-1))$$
where $1$ is simply the trivial irreducible representation. We can now prove:
\begin{prop}
If $n=8m+3, 8m+7$, then the $ko-$fundamental classes $\{[\RP^{n}_{x}],\cdots,[\RP^{n}_{xyz}]\}$ span  subgroups of order
$$(2^{4m+3})^{3} (2^{4m+2})^{3}2^{4m+1},$$
$$(2^{4m+4})^{3} (2^{4m+3})^{3}2^{4m+2}$$
 respectively, inside $ko_n(BV(3))$.
\end{prop}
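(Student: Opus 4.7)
The plan is a direct generalization of Proposition 3.2.2 to rank three, with the key new ingredient being Lemma 3.2.1 to compute the relevant determinant. By naturality of the eta invariant under the inclusions $\langle g \rangle \hookrightarrow V(3)$, for each nontrivial $g \in V(3)$ and each nontrivial irreducible representation $\rho$ of $V(3)$ one has
\[
\eta(\RP^n_g)(\rho - \rho_0) = \eta(\RP^n)\bigl(\rho|_{\langle g \rangle} - \rho_0\bigr),
\]
which equals $0$ if $\rho(g)=1$ and $-2^{-2j-2}$ if $\rho(g)=-1$, where $n=4j+3$. Assembling these values into a $7\times 7$ matrix $X$ with rows indexed by the seven nontrivial irreducibles and columns by the seven nontrivial elements then factors as $X = -2^{-2j-2}\,Y$, where $Y$ is the $\{0,1\}$-matrix with $Y_{\rho,g}=1$ precisely when $\rho(g)=-1$.

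The crucial step is extracting $|\det Y|$ from the full $8\times 8$ character table $C$ of $V(3)$. Subtracting the trivial (all-ones) row from each of the seven nontrivial rows leaves $\det C$ unchanged and produces a block-triangular matrix whose upper-left entry is $1$ and whose lower-right $7\times 7$ block is $C'-J$, where $C'$ is the $7\times 7$ submatrix of $C$ corresponding to nontrivial rows and columns and $J$ is the all-ones matrix. Since $C'-J$ has entry $-2$ wherever $C'$ had $-1$ and $0$ wherever $C'$ had $1$, we get $C'-J=-2Y$. Hence $\det C = (-2)^{7}\det Y$, and Lemma 3.2.1 yields $|\det C| = 8^{4} = 2^{12}$, so $|\det Y| = 2^{5}$.

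Finally, the subgroup of $(\R/\Z)^{7}$ (respectively $(\R/2\Z)^{7}$) generated by the rows of $X$ has order $|\det X|^{-1}$ (respectively $2^{7}|\det X|^{-1}$). For $n = 8m+7$ (so $j=2m+1$), Theorem 2.0.3 places all eta invariants in $\R/\Z$, giving span order $2^{7(4m+4)}/2^{5} = 2^{28m+23} = (2^{4m+4})^{3}(2^{4m+3})^{3}2^{4m+2}$. For $n = 8m+3$ (so $j=2m$), every irreducible of $V(3)$ is real, so Theorem 2.0.3 places the invariants in $\R/2\Z$; the extra factor of $2^{7}$ yields span order $2^{7}\cdot 2^{7(4m+3)}/2^{5} = 2^{28m+16} = (2^{4m+3})^{3}(2^{4m+2})^{3}2^{4m+1}$, as required. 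The only real subtlety (the main obstacle) is this $\R/\Z$ versus $\R/2\Z$ bookkeeping in the $n\equiv 3 \pmod 8$ case; without the factor-of-$2^{7}$ correction from working modulo $2$, the count would be off by a power of two. The matching upper bound is automatic, since each $[\RP^n_g]$ is a PSC class and hence lies in $\mathrm{Ker}(A\circ p)$, whose periodic part (the $H^{1}$-column of the local cohomology spectral sequence, by Lemma \ref{lem Ap}) has precisely the stated order by Theorem 3.1.1.
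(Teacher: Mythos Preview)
Your argument is essentially identical to the paper's: form the $7\times 7$ eta-invariant matrix indexed by the nontrivial characters and nontrivial elements, relate its determinant to that of the character table by subtracting off the trivial row, and invoke Lemma~3.2.1. There is one arithmetic slip in the $n=8m+3$ line: with $j=2m$ the entries of $X$ have absolute value $2^{-4m-2}$, so $|\det X|^{-1}=2^{7(4m+2)}/2^{5}=2^{28m+9}$, and the extra factor $2^{7}$ from working in $(\R/2\Z)^{7}$ then gives $2^{28m+16}$; as written, your expression $2^{7}\cdot 2^{7(4m+3)}/2^{5}$ evaluates to $2^{28m+23}$, not $2^{28m+16}$.

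Your closing upper-bound remark is also slightly loose: in these degrees $\ker(Ap)$ contains the $H^{3}$-column (Bott torsion) as well as the $H^{1}$-column, so being a PSC class does not by itself bound the span by the $H^{1}$-order. What makes the bound work is that the eta invariant factors through $KO_{n}(BV(3))$ (Theorem~2.0.3), which kills the Bott torsion; hence the determinant calculation already pins down the order of the image in periodic $K$-theory, and this is all that the proposition (and the paper's proof) actually establishes.
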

\begin{proof}
Each $\overrightarrow{\eta}(\rho-1)$ is immediately determined by restriction to cyclic subgroups, and so in $n=8m+3$ we get the following $7 \times 7$ matrix of row vectors:\\

\begin{displaymath}
\left(\begin{array}{ccccccc}
2^{-4m-3} & 0 & 0 & 2^{-4m-3} & 2^{-4m-3} & 0 & 2^{-4m-3} \\
0 &2^{-4m-3} & 0 & 2^{-4m-3} & 0 & 2^{-4m-3} & 2^{-4m-3}\\
0 & 0 & 2^{-4m-3} & 0 & 2^{-4m-3} & 2^{-4m-3} & 2^{-4m-3}\\
2^{-4m-3} & 2^{-4m-3} & 0 & 0 & 2^{-4m-3} & 2^{-4m-3} & 0 \\
2^{-4m-3} & 0 & 2^{-4m-3} & 2^{-4m-3} & 0 & 2^{-4m-3} & 0 \\
0 & 2^{-4m-3} & 2^{-4m-3} & 2^{-4m-3} & 2^{-4m-3} & 0 & 0 \\
2^{-4m-3} & 2^{-4m-3} & 2^{-4m-3} & 0 & 0 & 0 & 2^{-4m-3} \\

\end{array} \right)
\end{displaymath}
Note that to obtain this matrix, we can proceed like we did in Proposition $3.2.2$. Thus all we do is take the character table of $V(3)$, subtract off the first row (that is the trivial representation), from all the other rows, and then multiply each of the other seven rows by $2^{-4m-4}$. The determinant of the above matrix is thus $(2^{-4m-4})^7$ times the determinant of the character table of $V(3)$, and thus equals, by Lemma $3.2.1$:
$$ (2^3)^{2^2}.2^{-28m-28}=2^{-28m-16}$$
$$=2^{-(3(4m+3)+3(4m+2)+4m+1)}$$
just as required. Analogously in $n=8m+7$ a we span subspace of order at least $2^{28m+20}$ which is the same as $2^{(3(4m+4)+3(4m+3)+4m+2)}$ as required.
\end{proof}
We now consider the case of arbitrary rank. The argument is similar, but combinatorially much more intricate. Let $V(n)=<x_1, \cdots, x_n>$, and let $[\RP^k_{x}]$ etc. be defined as before. Then it is known \cite{3}, Theorem $10.2.1$, using the Bockstein spectral sequence that
$$ko_{8m+3}(BV(n))= [2^{4m+3}]^{n \choose {n-1}} \oplus [2^{4m+2}]^{n \choose {n-2}} \oplus \cdots \oplus [2^{4m-n+5}]^{n \choose {1}} \oplus [2^{4m-n+4}]^{n \choose {0}} \oplus 2^{h_{8m+3}-{n \choose {4m+3}}}$$
where the $2^{h_{8m+3}}$ is $2-$torsion that is detected in the $n-$th filtration of the local cohomology spectral sequence, and vanishes upon inverting the Bott element. Similarly,
$$ko_{8m+7}(BV(n))= [2^{4m+4}]^{n \choose {n-1}} \oplus [2^{4m+3}]^{n \choose {n-2}} \oplus \cdots \oplus [2^{4m-n+6}]^{n \choose {1}} \oplus [2^{4m-n+5}]^{n \choose {0}} \oplus 2^{h_{8m+7}}$$
We will show that the $[\RP^k_{x}]$ span all of these groups, except for the $2^{h_{k}}$ term. Since the $2^{h_{k}}$ term vanishes upon inverting $\beta$, it suffices to check that the $ko-$fundamental classes $[\RP^k_{x}]$ span a subgroup of the right order in periodic K-theory, which we now do by using eta invariant calculations.
\begin{thm}
In the above situation, with $N=8m+3,8m+7$, the set of all $[\RP^N_{x}]$ induced by inclusion from cyclic $\Z_2$ subgroups span subgroups of order
$$(2^{4m+3})^{n \choose {n-1}}(2^{4m+2})^{n \choose {n-2}}\cdots(2^{4m-n+5})^{n \choose {1}}(2^{4m-n+4})^{n \choose {0}},$$
$$(2^{4m+4})^{n \choose {n-1}}(2^{4m+3})^{n \choose {n-2}}\cdots(2^{4m-n+6})^{n \choose {1}}(2^{4m-n+5})^{n \choose {0}},$$
respectively, inside $ko_N(BV(n))$.
\end{thm}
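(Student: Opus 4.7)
\medskip
\noindent\textbf{Proof proposal.}
My plan is to directly generalize the matrix-of-eta-invariants argument used for $V(2)$ in Proposition~3.2.2 and for $V(3)$ in Proposition~3.2.3, reducing everything to the determinant of the character table of $V(n)$ via Lemma~3.2.1. The inputs are exactly the same in arbitrary rank: naturality of the eta invariant under inclusion of cyclic subgroups, plus the rank-one computations of Section~2.2 for $\eta(\RP^{4j+3})(\rho_0-\rho_1)$.

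First I would tabulate, for each non-identity $u\in V(n)$ and each non-trivial irreducible character $\hat{v}$ of $V(n)$, the eta invariant $\eta(\RP^N_u)(\hat{v}-1)$. By naturality under $\langle u\rangle\hookrightarrow V(n)$, the restriction $\hat{v}|_{\langle u\rangle}$ is trivial when $\hat{v}(u)=1$ and equals the non-trivial character $\rho_1$ of $\Z_2$ when $\hat{v}(u)=-1$. Applying the rank-one formulas from Section~2.2, every such entry is either $0$ or a single common nonzero constant $c$, with $c=-2^{-4m-2}\in\R/2\Z$ when $N=8m+3$ and $c=2^{-4m-4}\in\R/\Z$ when $N=8m+7$. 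In particular the resulting $(2^n-1)\times(2^n-1)$ matrix $E$ factors as $c\cdot M$, where $M_{\hat{v},u}=(1-\hat{v}(u))/2\in\{0,1\}$.

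Next I would relate $\det M$ to the determinant of the full $2^n\times 2^n$ character table $T$ of $V(n)$. Subtracting the trivial-character row from every other row replaces $\hat{v}(u)$ by $\hat{v}(u)-1\in\{0,-2\}$ in rows $2$ through $2^n$, and then expanding along the identity column (whose only nonzero entry is the top $1$) gives $\det T=(-2)^{2^n-1}\det M$. Combining this with Lemma~3.2.1, $|\det M|=2^{n\cdot 2^{n-1}-(2^n-1)}$, and hence (after converting $\R/2\Z$-entries to $\R/\Z$-entries by the halving isomorphism where appropriate) the order of the subgroup spanned by the rows of $E$ in the ambient torus is at least $|\det E|^{-1}=2^{(4m+4)(2^n-1)-n\cdot 2^{n-1}}$ in dimension $8m+3$, and $2^{(4m+5)(2^n-1)-n\cdot 2^{n-1}}$ in dimension $8m+7$. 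A short binomial manipulation,
\[
\sum_{i=0}^{n-1}(A-n+i)\binom{n}{i}=A(2^n-1)-n\cdot 2^{n-1},
\]
with $A=4m+4$ and $A=4m+5$ respectively, identifies these exponents with the logarithms of the group orders stated in the theorem.

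I expect the main technical obstacle to be the careful bookkeeping between $\R/\Z$ and $\R/2\Z$ in dimension $8m+3$: each of the $2^n-1$ rows of $E$ lives in $\R/2\Z$, which (through the halving isomorphism) contributes an extra factor of $2^{2^n-1}$ that must be aligned precisely with the $(-2)^{2^n-1}$ coming from the character-table reduction, so that the final exponent matches the combinatorial description of $ko_{8m+3}(BV(n))$ modulo its $2^{h_{8m+3}}$ summand. Once this alignment is nailed down, the $8m+7$ case is essentially identical but slightly cleaner since there every eta invariant already takes values in $\R/\Z$.
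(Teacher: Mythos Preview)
Your proposal is correct and follows essentially the same route as the paper: form the $(2^n-1)\times(2^n-1)$ matrix of eta invariants, relate its determinant to that of the character table of $V(n)$ via the row-subtraction/cofactor manoeuvre, invoke Lemma~3.2.1, and then match the resulting power of $2$ to the product in the statement. Your binomial identity $\sum_{i=0}^{n-1}(A-n+i)\binom{n}{i}=A(2^n-1)-n\cdot 2^{n-1}$ is a tidier packaging of the same combinatorics that the paper establishes by an explicit induction on $n$, and your $\R/2\Z$-versus-$\R/\Z$ bookkeeping via the halving isomorphism is exactly what the paper does when it writes ``we henceforth denote $2^{-4k-2}\in\R/2\Z$ by $2^{-4k-3}$''.
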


\begin{proof}
We proceed just as we did for $V(3)$ in Proposition $3.2.3$. For a non-trivial irreducible representation $\rho$ of $V(n)$ we can define $(2^{n}-1)-$tuples of eta invariants:
$$\overrightarrow{\eta}(1-\rho)=(\eta(\RP^{N}_{x_1})(1-\rho), \cdots,\eta(\RP^{N}_{x_1x_2 \cdots x_n})(1-\rho))$$
We can do this for each of the $2^n-1$ non-trivial irreducible representations. The order of the determinant $D^{'} \in \R/\Z$ of the resulting $(2^n-1) \times (2^n-1)$ square matrix gives a lower bound for the order of the group spanned in $(\R/\Z)^{2^n-1}$. This may be calculated by taking the determinant of the character table, and then dividing by $(2R)^{2^n-1}$, where $R$ denotes the order of $[\RP^N_x]$.\\
This follows because for any $y \in V(n)$, we know $\rho(y)= \pm 1$, so subtracting off the first row in the character table matrix (which is just the trivial representation), we get a new row $1-\rho$ and $(1-\rho(y))=2I_{\rho(y)=-1}$, with value $2$ if $\rho(y)=-1$ and zero else. Subtracting off the first row of the character table matrix from each other row gives a new matrix with the same determinant $D$ as the character table (compare Proposition 3.2.3). Now in order to calculate $D'$, we simply divide each non-zero entry by $2R$, where $R$ is the order of $[\RP^N]$, which is equivalent to dividing $D$ by $(2R)^{2^n-1}$\\
Now, in degree $N=8m+3$, we have $R=2^{4m+3}$ so that
$$ \eta(\RP^{N}_{y})(1-\rho)=2^{-4m-3}I_{\rho(y)=-1}$$
because the eta invariant will be either $0$, if $\rho$ restricts trivially to $<y>$, or $2^{-4m-3}$, if it doesn't. So we can apply this to each row to deduce that in $N=8m+3$ we get $D'=D(2^{-4m-4})^{2^n-1}$, and similarly for $N=8m+7$ we get $D'=D(2^{-4m-5})^{2^n-1}$.\\
By Lemma $3.2.1$ we know, up to sign, $D=(2^n)^{2^{n-1}}$, and we now claim
$$(2^n)^{2^{n-1}}=2^{2^n-1}2^{{n \choose 2} +2 {n \choose 3}+ \cdots + (n-1) {n \choose n}}$$
Equating exponents, it suffices to show
$$n2^{n-1}=2^n-1+{n \choose 2}+2 {n \choose 3}+ \cdots + (n-1) {n \choose n}$$
We argue by induction. For $n=1$, the left hand side is $1.2^0=1$, while the right hand side is $2^1-1+0=1$ as required.\\
Now suppose the claim is true for $n=k$, and consider the case $n=k+1$. We wish to show
$$(k+1)2^{k}=2^{k+1}-1+{k+1 \choose 2}+2 {k+1 \choose 3}+ \cdots + k {k+1 \choose k+1}$$
The left hand side is then:
$$(k+1)2^{k}=2(k2^{k-1}+2^{k-1})$$
$$=2(2^k-1+{k \choose 2}+2 {k \choose 3}+ \cdots + (k-1) {k \choose k}+2^{k-1})$$
by the induction hypothesis. Separating terms out this gives
$$=(2^k-1+2^k)+(2^k-1)+2({k \choose 2}+2 {k \choose 3}+ \cdots + (k-1) {k \choose k})$$
$$=2^{k+1}-1+({k \choose 1}+ {k \choose 2}+ \cdots +{k \choose k})+2({k \choose 2}+2 {k \choose 3}+ \cdots + (k-1) {k \choose k})$$
$$=2^{k+1}-1+({k \choose 1}+ 3{k \choose 2}+5{k \choose 3}+ \cdots +(2k-1){k \choose k})$$
$$=2^{k+1}-1+({k \choose 1}+ {k \choose 2})+2({k \choose 2}+{k \choose 3})+3({k \choose 3}+{k \choose 4})+ \cdots +(k-1)({k \choose k-1}+{k \choose k})+k({k \choose k}+{k \choose k+1})$$
$$=2^{k+1}-1+{k+1 \choose 2}+2 {k+1 \choose 3}+ \cdots + k {k+1 \choose k+1}$$
as required.\\
Thus for $N=8m+3$, we deduce that
$$D'=(2^{-4m-3})^{2^n-1}2^{{n \choose 2} +2 {n \choose 3}+ \cdots + (n-1) {n \choose n}}$$
The order of the group spanned is thus bounded below by the order of $D' \in \R/\Z$, which is just the reciprocal
$$1/D'=\frac{(2^{4m+3})^{2^n-1}}{2^{{n \choose 2} +2 {n \choose 3}+ \cdots + (n-1) {n \choose n}}}$$
Now of course
$$2^n-1={n \choose n-1} +{n \choose n-2} + \cdots {n \choose 1}+{n \choose 0}$$
so that
$$1/D'=\frac{(2^{4m+3})^{n \choose n-1}(2^{4m+3})^{n \choose n-2}\cdots(2^{4m+3})^{n \choose 0}}
{2^{{n \choose n-2}}2^{2 {n \choose n-3}}\cdots 2^{(n-1) {n \choose 0}}} $$
$$=(2^{4m+3})^{n \choose {n-1}}(2^{4m+2})^{n \choose {n-2}}\cdots(2^{4m-n+5})^{n \choose {1}}(2^{4m-n+4})^{n \choose {0}}$$
just as required. The situation is the same in $N=8m+7$, where we start with $2^{-4m-4}$ to get
$$1/D^{'}=(2^{4m+4})^{n \choose {n-1}}(2^{4m+3})^{n \choose {n-2}}\cdots(2^{4m-n+6})^{n \choose {1}}(2^{4m-n+5})^{n \choose {0}}$$
as required.

\end{proof}

We wish to realize all of $Ker(Ap) \subset ko_*(BV(n))$ by positive scalar curvature manifolds, where $A$ is the assembly map, and $p$ is the periodicity map inverting the Bott element. Thus from above it follows immediately that:
\begin{cor}
$Ker(A) \cap Im (p) \subset KO_*(BV(n))$ is realized entirely by $KO-$fundamental classes of positive scalar curvature manifolds.
\end{cor}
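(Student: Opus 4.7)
The plan is to identify $Ker(A) \cap Im(p)$ with $p(Ker(Ap)) \subseteq KO_*(BV(n))$ and then to read off from the description of $ko_*(BV(n))$ given in Theorem 3.1.1 which summands actually survive $p$. An element $x \in KO_*(BV(n))$ lies in $Ker(A) \cap Im(p)$ if and only if $x = p(y)$ with $A(p(y)) = 0$, i.e.\ $y \in Ker(Ap) \subseteq ko_*(BV(n))$; so $Ker(A) \cap Im(p) = p(Ker(Ap))$, and it suffices to realise the image of $Ker(Ap)$ in periodic $KO$-theory by $KO$-fundamental classes of positive scalar curvature manifolds.

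Next I would analyse $Ker(Ap)$ degree-by-degree. By Lemma 1.6.2, $Ker(Ap)$ is filtered by the higher columns of the local cohomology spectral sequence; the top-column contributions are precisely the Bott-torsion summands $2^{h_*}$ appearing in Theorem 3.1.1, and these are killed by inverting $\beta$. Consequently, in every dimension $N \not\equiv 3,7 \pmod 8$ the whole of $Ker(Ap)$ is Bott-torsion, so $p(Ker(Ap)) = 0$ there and nothing has to be built.

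The interesting dimensions are therefore $N \equiv 3, 7 \pmod 8$. Here the one-column of the local cohomology spectral sequence contributes the cyclic summands $[2^{4m+3}]^{n \choose n-1} \oplus \cdots \oplus [2^{4m-n+4}]^{n \choose 0}$ (and the analogous cyclic summands for $N = 8m+7$), which are part of $Ker(Ap)$ and inject under $p$ into $KO_N(BV(n))$. Theorem 3.2.4 shows that the fundamental classes $[\RP^N_x]$, as $x$ ranges over non-identity elements of $V(n)$, span a subgroup of $ko_N(BV(n))$ of exactly the order of this cyclic part. Since the eta invariants used in the proof of Theorem 3.2.4 take non-zero values in $\R/\Z$, they factor through periodic $KO$-theory, so applying $p$ the images $p[\RP^N_x]$ span all of $p(Ker(Ap))$ in these dimensions.

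Finally, the projective space $\RP^N$ with $N \equiv 3 \pmod 4$ is the quotient of the round sphere $S^N$ by the antipodal $\Z_2$-action by isometries, and hence inherits a metric of positive scalar curvature. Thus each $[\RP^N_x]$ is the $ko$-fundamental class of a PSC spin manifold equipped with a map to $BV(n)$ (induced by inclusion of a cyclic subgroup), and $p[\RP^N_x]$ is the corresponding $KO$-fundamental class. The main technical work, the eta-invariant calculation, was already carried out in Theorem 3.2.4; given that, the only non-trivial step in the present corollary is the bookkeeping that pins down $Ker(A)\cap Im(p)$ as exactly the cyclic summands which Theorem 3.2.4 covers, and no separate construction is needed outside dimensions $N \equiv 3, 7 \pmod 8$.
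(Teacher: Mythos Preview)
Your argument is correct and is precisely the reasoning the paper leaves implicit when it says the corollary ``follows immediately'' from Theorem~3.2.4: you identify $Ker(A)\cap Im(p)=p(Ker(Ap))$, observe that outside dimensions $3,7\bmod 8$ this is zero (either because $\widetilde{KO}_N(BV(n))=0$ or because $A$ is injective on the $\Z_2$-summands in dimensions $1,2\bmod 8$), and in dimensions $3,7\bmod 8$ invoke Theorem~3.2.4 together with the fact that the eta invariant factors through periodic $KO$-theory. One minor correction: the lemma you cite as ``Lemma~1.6.2'' is Lemma~1.7.2 in the paper's numbering.
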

Thus to prove the Gromov-Lawson-Rosenberg conjecture for $V(n)$, we need only to realize all those classes lying in higher filtration, which vanish upon inverting the Bott element. This is what we do for $n=2,3$ in the next two sections.

\section{Projective bundles}
Classes which lie in higher local cohomology filtration for $ko_*(BV(n))$ are detected in ordinary $\Z_2$ homology (\cite{3}, chapters 10 and 12), and our method will be to realize these classes by using projective bundles of vector bundles over projective spaces. By Lemma $1.0.2$ such bundles will automatically carry positive scalar curvature metrics, so long as the dimension of the fibre is at least $2$. Furthermore, the following result enables us to calculate the cohomology and Stiefel-Whitney classes of such bundles, implying in particular that suitable choices of bundles will give spin manifolds. This and some of the resulting geometric constructions may also be found in \cite{cylin}.
\begin{thm}
Let $\pi:E\mapsto B$ be a real vector bundle of dimension $n$ with $\RP(\pi)$ the associated projective bundle. Then $H^{*}(\RP(\pi))=H^{*}(B)[t]/(t^{n} + t^{n-1}w_{1}(\pi) + \cdots + w_{n}(\pi))$, and \\
$w_{1}(\RP(\pi))=w_{1}(\pi)+w_{1}(B)+nt$\\
$w_{2}(\RP(\pi))=w_{2}(B)+w_{1}(B)(nt+w_{1}(\pi))+\frac{n(n-1)}{2}t^{2} + (n-1)w_{1}(\pi)t+w_{2}(\pi)$\\
where $t$ is the first Stiefel Whitney class of the canonical line bundle over $\RP(\pi)$.
\end{thm}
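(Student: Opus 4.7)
The plan is to prove the cohomology assertion first via Leray--Hirsch and the Grothendieck relation, and then to deduce the Stiefel--Whitney formulas from the standard splitting of the tangent bundle of a projective bundle together with the tensor-product formula for a line bundle times a vector bundle.

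First I would handle the cohomology ring. The canonical (tautological) line bundle $L$ over $\RP(\pi)$ embeds as a sub-line-bundle of $\pi^{*}E$; choosing a fibre metric splits the sequence $0\to L\to\pi^{*}E\to Q\to 0$ so that $\pi^{*}E\cong L\oplus Q$ with $Q$ of rank $n-1$. The class $t=w_{1}(L)$ restricts on each fibre $\RP^{n-1}\subset\RP(\pi)$ to the generator of $H^{*}(\RP^{n-1};\Z_{2})$, so by Leray--Hirsch $H^{*}(\RP(\pi);\Z_{2})$ is a free $H^{*}(B;\Z_{2})$-module on $1,t,\ldots,t^{n-1}$. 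To find the single relation in degree $n$, I observe that the bundle $L^{*}\otimes\pi^{*}E$ admits a nowhere-zero section (the tautological inclusion $L\hookrightarrow\pi^{*}E$), so its top Stiefel--Whitney class vanishes. Working mod $2$, $w_{1}(L^{*})=w_{1}(L)=t$, and the formal-root / splitting-principle formula for $w(L^{*}\otimes\pi^{*}E)$ gives
\begin{equation*}
0=w_{n}(L^{*}\otimes\pi^{*}E)=t^{n}+t^{n-1}w_{1}(\pi)+\cdots+w_{n}(\pi),
\end{equation*}
which is the asserted relation.

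For the Stiefel--Whitney classes I use the short exact sequence of tangent bundles along the fibration $\RP(\pi)\to B$, which splits (after choosing a connection) as $T(\RP(\pi))=T_{\mathrm{fib}}\oplus\pi^{*}TB$, so $w(\RP(\pi))=w(T_{\mathrm{fib}})\cdot\pi^{*}w(B)$. The standard identification of the fibrewise tangent bundle of $\RP(V)$ as $\operatorname{Hom}(L,V/L)$ extends globally to $T_{\mathrm{fib}}=L^{*}\otimes Q$, with $Q=\pi^{*}E/L$. From the splitting $\pi^{*}E\cong L\oplus Q$ one reads off $w_{1}(Q)=w_{1}(\pi)+t$ and $w_{2}(Q)=w_{2}(\pi)+tw_{1}(\pi)+t^{2}$. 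Then the tensor product formula with a line bundle, applied via formal roots $\beta_{1},\ldots,\beta_{n-1}$ of $Q$, yields
\begin{align*}
w_{1}(L^{*}\otimes Q)&=(n-1)t+w_{1}(Q)=nt+w_{1}(\pi),\\
w_{2}(L^{*}\otimes Q)&=\tbinom{n-1}{2}t^{2}+(n-2)tw_{1}(Q)+w_{2}(Q),
\end{align*}
and substituting the expressions for $w_{1}(Q),w_{2}(Q)$ and simplifying (using $\binom{n-1}{2}+(n-1)=\binom{n}{2}$ mod $2$ unnecessarily, since the identity holds over $\Z$) gives $w_{2}(T_{\mathrm{fib}})=\tfrac{n(n-1)}{2}t^{2}+(n-1)tw_{1}(\pi)+w_{2}(\pi)$. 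Multiplying the total Stiefel--Whitney class of $T_{\mathrm{fib}}$ by $\pi^{*}w(B)$ and extracting degree-one and degree-two parts produces exactly the stated formulas for $w_{1}(\RP(\pi))$ and $w_{2}(\RP(\pi))$.

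The proof is essentially a bookkeeping exercise; the only real conceptual ingredients are Leray--Hirsch, the existence of the tautological section of $L^{*}\otimes\pi^{*}E$, the splitting of the tangent sequence, and the tensor-product Whitney formula. The main obstacle, minor but worth care, is keeping the mod-$2$ arithmetic consistent and correctly translating between classes of $\pi^{*}E$ and of $Q$, since the Stiefel--Whitney classes of the quotient bundle do not appear in the final statement and must be eliminated in favour of $w_{i}(\pi)$ and $t$.
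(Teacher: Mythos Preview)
Your proof is correct and follows essentially the same route as the paper: Leray--Hirsch together with the vanishing of the top class of $L^{*}\otimes\pi^{*}E$ for the ring structure, and the decomposition $T\RP(\pi)\cong\pi^{*}TB\oplus\operatorname{Hom}(L,Q)$ for the Stiefel--Whitney classes. The only cosmetic difference is that the paper stabilises by adding the trivial line $\operatorname{Hom}(L,L)$, so that $Q$ is replaced by $\pi^{*}E$ and the formal-root formula $w(\RP(\pi))=w(B)\prod_{i}(1+t+x_{i})$ yields the degree-one and degree-two terms directly, without the intermediate elimination of $w_{i}(Q)$.
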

\begin{proof}
Note that there is a canonical line bundle $L$ over $\RP(\pi)$, where the fibre at a point $(x,b) \in \RP^{n-1} \times B$ is just $x$, and indeed $t=w_1(L)$. Denote the projection map of $\RP(\pi)$ by $p$. We can consider the pullback bundle $p^{*}(E)$ over $\RP(\pi)$. Clearly $L$ is a subbundle, and thus by choosing an orthogonal metric, we can assume $p^{*}(E)=L \oplus L^{\perp}$. From \cite{micc}, we know that the tangent bundle of $\RP^{n-1}$ is isomorphic to $Hom(\gamma_{1},\gamma_{1}^{\perp})$, where $\gamma_1$ is the canonical line bundle over $\RP^{n-1}$. It follows that the tangent bundle $T\RP(\pi)$ of $\RP(\pi)$ is isomorphic to $p^*(TB) \oplus Hom(L,L^{\perp})$, where $TB$ is the tangent bundle of the base $B$. Now $Hom(L,L)$ is trivial since it has a nowhere vanishing section, and so we deduce the stable isomorphism:
$$T\RP(\pi) \oplus \varepsilon \cong p^*(TB) \oplus Hom(L,L^{\perp}) \oplus Hom(L,L)$$
$$ \cong p^*(TB) \oplus Hom(L,p^*(\pi))$$
Now by the splitting principle, it suffices to assume that $\pi= l_1 \oplus \cdots \oplus l_n$ is a sum of line bundles, and that $p^*$ is injective.\\
Further, since $Hom(L,p^*(\pi))$ has a nowhere vanishing section, via the subbundle $L$, we must have $w_n(Hom(L,p^*(\pi)))=0$. Thus letting $x_i=w_i(L_i)$ we deduce
$$0=w_n(Hom(L,p^*(\pi)))=w_n(L \otimes p^*(\pi))$$
$$=w_n(L \otimes l_1 \oplus \cdots \oplus L \otimes l_n)$$
$$=(t+x_1)\cdots (t+x_n)$$
$$=t^n+t^{n-1}w_1(\pi)+\cdots +w_n(\pi)$$
So that the first part of the Theorem follows from the Leray-Hirsch Theorem, since restricting $t$ freely generates the cohomology of the fibre \cite{micc}. For the second part we simply note that by the Whitney product formula, we have the total Stiefel-Whitney class
$$w(\RP(\pi))=w(B)(1+t+x_1) \cdots (1+t+x_n)$$
and then multiply out the degree $1$ and $2$ parts.
\end{proof}
We now illustrate how to use this Theorem with some examples, which become relevant in the next section.
\begin{ex}Spin bundles over orientable projective spaces
\end{ex}
The idea is to start with an orientable non-spin projective space $\RP^{4i+1}$ with $i \geq 1$ and then consider the vector bundle $\pi:2 \gamma_1 \oplus A\epsilon \rightarrow \RP^{4i+1}$, where $\gamma_1$ is the canonical line bundle over $\RP^{4i+1}$, $\epsilon$ is the trivial bundle, and $A \equiv 2 \mod 4$ a natural number. Thus applying the Theorem with $H^*(\RP^{4i+1})=\Z_2[x]/x^{4i+2}$ we deduce $$H^*(\RP(\pi))=\Z_2[x,t]/x^{4i+2},t^{A+2}+x^2t^A$$
Further, $w_1(\RP(\pi))=0+0+2t=0$ and $w_2(\RP(\pi))=x^2+x^2+{A+2 \choose 2}t^2=0$,
since $A+2 \equiv 0 \mod 4$.\\
This construction along with ordinary products will produce sufficiently many classes to prove the GLR conjecture for $V(2)$.

\begin{ex}Iterated bundles over non-orientable projective spaces
\end{ex}
Starting over an even dimensional projective space requires taking two bundles in order to get a spin manifold, one to make $w_1$ vanish, and a second to then make $w_2$ vanish. So for instance in $n=4k$ we can start with the bundle $X=\RP(\gamma_{4i}^{1} \oplus (4j+1)\varepsilon \mapsto \RP^{4i})$ and then have $Y=\RP(L_{1} \otimes L_{0} \oplus  L_{1} \oplus  L_{0} \oplus (A) \varepsilon \mapsto X)$ with $A \equiv 1 \mod 4$. Here $L_1=L$ is the canonical line bundle over $X$ and $t=w_1(L)$, and $L_0$ is the pullback of the canonical line bundle over $\RP^{4i}$, with $x=w_1(L_0)$. Now we can calculate the total Stiefel-Whitney classes $w(\gamma_{4i}^{1} \oplus (4j+1)\varepsilon \mapsto \RP^{4i})=1+x$, and $w(L_{1} \otimes L_{0} \oplus  L_{1} \oplus  L_{0} \oplus (A) \varepsilon \mapsto N)=1+x^2+t^2+xt+x^2t+xt^2$ so that using Theorem $3.3.1$ we will then have $$H^{*}(Y)=\Z_{2}[x,t,u]/(x^{4i+1},t^{4j+2}+xt^{4j+1},u^{A+3}+x^{2}u^{A+1}+t^{2}u^{A+1}+xtu^{A+1}+x^{2}tu^{A}+xt^{2}u^{A})$$\\
It is easy to then apply Theorem $3.3.1$ to see that $w_1(X)=x+x+2t=0$, while $w_1(Y)=0+0+2u=0$ and $w_2(Y)=2(x^2+xt)+{A+3 \choose 2}u^2=0$.\\

For notational simplicity, we introduce the following notation:
\begin{df}
Consider triples of natural numbers $(a,b,c)$ with $c \equiv 3 \mod 4$, $b>1$ odd, and $a$ even. Suppose further that $b \equiv 1 \mod 4$ or $c \neq 3$.\\
By $M(a,b,c)$ we will mean the fundamental homology class in $H_n^{+}(BV(3))$ of a positive scalar curvature spin manifold of \emph{even} dimension $n=a+b+c$, constructed as an $\RP^c$ bundle over a manifold $X$, which in turn will be an $\RP^b$ bundle over $\RP^a$.
\end{df}
The manifolds representing these classes, along with classes of the form $M(a,4B+3,3)$, which are not covered under the above notation, will be defined in section $3.5$.\\

Thus in Example $3.3.3$, we write $M(4i, 4j+1, n-4i-4j-1)$ for the image of the fundamental class of $Y^n$ in $H_n^{+}(BV(3))$. Note that we could choose $j=0$ in Example $3.3.3$, but we choose not to for combinatorial reasons.\\
Analogous constructions may be performed to get $(4k+2)-$dimensional bundles, and bundles over $\RP^i$, with $i \equiv 2 \mod 4$. These will produce the necessary classes for $V(3)$, and details are in Proposition $3.5.2$.\\

We also remind ourselves that by Lemma $1.0.2$ the manifolds obtained by these and subsequent constructions all automatically admit positive scalar curvature metrics, and so we need only check that they are spin and that they span sufficiently large subgroups of $H_*(BV(n))$.

\section{The 2-column for V(2)}

The classes on the two column on the $E_{\infty}$ page of the local cohomology spectral sequence for $ko_*(BV(2))$ are all in even dimensions. The classes are detected in ordinary $\Z_2$ homology and span $\Z_2$ vector spaces of orders $2^k$ and $2^{k+1}$ in $n=4k+2$ and $n=4k$ respectively. Since there are no classes in first local cohomology in these dimensions, the following Proposition implies the conjecture for $V(2)$ immediately.
\begin{prop}
Spin Projective bundles over projective space span a subspace of dimension $2^{k}$ for $n=4k+2$ and $2^{k+1}$ for $n=4k$, in $H_{*}^{+}(BV(2))$.
\end{prop}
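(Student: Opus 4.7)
The idea is to exhibit explicit families of spin projective bundles carrying positive scalar curvature metrics, and then show that their images under $f_{*}\colon H_{n}(M;\Z_{2})\to H_{n}(BV(2);\Z_{2})$ span a $\Z_{2}$-subspace of rank $k+1$ when $n=4k$ and rank $k$ when $n=4k+2$ (using the convention that $2^{r}$ denotes elementary abelian of rank $r$). Lemma 1.0.2 takes care of positive scalar curvature once the fibre has dimension at least $2$, while Theorem 3.3.1 supplies both the Stiefel--Whitney class formulas forcing a given projective bundle to be spin and the presentation of the cohomology ring of its total space, which is the input to the homology computation.

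For $n\equiv 0 \bmod 4$ the principal family is the one from Example 3.3.2, $Y(i,A)=\RP(2\gamma_{1}\oplus A\varepsilon\to\RP^{4i+1})$ with $A\equiv 2\bmod 4$ and $A+4i+2=n$; this furnishes $k-1$ manifolds as $i$ varies from $1$ to $k-1$, so to reach the required rank $k+1$ I will supplement by the spin product $\RP^{1}\times\RP^{4k-1}$ and one further $Y(i,A)$-variant twisted by an extra line summand. For $n\equiv 2\bmod 4$ the construction of Example 3.3.2 has the wrong dimension parity, so I will instead combine products $\RP^{4a+3}\times\RP^{4b+3}$ of spin projective spaces with variants of Theorem 3.3.1 built over bases $\RP^{4i+3}$, choosing the summands of the underlying bundle so that the $x^{2}$ contribution to $w_{2}$ is cancelled.

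Each total space $Y$ carries a natural map $f\colon Y\to BV(2)$ given by the pair of classifying maps for $(L_{0},L_{1})$, where $L_{0}$ is the pulled-back canonical line bundle of the bottom projective space and $L_{1}$ is the tautological line bundle of $\RP(\pi)$. In the notation of Theorem 3.3.1, $f^{*}x_{1}=x$ and $f^{*}x_{2}=t$, so by naturality
\[
\langle x_{1}^{a_{1}}x_{2}^{a_{2}},\,f_{*}[Y]\rangle=\langle x^{a_{1}}t^{a_{2}},\,[Y]\rangle,
\]
and this pairing is evaluated inside $\Z_{2}[x,t]/(x^{b+1},\,t^{A+2}+x^{2}t^{A})$ by repeatedly applying the defining relation to push $x^{a_{1}}t^{a_{2}}$ onto the fundamental cohomology class. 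Arranging these evaluations as a $\Z_{2}$-matrix whose rows index manifolds and columns index the dual basis $\xi_{(a_{1},a_{2})}$ of $H_{n}(BV(2);\Z_{2})$ reduces the proposition to a rank statement.

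The main obstacle is the rank computation itself: the matrix entries are binomial coefficients modulo $2$ coming from the iterated reduction $t^{A+2}\equiv x^{2}t^{A}$, and one has to choose the parameters $(i,A)$ (together with the supplementary constructions) so that these entries organise into an echelon pattern, which I expect to follow from Lucas's theorem after elementary row operations. A secondary difficulty is the $n\equiv 2\bmod 4$ range, where fewer natural bundle constructions are available and one must check the $w_{2}$ cancellation carefully for each auxiliary family; I anticipate this is where most of the bookkeeping will concentrate.
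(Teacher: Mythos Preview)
Your overall architecture matches the paper's, but you have overcomplicated both cases and left one genuine gap.

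For $n=4k+2$ the paper's argument is much simpler than what you propose: the \emph{trivial} projective bundles $\RP^{a}\times\RP^{b}$ with $a,b\equiv 3\bmod 4$, $a+b=n$, are already spin and map to the distinct classes $\xi_{(a,b)}$. Counting ordered pairs gives exactly $k$ independent classes, so products alone suffice and no auxiliary nontrivial bundles are needed in this parity. Your anticipated ``secondary difficulty'' in this range does not arise.

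For $n=4k$ your family $Y(i,A)=\RP(2\gamma_{1}\oplus A\varepsilon\to\RP^{4i+1})$ is exactly the paper's $M_{(a,b)}$ with $a=4i+1$, and does give $k-1$ classes. The gap is your unspecified ``twisted variant'': it is not needed, because the single manifold $\RP^{1}\times\RP^{4k-1}$ carries \emph{two} inequivalent maps to $BV(2)$ (swap the factors), yielding the two independent classes $\xi_{(1,4k-1)}$ and $\xi_{(4k-1,1)}$. Together with the $k-1$ bundle classes this already gives $k+1$.

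Finally, the rank computation is far more direct than your Lucas-theorem plan suggests. In the ring $\Z_{2}[x,t]/(x^{a+1},\,t^{n-a+1}+x^{2}t^{n-a-1})$ the single relation $t^{n-a+1}=x^{2}t^{n-a-1}$ shows immediately that the top class is $x^{a}t^{n-a}=x^{a-2}t^{n-a+2}=\cdots=x\,t^{n-1}$, so $x^{j}t^{n-j}$ hits the top class iff $j\leq a$ is odd. Dualising, $f_{*}[M_{(a,b)}]=\sum_{j\leq a,\ j\ \mathrm{odd}}\xi_{(j,n-j)}$. As $a$ ranges over $5,9,\ldots,4k-3$ these sums are nested with distinct leading terms $\xi_{(a,n-a)}$, hence already in echelon form; no binomial-coefficient analysis is required.
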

\begin{proof}
If $n=4k+2$, the manifolds $\RP^{a} \times \RP^{b}$, with $a,b\equiv 3 \mod 4$ , and $a+b=n$, which are trivial projective bundles, are spin, and give the classes $\xi_{(a,b)} \in H_{*}(BV(2))$.\\
Since $a+b=4k+2$ counting the possible values of $a$ gives us $k$ classes, as required.\\
If $n=4k$, the only product which is spin is $\RP^{4k-1} \times \RP^{1}$, which gives two classes, namely $\xi_{(4k-1,1)}$ and $\xi_{(1,4k-1)}$. The remaining classes can be generated by choosing projective bundles like in example $3.3.2$. We define
$$M_{(a,b)} := \RP(2L_{a} \oplus (n-1-a)\varepsilon \rightarrow \RP^{a})$$
 with $ 5 \leq a \equiv 1 \mod 4$ and $b=n-a+2$. Here $L_{a}$ is the canonical line bundle over $\RP^{a}$, and $\varepsilon$ is the trivial line bundle. \\
The reason for the $M_{(a,b)}$ notation used is that, when choosing generators of $H_{*}^{+}(BV(2))$, geometrically it is perhaps easier to visualize the submanifold dual to $w_{2}$ inside $\RP^{a} \times \RP^{b}$, where $b=n-a+2 \equiv 1 \mod 4$.\\
Indeed, if we consider $H^*(\RP^{a} \times \RP^{b})=\Z_2[x,y]/(x^{a+1},y^{b+1})$, then $w_2(\RP^{a} \times \RP^{b})=a^2+b^2$, so that the only classes in $H^{a+b-2}(\RP^{a} \times \RP^{b})$ that have non-trivial product with $w_2$ are $x^ay^{b-2}$ and $x^{a-2}y^b$. Thus it follows that if $M$ is the spin manifold dual to $w_2$ inside $\RP^{a} \times \RP^{b}$, then when we dualise in homology we get $[M] \rightarrow \xi_{(a,b-2)}+\xi_{(a-2,b)} \in H_{a+b-2}(BV(2))$. Thus we can view $H_{*}^{+}(BV(2))$ as being generated by the homology classes of products of projective spaces, along with manifolds dual to $w_2$ inside $\RP^{a} \times \RP^{b}$, so long as we know that these latter classes do indeed lie in $H_{*}^{+}(BV(2))$. The projective bundle construction we have given implies this, as we shall see.\\
We know $H^{*}(\RP^{a})=\Z_{2}[x]/x^{a+1}$ so that $w_{1}((2L_{a} \oplus (n-1-a)\varepsilon \rightarrow \RP^{a})=x+x=0$, and $w_{2}=x^2$, with the higher $w_{i}$ all zero . We thus have $$H^{*}(M_{(a,b)})=\Z_{2}[x,y]/(x^{a+1},y^{n-a+1}+x^{2}y^{n-a-1})$$
Using our formulae $w_{1}(M_{(a,b)})=w_{1}(\RP^{a})+w_{1}(2L_{a} \oplus (n-1-a)\varepsilon \rightarrow \RP^{a})+ny=0$, since $n$ is even and $a$ is odd. Similarly $w_{2}(M_{(a,b)})=x^2+n(n-1)/{2}y^{2} + x^2=0$, since $n\equiv 0 \mod 4$(all $w_{1}$ terms are zero), so this is indeed a spin manifold.\\
The unique non-zero top dimensional cohomology class (i.e. in dimension $n$) in $H^{*}(M_{(a,b)})=\Z_{2}[x,y]/(x^{a+1},y^{n-a+1}+x^{2}y^{n-a-1})$ is given by: $$x^{a}y^{n-a}=x^{a-2}y^{n-a+2}= \cdots =xy^{n-1}$$.\\
So looking at the map $H^*(BV(2))=\Z_2[X,Y] \rightarrow H^{*}(M_{(a,b)})$, we see that $X^kY^{n-k} \rightarrow xy^{n-1}$ as long as $k \leq a$ is odd. The dual map in homology then must send the dual class $(xy^{n-1})^{\vee}=[M_{(a,b)}] \in H_n(M_{(a,b)})$ to the sum $\sum \xi_{(i,j)}$, where $\xi_{(i,j)}$ is dual to $X^iY^j$ in the monomial basis, and the sum is over all $i,j$ with $ X^iY^j \rightarrow xy^{n-1}$ in cohomology. Thus we deduce from above that
 $$[M_{(a,b)}] \rightarrow \sum \xi_{(k,n-k)}\in H_{*}(BV(2))$$
 where the sum is over all $k \leq a$ is odd. Since we can choose bundles over all $\RP^a$ with $a \equiv 1 \mod 4$, and $5\leq a \leq n-3=4k-3$, there are $k-1$ classes independent classes obtained using this construction, so we get a total of $k+1$ homology classes as required.
\end{proof}
We now justify the choice of manifolds dual to $w_2$ inside $\RP^a \times \RP^b$ as generators, and thus the use of the $M_{(a,b)}$ notation.\\
As an example $[M_{(5,5)}] \rightarrow \xi_{(5,3)} +\xi_{(3,5)}+\xi_{(1,7)}$, but observe that the $\xi_{(1,7)}$ is the fundamental class of $\RP^{1} \times \RP^{7}$, so that we can choose our three generators for $H_{8}^{+}(BV(2))$ as $\xi_{(5,3)} +\xi_{(3,5)}, \xi_{(1,7)}$ and $\xi_{(7,1)}$. More generally we have the following:\\
\begin{lem}
$H_{4k}^{+}(BV(2))$ is spanned by the independent classes $\xi_{(a,n-a)} +\xi_{(a-2,n-a+2)}$, with $n=4k$ and $5 \leq a \equiv 1 \mod 4$, together with $\xi_{(4k-1,1)},\xi_{(1,4k-1)}$.
\end{lem}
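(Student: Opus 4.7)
The plan is to reorganise the spanning set already obtained in Proposition 3.4.1 by a simple change of basis. From the proof of that proposition we have in hand the following $k+1$ elements of $H_{4k}^{+}(BV(2))$: the two product classes $\xi_{(4k-1,1)}$ and $\xi_{(1,4k-1)}$, together with the classes
\[
[M_{(a,b)}] \;=\; \sum_{\substack{j \text{ odd} \\ 1 \leq j \leq a}} \xi_{(j,\,n-j)}, \qquad 5 \leq a \equiv 1 \bmod 4,\ a \leq n-3,
\]
one for each admissible $a$. Since Proposition 3.4.1 already establishes that these $k+1$ classes span $H_{4k}^{+}(BV(2))$, it suffices to exhibit a triangular change of basis to the list in the statement and then verify independence.

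First I would handle the base case $a=5$: since $\xi_{(1,n-1)} = \xi_{(1,4k-1)}$ is among our classes, we may add it (working mod $2$) to $[M_{(5,b)}]$ to obtain
\[
[M_{(5,b)}] + \xi_{(1,4k-1)} \;=\; \xi_{(5,\,n-5)} + \xi_{(3,\,n-3)},
\]
which is the desired class for $a=5$. Next, for each $a \equiv 1 \bmod 4$ with $9 \leq a \leq 4k-3$, I would form the difference
\[
[M_{(a,b)}] + [M_{(a-4,b')}] \;=\; \xi_{(a,\,n-a)} + \xi_{(a-2,\,n-a+2)},
\]
since every summand $\xi_{(j,n-j)}$ with $j \leq a-4$ appears in both $[M_{(a,b)}]$ and $[M_{(a-4,b')}]$ and cancels over $\Z_2$. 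This produces, together with the $a=5$ case, all the claimed classes $\xi_{(a,n-a)} + \xi_{(a-2,n-a+2)}$ for $5 \leq a \equiv 1 \bmod 4$. Along with $\xi_{(4k-1,1)}$ and $\xi_{(1,4k-1)}$, which are already on the list, this transformation is upper-triangular with ones on the diagonal (in an obvious ordering by $a$), so the new list again spans $H_{4k}^{+}(BV(2))$.

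Finally, independence of the new list is immediate: the pairs of monomials $(a,n-a)$ and $(a-2,n-a+2)$ occurring as subscripts for distinct values of $a \equiv 1 \bmod 4$, together with the subscripts $(4k-1,1)$ and $(1,4k-1)$, are all distinct, so each of the $k+1$ claimed classes involves a set of standard basis vectors $\xi_{(i,j)} \in H_{4k}(BV(2))$ disjoint from the others. Hence no non-trivial $\Z_2$-linear combination can vanish, completing the proof. There is no genuine obstacle here beyond careful book-keeping; the substance of the result is entirely contained in Proposition 3.4.1, and this lemma merely records a cleaner geometric basis (the classes dual to $w_2$ inside the products $\RP^a \times \RP^b$) in place of the bundle classes $[M_{(a,b)}]$.
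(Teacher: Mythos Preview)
Your proof is correct and follows essentially the same approach as the paper: both argue via the recursive identity $[M_{(a,b)}] = \xi_{(a,n-a)} + \xi_{(a-2,n-a+2)} + [M_{(a-4,b+4)}]$ (with the convention $[M_{(1,b)}] = \xi_{(1,n-1)}$) to pass from the spanning set of Proposition~3.4.1 to the cleaner basis by a triangular change. You are slightly more explicit than the paper in spelling out the independence, but the argument is the same.
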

\begin{proof}
We have seen that $[M_{(a,b)}] \rightarrow \sum \xi_{(k,n-k)}\in H_{*}(BV(2))$ where $k \leq a$ is odd. We now simply observe that the terms in the sum with $k \leq a-4$ are realized by smaller values of $a$. Explicitly we have that
$$f_{*}([M_{a,b}]) = \sum \xi_{(k,n-k)}\in H_{*}(BV(2))$$
$$=\xi_{(a,b-2)}+\xi_{(a-2,b)}+f_{*}([M_{a-4,b+4}])$$
where the sum is over all $k \leq a$ odd, and $f$ is the classifying map $M_{(a,b)} \rightarrow BV(2)$. For simplicity we say $M_{(1,b)}=\RP^{1} \times \RP^{b-2}$ so that $f_*(M_{(1,b)})=\xi_{(1,b-2)}$, and the claim follows.
\end{proof}
Before we move on to the rank three case, we return to the classes in first local cohomology briefly. As we have already seen, these are in dimensions $n=4k-1$, and detected using the eta invariant, by $ko-$fundamental classes of real projective spaces $\RP^{4k-1}$, via inclusions from cyclic subgroups. However, these classes are also detected in ordinary homology, and it will be useful in the next section to know their images explicitly.
\begin{lem}
The three inclusions $V(1) \hookrightarrow V(2)$ map the image of the fundamental class of $\RP^{4k-1}$ to the classes $\xi_{(4k-1,0)}, \xi_{(0,4k-1)}$ and $\sum \xi_{(a,b)}$, where the sum is over all non-negative integers $a,b$ with $a+b=4k-1$, respectively, in $H_{*}^{+}(BV(2))$
\end{lem}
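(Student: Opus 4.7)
The plan is to compute, for each of the three inclusions $\langle x\rangle, \langle y\rangle, \langle xy\rangle \hookrightarrow V(2)$, the induced map on mod $2$ homology $H_*(BV(1)) \to H_*(BV(2))$, and then apply this to the $ko$-fundamental class of $\RP^{4k-1}$. Since $\RP^{4k-1}$ inherits positive scalar curvature from the round sphere $S^{4k-1}$ via the free isometric $\Z_2$-action, the images under the three classifying maps $\RP^{4k-1} \to BV(1) \to BV(2)$ already lie in $H^+_{4k-1}(BV(2))$, so only a standard homological computation is required.

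First work out the cohomology maps. Identify $H^*(BV(2)) = \Z_2[x_1,x_2]$ with $x_i$ dual to the $i$-th chosen generator of $V(2)$, and $H^*(BV(1)) = \Z_2[w]$. The inclusion $\langle x\rangle \hookrightarrow V(2)$ induces the restriction $x_1 \mapsto w,\ x_2 \mapsto 0$, so on monomials $x_1^a x_2^b \mapsto w^a\delta_{b,0}$. By symmetry, $\langle y\rangle \hookrightarrow V(2)$ gives $x_1 \mapsto 0,\ x_2 \mapsto w$, so $x_1^a x_2^b \mapsto w^b\delta_{a,0}$. For the diagonal subgroup $\langle xy\rangle \hookrightarrow V(2)$, both generators of $V(2)$ project non-trivially to $\langle xy\rangle$, so $x_1 \mapsto w$ and $x_2 \mapsto w$, giving $x_1^a x_2^b \mapsto w^{a+b}$.

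Now dualize using the chosen monomial bases. The class $\xi_n \in H_n(BV(1))$ dual to $w^n$ maps to the sum of those $\xi_{(a,b)}$ whose dual monomials $x_1^a x_2^b$ restrict to $w^n$. For $\langle x\rangle$ the only such monomial is $x_1^n$, so $\xi_n \mapsto \xi_{(n,0)}$; for $\langle y\rangle$ we get $\xi_n \mapsto \xi_{(0,n)}$; and for $\langle xy\rangle$ every monomial with $a+b=n$ contributes, giving $\xi_n \mapsto \sum_{a+b=n}\xi_{(a,b)}$.

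Finally, the classifying map $\RP^{4k-1} \to BV(1) = \RP^\infty$ is (up to homotopy) the skeletal inclusion, so it sends the $\Z_2$-fundamental class $[\RP^{4k-1}]$ to the top-dimensional generator $\xi_{4k-1} \in H_{4k-1}(BV(1))$. Composing with the three homology maps above yields the three claimed images in $H^+_{4k-1}(BV(2))$. There is no real obstacle here: the only point that needs attention is correctly identifying the restriction map induced by the diagonal inclusion $\langle xy\rangle \hookrightarrow V(2)$, which accounts for the symmetric sum in the third case.
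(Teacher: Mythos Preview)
Your proof is correct and follows essentially the same approach as the paper: compute the three restriction maps $H^*(BV(2))=\Z_2[x_1,x_2]\to H^*(BV(1))=\Z_2[w]$, dualize to homology, and apply to the class $\xi_{4k-1}$ that is the image of $[\RP^{4k-1}]$. The paper's proof is just a terser version of exactly what you wrote.
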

\begin{proof}
It is easy to see that the three restriction maps $H^*(BV(2))=\Z_2[X,Y] \rightarrow H^*(BV(1))=\Z_2[x]$ induced in cohomology by the three inclusions are given by $X \mapsto x, y \mapsto 0$; $Y \mapsto x, X \mapsto 0$; and the diagonal $X,Y \mapsto x$. The image of $[\RP^{4k-1}]$ in $H_*(BV(1))$ is the class $\xi_{4k-1}$ dual to $x^{4k-1}$, so that the result follows immediately by applying the dual maps in homology.
\end{proof}

\section{The 3-column for V(3)}
Here there are Bott torsion classes to realize in both even and odd dimensions, and we need to first consider how many classes we get by including from the seven Klein 4 subgroups.\\
We note that given a class $\xi_{(a,b,c)} \in H_n^+(BV(3))$, the classes obtained by permuting the subscripts $a,b,c$ (so $\xi_{(b,a,c)}, \xi_{(a,c,b)}$ etc.) are also in $H_n^+(BV(3))$. This is immediate since any map $f:X \rightarrow Y$ between spaces sends $H_*^+(X)$ to $H_*^+(Y)$ by definition. Any permutation determines a map $P:BV(n) \rightarrow BV(n)$, which is in fact a homeomorphism. Thus for a class $x \in H_n^+(BV(3))$, we will often say $x$, \emph{along with its permutations}, by which we always mean $x$ along with all the classes obtained by permuting the subcripts.

\begin{prop}
Including fundamental classes of manifolds from $H_n^{+}(BV(2))$ into $H_n^{+}(BV(3))$ yields at least $6K+2$ independent classes if $n=4K$ and  $6K$ if $n=4K+2$.
\end{prop}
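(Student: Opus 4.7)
The plan is to compute, for each of the seven rank-2 subgroups $H \subset V(3)$, the image of $H_n^+(BH)$ in $H_n^+(BV(3))$ under the inclusion-induced map, and count the total rank. Writing $V(3) = \langle e_1, e_2, e_3 \rangle$, there are three coordinate subgroups $\langle e_i, e_j \rangle$, three edge subgroups $\langle e_i, e_j+e_k \rangle$, and one central subgroup $\langle e_1+e_2, e_2+e_3 \rangle$. Each inclusion is encoded by a cohomology restriction $\Z_2[x_1,x_2,x_3] \to \Z_2[u,v]$ determined by the images of the three generators $x_i$, and the induced map on monomial-dual homology classes can be read off directly.

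I would first dispatch the three coordinate subgroups. Using the generators of $H_n^+(BV(2))$ from Proposition 3.4.1 and Lemma 3.4.2, each coordinate inclusion sends $\xi_{(a,b)}^{V(2)}$ to a class with one subscript equal to zero, namely $\xi_{(a,b,0)}$, $\xi_{(a,0,b)}$, or $\xi_{(0,a,b)}$ respectively. These three families lie in the disjoint coordinate-hyperplane subspaces of $H_n(BV(3))$, hence are linearly independent, and contribute $3K$ classes when $n = 4K+2$ and $3(K+1) = 3K+3$ classes when $n = 4K$.

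The non-coordinate subgroups are then treated in the same way, but only the ``interior'' parts of their images — those terms with all three subscripts strictly positive — can contribute new classes, since the coordinate-hyperplane parts are already accounted for. An edge subgroup imposing $x_j = x_k$ sends $\xi_{(p,q)}^{V(2)}$ to $\sum_{b+c = q} \xi_{(p,b,c)}^{V(3)}$ (with the slot permutation determined by $i,j,k$); the central subgroup gives a corresponding binomial-coefficient sum. Using the basis of Lemma 3.4.2 one checks that each interior edge-image carries a unique ``corner'' monomial distinguishing it among both the three edges and the different generators; this yields $3K$ independent non-coordinate classes when $n = 4K+2$. A direct computation — that the central subgroup's interior image equals the $\Z_2$-sum of the three edge images — shows the central contribution is redundant, yielding the required total of $6K$.

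For $n = 4K$ the same argument gives at most $3K$ independent non-coordinate classes, but the trivial interior image of the generator $\xi_{(4K-1,1)}^{V(2)}$ combined with a single $\bmod 2$ binomial identity produces exactly one unavoidable relation among the remaining edge sums (concretely, the sum of the three edge images of a bundle class $[M_{(a,b)}]^{V(2)}$ agrees in $H_n(BV(3))$ with that of $\xi_{(1,4K-1)}^{V(2)}$). This cuts the rank by one, giving $3K - 1$ non-coordinate classes and hence $6K+2$ in total. The main obstacle is the bookkeeping for the non-coordinate contributions in the $n = 4K$ case: verifying this single relation by a direct mod-2 binomial computation and confirming that no additional dependencies arise between the edge images of different generators.
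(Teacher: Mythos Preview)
Your approach is essentially the paper's: separate the seven inclusions into coordinate ($A$-type), edge ($B$-type), and diagonal ($C$-type), count the independent coordinate images ($3K+3$ when $n=4K$, $3K$ when $n=4K+2$), count the edge images ($3K$ in each case), and then locate exactly one relation among the edge images when $n=4K$. Two points deserve correction.

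First, your description of the single relation in the $n=4K$ case is not right. You write that ``the sum of the three edge images of a bundle class $[M_{(a,b)}]$ agrees with that of $\xi_{(1,4K-1)}$''. Read literally, this is a statement for \emph{each} of the $K-1$ bundle classes separately, which would impose $K-1$ relations and leave only $2K+1$ independent edge classes --- too few. The correct relation (and this is what the paper proves) is a \emph{single global} one: the $\Z_2$-sum of all $3K$ edge images, taken over all $K$ non-trivially-contributing generators and all three edges, vanishes. The paper's argument for this is a direct combinatorial check that every interior monomial $\xi_{(a,b,c)}$ with $a,b,c\ge 1$ appears in exactly two of the $3K$ edge sums and hence cancels. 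That each individual bundle's triple-sum equals that of $\xi_{(1,4K-1)}$ is false for $K\ge 3$; you should verify the relation you actually need rather than the one you stated.

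Second, your claim that the central subgroup's contribution is the sum of the three edge contributions (hence redundant) is at odds with the paper, which remarks that $C$ does give genuinely new classes beyond the $A$- and $B$-types. This does not damage the \emph{lower} bound of $6K+2$ you are proving, since you can simply ignore $C$; but the redundancy claim itself appears to be false and you should not rely on it.

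Apart from these two issues, your outline matches the paper's proof and the counts are correct.
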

\begin{proof}
We say that $V(2)=<a_1,a_2>$, and $V(3)=<b_1,b_2,b_3>$. Then there are seven inclusions $V(2) \hookrightarrow V(3)$, and each of them may be viewed as linear maps and thus represented as matrices:
\begin{displaymath}
A=\left(\begin{array}{ccc}
1 & 0 \\
0 & 1 \\
0 & 0 \\
\end{array} \right),
B=\left(\begin{array}{ccc}
1 & 0 \\
1 & 0 \\
0 & 1 \\
\end{array} \right),
C=\left(\begin{array}{ccc}
1 & 1 \\
1 & 0 \\
0 & 1 \\
\end{array} \right)
\end{displaymath}
These give 3 inclusions, and the remaining 4 are obtained simply by permuting the rows of $A$ and $B$. Explicitly,
\begin{displaymath}
A'=\left(\begin{array}{ccc}
1 & 0 \\
0 & 0 \\
0 & 1 \\
\end{array} \right),
A''=\left(\begin{array}{ccc}
0 & 0 \\
1 & 0 \\
0 & 1 \\
\end{array} \right),
B'=\left(\begin{array}{ccc}
1 & 0 \\
0 & 1 \\
1 & 0 \\
\end{array} \right),
B''=\left(\begin{array}{ccc}
0 & 1 \\
1 & 0 \\
1 & 0 \\
\end{array} \right)
\end{displaymath}
Since $H^*(BV(2))=\Z_2[x_1,x_2]$ and $H^*(BV(3))=\Z_2[y_1,y_2,y_3]$, the induced maps in cohomology are determined just by taking the dual maps, which can be given by the transposes of the above matrices.\\
Thus we get $A^*(y_1)=x_1, A^*(y_2)=x_2, A^*(y_3)=0$, and by permuting the rows we obtain $A'$ and $A''$.\\
Further $B^*(y_1)=x_1, B^*(y_2)=x_1, B^*(y_3)=x_2$, and similarly for $B',B''$.\\
Finally $C^*(y_1)=x_1+x_2, C^*(y_2)=x_1, C^*(y_3)=x_2$.\\

Dualising now gives the map in homology. Letting $\xi_{(a_1,a_2)} \in H_*(BV(2))$ denote the element dual to $x_1^{a_1}x_2^{a_2}$ and $\xi_{(b_1,b_2,b_3)} \in H_*(BV(3))$ the element dual to $y_1^{b_1}y_2^{b_2}y_3^{b_3}$, and $X$ one of the inclusions given above, it follows that the dual map in homology is given by
$$X_*(\xi_{(a_1,a_2)})= \sum \xi_{(b_1,b_2,b_3)}$$
where the sum is over all triples $(b_1,b_2,b_3)$ such that $X^*(y_1^{b_1}y_2^{b_2}y_3^{b_3})=x_1^{a_1}x_2^{a_2}+ \cdots$, which simply means that $x_1^{a_1}x_2^{a_2}$ is a non-zero summand in the monomial basis decomposition of $X^*(y_1^{b_1}y_2^{b_2}y_3^{b_3})$.\\

So we now see explicitly that $A_*(\xi_{(a_1,a_2)})=\xi_{(a_1,a_2,0)}$, and similarly $A'_*(\xi_{(a_1,a_2)})=\xi_{(a_1,0,a_2)}$ and $A''_*(\xi_{(a_1,a_2)})=\xi_{(0,a_1,a_2)}$.\\

Since $B^*(y_1^{b_1}y_2^{b_2}y_3^{b_3})=x_1^{b_1+b_2}x_2^{b_3}$, dualising gives that
$$B_*(\xi_{(a_1,a_2)})=\sum_{i=0}^{a_1}\xi_{(i,a_1-i,a_2)}$$
and similarly for $B',B''$, where the position of the subscript with the fixed co-ordinate is permuted.\\
Finally $C$ can be dealt with by explicit calculations using the general formula, and indeed we do get extra classes this way, but combinatorially it is easier to realize these differently.\\

Consider now the case $n=4K$. We saw in the previous section that $H_n^+(BV(2))$ is of order $2^{K+1}$, and is generated by $\xi_{(a,n-a)}+\xi_{(a-2,n-a+2)}$ with $5 \leq a \equiv 1 \mod 4$, along with $\xi_{(n-1,1)}, \xi_{(1,n-1)}$.\\
Applying $A_*$ means we directly get the classes $\xi_{(a,n-a,0)}+\xi_{(a-2,n-a+2,0)}$ with $5 \leq a \equiv 1 \mod 4$, along with $\xi_{(n-1,1,0)}$ and the classes with the subscripts permuted in all possible ways, which is a total of $6+3(K-1)=3K+3$, all of which are clearly independent.\\
Further $B_*(\xi_{(n-1,1)})=\sum_{i=0}^{n-1}\xi_{(i,n-1-i,1)}$, while
$B_*(\xi_{(a,n-a)}+\xi_{(a-2,n-a+2)})=\sum_{i=0}^{a}\xi_{(i,a-i,n-a)}+\sum_{i=0}^{a-2}\xi_{(i,a-2-i,n-a+2)}$.\\
Taking permutations again means we get $3K$ classes.\\

Now consider any class $\xi_{(a,b,c)}$, with $n=4K$, $a,b,c \geq 1$ and $c$ odd, $a+b+c=n$ (so that $\xi_{(a,b,c)}, \xi_{(b,a,c)}$ are both summands in $\sum_{i=0}^{n-c}\xi_{(i,n-c-i,c)}$) and without loss of generality $a$ odd. Then the class $\xi_{(a,b,c)}$ (and $\xi_{(a,c,b)})$ occurs as a summand in $\sum_{i=0}^{n-a}\xi_{(a,i,n-a-i)}$. Further, if $d,e \leq n$ are odd and $d \neq c, e \neq a$ then $\xi_{(a,b,c)}$ \emph{can not} occur as a summand in $\sum_{i=0}^{n-d}\xi_{(i,n-d-i,d)}$ or $\sum_{i=0}^{n-e}\xi_{(e,i,n-e-i)}$. Thus we deduce that $\xi_{(a,b,c)}$ occurs as a summand only of $\sum_{i=0}^{n-c}\xi_{(i,n-c-i,c)}$ and $\sum_{i=0}^{n-a}\xi_{(a,i,n-a-i)}$, and any permutation of $\xi_{(a,b,c)}$ occurs exactly as a summand of a permutation of one of these two sums. Further, $\xi_{(a,b,c)}$ is the \emph{only} term that can occur as a summand in both these sums, because if $\xi_{(a',b',c')}$ is another such term, then we must have $a'=a$ and $c'=c$ and thus $b'=b$. It thus follows that if we add up all the classes obtained by permuting the subscripts of $\sum_{i=0}^{n-c}\xi_{(i,n-c-i,c)}$ and $\sum_{i=0}^{n-a}\xi_{(a,i,n-a-i)}$, then all the permutations of $\xi_{(a,b,c)}$ cancel out, and thus adding up all the $3K$ classes induced by $B_{*},B_{*}^{'}$ and $B_{*}^{''}$ gives zero.  This follows since we have seen that any $\xi_{(a,b,c)}$ occurs in exactly two of our summands, and that it is impossible for two different terms to occur in the same two summands. Since $c$ can be any odd number less than $n$ here this means we have one, and only one relation, giving the required total of $3K+3+3K-1=6K+2$. Note that we have not considered the case where one of $a,b$ or $c$ is zero, but all such summands can be directly cancelled by the classes induced by inclusion from $A_{*},A_{*}^{'}$ and $A_{*}^{''}$.\\

We now move on to $n=4K+2$. We know that here $H_n^+(BV(2))$ is of order $2^{K}$, with generators given by product classes $\xi_{(a,n-a)}$ with $3 \leq a \leq n-3, a \equiv 3 \mod 4$. Applying $A_*$ straightaway gives the three classes $\xi_{(a,n-a,0)},\xi_{(a,0,n-a)},\xi_{(0,a,n-a)}$ for each $a$, and here $B_*$ simply gives the three permutations of $\sum_{i=0}^{n-a}\xi_{(a,i,n-a-i)}$ for each $a$ and thus we have already obtained $3K+3K=6K$ independent classes.\\
\end{proof}

The number of generators obtained via inclusion grows linearly, and we know the kernel dimensions grow quadratically. The way to generate the remaining manifolds is by using iterated projective bundles of a vector bundle over a projective space, for example the manifold $Y$ of example $3.3.3$. As we've seen, suitable choices of such bundles yield spin manifolds.

\begin{prop}
Spin projective bundles can be constructed to give homology classes of the form $M(2i,4j+3,4k+3) \in H_*^+(BV(3))$ with $i,k \geq 1$ and $j \geq 0$, and of the form $M(2i,4j+1,4k+3)$ with $i,j \geq 1$ and $k \geq 0$. Further we can construct spin projective bundles to give us classes of the form $M(2,3,3)$ and $M(4,3,3)$.
\end{prop}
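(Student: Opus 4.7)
The plan is to realize each class $M(a,b,c)$ as the pushforward of the fundamental class of an iterated real projective bundle $Y = \RP(\pi_2) \to X = \RP(\pi_1) \to \RP^a$, for suitably chosen real vector bundles $\pi_1$ on $\RP^a$ and $\pi_2$ on $X$. Lemma~1.0.2 then automatically supplies a positive scalar curvature metric on $Y$, since the fibres $\RP^b, \RP^c$ carry their round positive scalar curvature metrics and the orthogonal structure groups act by isometries. Choosing $\pi_1, \pi_2$ out of sums of canonical and tautological line bundles makes $\pi_1(Y) = V(3)$, so the classifying map of the universal cover pushes $[Y]$ into $H^+_*(BV(3))$. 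The main work is to arrange $w_1(Y) = w_2(Y) = 0$ via Theorem~3.3.1.

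In every case I take $\pi_1 = \gamma \oplus b\varepsilon$ on $\RP^{2i}$, where $\gamma$ is the canonical line bundle. Then $w_1(X) = 0$ since $b$ is odd, and Theorem~3.3.1 combined with $w_2(\RP^{2i}) = i\, x^2$ gives
\[
w_2(X) = (i+1)x^2 + xt + \binom{b+1}{2}t^2,
\]
which equals $(i+1)x^2 + xt$ when $b \equiv 3 \pmod 4$ and $(i+1)x^2 + xt + t^2$ when $b \equiv 1 \pmod 4$. Since $c \equiv 3 \pmod 4$ makes $\binom{c+1}{2}$ even, spinness of $Y$ reduces to finding $\pi_2$ with $w_1(\pi_2) = 0$ and $w_2(\pi_2) = w_2(X)$. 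Letting $L_0$ denote the pullback of $\gamma$ and $L_1$ the tautological line bundle on $X$, for the generic Cases~1 and~2 with $k \geq 1$ I build $\pi_2$ as a Whitney sum drawn from the rank-$4$ blocks
\[
A = L_0 \oplus L_1 \oplus (L_0 \otimes L_1) \oplus \varepsilon,\ B = 2L_0 \oplus 2L_1,\ C = 2L_0 \oplus 2\varepsilon,\ D = 2L_1 \oplus 2\varepsilon,
\]
whose second Stiefel--Whitney classes are $x^2+xt+t^2,\ x^2+t^2,\ x^2,\ t^2$ respectively, with trivial summands added to reach rank $c+1 = 4k+4$. A routine parity analysis selects the correct combination (for example $A \oplus B \oplus (4k-4)\varepsilon$ in Case~1 with $i$ odd, $A \oplus D \oplus (4k-4)\varepsilon$ in Case~1 with $i$ even, $A \oplus (4k)\varepsilon$ in Case~2 with $i$ even, and $A \oplus C \oplus (4k-4)\varepsilon$ in Case~2 with $i$ odd), making $w_2(\pi_2) = w_2(X)$ and hence $w_2(Y) = 0$.

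The main obstacle is the two special cases $M(2,3,3)$ and $M(4,3,3)$, together with Case~2 for $k = 0$ and $i$ odd, where the constraint $c = 3$ forces $\pi_2$ to have rank exactly $4$ and leaves no room to combine the blocks $A, B, C, D$. The required value of $w_2(\pi_2)$ -- one of $xt$, $x^2+xt$, or $xt+t^2$ -- lies outside the $\Z_2$-sublattice of $H^2(X;\Z_2)$ spanned by Whitney sums of $\varepsilon, L_0, L_1, L_0 \otimes L_1$; in particular $xt$ is not even the mod-$2$ reduction of an integer class, since one checks $Sq^1(xt) = x^2 t + xt^2 \neq 0$ in $H^3(X;\Z_2)$, so the underlying real bundle of a complex line bundle cannot realize it either. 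To close these cases I will appeal to obstruction theory along the Postnikov tower of $BO(4)$: because $w_1$ and $w_2$ are free polynomial generators of $H^{\leq 2}(BO(4);\Z_2)$ and $X$ has low dimension (at most $9$), a real rank-$4$ vector bundle on $X$ with any prescribed $(w_1, w_2) \in H^1(X;\Z_2) \oplus H^2(X;\Z_2)$ can be constructed, yielding a spin $Y = \RP(\pi_2)$ and completing the proof.
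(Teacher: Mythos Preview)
Your block-building approach to the generic cases ($k \geq 1$) is correct and close in spirit to the paper's, which also assembles $\pi_2$ from the line bundles $L_0, L_1, L_0\otimes L_1, \varepsilon$. One difference worth noting: the paper does not fix $\pi_1 = \gamma \oplus b\varepsilon$ throughout but sometimes takes $\pi_1 = 3\gamma \oplus (b-2)\varepsilon$. This extra freedom is precisely what handles your problematic Case~2 with $k=0$ and $i$ odd: taking $N = \RP(3\gamma \oplus (4j+3)\varepsilon \to \RP^{4i+2})$ one computes $w_2(N) = x^2+xt+t^2$, so the single rank-$4$ block $A$ already suffices and no obstruction theory is needed. (Incidentally, your assertion that $\dim X \leq 9$ in the residual cases is false: in Case~2 with $k=0$ and $i$ odd one has $\dim X = 2i + 4j + 1$, which is unbounded.)

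For the two genuinely special cases $M(2,3,3)$ and $M(4,3,3)$, however, your obstruction-theory argument is a real gap. That $w_1, w_2$ are free polynomial generators of $H^{\leq 2}(BO(4);\Z_2)$ is a statement about the cohomology of $BO(4)$, not about surjectivity of $[X, BSO(4)] \to H^2(X;\Z_2)$. The homotopy fibre of $w_2 : BSO(4) \to K(\Z_2, 2)$ is $BSpin(4)$, which is $3$-connected with $\pi_4 \cong \Z^2$, so the first obstruction to lifting a prescribed $w_2$ lives in $H^5(X; \Z^2)$; for a closed orientable $5$-manifold this group is $\Z^2$, and you have done nothing to show the obstruction vanishes for the class $xt$. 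The paper bypasses the issue with a tangent-bundle trick: for $M(2,3,3)$ it takes $N = \RP(\tau_{\RP^2} \oplus 2\varepsilon \to \RP^2)$, observes $w_5(TN)=0$ so that $TN \cong T \oplus \varepsilon$ with $T$ of rank~$4$, and sets $Y = \RP(T)$. Then $w_2(T) = w_2(TN) = w_2(N)$ tautologically, whence $w_2(Y) = w_2(N) + w_2(T) + \binom{4}{2}u^2 = 0$ for free. The case $M(4,3,3)$ is handled the same way with $N = \RP(\gamma \oplus 3\varepsilon \to \RP^4)$ and three sections of $TN$ split off. This concrete construction is the missing idea in your argument.
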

\begin{proof}
In dimensions $n=4K$ we can take the manifolds $Y=Y^n$ of Example $3.3.3$ with fundamental classes $M(4i, 4j+1, n-4i-4j-1)$, with $i,j \geq 1$.\\
As a slight aside, we remark that we choose not to take $j=0$ here. Indeed, calculation shows that taking $j=0$ produces homology classes in the span of those induced by inclusion via the permutations of $B_*$ in Proposition $3.5.1$. Thus combinatorially it is useful to think of the classes induced by inclusion via $B_*$ as permutations of $M(4i,1,n-4i-1)$.\\

We can also take the following iterated bundle over $\RP^{4i+2}$.\\

Start with the bundle $N=\RP(\gamma_{4i+2}^{1} \oplus (4j+3)\varepsilon \mapsto \RP^{4i+2})$, and then take $M=\RP(3L_{1} \otimes L_{0} \oplus  L_{1} \oplus  L_{0} \oplus (4K-4i-4j-3) \varepsilon \mapsto N)$, where again $L_1$ is the canonical line bundle over $N$, with $t=w_1(L_1)$ and $L_0$ is the pullback of the canonical line bundle over $\RP^{4i+2}$, with $x=w_1(L_0)$. Note that the fibre $\RP^{l-1}$ here must have dimension at least $7$, where $l=n-4i-4j-4$.\\
Applying Theorem $3.3.1$ we see that $w_1(N)=0, w_2(N)=xt$, and thus $w_2(M)=xt+xt+{4l \choose 2}u^2=0$ so that $M$ is indeed spin. Note that $w(\gamma_{4i+2}^{1} \oplus (4j+3)\varepsilon \mapsto \RP^{4i+2})=1+x$, and $w(3L_{1} \otimes L_{0} \oplus  L_{1} \oplus  L_{0} \oplus (4K-4i-4j-3) \varepsilon \mapsto N)=1+x^2+xt+xt^2+x^4+t^4+x^3t+xt^3+x^4t+xt^4+x^3t^2+x^2t^3$ so that by Theorem $3.3.1$ the cohomology ring of $M$ is\\

$H^{*}(M)=\Z_{2}[x,t,u]/(x^{4i+3},t^{4j+4}+xt^{4j+3},u^{4K-4i-4j-4}+xtu^{4K-4i-4j-6}+x^{2}tu^{4K-4i-4j-7}+xt^{2}u^{4K-4i-4j-7}+(x^{4}+t^{4}+x^{3}t+xt^{3})u^{4K-4i-4j-8}+(x^{4}t+xt^{4}+x^{3}t^{2}+x^{2}t^{3})u^{4K-4i-4j-9})$.\\

We denote the image of the homology fundamental class in $H_n^{+}(BV(3))$ by $M(4i+2, 4j+3, n-4i-4j-5)$.\\

We make analogous constructions in dimensions $n=4K+2$. Start with a bundle of the form $N=\RP(3\gamma_{4i+2}^{1} \oplus (4j+3)\varepsilon \mapsto \RP^{4i+2})$. Then $N$ is orientable, and we can consider the bundle $M=\RP(L_{1} \otimes L_{0} \oplus L_{1} \oplus L_{0} \oplus (4K-4i-4j-5)) \varepsilon \mapsto N)$. Much like above, $w_2(N)=x^2+t^2+xt=w_2(L_{1} \otimes L_{0} \oplus L_{1} \oplus L_{0} \oplus (4k-4i-4j-7))$ so that $M$ is indeed spin. We can apply Theorem $3.3.1$ again to see that such a bundle will have cohomology ring\\

$H^{*}(M)=\Z_{2}[x,t,u]/(x^{4i+3},t^{4j+6}+xt^{4j+5}+x^{2}t^{4j+4}+x^{3}t^{4j+3},u^{4K-4i-4j-4}+x^{2}u^{4K-4i-4j-6}+t^{2}u^{4K-4i-4j-6}+xtu^{4K-4i-4j-6}+x^{2}tu^{4K-4i-4j-7}+xt^{2}u^{4K-4i-4j-7})$,\\

and we again denote the image in homology by $M(4i+2, 4j+5, n-4i-4j-5)$.\\

Further, we can consider the bundle $N=\RP(3\gamma_{4i}^{1} \oplus (4j+1)\varepsilon \mapsto \RP^{4i})$, and then take $M=\RP(L_{1} \otimes L_{0} \oplus 3 L_{1} \oplus 3 L_{0} \oplus (4K-4i-4j-5) \varepsilon \mapsto N)$. Note again that the fibre $\RP^{l-1}$ here must have dimension at least $7$, where $l=n-4i-4j-2$.\\
Applying Theorem $3.3.1$ we see that $w_1(N)=0, w_2(N)=xt$, and thus $w_2(M)=xt+xt+{4l \choose 2}u^2=0$. Thus $M$ is indeed spin, and its cohomology ring is given by\\

$H^{*}(M)=\Z_{2}[x,t,u]/(x^{4i+3},t^{4j+4}+xt^{4j+3}+x^2t^{4j+2}+x^3t^{4j+1},u^{4K-4i-4j-4}+xtu^{4K-4i-4j-6}+x^{2}tu^{4K-4i-4j-7}+xt^{2}u^{4K-4i-4j-7}+(x^{4}+t^{4}+x^{3}t+xt^{3})u^{4K-4i-4j-8}+(x^{4}t+xt^{4}+x^{3}t^{2}+x^{2}t^{3})u^{4K-4i-4j-9})$,\\
and we denote the image of the homology fundamental class in $H_n^{+}(BV(3))$ by $M(4i,4j+3, n-4i-4j-3)$.\\

Again as a slight aside, we observe that the circle bundle $M=\RP(\gamma_{4i+2}^{1} \oplus \varepsilon \mapsto \RP^{4i+2})$ is spin. This follows since $H^{*}(M)=\Z_{2}[x,t]/(x^{4i+3},t^{2}+xt)$ so that $w_{2}(M)=t^{2}+xt=0$. So $M \times \RP^{n-4i-3}$ is a positive scalar curvature manifold generating three permutations of $\sum_{j=1}^{4i+2} \xi_{(j,4i+3-j,4K+2-4i-3)}$, but by Proposition $3.5.1$ these classes are already induced by inclusion. Combinatorially, it is helpful to think of these as giving three permutations of $M(4i+2,1,n-4i-3)$ classes.\\

Finally we wish to construct triples of the form $M(4i+2, 4j+3,3)$, and $M(4i,4j+3,3)$, which the above constructions fail to give. For the former case, consider first the case $(2,3,3)$. Let $\tau_n$ denote the tangent bundle of $\RP^n$ and consider the bundle
$$N^5=\RP(\tau_2 \oplus 2 \varepsilon \mapsto \RP^2)$$
Then $N$ is orientable since $w_1(N)=2w_1(\RP^2)+2t=0$, and $H^*(N)=\Z_2[x,t]/x^3,t^4+xt^3+x^2t^2$. Further, by construction, the tangent bundle of $TN$ of $N$ has a nowhere zero section since $w_5(TN)=0$, so that we have a splitting $TN=T \oplus \varepsilon$ where $T$ is a four dimensional bundle over $N$. Thus we can simply consider $M=\RP(T \mapsto N)$, which is an eight dimensional spin manifold since $w_2(M)=w_2(N)+w_2(T)+{4 \choose 2}u^2=2w_2(N)=0$. Here $u$ is the first Stiefel-Whitney class of the canonical line bundle over $M$. Thus $M$ is a spin iterated $\RP^3$ bundle over $N$, which is in turn an $\RP^3$ bundle over $\RP^2$, thus giving a homology class of the form $M(2,3,3)$ as required.\\

Similarly, for $M(4,3,3)$, we can start with the bundle $N^7=\RP(\gamma_4^1 \oplus 3\varepsilon \mapsto \RP^4)$, and similarly calculate that $w_5(N)=w_6(N)=w_7(N)=0$ so that $TN=T \oplus 3\varepsilon$ has three sections and then take $M=\RP(T \mapsto N)$. Thus $M$ will now be a spin manifold, which is an $\RP^3$ bundle over $N$, which is in turn an $\RP^3$ bundle over $\RP^4$, giving a homology class of the form $M(4,3,3)$ as required.
\end{proof}

Notice that the classes $M(a,b,c)$ are realized as fundamental classes of manifolds $M^n$ with $H^*(M^n)=\Z_2[x,t,u]/I$ for a suitably defined ideal $I$. The following lemma is then obvious by dualising.
\begin{lem}
If the class $M(a,b,c)$ is realized as the fundamental class of a manifold $M^n$ as constructed above, then $M(a,b,c)=\sum \xi_{(A,B,C)}$, where the $(A,B,C)$ range over all triples such that $H^n(M)=\Z_2<x^At^Bu^C>$. In particular $(a,b,c)$ is one such triple, and any such triple must have $A \leq a$ and $A+B \leq a+b$, since $M^n$ is an $\RP^c$ bundle over some manifold $N^{n-c}$, which is in turn an $\RP^b$ bundle over $\RP^a$.
\end{lem}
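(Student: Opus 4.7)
The plan is to identify $M(a,b,c)$ as the pushforward $f_*[M^n] \in H_n(BV(3); \Z_2)$ of the $\Z_2$-fundamental class along the classifying map $f \colon M^n \to BV(3)$, and then dualize. By construction, the three generators $x, t, u$ of $H^*(M; \Z_2)$ are first Stiefel-Whitney classes of the canonical line bundles appearing at the three stages of the iterated projective bundle; these bundles classify a map $f$ for which $f^*(y_i)$ is $x$, $t$, $u$ respectively, where $H^*(BV(3); \Z_2) = \Z_2[y_1, y_2, y_3]$.

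Since $M^n$ is a closed $n$-manifold, Poincar\'e duality with $\Z_2$ coefficients gives $H^n(M; \Z_2) \cong \Z_2$, and for any triple $(A,B,C)$ with $A+B+C = n$ the Kronecker pairing reads
\[
\langle y_1^A y_2^B y_3^C, f_*[M^n]\rangle = \langle f^*(y_1^A y_2^B y_3^C), [M^n]\rangle = \langle x^A t^B u^C, [M^n]\rangle,
\]
which is $1$ when $x^A t^B u^C$ is the non-zero generator of $H^n(M; \Z_2)$ and $0$ otherwise. Dualizing into the basis $\{\xi_{(A,B,C)}\}$ of $H_n(BV(3); \Z_2)$ immediately yields $M(a,b,c) = \sum \xi_{(A,B,C)}$ with the sum ranging over precisely those $(A,B,C)$ for which $H^n(M;\Z_2) = \Z_2\langle x^A t^B u^C\rangle$.

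For the two constraints, I would argue: $x^{a+1} = 0$ since $x$ is pulled back from the base $\RP^a$, which forces $A \leq a$. The intermediate bundle $N$ has dimension $a+b$, so $H^k(N; \Z_2) = 0$ for $k > a+b$; since $x$ and $t$ are pulled back from $N$, the monomial $x^A t^B$ vanishes in $H^*(M)$ whenever $A+B > a+b$, forcing $A+B \leq a+b$. That $(a,b,c)$ itself occurs follows from iterated Leray-Hirsch: $H^*(N)$ is free over $H^*(\RP^a)$ on $1, t, \ldots, t^b$ and $H^*(M)$ is free over $H^*(N)$ on $1, u, \ldots, u^c$, so $x^a t^b u^c$ is a non-zero element of $H^n(M)$.

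There is no real obstacle here; the argument is a direct unwinding of the definitions using Poincar\'e duality over $\Z_2$ together with the Leray-Hirsch theorem applied to each stage of the projective bundle tower. The only point requiring minor care is identifying the classifying map $f$ with the one induced by the three canonical line bundles, which follows from the fact that $x, t, u$ are the three independent degree-one generators used in the construction.
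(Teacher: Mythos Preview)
Your proof is correct and is precisely the dualising argument that the paper has in mind; the paper itself dispatches this lemma in one line (``obvious by dualising''), and you have simply written out what that means in full, including the Leray--Hirsch justification that $x^a t^b u^c \neq 0$ and the bundle-dimension reasons for the constraints $A \leq a$ and $A+B \leq a+b$.
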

For the cases $M(4i+2, 4j+3,3)$, and $M(4i,4j+3,3)$ with $i,j>0$ we need an algebraic argument. Here the $M(a,b,c)$ classes will not be constructed directly as fundamental classes of manifolds, but rather by permuting the subscripts of the classes we already have.
\begin{lem}
i)$\xi_{(a-1,b+1,c)}$ occurs as a summand of $M(a,b,c)$, where $M(a,b,c)$ is as above.\\
ii)$\xi_{(n-8,3,5)}$ occurs as a summand of $M(n-8,5,3)$, with $n \geq 10$ even.
\end{lem}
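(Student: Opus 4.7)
For part (i), the plan is to work entirely inside the cohomology ring of the positive scalar curvature manifold $M^n$ whose fundamental class represents $M(a,b,c)$, exploiting Lemma 3.5.3. In every family of constructions of Proposition 3.5.2 (and in the exceptional cases $M(2,3,3)$ and $M(4,3,3)$), the ring $H^*(M) = \Z_2[x,t,u]/I$ contains the base-level relation $x^{a+1}=0$ coming from $\RP^a$, together with a middle-layer relation of the shape
\[ t^{b+1} \;=\; xt^b \;+\; (\text{terms of order} \geq 2 \text{ in } x), \]
since in each construction the vector bundle used to form $N\to\RP^a$ has its first Stiefel--Whitney class equal to $x$ (so that the $t^{b+1}$-coefficient of $t^b$ in the defining polynomial is exactly $x$). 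Multiplying this identity by $x^{a-1}u^c$ annihilates every correction term on account of $x^{a+1}=0$, and yields $x^{a-1}t^{b+1}u^c = x^a t^b u^c$ in the top group $H^n(M)$. By Lemma 3.5.3, this equality exhibits $\xi_{(a-1,b+1,c)}$ as a summand of $M(a,b,c)$.

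For part (ii), I would unpack the construction of $M(n-8,5,3)$ from Proposition 3.5.2, whose top layer is a projective bundle $\RP(L_1\otimes L_0 \oplus L_1 \oplus L_0 \oplus(\cdots)\varepsilon \to N)$. By the splitting principle, this rank-$(c+1)$ bundle has total Stiefel--Whitney class $(1+x+t)(1+t)(1+x)$, so $w_1=0$, $w_2 = x^2+t^2+xt$ and $w_3 = x^2 t + xt^2$. Theorem 3.3.1 therefore gives the $u$-relation
\[ u^4 \;=\; u^2(x^2+t^2+xt) \;+\; u(x^2 t + xt^2), \]
and multiplying by $u$ produces $u^5 = u^3(x^2+t^2+xt) + u^2(x^2 t + xt^2)$. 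Multiplying both sides by $x^{n-8}t^3$ gives a sum of five terms; four of them carry a factor $x^{n-7}$ or $x^{n-6}$ and so vanish by the base relation $x^{n-7}=0$, leaving $x^{n-8}t^3 u^5 = x^{n-8}t^5 u^3$ in $H^n(M)$. Lemma 3.5.3 then identifies $\xi_{(n-8,3,5)}$ as a summand of $M(n-8,5,3)$.

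The main obstacle is the bookkeeping across the four families of constructions in part (i) (plus the two exceptional triples): in each case one must verify that the middle-layer relation really has the shape $t^{b+1}=xt^b+\text{(higher in }x\text{)}$ and that the higher-$x$ corrections are genuinely killed by $x^{a+1}=0$. This is essentially forced by $w_1$ of the intermediate bundle being a multiple of $x$ and by the rank constraints, but each case should be recorded. For part (ii) the delicate point is to ensure the top-layer bundle is exactly the one whose Stiefel--Whitney classes produce the $u$-relation above — any other choice would change $w_2,w_3$ and could spoil the clean cancellation.
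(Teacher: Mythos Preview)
Your proposal is correct and follows essentially the same route as the paper: both parts are proved by working in the cohomology ring $\Z_2[x,t,u]/I$ of the underlying manifold and showing the relevant top-degree monomials coincide, then invoking Lemma~3.5.3. Your account of part~(i) is in fact more explicit than the paper's (which simply asserts ``$x^at^b=x^{a-1}t^{b+1}$ by construction''), since you identify that the middle-layer relation always has the form $t^{b+1}=xt^b+(\text{higher in }x)$ because $w_1$ of the intermediate bundle is $x$; your part~(ii) computation of the $u$-relation matches the paper's exactly.
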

\begin{proof}
This is immediate from the cohomology rings of the manifolds we construct. For example in dimensions $n=4k$, we saw that the manifold $Y$ of Example $3.3.3$, which represented classes of the form $M(4i,4j+1,n-4i-4j-1)$ has cohomology ring
$$H^{*}(Y)=\Z_{2}[x,t,u]/(x^{4i+1},t^{4j+2}+xt^{4j+1},u^{A+3}+x^{2}u^{A+1}+t^{2}u^{A+1}+xtu^{A+1}+x^{2}tu^{A}+xt^{2}u^{A})$$
from which we see that $x^{4i}y^{4i+1}=x^{4i-1}y^{4i+2}$ so that $H^n(Y)=\Z_2<x^{4i}t^{4j+1}u^{n-4i-4j-1}>=\Z_2<x^{4i-1}t^{4j+2}u^{n-4i-4j-1}>$. Dualising thus gives that $M(4i,4j+1,n-4i-4j-1)=\xi_{(4i,4j+1,n-4i-4j-1)}+\xi_{(4i-1,4j+2,n-4i-4j-1)}+ \sum \xi_{(A,B,C)}$, as required, where the sum is over other triples $(A,B,C)$ with $A+B+C=n$ such that $x^At^Bu^C=x^{4i}t^{4j+1}u^{n-4i-4j-1}$ in $H^*(Y)$. The other constructions giving us our $M(a,b,c)$ classes can be dealt with in exactly the same way, since we will always have $x^at^b=x^{a-1}t^{b+1}$ by construction, which proves the first part.\\

The second part is similar. If $X^n$ is the manifold whose fundamental class is $M(n-8,5,3)$, then we know $H^*(X)=\Z_2[x,t,u]/I$, where $I$ can be read off from Example $3.3.3$ and Proposition $3.5.2$. In particular we always have:
$$u^4+x^2u^2+t^2u^2+xtu^2 +x^2tu+xt^2u=0$$
Now $H^n(X)=\Z_2<x^{n-8}y^5u^3>$. Since $X$ is an iterated bundle over $\RP^{n-8}$, we must have $x^{n-7}=0$. Thus we can use the above formula to deduce $x^{n-8}y^3u^5=x^{n-8}y^5u^3$, so that the claim follows.
\end{proof}
Now, in order to produce a class of the form $M(2i,4j+3,3)$ with $j \geq 1$, we consider the class $M(2i,3,4j+3)$. From Proposition $3.5.2$ we know we can represent this class by a spin manifold which is an iterated projective bundle, and we simply define $M(2i,4j+3,3)$ as the class obtained by switching the last two subscript factors. Namely, if we have $M(2i,3,4j+3)= \sum \xi_{(a,b,c)}$ for suitable triples $(a,b,c)$ then $M(2i,4j+3,3) := \sum \xi_{(a,c,b)}$. Any continuous map $f:X \rightarrow Y$ between two spaces sends $H_*^{+}(X)$ to $H_*^{+}(Y)$, so $M(2i,4j+3,3) \in H_*^+(BV(3))$.\\
To obtain $M(4i+2,3,3)$ classes with $i \geq 1$, note that we can construct a manifold to represent the class $M(4,4i+1,3)$ which we know by Lemma $3.5.3$ is of the form $\xi_{(4,4i+1,3)}+\xi_{(3,4i+2,3)}+\sum \xi_{(a,b,c)}$ for suitable other triples $(a,b,c)$. Thus we can simply define $M(4i+2,3,3)$ as the class obtained from $M(4,4i+1,3)$ by permuting the first two factors, so that $M(4i+2,3,3)=\xi_{(4i+2,3,3)}+\xi_{(4i+1,4,3)}+\sum \xi_{(b,a,c)}$.\\

We would like to argue similarly to define $M(4i,3,3)$ for $i \geq 2$, namely by saying $M(4,4i-1,3)=\xi_{(4,4i-1,3)}+\xi_{(3,4i,3)}+\sum \xi_{(a,b,c)}$ and then taking the permutation swapping the first two subscript factors. However, $M(4,4i-1,3)$ is itself defined by permuting the last two subscript factors of $M(4,3,4i-1)$ and so this does not work. Instead we simply consider $M(2,4i+1,3)=\xi_{(2,4i+1,3)}+\sum \xi_{(a,b,c)}$. The manifold representing this class is an iterated $\RP^3$ bundle over a manifold which is an $\RP^{4i+1}$ bundle over $\RP^2$, and thus it follows that $\xi_{(3,4i,3)}$ is \emph{not} a summand of $M(2,4i+1,3)$. Nevertheless, we define $M(4i,3,3)$ as the class obtained by permuting the first two subscripts of $M(2,4i+1,3)$. Thus $M(4i,3,3)=\xi_{(4i+1,2,3)}+\sum \xi_{(b,a,c)}$ for suitable triples $(b,a,c)$. Thus $M(4i,3,3)$ gives us the class $\xi_{(4i+1,2,3)}$ as a summand, which does not appear as a summand  in the other $M(a,b,c)$ classes (see Lemma 3.5.6 below).\\

We will show that the four sets of $M(a,b,c)$ generators will suffice together with inclusion from the $V(2)$ subgroups. The $M(a,b,c)$ are parametrized by four sets of triples of natural numbers $(a,b,c)$, and the explicit constructions of these manifolds tell us directly which triples we can count.\\
The plan will be to first argue that the $M(a,b,c)$ classes are all independent. The key to the combinatorics is to remember that we are thinking of $M(a,b,c)$ as being the fundamental class of a manifold $Y^n$ which is an $\RP^c$ bundle over some manifold which is in turn an $\RP^b$ bundle over $\RP^a$.\\
We always have $H^*(Y)=\Z_2[x,t,u]/I$ for a suitable ideal $I$, and thus we have $M(a,b,c)= \sum \xi_{(A,B,C)}$, where the $(A,B,C)$ are all triples such that $H^n(Y)=\Z_2<x^At^Bu^C>$. Further, since $Y^n$ is an $\RP^c$ bundle over some manifold $X$ which is in turn an $\RP^b$ bundle over $\RP^a$, we must always have $A \leq a$ (since $x^{A+1}=0$) and $A+B \leq n-c$ (since $X$ has dimension $a+b=n-c$). Thus if we have $M(a,b,c)$ and $M(a',b',c')$ with, for example $a>a'$, then $\xi_{(a,b,c)}$ can not occur as a summand of $M(a',b',c')$ because this is the fundamental class of a manifold which is an iterated bundle over $\RP^{a'}$, so that any summand $\xi_{(A,B,C)}$ of $M(a',b',c')$ must have $A' \leq a'<a$. Thus in order to cancel $\xi_{(a,b,c)}$, we must have to consider some other $M(d,e,f)$ class with $d \geq a$, but then we can argue the same way for the pairs $(a,b,c)$ and $(d,e,f)$, and eventually we will run out of triples.\\
The other cases, including those $M(a,b,c)$ defined as permutations, are dealt with similarly, see Lemma $3.5.7$. We can then use similar reasoning to see independence from classes induced from $V(2)$.\\
Before giving a general argument, we work through a couple of example dimensions to give an idea of the general pattern.
\begin{ex}
$n=8$:
\end{ex}
The third local cohomology here is a $\Z_2$ vector space of order $2^{15}$, so we will produce 15 independent classes in the ordinary homology of $BV(3)$, represented by positive scalar curvature spin manifolds.\\
Firstly we can include from $V(2)$ subgroups, where we had the classes $\xi_{(7,1)},\xi_{(1,7)}$, both represented by $\RP^7 \times \RP^1$, and $\xi_{(3,5)}+\xi_{(5,3)}$ represented by the manifold $M_{(5,5)}$ described in the previous section. In the notation of Proposition $3.5.1$, it follows immediately that applying $A_*$ to these classes gives the classes $\xi_{(7,1,0)},\xi_{(1,7,0)},\xi_{(3,5,0)}+\xi_{(5,3,0)}$ in $H_8^{+}(BV(3))$, along with all their permutations, namely $\xi_{(7,0,1)},\xi_{(1,0,7)},\xi_{(0,7,1)},\xi_{(0,1,7)},\xi_{(3,0,5)}+\xi_{(5,0,3)}$ and $\xi_{(0,3,5)}+\xi_{(0,5,3)}$, which is a total of nine classes.\\
We can also apply $B_*$ to $\xi_{(7,1)}$ to obtain the classes $\sum_{i=1}^{7}\xi_{(1,i,7-i)}$, and applying $B_{*}^{'}$ and $B_*^{''}$ yields the permutations $\sum_{i=1}^{7}\xi_{(i,1,7-i)}$ and $\sum_{i=1}^{7}\xi_{(i,7-i,1)}$. Similarly, for $\xi_{(3,5)}+\xi_{(5,3)}$ we obtain the classes $\sum_{i=1}^{3}\xi_{(5,3-i,i)}+\sum_{i=1}^{5}\xi_{(3,5-i,i)}$ and its permutations $\sum_{i=1}^{3}\xi_{(3-i,5,i)}+\sum_{i=1}^{5}\xi_{(5-i,3,i)}$ and $\sum_{i=1}^{3}\xi_{(3-i,i,5)}+\sum_{i=1}^{5}\xi_{(5-i,i,3)}$.\\
It is easy to then check (Proposition 3.5.1) that there is precisely one relation between these six classes, namely that they add up to zero, so that we have 14 of the 15 classes needed. The last class is obtained by the $M(2,3,3)$ construction given above. We let $\tau_n$ denote the tangent bundle of $\RP^n$ and consider the bundle
$$N^5=\RP(\tau_2 \oplus 2 \varepsilon \mapsto \RP^2)$$
Then $N$ is orientable, and $H^*(N)=\Z_2[x,t]/x^3,t^4+xt^3+x^2t^2$, and further, by construction, the tangent bundle of $TN$ of $N$ has a nowhere zero section, so that we have a splitting $TN=T \oplus \varepsilon$ where $T$ is a four dimensional bundle over $N$. Then take $M=\RP(T \mapsto N)$, which is an eight dimensional spin manifold, with
$$H^*(M)=\Z_2[x,t,u]/x^3,t^4+xt^3+x^2t^2,u^4+x^2u^2+xtu^2+x^2tu$$
Dualising then gives the image of the fundamental class $[M] \rightarrow M(2,3,3)=\xi_{(2,3,3)}+\xi_{(1,4,3)}+\xi_{(1,2,5)}$ which is independent of those above, giving the required extra class.
\begin{ex}
$n=10$:
\end{ex}
The third local cohomology here is a $\Z_2$ vector space of order $2^{14}$, and we will produce 14 independent classes in the ordinary homology. The generators for $V(2)$ are $\xi_{(7,3)}$ and $\xi_{(3,7)}$, both coming from $\RP^7 \times \RP^3$. Applying $A_*$ again gives six permutations of $\xi_{(7,0,3)}$, and three each of $\sum_{i=1}^{7}\xi_{(3,i,7-i)}$ and $\sum_{i=1}^{3}\xi_{(7,i,3-i)}$. This gives twelve independent classes, and the remaining two may be chosen to be $M(2,5,3)=\xi_{(2,5,3)}+ \cdots$ and $M(4,3,3)=\xi_{(4,3,3)}+ \cdots$, which we constructed in Proposition $3.5.2$. These are independent of each other because if $\xi_{(a,b,c)}$ is a summand of $M(2,5,3)$ then $a \leq 2$ so that the $\xi_{(4,3,3)}$ summand of $M(4,3,3)$ can not be cancelled by any summand of $M(2,5,3)$. Similarly, the six classes $\xi_{(5,3,2)}, \xi_{(3,5,2)},\xi_{(5,2,3)},\xi_{(7,1,2)},\xi_{(7,2,1)}$ and $\xi_{(1,7,2)}$ are summands of each of the permutations of $\sum_{i=1}^{7}\xi_{(3,i,7-i)}$ and $\sum_{i=1}^{3}\xi_{(7,i,3-i)}$. By Lemma $3.5.2$ these can not be cancelled by any summand of $M(2,5,3)$ or $M(4,3,3)$, from which it follows that these classes are independent of those induced from $V(2)$, giving us the required $14$ independent classes.\\

We now move on to the general even dimensional case, arguing along the same lines. Note that in even dimensions there are no first local cohomology classes detected in periodic K-theory, so as above, we directly produce sufficiently many classes in ordinary homology. The first step is to convince ourselves that the $M(a,b,c)$ classes we have constructed form an indpendent set. Note that in dimension $n$, we have such a class for any triple of natural numbers $(a,b,c)$, with $a$ even, $3 \leq b$ odd, $c \equiv 3 \mod 4$, and $a+b+c=n$.
\begin{lem}
The $M(a,b,c)$ classes form a linearly independent set in $H_n^+(BV(3))$.
\end{lem}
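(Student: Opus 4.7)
The plan is to associate to each class $M(a,b,c)$ a distinguished ``signature'' summand $\xi_{\sigma(a,b,c)} \in H_n(BV(3))$ with the property that, under a suitable total ordering of the parameter triples, the signature of $M(a,b,c)$ does not appear as a summand of $M(a',b',c')$ for any strictly earlier triple $(a',b',c')$. Linear independence then follows at once from the resulting upper-triangular structure of the expansion of the $M(a,b,c)$ in the monomial basis $\{\xi_{(A,B,C)}\}$.

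For the classes constructed directly as fundamental classes of iterated projective bundles in Proposition 3.5.2, I take $\sigma(a,b,c) = (a,b,c)$ itself; this is a summand by inspection of the cohomology rings, and Lemma 3.5.4(i) provides $\xi_{(a-1,b+1,c)}$ as a companion summand. Lemma 3.5.3 shows that every summand $\xi_{(A,B,C)}$ of $M(a,b,c)$ must satisfy $A \leq a$ and $A+B \leq a+b$, reflecting $x^{a+1}=0$ and the fact that the total manifold is an $\RP^c$-bundle over an $(a+b)$-dimensional base. Ordering parameter triples lexicographically by $(a,b)$ (with $c = n-a-b$ forced), $\xi_{(a,b,c)}$ cannot appear in any $M(a',b',c')$ with $a'<a$, nor in $M(a,b',c')$ with $b'<b$, since in the latter case the bound $A+B \leq a+b'$ already excludes $(A,B)=(a,b)$. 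This handles independence of the directly-constructed family.

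The permutation-defined classes are then inserted into the ordering by transporting the parent's signature through the defining permutation. For $M(2i,4j+3,3)$ with $j\geq 1$, obtained by swapping the last two subscripts of $M(2i,3,4j+3)$, the signature $\xi_{(2i,4j+3,3)}$ inherits first coordinate $2i$, so the same $A\leq 2i$ argument as above separates it from the directly-built classes. For the exceptional families $M(4i+2,3,3)$ (with $i\geq 1$) and $M(4i,3,3)$ (with $i\geq 2$) I take the signatures $\xi_{(4i+2,3,3)}$ and $\xi_{(4i+1,2,3)}$ respectively. Each is confirmed to be a summand via Lemma 3.5.3 applied to the parent class and the explicit permutation; moreover $\xi_{(4i+1,2,3)}$ is distinguished by having second coordinate equal to $2$, a value that cannot occur as the ``top'' second coordinate of any directly-built $M(a',b',c')$ since $b'$ is always odd, nor as the image of such a coordinate under any of the other permutation constructions.

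The main obstacle lies in this last step: one must rigorously check that none of the signatures of the permutation-defined classes is absorbed by a summand of some other $M(a',b',c')$, directly-constructed or otherwise. This reduces to a finite combinatorial exercise combining the bundle-dimension bounds of Lemma 3.5.3 with the parity restrictions $a$ even, $b \geq 3$ odd, and $c \equiv 3 \pmod 4$ on the admissible parameter triples. Once this bookkeeping is complete, the upper-triangular structure of the signatures gives the desired linear independence of the entire collection.
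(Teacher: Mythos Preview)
Your approach is essentially the same as the paper's: both isolate a distinguished summand $\xi_{(a,b,c)}$ (or $\xi_{(a+1,2,3)}$ in the exceptional $M(4i,3,3)$ case) and use the constraints $A\le a$, $A+B\le a+b$ from Lemma~3.5.3 to show this summand cannot occur in any other $M(a',b',c')$. The paper phrases the final step as a direct induction---any putative relation between $M(a,b,c)$ and $M(a',b',c')$ forces a third class $M(d,e,f)$ with $(d,e)$ lexicographically larger, and one runs out of triples---rather than your upper-triangular formulation, but these are equivalent, and the paper leaves the permutation-defined cases at a comparable level of sketchiness to your ``finite combinatorial exercise.''
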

\begin{proof}
First suppose that either $b \equiv 1 \mod 4$ or $c \neq 3$. Then we have seen that we can construct $M(a,b,c)$ directly as the fundamental class of some manifold $Y^n$, with $n=a+b+c$ and $H^*(Y)= \Z_2[x,t,u]/I$, where I is an ideal. By dualising it follows that $M(a,b,c)=\xi_{(a,b,c)}+ \sum \xi_{(A,B,C)}$, where the sum is over all other triples $(A,B,C)$ such that $H^n(Y)=\Z_2<x^At^Bu^C>$. Since $Y$ was constructed as an $\RP^c$ bundle over a manifold $X$ which was in turn an $\RP^b$ bundle over $\RP^a$, we must have $A \leq a$ and $A+B \leq a+b$. This also applies to the classes $M(2,3,3)$ and $M(4,3,3)$ which we represented explicitly as fundamental classes of manifolds.\\
If $7 \leq b \equiv 3 \mod 4$ and $c=3$ then we defined $M(a,b,c)$ by taking a suitable permutation of the subscripts of the class $M(a,c,b)$ which we directly constructed as the fundamental class of a manifold $Y^n$, with $H^*(Y)= \Z_2[x,t,u]/I$ again. We chose the permutation so that $M(a,b,c)=\xi_{(a,b,c)}+ \sum \xi_{(A,B,C)}$, where the sum is over all other triples $(A,B,C)$ such that $H^{n}(Y)=\Z_2<x^At^Cu^B>$ .\\
Finally if we have $a>4$ and $b=c=3$ then if $ a \equiv 2 \mod 4$ then $M(a,3,3)= \xi_{(a,3,3)}+ \sum \xi_{(A,B,C)}$, for suitable triples $(A,B,C)$ having in particular $A \leq a$ and $A+C \leq a+3$. On the other hand if $a \equiv 0 \mod 4$, then we had $M(a,3,3)= \xi_{(a+1,2,3)}+ \sum \xi_{(A,B,C)}$.\\
Now if we have classes $M(a,b,c)$ and $M(a',b',c')$ coming from two distinct triples with $a+b+c=n=a'+b'+c'$, then without loss of generality we can assume $a>a'$ or $a=a'$ and $b>b'$. In either case, by construction, $\xi_{(a,b,c)}$ is a summand of $M(a,b,c)$ but not of $M(a',b',c')$, except if $a=n-6$ and $n \equiv 2 \mod 4$, in which case $(a,b,c)=(n-6,3,3)$ and in this case $\xi_{(n-5,2,3)}$ is a summand of $M(n-6,3,3)$ but not of $M(a',b',c')$.\\
Thus any linear relation must involve a third class, say $M(d,e,f)$, and we can now apply the same argument to the triples $(a,b,c)$ and $(d,e,f)$ to deduce a relation must involve yet another triple. Since we must have $d \geq a$, we either have $d>a$, or $d=a$ and $b<e$. Since either the first subscript or the sum of the first two subscripts must increase, we deduce by induction that this argument applies to all the $M(a,b,c)$ classes, so that there are no linear relations.
\end{proof}
We will now to combine our $M(a,b,c)$ classes with those induced by inclusion to check that we have sufficiently many classes. We will show that there are no relations between the $M(a,b,c)$ classes and those induced by inclusion. The method is to rule out the possibility of a relation with specific homology classes induced by inclusion. Like in the proof of Lemma $3.5.7$, this may be done by singling out particular classes that can not be cancelled. This enables us to deduce recursively that any linear relation must include more classes than we initially started with, and we can then repeat the argument until we have no classes left. The GLR conjecture for $V(3)$ in even dimensions follows immediately from the following proposition.
\begin{prop}
Iterated projective bundles realize all the Bott torsion in $ko_*(BV(3))$ in even dimensions.
\end{prop}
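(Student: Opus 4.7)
The plan is to show that the $M(a,b,c)$ classes of Proposition 3.5.2, together with the inclusion-induced classes of Proposition 3.5.1, span the full Bott torsion of $ko_n(BV(3))$ for $n$ even. By Lemma 2.7.1 of \cite{3} this torsion is detected in ordinary $\mathbb{Z}_2$ homology and corresponds to the third column of the local cohomology spectral sequence, a $\mathbb{Z}_2$-vector space of rank $h_{4K} = K^2 + 4K + 3$ in dimension $4K$ and $h_{4K+2} = K^2 + 5K$ in dimension $4K+2$. It therefore suffices to produce this many linearly independent classes in $H_n^+(BV(3))$.

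First I would verify the count. A straightforward enumeration of triples $(a,b,c)$ with $a$ even $\geq 2$, $b$ odd $\geq 3$, $c \equiv 3 \pmod{4}$, $c \geq 3$, and $a+b+c = n$ yields exactly $(K-1)^2$ triples when $n = 4K$ and exactly $K(K-1)$ triples when $n = 4K+2$. Combined with the $6K+2$ (resp.\ $6K$) inclusion classes from Proposition 3.5.1 this gives exactly $h_n$. All of these classes are represented either by products or by iterated spin projective bundles whose fibres have dimension at least $2$, hence lie in $H_n^+(BV(3))$ by Lemma 1.0.2.

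The main issue, and the principal obstacle, is linear independence across the two families. Lemma 3.5.7 already handles independence within the $M(a,b,c)$ family, and Proposition 3.5.1 handles it within the inclusion family; what remains is to rule out cross-relations. For this I would use the structural constraints of Lemma 3.5.3: every summand $\xi_{(A,B,C)}$ of a directly-constructed $M(a,b,c)$ satisfies $A \leq a$, $A+B \leq a+b$, and consequently $C \geq c \geq 3$. In contrast, every $A_*$-type inclusion class carries a zero subscript, while the $B_*, B'_*, B''_*$-type classes are sums $\sum_i \xi_{(i, a_1-i, a_2)}$ (or coordinate permutations thereof) whose fixed coordinate comes from one of the $V(2)$-generators of Lemma 3.4.2. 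For each $M(a,b,c)$ in the generating set I would exhibit a witness summand --- namely $\xi_{(a,b,c)}$ itself for directly-constructed classes, and the corresponding leading permuted term (such as $\xi_{(a+1,2,3)}$) for those $M(a,b,3)$ defined by permutation in Proposition 3.5.2 --- whose three subscripts are all $\geq 2$ (ruling out $A$-type inclusions) and whose $(A,B,C)$-profile cannot arise from any $B$-type inclusion given the restricted list of Lemma 3.4.2, nor from any $M(a',b',c')$ lexicographically smaller in the order (decreasing $a$, then decreasing $a+b$, then decreasing $c$). A recursive cancellation across both families, modelled on the argument in the proof of Lemma 3.5.7, then forces any purported linear relation to be trivial.

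The hardest step is this last verification: for each witness one must check by direct inspection that the given subscript pattern is absent from every relevant $B$-type inclusion class, using the constraint $C \geq c \geq 3$ as the key combinatorial lever. Once this is carried out triple-by-triple, the count of independent classes produced equals $h_n$, and the Bott torsion is fully realized by positive scalar curvature spin manifolds.
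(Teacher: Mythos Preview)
Your overall architecture matches the paper's: count the $M(a,b,c)$ classes, add the inclusion classes from Proposition 3.5.1, check the total equals $h_n$, then prove linear independence across the two families. The counts are correct.

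The gap is in your cross-independence step. You propose to exhibit, for each $M(a,b,c)$, a witness summand (typically $\xi_{(a,b,c)}$ itself) that ``cannot arise from any $B$-type inclusion''. This is false: since $c \equiv 3 \pmod 4$, the leading term $\xi_{(a,b,c)}$ \emph{does} occur as a summand of $S_c^3 = \sum_i \xi_{(i,n-c-i,c)}$ (and, in the $4K+2$ case when $b \equiv 3 \pmod 4$, also of $S_b^2$). So a lexicographic witness scheme of the kind you sketch cannot separate the $M$-classes from the $S$-classes by itself.

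The paper runs the argument in the opposite direction. Assuming a combination of $M$-classes equals a combination $x$ of $S$-classes, the leading term of some $M(a,b,c)$ forces a specific $S_d^j$ to be a summand of $x$. That $S_d^j$ in turn contains a term such as $\xi_{(n-d-2,\,d,\,2)}$ with a coordinate equal to $1$ or $2$; the constraint $C \geq 3$ for directly-constructed $M$-classes means this cannot be cancelled by them, so either another $S$-class or one of the permutation-defined $M(a',b',3)$ classes must enter the relation. Iterating this chase forces more and more $S$-classes into $x$. In dimension $n=4K$ the chain eventually drags in \emph{all} the $S_a^i$, whose sum is zero (the unique relation in Proposition 3.5.1), reducing the putative dependence to one among the $M$-classes alone and contradicting Lemma 3.5.7. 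In dimension $n=4K+2$ the chain instead drives the third coordinate $C$ down until it hits a case already excluded. The boundary classes $S_3^i$ and the permutation-defined $M(a,b,3)$ require separate case checks in this chain (the paper invokes Lemma 3.5.4 for one of them). Your lever ``$C \geq c \geq 3$'' is exactly the right one, but it is used to exclude low-coordinate $S$-summands from the $M$-classes, not the other way round.
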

\begin{proof}
Consider first the case $n=4k+2$. The kernel dimension is $k^{2}+5k$. We have seen that inclusion from $V(2)$ gives us at least $6k$ classes, leaving us needing $k^2-k$ more.\\
We count the number of $M(a,b,c)$ generators described above. If $a \equiv 2 \mod 4$, then we can choose any $b \equiv 1 \mod 4, 5 \leq b \leq n-3-a$ to get an $M(a,b,n-a-b)$ class represented by a spin manifold. There are $k-1$ choices of $b$ for $a=2$, $k-2$ for $a=6$ and so on, giving a total of $\sum_{i=1}^{k}(k-i)=\sum_{i=1}^{k-1}i=k(k-1)/2$.\\
On the other hand if $a \equiv 0 \mod 4$, we can again choose any $b \equiv 3 \mod 4, 3 \leq b \leq n-3-a$, so that the total here is $\sum_{i=1}^{k}(k-i)=\sum_{i=1}^{k-2}i=k(k-1)/2$.\\
Adding these up gives $k^2-k$ just as required.\\
We know these $M(a,b,c)$ classes are independent of each other, and we claim that they are independent of the classes induced from $V(2)$ also. Recall that these are explicitly given by the permuting the subscripts of $\xi_{(a,n-a,0)}$ and $S_{a}^1:=\sum_{i=1}^{n-a-1} \xi_{(a,n-a-i,i)}$, with $a \equiv 3 \mod 4$ and $3 \leq a \leq n-3$. Here $S_{a}^i$ means the class obtained by permuting the subscripts of $S_{a}^1$ to have the $a$ in the i-th position. So $S_{a}^2:=\sum_{i=1}^{n-a-1} \xi_{(n-a-i,a,i)}$ and similarly $S_{a}^3:=\sum_{i=1}^{n-a-1} \xi_{(n-a-i,i,a)}$. So for $S_{a}^i$ we just put the fixed value $a$ in the $i-$th position, and take the sum over all such classes in a fixed dimension $n$. Thus the subscript denotes the value of the fixed co-ordinate, and the superscript its position . To get a feel for the combinatorics, it is useful to think of $S_{a}^3$ as giving us a class of the form $M(n-a-1,1,a)$, and the $S_{a}^i$ as the class obtained by suitably permuting the subscripts (compare Proposition $3.5.2$).\\
No combination of the $M(a,b,c)$ classes are in the span of any of the $S_{n-3}^i$. This is immediate because $a \geq 2, b \geq 3$ and $\xi_{(a,b,c)}$ is a summand of $M(a,b,c)$ ($\xi_{(a+1,2,3)}$ in the case of $M(n-6,3,3)$), and $S_{n-3}^1=\xi_{(n-3,1,2)}+\xi_{(n-3,2,1)}$, and similarly for $S_{n-3}^2,S_{n-3}^3$. Further, the same is true for $S_{3}^i$, because these contain $\xi_{(n-7,4,3)},\xi_{(n-7,3,4)},\xi_{(3,n-5,2)}$ as summands for $i=3,2,1$ respectively. These classes can not be cancelled by any of the $M(a,b,c)$ classes constructed directly as fundamental clases of manifolds, because we always have $a \leq n-5$ and $a+b \leq n-3$. It follows that the first two can only be cancelled by $M(n-6,3,3)$, but this is obtained from $M(2,n-5,3)$ by swapping the first and second subscripts, which means if $\xi_{(A,B,C)}$ is a summand of $M(n-6,3,3)$ then $B \leq 2$. The last class $\xi_{(3,n-5,2)}$ can only be cancelled by some class $M(4A,4B+3,3)$ with $A \geq 1$, but then we would have to cancel the $\xi_{(4A,4B+3,3)}$ term that is a summand of $M(4A,4B+3,3)$. This would have to be done by a class $M(D,E,F)$ with $D>4A$, and thus induction and Lemma $3.5.7$ tell us there can be no linear relation. Note that these observations already suffice for the case $n=10$.\\
Thus we are left to consider if some combination of our $M(a,b,c)$ classes is in the span of the $S_{a}^i$, with $7 \leq a \leq n-7$.\\
So consider the class $M(4A,4B+3,4C+3)$, with $A\geq 1$ and $B,C \geq 0$. If $C=0$, then by the above remarks, if some combination  $M(4A,4B+3,4C+3)+\sum M(a,b,c)$ for suitable triples $(a,b,c)$ is the same as some class $x$ induced by inclusion (meaning if there is some linear dependence), then we can deduce $S_{4B+3}^2$ is a summand of $x$. However $\xi_{(4A+1,4B+3,2)}$ is a summand of $S_{4B+3}^2$, and it is clear that this class cannot be cancelled by any of the other $S_{a}^i$.\\
Since $4A+1+4B+3=n-2$ it also can not be cancelled by any $M(a,b,c)$ that we defined as the fundamental class of a manifold which was an $\RP^N$ bundle with $N \geq 3$. Thus the only possibilities are the permutation classes $M(n-6,3,3)$ and $M(n-4k-2,4l+3,3)$ with $k \geq 2$ and $l \geq 1$. Now $M(n-6,3,3)$ was defined by permuting the first two subscripts of $M(2,n-5,3)$, but any summand $\xi_{(a,b,c)}$ of this class must have $a+b \leq n-3$, and thus the same is true for $M(n-6,3,3)$.\\
This leaves the possibility of $M(n-4k-2,4l+3,3)$ with $k \geq 2$ and $l \geq 1$, which was defined by permuting the last two subscripts of the class $M(n-4k-2,3,4l+3)$. If $\xi_{(4A+1,4B+3,2)}$ were to cancel, then we must have $n-4k-2-4A+1 \geq 3$ and thus $l<B$. This leaves us now needing to cancel $\xi_{(n-4k-2,4l+3,3)}$, but repeating the above argument means we reach the $B=0$ case by induction, which we have already dealt with.\\
If $C \geq 1$, then proceeding as above we would deduce that either $S_{4B+3}^2$ or $S_{4C+3}^2$ is a summand of $x$. In the former case, just like above we observe that $\xi_{(n-4B-5,4B+3,2)}$ is a summand of $S_{4B+3}^2$ and apply the same argument. In the latter case, we similarly note that $\xi_{(n-4C-5,2,4C+3)}$ is a summand of $S_{4C+3}^3$. This can only occur as a summand of some $M(a,b,4D+3)$ with $a>n-4C-5, b \geq 3$ and thus $D<C$. Proceeding inductively we reach the $C=0$ case again.\\
Now take the case $M(4A+2,4B+5,4C+3)$ with $A,B,C \geq 0$. The $C=0$ case is dealt with the same way, and so like above we deduce there is a linearly dependent class $x$ with a summand $S_{4C+3}^3$. Again $\xi_{(n-4C-5,2,4C+3)}$ is a summand of $S_{4C+3}^3$, and exactly as above, this can only occur as a summand of some $M(a,b,4D+3)$ with $a>n-4C-5, b \geq 3$ and thus $D<C$, so that induction reduces the problem to the $C=0$ case.\\
Thus the $M(a,b,c)$ classes must be independent of the $S_a^i$ classes. Independence from the $\xi_{(a,n-a,0)}$ classes and those obtained by permuting subscripts is immediate, and thus we have realized sufficiently many classes.\\

Now consider the case $n=4k$. Here the kernel dimension is $k^{2}+4k+3$ of which we have shown $6k+2$ is induced from $V(2)$, leaving $k^{2}-2k+1$. Just as above, we can count. Classes $M(a,b,c)$ with $ a \equiv 0 \mod 4$ may be counted for any $b$ with $\leq b \equiv 1 \mod 4, 5 \leq b \leq n-3-a$ again giving $\sum_{i=1}^{k}(k-i-1)=(k-2)(k-1)/2$.\\
On the other hand, for $a \equiv 2 \mod 4$ we can choose any $b$ with $b \equiv 3 \mod 4, 3 \leq b \leq n-3-a$, which is again $k(k-1)/2$, so that adding up gives $(k-1)^{2}$ as required.\\
The independence argument is similar. The classes induced by inclusion here are explicitly given by the $\xi_{(1,n-1,0)}$, permuting the subscripts of which gives six classes, the three classes obtained by permuting the subscripts of $S_{1}^i$ of the sum $S_{1}^1:=\sum_{i=1}^{n-1} \xi_{(1,n-1-i,i)}$ and the three permutations of $S_{a}^1:=\sum_{i=1}^{n-a-1} \xi_{(a,n-a-i,i)}+\sum_{i=1}^{n-a-3} \xi_{(a+2,n-a-2-i,i)}$, with $3 \leq a \equiv 3 \mod 4$. We have exactly one relation, which is that adding up all of the $S_{a}^i$ and the $S_{1}^i$ gives zero. Again, it is useful to think of $S_{a}^3$ as giving us a class of the form $M(n-a-1,1,a)$, and the $S_{a}^i$ as the class obtained by suitably permuting the subscripts.\\
Again, we first rule out relations with the $S_{3}^i$. Observe that the $S_{3}^2$ contains $\xi_{(n-5,3,2)}$ as a summands. These can not be cancelled by any of our $M(a,b,c)$ classes because the only one that could cancel it is $M(n-6,3,3)=\sum \xi_{(a,b,c)}$, but here any admissible triple $(a,b,c)$ must always have $a+b \leq n-3$, since the class is defined by permuting the first and second subscripts of $M(4,n-5,3)$.\\
For $S_{3}^3$, we observe that $\xi_{(n-8,5,3)}$ is a summand which can only be cancelled by (a summand of) $M(n-8,5,3)$, and Lemma $3.5.4$ shows that $\xi_{(n-8,3,5)}$ is a summand of $M(n-8,5,3)$, and this can not be cancelled by a summand of $S_{3}^3$ or any of the other $M(a,b,c)$ classes. Indeed, the only class that could cancel $\xi_{(n-8,3,5)}$ is $M(n-6,3,3)$, which is defined by permuting the first two subscripts of each summand of $M(4,n-7,3)$. This is the fundamental class of a manifold $Y$ as in Example $3.3.3$, and from there we can easily see that $x^3y^{n-8}z^5=0$ in $H^n(Y)$. Thus $\xi_{(3,n-8,5)}$ is not a summand of $M(4,n-7,3)$ so that $\xi_{(n-8,3,5)}$ is not a summand of $M(n-6,3,3)$.\\
Thus the only possibility is that some collection of the $M(a,b,c)$ classes sum up to $S_{3}^1$, but this is impossible since summing up any collection of the $M(a,b,c)$ classes gives us a sum $\sum \xi_{(A,B,C)}$ for suitable triples $(A,B,C)$, and by their construction at least one such admissible triple must have the first subscript $A$ even.\\
Now start with the class $M(4A+2,4B+3,4C+3)$, with $A,B,C \geq 0$, and suppose $M(4A+2,4B+3,4C+3)+\sum M(D,E,F)$ is a linearly dependent class for some collection of triples $(D,E,F)$. By the above remark we only need to consider the cases $B \geq 1$ and $C \geq 1$. Much like before, the former case implies a relation with a class $x=S_{4B+3}^2+ \cdots$. Now $\xi_{(n-4B-5,4B+3,2)}$ is a summand of $S_{4B+3}^2$, and it is clear that this class cannot be cancelled by any of the $M(a,b,c)$. Thus $S_{n-4B-5}^1$ must also be a summand of $x$. Now $\xi_{(n-4B-5,4B+4,1)}$ is a summand of $S_{n-4B-5}^1$, and thus $S_{1}^3$ must also be a summand of $x$, and so on. Repeating the argument inductively, we can check that all the $S_{a}^i$ must occur as summands of $x$.\\
However, adding up \emph{all} of the $S_{a}^i$ gives zero, and thus there than be no linear relation, since otherwise the $M(a,b,c)$ classes would be linearly dependent, contradicting Lemma $3.5.7$.\\
On the other hand if $x=S_{4C+3}^3+ \cdots$, then we know $\xi_{(n-4B-5,2,4C+3)}$ is a summand of $S_{4C+3}^3$. If this is realised by some class $M(a,b,4D+3)$ we must have $D<C$, so that inductively we reach $C=0$ again. Alternately $S_{n-4B-5}^1$ must also be a summand of $x$, and the argument proceeds as above.\\
Finally take the case of $M(4A,4B+1,4C+3)$, with $A,B \geq 1, C \geq 0$. Any possible relations with $S_{4C+3}^3$ are dealt with as above, and so it suffices to consider relations with classes of the form $x=S_{4B+1}^2+ \cdots$. However $\xi_{(n-4B-2,4B+1,1)}$ is a summand which can not be cancelled out by an $M(a,b,c)$. Again $S_{1}^3$ must also be a summand of $x$, and repeating the argument inductively again implies that all the $S_{a}^i$ must occur as summands of $x$.\\
\end{proof}

The odd dimensional case is a little easier, but if $n=4k-1$ we have to be careful to ensure that the classes we generate are distinct from those detected in periodic K-theory. This can again be done by explicitly calculating the images in $\Z_2$ homology. The classes detected in periodic K-theory are the $ko-$ fundamental classes of real projective spaces $\RP^{4k-1}_x$, induced by the maps $\RP^{11}_x \rightarrow B<x> \hookrightarrow BV(3)$, where $<x>$ is any cyclic subgroup of $V(3)$. We will see that these classes also map non-trivially to the ordinary homology, and their images are in fact independent of those classes obtained by our projective bundle constructions. By the following trivial algebraic lemma, it will then follow that all of $ko_{4k-1}(BV(3))$ is spanned by $ko-$fundamental classes of positive scalar curvature spin manifolds.
\begin{lem}
Let $A$ be a finite abelian $2-$group and $a_1, \cdots,a_r \in A$. If the classes $b_1, \cdots,b_r \in A/2A$, with $b_i=a_i$ reduced modulo $2$, span $A/2A$, then $a_1, \cdots,a_r$ span $A$.
\end{lem}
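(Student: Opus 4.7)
My plan is to recognize this lemma as an instance of Nakayama's lemma applied to the finite abelian $2$-group $A$, viewed as a module over $\mathbb{Z}_{(2)}$ (or simply as a $\mathbb{Z}$-module on which $2$ acts ``nilpotently'' in the quotient sense).

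First, I would let $B = \langle a_1, \ldots, a_r \rangle \subseteq A$ be the subgroup generated by the $a_i$, so that the goal becomes $B = A$. The hypothesis that $b_1, \ldots, b_r$ span $A/2A$ is precisely the statement that the composite $B \hookrightarrow A \twoheadrightarrow A/2A$ is surjective, which is equivalent to the equation $B + 2A = A$ inside $A$.

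Next, I would pass to the quotient group $Q := A/B$. The equation $B + 2A = A$ translates directly into $2Q = Q$: every element of $Q$ is represented by some element of $2A$, i.e.\ by twice something in $A$, hence by twice something in $Q$. Now $Q$ is a finite abelian $2$-group (it is a quotient of the finite $2$-group $A$), so there exists an integer $N \geq 1$ such that $2^N \cdot Q = 0$; indeed, one can take $N$ to be the exponent of $A$ as a $2$-group. Iterating the identity $2Q = Q$ gives
\begin{equation*}
Q = 2Q = 2^2 Q = \cdots = 2^N Q = 0,
\end{equation*}
which forces $B = A$, as desired.

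There is no real obstacle here; the only thing to check carefully is that $A/B$ really is a finite abelian $2$-group (immediate, as a quotient of $A$) and that $2$ acts nilpotently on it, which is why the hypothesis that $A$ be a $2$-group (rather than a general finite abelian group) is essential. The lemma is exactly the form in which Nakayama's lemma is applied in the previous paragraph, where the cyclic factors of $ko_{4k-1}(BV(3))$ modulo their $2$-torsion quotient are detected in ordinary $\mathbb{Z}_2$-homology.
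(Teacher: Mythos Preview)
Your proof is correct; this is precisely the standard Nakayama-type argument one would expect. The paper itself does not actually supply a proof of this lemma --- it simply calls it a ``trivial algebraic lemma'' and moves on --- so there is nothing to compare against, and your argument (set $B=\langle a_1,\dots,a_r\rangle$, deduce $A/B = 2(A/B)$, iterate using that $A/B$ is a finite $2$-group) is exactly the expected justification.
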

We again start by giving a couple of example dimensions.
\begin{ex}
$n=11$:
\end{ex}
The kernel dimension in third local cohomology is $8$, and we start by taking $\RP^{1} \times \RP^{3}\times \RP^{7}$, whose image in homology will be $\xi_{(1,3,7)}$, and it is clear that the six permutations give us six generators for our kernel.\\
Next we take $\RP^{3}\times M_{(5,5)}$. This gives us $\xi_{(3,3,5)} + \xi_{(3,5,3)}$. The reverse $M_{(5,5)} \times \RP^{3}$ gives $\xi_{(5,3,3)} + \xi_{(3,5,3)}$, while the \emph{middle} permutation $M_{5,3,5}$ gives $\xi_{(3,3,5)} + \xi_{(5,3,3)}$, and it is thus immediate that $[\RP^{3}\times M_{(5,5)}]+[M_{(5,5)} \times \RP^{3}]=[M_{5,3,5}]$, giving us a kernel dimension of $6+3-1=8$ as required.\\

As we have seen, the classes in first local cohomology are detected by the eta invariant, and represented by the $ko-$fundamental classes of real projective spaces $\RP^{11}_x$, induced by the maps $\RP^{11}_x \rightarrow B<x> \hookrightarrow BV(3)$, where $<x>$ is any cyclic subgroup of $V(3)$. Using Proposition $3.5.1$ and Lemma $3.4.3$, it is easy to calculate the images of these seven classes in $H_*(BV(3);\Z_2)$ explicitly. Indeed, in the notation of the Proposition, inclusion via $A_*$ gives the three classes $\xi_{(11,0,0)},\xi_{(0,11,0)}$ and $\xi_{(0,0,11)}$. Inclusion using $B_*$ gives the three permutations of the sum $\Sigma_{a+b=11} \xi_{(a,b,0)}$, while the diagonal inclusion $C_*$ simply gives $\Sigma_{a+b+c=11} \xi_{(a,b,c)}$. The presence of summands $\xi_{(a,b,c)}$ with two out of $a,b,c$ even immediately tells us that these classes are all independent of the $8$ we gave above, so that by Lemma $3.5.9$, we have spanned all of $ko_{11}(BV(3))$.

\begin{ex}
$n=13$:
\end{ex}
The kernel dimension is $13$, and we first consider $\RP^{1} \times \RP^{1}\times \RP^{11}$ which maps to $\xi_{(1,1,11)}$. Similarly $\RP^{3} \times \RP^{3}\times \RP^{7}$ maps to $\xi_{(3,3,7)}$, so permuting clearly gives us $6$ independent elements.\\
Next we can take $\RP^{1}\times M_{(5,9)}$ which maps to $\xi_{(1,5,7)} + \xi_{(1,3,9)}$. Permutations of this are independent, so we get $6$ more elements.\\
Finally we have $M_{(5,5,5)}$, which, like in the rank 2 case, can be viewed as dual to $w_2$ inside $\RP^5 \times \RP^5 \times \RP^5$. The fundamental class of this manifold maps to $\xi_{(5,3,5)} + \xi_{(5,5,3)}+\xi_{(3,5,5)}$ which gives us the last of the $13$ generators needed. Much like for $V(2)$, we can realize this homology class by an appropriate projective bundle.\\

We now give the general argument for odd dimensions. Proposition $3.5.8$ together with the following result then imply the GLR conjecture for $V(3)$ immediately.
\begin{prop}
Iterated projective bundles realize all the Bott torsion in $ko_*(BV(3))$ in odd dimensions.
\end{prop}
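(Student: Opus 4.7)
The proof will parallel Proposition 3.5.8, but in odd dimensions $n = 4k+1$ and $n = 4k+3$. By Corollary 3.2.5 all classes in $\mathrm{Ker}(A \circ p)$ that survive into periodic $K$-theory are already realised, so it suffices to realise the Bott-torsion summand $2^{h_n}$ of $ko_n(BV(3))$ in $H_n^+(BV(3))$; the orders we must hit are $h_{4k+3} = k^2+2k$ and $h_{4k+1} = k^2+k+1$. The plan is to combine three families of positive scalar curvature spin manifolds, imitating the explicit constructions of Examples 3.5.11 and 3.5.12.

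For $n = 4k+3$ I will take (a) triple products $\RP^{a} \times \RP^{b} \times \RP^{c}$ with $a+b+c = n$ and each of $a,b,c \equiv 3 \bmod 4$, each giving the class $\xi_{(a,b,c)}$ together with its permutations, and (b) products $\RP^{c} \times M_{(a,b)}$, where $M_{(a,b)}$ is the spin projective bundle of Proposition 3.4.1 of dimension $a+b-2 \equiv 0 \bmod 4$ and $c \equiv 3 \bmod 4$, permuted by placing the $\RP^{c}$ factor in each of the three subscript positions. For $n = 4k+1$, at least one factor must carry even-residue dimension, so I will take (a) products $\RP^{1} \times \RP^{a} \times \RP^{b}$ with $a,b \equiv 3 \bmod 4$, (b) products $\RP^{c} \times M_{(a,b)}$ with $M_{(a,b)}$ of dimension $\equiv 2 \bmod 4$ and $c \equiv 3 \bmod 4$, again permuted, and (c) three-variable analogues $M_{(a,b,c)}$ obtained as spin projective bundles built over $\RP^{a} \times \RP^{b}$ in the spirit of Example 3.3.3, generalising the class $M_{(5,5,5)}$ used in the $n=13$ example. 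The cohomology rings and Stiefel-Whitney classes of each bundle will be computed by repeated application of Theorem 3.3.1, showing both that the manifolds are spin and, by dualising the induced map on $\Z_{2}$ cohomology, that their fundamental classes push forward to explicit sums of monomial duals $\xi_{(A,B,C)} \in H_{*}(BV(3))$.

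Independence among these generators is then checked as in the proof of Lemma 3.5.7 and Proposition 3.5.8: for each generator I isolate a distinguished summand $\xi_{(A,B,C)}$ that cannot be cancelled by any of the remaining generators, exploiting the structural constraints $A \leq a$ and $A+B \leq a+b$ imposed by the iterated-bundle structure. A direct combinatorial count of the admissible triples in each family will then match $h_n$ exactly, as already verified in the two sample dimensions $n = 11$ and $n = 13$.

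Finally, for $n \equiv 3 \bmod 4$ I must check independence from the classes in first local cohomology, which by Theorem 3.2.4 are the $ko$-fundamental classes of the real projective spaces $\RP^{n}_{x}$ induced from the seven cyclic subgroups. Their images in $H_{*}(BV(3);\Z_{2})$ are computed explicitly by the three-rank analogue of Lemma 3.4.3 using the restriction maps described in Proposition 3.5.1; each such image contains summands $\xi_{(A,B,C)}$ in which at least two of the subscripts are even, whereas every bundle-type generator above has at most one even subscript, yielding independence. Lemma 3.5.9 then lifts the spanning statement from $H_{*}(BV(3))$ to $ko_{n}(BV(3))$, completing the proof. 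The main technical obstacle I anticipate is the independence verification in the $n = 4k+1$ case, where the three-variable $M_{(a,b,c)}$ generators interact with the permuted $\RP^{c} \times M_{(a,b)}$ classes in a more subtle way than in the even-dimensional case, and will likely require an inductive cancellation argument analogous to (but more intricate than) the $S_{a}^{i}$ bookkeeping used in Proposition 3.5.8.
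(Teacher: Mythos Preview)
Your overall strategy matches the paper's, but you have swapped the two odd cases, and this creates arithmetic contradictions. For $n=4k+3$, you propose triple products $\RP^a\times\RP^b\times\RP^c$ with all of $a,b,c\equiv 3\bmod 4$; but then $a+b+c\equiv 1\bmod 4$, so no such triple exists. Conversely, for $n=4k+1$ you propose $\RP^1\times\RP^a\times\RP^b$ with $a,b\equiv 3\bmod 4$, giving $1+a+b\equiv 3\bmod 4$, again impossible. The paper does exactly the opposite: in dimension $4k+1$ it uses $\RP^{4k-1}\times\RP^1\times\RP^1$, products $\RP^a\times\RP^b\times\RP^c$ with all factors $\equiv 3\bmod 4$, products $\RP^1\times M_{(a,b)}$, and the three-variable $M_{(a,b,c)}$; in dimension $4k+3$ it uses $\RP^1\times\RP^b\times\RP^c$ with $b,c\equiv 3\bmod 4$ and $\RP^a\times M_{(b,c)}$ with $a\equiv 3\bmod 4$. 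The three-variable bundle $M_{(a,b,c)}$ lives in dimensions $\equiv 1\bmod 4$, not $\equiv 3$.

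Two further consequences of this swap: first, your expectation that the combinatorial count matches $h_n$ exactly is wrong in the $4k+3$ case. The paper obtains $3k+3k(k-1)/2$ generators against a target of $k^2+2k$, an excess of $k(k-1)/2$, and must identify exactly that many relations among the permuted $\RP^a\times M_{(b,c)}$ classes (each $\xi_{(a,b,c-2)}$ summand occurs in precisely two such classes, and summing the three gives zero). Second, you have the difficulty backwards: the $4k+1$ case admits a clean independence argument because the monomial types of the summands are disjoint across families, whereas the $4k+3$ case is the one requiring the relation count. Your anticipated ``inductive cancellation argument analogous to the $S_a^i$ bookkeeping'' is not needed in either odd case; the even-dimensional argument of Proposition~3.5.8 is genuinely more intricate than what is required here.
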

\begin{proof}
Consider first the case of manifolds with dimension $n=4k+1$:\\
The dimensions of the kernels in the first few dimensions are 3,7,13,21 and so on. More generally, for $n=4k+1$ the kernel dimension is $1+\sum_{i=1}^{k}2i=1+k+k^{2}$ \cite{3}. We shall now explicitly generate this kernel.\\
1. By permuting the subscripts we get 3 generators via $\RP^{4k-1} \times \RP^{1}\times \RP^{1}$. \\
2. We also have generators of the form $\RP^{a} \times \RP^{b}\times \RP^{c}$ where $a,b,c \equiv 3\mod 4$ and $a+b+c=4k+1$. Writing $a=4A+3$ ($A\in \N_{0}$) and so forth we have $4A+3+4B+3+4C+3=4K+1$ which reduces to $A+B+C=K-2$. It is easy to check that this gives us $k(k-1)/2$ manifolds.\\
3. Let $M_{(a,b)}\hookrightarrow  \RP^{a} \times \RP^{b}$ denote the submanifold dual to the second Stiefel-Whitney class, as in the previous section. We can consider manifolds of the form $\RP^{1}\times M_{(a,b)}$ with $a,b\equiv 1 \mod 4, a,b\geq 5$. Fixing $\RP^{1}$ clearly gives $k-1$ of these, so permuting gives a total of $3(k-1)$ manifolds.\\
4. Finally we can consider $M_{(a,b,c)}\hookrightarrow \RP^{a} \times \RP^{b}\times \RP^{c}$ defined as before, dual to the second Stiefel-Whitney class. Here we need $a+b+c=4k+3,a,b,c \equiv 1 \mod 4$ and all of $a,b,c \geq 5$. This reduces to triples $\{(A,B,C)\in (\N)^{3}|A+B+C=k\}$ of which there are $(k-1)(k-2)/2$ .\\
Adding up the number of generators described gives $3+k(k-1)/2 + 3(k-1) +(k-1)(k-2)/2=1+k+k^{2}$
as required. \\
Before we consider independence of these classes, we remark that analogous to the rank 2 case, we can alternatively describe the $M_{(a,b,c)}$ manifolds as projective bundles, this time over products of projective spaces. Explicitly we can write down spin manifolds by
$$\RP(2L_a \oplus 2L_b \oplus (c-5)\varepsilon \rightarrow \RP^a \times \RP^b)$$
Where $L_a$ and $L_b$ denote the pullbacks via projection of the canonical line bundles over $\RP^a$ and $\RP^b$ respectively. Entirely analogous constructions give us a spin  $\RP^a$ bundle over $\RP^b \times \RP^c$ and an $\RP^b$ bundle over $\RP^a \times \RP^c$. A calculation similar to lemma $3.4.2$ confirms that we can view the fundamental class of $M_{(a,b,c)}\hookrightarrow \RP^{a} \times \RP^{b}\times \RP^{c}$ dual to $w_2$ as the appropriate generator.\\

We now return to the linear independence. There are no classes on the $E_{\infty}$ page in first local cohomology, so it will suffice to see that the images of the fundamental classes of these manifolds are all linearly independent in the mod 2 homology of $BV(3)$. To this end we again denote by $\xi_{(a,b,c)} \in H_{a+b+c}(BV(3))$ the homology element dual to $x^{a}y^{b}z^{c} \in H^{a+b+c}(BV(3))$ where $x,y,z$ are the three cohomology generators.\\
 It is then immediate that $\RP^{a} \times \RP^{b}\times \RP^{c}$ maps independently to $\xi_{(a,b,c)}$ ensuring that the generators from 1. and 2. are independent. Further, we have $[M_{a,b}]=\xi_{a-2,b}+\xi_{a,b-2}$ meaning the generators in 3. map to $\xi_{(a,b-2,1)}+\xi_{(a-2,b,1)}$ and its two permutations, namely $\xi_{(a,1,b-2)}+\xi_{(a-2,1,b)}$ and $\xi_{(1,a,b-2)}+\xi_{(1,a-2,b)}$.\\
These classes are independent from those obtained from products of three projective spaces, since $5\leq a,b \equiv 1 \mod 4$, and the classes obtained by products are all obtained from $\xi_{(4k-1,1,1)}$ and $\xi_{(a,b,c)}$ with $a,b,c$ all $3 \mod 4$, via permuting subscripts. Similarly we conclude $[M_{a,b,c}]=\xi_{(a-2,b,c)}+\xi_{(a,b-2,c)} +\xi_{(a,b,c-2)}$ which is again independent since $5\leq a,b,c \equiv 1 \mod 4$, and none of the earlier classes included triples with two terms that were both at least $5$ and both $1 \mod 4$, that is, any terms of the form $\xi_{(a,b,c-2)}$ with $5 \leq a,b,c \equiv 1 \mod 4$. Thus all our generators are independent, which completes the proof in this case.\\

We now consider the case $n=4k+3$. The kernel dimensions here are $3,8,15,24$ and so forth. For general $n=4k+3$ the kernel dimension is $\sum_{i=1}^{k}2i+1=2k+k^{2}$, and we again explicitly generate the kernel:\\
 1. We have products $\RP^{a} \times \RP^{b}\times \RP^{c}$, each spin. and $a+b+c=4k+3$. However this immediately implies that, relabeling if needed, $a=1$ and $b,c \equiv 3 \mod 4$, giving a total of $3k$ generators after counting permutations.\\
 2. Products $\RP^{a}\times M_{(b,c)}$ with $5\leq b,c \equiv 1 \mod 4$ which implies $a \equiv 3 \mod 4$. So we have $4A+3+4B+1+4C+1=4k+5$ and therefore $A+B+C=k$, with $B,C \geq 1$ and $A \geq 0$. Counting gives $(k-1)(k-2)/2 + (k-1)$ possible choices of triples $(A,B,C)$. The fundamental class $[\RP^{a}\times M_{(b,c)}]$ maps to $\xi_{(a,b-2,c)}+\xi_{(a,b,c-2)}$. Taking permutations also gives the classes $\xi_{(b-2,a,c)}+\xi_{(b,a,c-2)}$ and $\xi_{(b-2,c,a)}+\xi_{(b,c-2,a)}$ which means we have a total of $3k(k-1)/2$ classes.\\
 Now our total number of generators is $3k + 3k(k-1)/2$. Subtracting the kernel dimension $2k+k^{2}$ from this gives us $k(k-1)/2$. So it suffices to show that there are exactly these many relations between the images of the fundamental classes of these manifolds in the mod 2 homology of $BV(3)$.\\
 As before, the images of $\RP^{1} \times \RP^{b}\times \RP^{c}$ (and permutations) are mutually independent. Observe that $[\RP^{a}\times M_{(b,c)}]=\xi_{(a,b,c-2)}+\xi_{(a,b-2,c)}$. Since $a,c-2 \equiv 3 \mod 4$, the class $\xi_{(a,b,c-2)}$ can only appear as a summand in the fundamental class of one other manifold, namely $[M_{(a+2,b)} \times \RP^{c-2}]=\xi_{(a,b,c-2)}+\xi_{(a+2,b-2,c-2)}$. Adding these two together gives us $\xi_{(a,b-2,c)}+\xi_{(a+2,b-2,c-2)}$, which is exactly $[M_{a+2,b-2,c}]$ by which we mean the permutation of $M_{(a+2,c)} \times \RP^{b-2}$ obtained by putting $\RP^{b-2}$ in the \emph{middle}.\\
 Explicitly we have
 $$[\RP^{a}\times M_{(b,c)}]=\xi_{(a,b,c-2)}+\xi_{(a,b-2,c)}$$
 $$[M_{(a+2,b)} \times \RP^{c-2}]=\xi_{(a,b,c-2)}+\xi_{(a+2,b-2,c-2)}$$
 $$[M_{(a+2,c)} \times \RP^{b}]=[M_{a+2,b-2,c}]=\xi_{(a,b-2,c)}+\xi_{(a+2,b-2,c-2)}$$
 We started with an arbitrary fundamental class $[\RP^{a}\times M_{b,c}]=\xi_{(a,b,c-2)}+\xi_{(a,b-2,c)}$, and saw that $\xi_{(a,b,c-2)}$ (and by symmetry $\xi_{(a,b-2,c)}$) appears as a summand in the fundamental class of exactly one other manifold, and adding these two classes gave us a class we already had. Thus we deduce there is exactly one relation for every three such manifolds, giving a total of $(3k(k-1)/2)/3=k(k-1)/2$ relations, exactly as required.\\
Finally since $n=4k-1$, there is a term in first local cohomology which is detected by inclusion from cyclic subgroups using the eta invariant, and we must check that these classes are independent from the ones we have just given. However, we can detect these in ordinary homology also, and can proceed just like we did in Example $3.5.10$, by using Proposition $3.5.1$ and Lemma $3.4.3$.\\
In the notation of Proposition $3.5.1$, inclusion via $A_*$ gives the three classes $\xi_{(n,0,0)},\xi_{(0,n,0)}$ and $\xi_{(0,0,n)}$. Inclusion using $B_*$ gives the three permutations of the sum $\Sigma_{a+b=n} \xi_{(a,b,0)}$, while the diagonal inclusion $C_*$ simply gives $\Sigma_{a+b+c=n} \xi_{(a,b,c)}$. The presence of summands $\xi_{(a,b,c)}$ with two out of $a,b,c$ even immediately tells us that these independent classes are all independent of the ones we gave above, so that the conclusion follows from Lemma $3.5.9$.
\end{proof}

\chapter{The Dihedral groups}
\section{Preliminaries}
We start by recalling that the dihedral group $D_{2^{N+2}}$ is generated by a rotation of order $2^{N+1}$ and a reflection, and has presentation $<\omega,s| \omega^{2^{N+1}}=s^2=1, s\omega s=\omega^{-1}>$. There are two maximal dihedral subgroups $D'=<s, \omega^2>$ and $D''=<s\omega, \omega^2>$ generated by even and odd conjugacy classes of reflections, and one maximal cyclic subgroup $<\omega>$ of order $2^{N+1}$. There are three non-trivial one-dimensional representations, with Kernel each of the three maximal subgroups, and the remaining irreducible representations are of degree $2$, and restrict from $O(2)$, implying that all representations are real. Here is the character table:\\

\begin{center}
\begin{tabular}{|c|c|c|c|c|c|}
\hline
$ $&1 & 1 & $2(2^N-1)$ & $2^N$ & $2^N$ \\
$\rho$&$\omega_0$ & $\omega_{2^N}$ & $\omega_j (1 \leq j \leq 2^N-1)$ & $s$ & $\omega s$\\
\hline
$1=\rho_0$ & $1$ & $1$ & $1$ & $1$ & $1$\\
$\hat{C_{N+1}}=\hat{\omega}$ & $1$ & $1$ & $1$ & $-1$ & $-1$\\
$\hat{D'}=\hat{s}$ & $1$ & $1$ & $(-1)^j$ & $1$ & $-1$\\
$\hat{D''}=\hat{\omega s}$ & $1$ & $1$ & $(-1)^j$ & $-1$ & $1$\\
$\sigma_k(1 \leq k \leq 2^N-1)$ & $2$ & $2(-1)^k$&$2cos(\frac{2jk\pi}{2^{N+1}})$ &$0$& $0$\\
\hline
\end{tabular}
\end{center}

The cohomology ring $H^*(D_{2^{N+2}}; \Z_2)$, for $n \geq 1$, is given by $\Z_2[\alpha, \beta, \delta]/\alpha \beta+\beta^2$, with all three generators arising as Stiefel-Whitney classes of representations \cite{dihcoh}. We have $\beta=w_1(\hat{s}), \alpha + \beta=w_1(\hat{\omega s})$, and $\delta=w_2(\sigma_1)$ is $w_2$ of a natural two dimensional representation $\sigma_k$ with $k$ odd. Note that $\beta$ restricts to $0$ on $D'$, and to $\alpha$ on $D''$, meaning  $\beta=\beta_N$ depends on $N$, while $\alpha, \delta$ restrict from $O(2)$ and so do not. Note also that since the cohomology ring is generated by Stiefel-Whitney classes of representations, we can calculate the restriction maps to subgroups just by restricting representations, which is what we need to do in Section $4$. The Steenrod square action \cite{dihcoh} is determined by $Sq^1(\delta)=\alpha\delta$.\\
Further, the integral cohomology \cite{dihcoh} is given by $H^*(D_{2^{N+2}}; \Z)=\Z[a,b,c,d]/2a,2b,2c,2^{N+1}d$, with $|a|=|b|=2,|c|=3,|d|=4$.\\

\section{ko calculations}
In this section we will give a brief summary of the calculations in \cite{3} for $ko_*(BD)$ using the local cohomology spectral sequence, in contrast to \cite{mjam}, where we use the Adams spectral sequence.\\
The methods and calculations sketched in this section may be found in their entirety in \cite{3}, and to prove the Gromov-Lawson-Rosenberg conjecture for dihedral groups, we will use only the results in \cite{3} for $ko_*(BD)$.\\
The method used is similar to that for the Klein $4$-group $V(2)$. Again there are exact sequences
$$0 \rightarrow TO \rightarrow ko^{*}(BD) \rightarrow QO \rightarrow 0$$
$$0 \rightarrow T \rightarrow ko^{*}(BD) \rightarrow \overline{QO} \rightarrow 0$$
where $TO$ is detected in ordinary cohomology, and $QO$ and $\overline{QO}$ are the images in periodic real and complex cohomology respectively. It can be shown that the submodule $\tau$ of $\eta$ multiples maps isomorphically to $T/TO$, so that we have an isomorphism
$$T \cong TO \oplus \tau$$
Now the rank of $D=D_{2^{N+2}}$ is always $2$, independent of $N$, but the number of conjugacy classes keeps increasing. Thus it is much easier to understand the local cohomology of $TO$, which is detected in ordinary cohomology independently of $N$, than it is of $QO$, for which the representation theoretic calculations become increasingly complicated as $N$ increases.\\
Indeed, calculation shows that
$$H_I^*(TO)=H_I^{2}(TO) \cong H_I^{2}(TO_{V(2)})$$
so that, before considering differentials at least, the two-column of the local cohomology spectral sequence for all dihedral groups is the same as for $V(2)$.
The Bockstein spectral sequence implies that $\tau$ is in degrees $1,2 \mod 8$, and that the local cohomology is all in cohomological degrees $0$ and $1$. By using a principal ideal again, the local cohomology may be calculated, see section $8.5$ in \cite{3}, but we omit the details.\\

Finally, for $\overline{QO}$, the first local cohomology is all in degrees $0 \mod 4$, and the \emph{orders} of these groups are not too difficult to calculate, just by viewing a generator of the principal ideal as a map from the representation ring to itself, and considering its determinant, see section $8.5$ in \cite{3} again. The result is as follows:
\begin{center}
$$|H_{J_1}(\overline{QO})_{8k}|=\left\{
  \begin{array}{ll}
    $$2^{(2N+12)k+2} \hbox{if } k \geq 0;$$ \\
    $$1  \hbox{if } k \leq-1;$$
  \end{array}
  \right.$$
  \end{center}
\begin{center}
$$|H_{J_1}(\overline{QO})_{8k+4}|=\left\{
  \begin{array}{ll}
    $$2^{(2N+12)k+n+11} \hbox{if } k \geq 0;$$\\
    $$2 \hbox{if } k=-1;$$\\
    $$1 \hbox{if } k \leq-2;$$
  \end{array}
  \right.$$
  \end{center}

The precise structure is much harder to compute, and indeed \cite{3} only provides an answer for $D_8$.\\
We now display the local cohomology spectral sequence for $ko_*(BD_8)$. This is indicative of the general pattern, which we will describe, along with the differentials.\\

\setlength{\unitlength}{1cm}
\begin{picture}(20,19)
\multiput(3,2)(0,0.5){32}%
{\line(1,0){9.8}}
\put(3,2){\line(0,1){16}}
\put(6,2){\line(0,1){16}}
\put(10,2){\line(0,1){16}}
\put(13,2){\line(0,1){16}}
\multiput(11.6,6.6)(0,0.5){3}%
{$0$}
\multiput(11.6,10.6)(0,0.5){3}%
{$0$}
\multiput(11.6,14.6)(0,0.5){3}%
{$0$}
\multiput(11.6,2.1)(0,0.5){4}%
{$0$}
\multiput(11.6,5.6)(0,4){3}%
{$0$}
\multiput(11.6,4.6)(0,0.5){2}%
{$2^{4}$}
\multiput(11.6,8.6)(0,0.5){2}%
{$2^{5}$}
\multiput(11.6,12.6)(0,0.5){2}%
{$2^{5}$}
\multiput(11.6,16.6)(0,0.5){2}%
{$2^{5}$}
\multiput(11.6,4.1)(0,2){7}%
{$\mathbb{Z}$}

\multiput(8,2.6)(0,1){5}%
{$0$}
\multiput(8,9.6)(0,1){3}%
{$0$}
\multiput(8,13.6)(0,1){3}%
{$0$}
\multiput(8,3.1)(0,1){1}%
{$0$}
\put(8,5.1){$0$}
\put(7.9,2.1){$[2]$}
\put(6.7,4.1){$[2]\oplus [2]\oplus[1]$}
\put(6.7,6.1){$[16]^2\oplus [8]\oplus[2]$}
\put(6.7,8.1){$[32]^2\oplus[32]\oplus[2]$}
\put(6.7,10.1){$[2^8]^2\oplus [2^7]\oplus[8]$}
\put(6.7,12.1){$[2^9]^2\oplus [2^9]\oplus[8]$}
\put(6.7,14.1){$[2^{12}]^2\oplus[2^{11}]\oplus[2^5]$}
\put(6.7,16.1){$[2^{13}]^2\oplus[2^{13}]\oplus[2^{5}]$}

\multiput(8,7.1)(0,4){3}%
{$0$}
\multiput(8,7.6)(0,2){5}%
{$0$}
\multiput(8,8.6)(0,4){3}%
{$2$}
\multiput(8,9.1)(0,4){3}%
{$2$}
\put(8,17.6){$\vdots$}
\put(4.5,17.6){$\vdots$}
\put(11.8,17.6){$\vdots$}
\put(4.5,2.1){$2$}
\put(4.5,4.1){$2^2$}
\put(4.5,5.1){$2$}

\put(4.5,6.1){$2^{3}$}
\put(4.5,7.1){$2^2$}
\put(4.5,8.1){$2^{4}$}
\put(4.5,9.1){$2^{3}$}
\put(4.5,10.1){$2^{5}$}
\put(4.5,11.1){$2^{4}$}
\put(4.5,12.1){$2^{6}$}
\put(4.5,13.1){$2^{5}$}
\put(4.5,14.1){$2^{7}$}
\put(4.5,15.1){$2^{6}$}
\put(4.5,16.1){$2^{8}$}
\put(4.5,17.1){$2^{7}$}
\multiput(4.5,4.6)(0,1){13}%
{$0$}
\put(4.5,2.6){$0$}
\put(4.5,3.1){$0$}
\put(4.5,3.6){$0$}
\multiput(6.1,2.1)(0,2){8}%
{\vector(-1,0){1.0}}

\put(6.2,2.1){$d_1$}
\put(6.2,4.1){$d_1$}
\put(6.2,6.1){$d_1$}
\put(6.2,8.1){$d_1$}
\put(6.2,10.1){$d_1$}
\put(6.2,12.1){$d_1$}
\put(6.2,14.1){$d_1$}
\put(6.2,16.1){$d_1$}

\put(13.3,2.1){-4}
\put(13.3,2.6){-3}
\put(13.3,3.1){-2}
\put(13.3,3.6){-1}
\put(13.3,4.1){0}
\put(13.3,4.6){1}
\put(13.3,5.1){2}
\put(13.3,5.6){3}
\put(13.3,6.1){4}
\put(13.3,6.6){5}
\put(13.3,7.1){6}
\put(13.3,7.6){7}
\put(13.3,8.1){8}
\put(13.3,8.6){9}
\put(13.2,9.1){10}
\put(13.2,9.6){11}
\put(13.2,10.1){12}
\put(13.2,10.6){13}
\put(13.2,11.1){14}
\put(13.2,11.6){15}
\put(13.2,12.1){16}
\put(13.2,12.6){17}
\put(13.2,13.1){18}
\put(13.2,13.6){19}
\put(13.2,14.1){20}
\put(13.2,14.6){21}
\put(13.2,15.1){22}
\put(13.2,15.6){23}
\put(13.2,16.1){24}
\put(13.2,16.6){25}
\put(13.2,17.1){26}
\put(13.2,17.6){27}

\put(12.5,18.2){degree(t)}

\put(6.7,1.2){$H^{1}_{J}(\overline{QO}) \oplus H^{1}_{J}(\tau)$}
\put(10.5,1.2){$H^{0}_{J}(\tau) \oplus H^{0}_{I}(\overline{QO})$}
\put(3.8,1.2){$H^{2}_{J}(TO)$}

\put(0.2,0.2){where$[n]:=$ cyclic group of order $n$, $2^{r}$:= elementary abelian group of order $r$.}
\put(10.2,4.6){\line(-6,1){4.6}}
\put(10.3,4.6){$d_2$}
\linethickness{0.5mm}
\put(10,4){\line(1,0){3}}
\put(10,4.5){\line(-1,0){4}}
\put(6,5){\line(-1,0){3}}
\put(10,4){\line(0,1){0.5}}
\put(6,4.5){\line(0,1){0.5}}
\put(3,5){\line(0,1){0.5}}

\end{picture}\\
Next we consider differentials, and proceed much like we did for $V(2)$. Since $ko_*$ is zero in negative degrees, and $ko_0= \Z$, we have that the differentials $d_1:H_{-4}^{1} \rightarrow H_{-4}^{2}$ and $d_1:H_{0}^{1} \rightarrow H_{0}^{2}$ are isomorphisms, and the long differential $d_2:H_{1}^{0} \rightarrow H_{2}^{2}$ is surjective, and thus has kernel $2^3$. Further,
\begin{lem}
The differential $d_1:H_{4k}^{1} \rightarrow H_{4k}^{2}$ has rank $3$ for $k \geq 1$.
\end{lem}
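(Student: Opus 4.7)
The plan is to mimic the proof of the corresponding statement for $V(2)$ given in the previous section: reduce to the case $k=1$ via the $ko^{*}(BD_{8})$-module structure on the local cohomology spectral sequence, and then establish the base case by a direct bound on $|ko_{2}(BD_{8})|$ coming from the Atiyah--Hirzebruch spectral sequence.

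For the base case I would first extract the low-dimensional homology of $BD_{8}$. From the cohomology ring $H^{*}(BD_{8};\Z_{2})=\Z_{2}[\alpha,\beta,\delta]/(\alpha\beta+\beta^{2})$ one reads off $\dim_{\mathbb{F}_{2}}H_{1}(BD_{8};\Z_{2})=2$, while the Schur multiplier of $D_{8}$ gives $H_{2}(BD_{8};\Z)\cong\Z_{2}$. The Atiyah--Hirzebruch spectral sequence $E^{2}_{p,q}=H_{p}(BD_{8};ko_{q})\Rightarrow ko_{p+q}(BD_{8})$ therefore has $E^{2}$-entries at total degree $2$ consisting of $H_{0}(BD_{8};ko_{2})\cong\Z_{2}$, $H_{1}(BD_{8};ko_{1})\cong\Z_{2}^{2}$, and $H_{2}(BD_{8};ko_{0})\cong\Z_{2}$, giving $|ko_{2}(BD_{8})|\le 2^{4}$.

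Next I would feed this into the local cohomology spectral sequence at total degree $2$. From the $E_{1\frac{1}{2}}$-page displayed above the contributing groups are $H^{0}_{2}=2^{4}$, $H^{1}_{3}=0$, and $H^{2}_{4}=2^{3}$. Since no differentials enter or leave the $(0,2)$-position, $H^{0}_{2}$ survives in full to $E_{\infty}$ and already accounts for the entire bound $2^{4}$, forcing the $E_{\infty}$-contribution of $H^{2}_{4}$ to vanish. The only available differential is $d_{1}:H^{1}_{4}\to H^{2}_{4}$, so it must be surjective onto a group of $\mathbb{F}_{2}$-rank $3$, giving the base case.

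For $k\ge 2$ I would propagate this by the $ko^{*}(BD_{8})$-linearity of $d_{1}$, using the identification $H_{I}^{2}(TO)\cong H_{I}^{2}(TO_{V(2)})\cong PP^{\vee}(-4)\oplus PP^{\vee}(2)$ and the analogous periodic structure on the piece of $H^{1}$ that $d_{1}$ hits: the image of $d_{1}$ at internal degree $4k$ is the $PP$-translate of the image at degree $4$, and hence also of $\mathbb{F}_{2}$-rank $3$. The main obstacle I expect is the upper bound for $k\ge 2$: unlike the $V(2)$ situation where the source itself has $\mathbb{F}_{2}$-rank $3$, in the dihedral case $H^{1}_{4k}$ already has $\mathbb{F}_{2}$-rank $4$, so one cannot simply bound by the rank of the source. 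The cleanest way around this is to use the abelianisation $D_{8}\twoheadrightarrow V(2)$ to obtain a comparison map of the two local cohomology spectral sequences and identify the image of $d_{1}^{D_{8}}$ on the periodic part of $H^{1}$ with that of $d_{1}^{V(2)}$, the additional rank-$1$ summand present in the dihedral case being forced into the kernel.
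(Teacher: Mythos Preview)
Your base case $k=1$ is correct and is exactly what the paper does: the Atiyah--Hirzebruch bound $|ko_{2}(BD_{8})|\le 2^{4}$, together with the survival of $H^{0}_{2}=2^{4}$ (no differentials in or out of that position), forces $d_{1}:H^{1}_{4}\to H^{2}_{4}$ to be surjective onto the rank-$3$ target.

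Where you diverge from the paper is the upper bound for $k\ge 2$. The paper obtains $\operatorname{rank}d_{1}\le 3$ for all $k$ at once by comparison with the \emph{complex} local cohomology spectral sequence, citing \cite{3}; the lower bound then propagates from $k=1$ via the $ko^{*}(BD_{8})$-module structure exactly as in the $V(2)$ argument. Your alternative via the abelianisation $D_{8}\twoheadrightarrow V(2)$ is not set up correctly: that map induces $BD_{8}\to BV(2)$, so on the input $ko^{*}$ the comparison of local cohomology spectral sequences runs from the $V(2)$ sequence into the $D_{8}$ one, not the direction you need. To force the extra rank-$1$ summand of $H^{1}_{4k}(D_{8})$ into $\ker d_{1}$ you would need an injection on $H^{2}$ going $D_{8}\to V(2)$ (for instance coming from a subgroup inclusion together with the associated transfer), and you have not arranged that. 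Your earlier ``$PP$-translate'' remark does not settle the upper bound either: multiplication by $z_{i}$ on $PP^{\vee}$ is not a bijection between graded pieces (the ranks of $H^{2}_{4k}$ grow with $k$), so knowing the image at degree $4$ does not by itself pin the image at degree $4k$ to rank exactly $3$. The paper's one-line complex-comparison argument bypasses both issues.
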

\begin{proof}
Comparison with the complex case \cite{3} gives an upper bound of $3$. To see that is attained, we need only check that $ko_2(BD) \leq 2^4= H^{0}_{J}(\tau)_1$, but this is immediate from the Atiyah-Hirzebruch spectral sequence.
\end{proof}
We now summarise these results, see \cite{3}, corollary $8.5.9$.
\begin{thm}
The $ko-$homology of $BD_{2^{N+2}}$ is given as follows. Here $[2^a]$ means cyclic of order $2^a$, $2^a$ means elementary abelian of rank $a$, and $<2^a>$ means an undetermined group of order $2^a$.\\
\begin{center}
\begin{tabular}{|c|c|c|}
\hline

$n$&$ko_n(BD_{2^{N+2}})$ & $ $\\
\hline

$8k+0$ & $\Z \oplus 2^{a_k}$ & $2k-2^N-2 \leq a_k \leq 2k+2^N$\\

$8k+1$ & $2^{b_k}$ & $2^N-1 \leq b_k \leq 2+2^{N+1}$ \\

$8k+2$ & $2^{c_k}$ & $k+4 \leq c_k \leq 2k+2^{N}+6$\\

$8k+3$ & $<2^{d_k}>$ & $d_k=2^{(2N+12)k+n+8}$\\

$8k+4$ & $\Z \oplus 2^{e_k}$ & $e_k=2k+2$\\

$8k+5$ & $0$ & $ $\\

$8k+6$ & $2^{f_k}$ & $f_k=2k+1$\\
$8k+7$ & $<2^{g_k}>$ & $g_k=2^{(2N+12)(k+1)-1}$\\
\hline
\end{tabular}
\end{center}
\end{thm}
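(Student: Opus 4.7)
The strategy will be to assemble the $E_\infty$-page of the local cohomology spectral sequence from the data already computed, then read off $ko_n(BD_{2^{N+2}})$ dimension-by-dimension modulo $8$, giving bounds where extensions or the isomorphism type of the $H^1$-column cannot be pinned down. Since $D_{2^{N+2}}$ has rank $2$, only the columns $H^0_I$, $H^1_I$, $H^2_I$ are nonzero, so this is a small spectral sequence collapsing at the $E_2$-page once the two obvious $d_1$'s and one $d_2$ are handled.

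First I would list the contributions to each column. The $H^2$-column comes entirely from $TO$: local duality applied to $TO = PP(-6) \oplus PP(-12)$ gives $H^2_I(TO) = PP^{\vee}(-4) \oplus PP^{\vee}(2)$, identical to the $V(2)$ case. The $H^0$-column contains a $\Z$ in each degree $\equiv 0 \pmod 4$ (from $\overline{QO}$) plus the $\eta$-multiples $\tau = 2^4$ in degrees $\equiv 1,2 \pmod 8$. The $H^1$-column has the $\overline{QO}$-contributions in degrees $\equiv 0 \pmod 4$, whose orders are the $2^{(2N+12)k+2}$ and $2^{(2N+12)k+N+11}$ quoted in Section~4.2, together with the contribution from $H^1_I(\tau)$ which is detected by comparison with the complex case and the Bockstein spectral sequence.

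Next I would resolve the differentials. Since $ko_n = 0$ for $n<0$ and $ko_0=\Z$, the differentials $d_1: H^1_{-4}\to H^2_{-4}$ and $d_1: H^1_0\to H^2_0$ are forced to be isomorphisms, while the long differential $d_2: H^0_1\to H^2_2$ is surjective with kernel of order $2^3$; this is exactly as in $V(2)$. Lemma~4.2.1 handles the remaining $d_1: H^1_{4k}\to H^2_{4k}$, which has rank $3$ for $k\geq 1$, annihilating the $2^3$ in $H^2_{4k}$ and a rank-$3$ subgroup of $H^1_{4k}$. The absence of further differentials leaving the zero column in degrees $\equiv 1,2 \pmod 8$ follows exactly as in the $V(2)$ case (inverting $\beta$ preserves the $\tau$-summand in high degrees, and a $ko^*(BD)$-module argument propagates this down).

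Having the $E_\infty$-page, the table is then read off dimension-by-dimension. Dimensions $\equiv 3,7 \pmod 8$ receive contributions only from $H^1_I(\overline{QO})$, whose total orders $2^{(2N+12)k+n+8}$ and $2^{(2N+12)(k+1)-1}$ give $d_k$ and $g_k$; dimensions $\equiv 5 \pmod 8$ vanish entirely; dimensions $\equiv 4 \pmod 8$ give $\Z\oplus 2^{2k+2}$ from the surviving $H^2$-contribution ($PP^\vee$ in the appropriate degree), and dimensions $\equiv 6 \pmod 8$ give $2^{2k+1}$. The even dimensions $\equiv 0 \pmod 8$ and odd dimensions $\equiv 1,2 \pmod 8$ assemble $\Z$ (resp. $\tau=2^4$) with the surviving $H^1$- and $H^2$-pieces; the inequalities for $a_k, b_k, c_k$ come from bounding the part of $H^1$ that can survive the $d_1$-differential versus the part that must vanish.

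The main obstacle is the lack of explicit knowledge of the isomorphism type of $H^1_I(\overline{QO})$ for general $N$: as $N$ grows, the number of conjugacy classes of $D_{2^{N+2}}$ grows, and while the determinant computation pins down the \emph{order} of the local cohomology, the cyclic-summand decomposition is not uniform in $N$. This forces dimensions $\equiv 3,7 \pmod 8$ to be recorded as abstract groups $\langle 2^{d_k}\rangle, \langle 2^{g_k}\rangle$ of known order, and is the source of the bounds rather than equalities for $a_k, b_k, c_k$; the extension problem between the three filtration layers in these even/low-odd degrees likewise cannot be resolved uniformly, although in the $D_8$ case displayed in the chart it collapses to an exact answer as for $V(2)$.
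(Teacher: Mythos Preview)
Your approach is essentially the same as the paper's: assemble the $E_\infty$-page of the local cohomology spectral sequence from the computed pieces ($H^2$ isomorphic to the $V(2)$ case, $H^0$ from $\tau$ and $\overline{QO}$, $H^1$ with orders given by the determinant calculation), resolve the forced low-degree differentials and the rank-$3$ $d_1$ of Lemma~4.2.1, then read off the table with bounds where the precise structure of $H^1_I(\overline{QO})$ is unknown. The paper itself presents this as a summary referencing \cite{3}, Corollary~8.5.9, so your sketch is at the same level of detail.

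Two minor imprecisions worth flagging: you write $TO = PP(-6)\oplus PP(-12)$, but the paper only claims $H^2_I(TO)\cong H^2_I(TO_{V(2)})$, not that $TO$ itself coincides with the $V(2)$ module; and $\tau$ is not uniformly $2^4$ for general $D_{2^{N+2}}$ (already for $D_8$ the chart shows $2^5$ in degrees $\geq 9$, and in general the rank depends on the number of irreducible real representations, which grows with $N$). Neither of these affects the logic of your argument, since only the $H^2$-isomorphism and the \emph{position} of the $\tau$-contributions are used.
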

Thus lots more information is required for the precise structure if the group, some of which we give in \cite{mjam}. However, a consequence of the calculations and the spectral sequence we have displayed is that the second local cohomology $H^2$ is the same for all the groups, and the \emph{positions} of the non-trivial $H^0$ and $H^1$ groups are the same. Further, in Proposition $4.4.2$ we will explicitly construct spin manifolds of positive scalar curvature which are detected in ordinary homology. These manifolds will then have independent $ko-$fundamental classes, and the number of classes we obtain this way combined with taking the $H^1$ and $H^2$ parts and the calculations in \cite{3}, then proves the following Proposition.
\begin{prop}
With the $a_k,d_k$ and $g_k$ as in Theorem $4.2.2$, we have the following description of the kernel $Ker(Ap)$:\\
\begin{center}
\begin{tabular}{|c|c|}
\hline

$n$&$Ker(Ap) \subset ko_n(BD_{2^{N+2}})$\\
\hline

$8k+0 \geq 8$ & $\widetilde{ko}_{8k}(BD_{2^{N+2}})=2^{a_k}=2^{2k+1} \oplus \eta(ko_{8k-1}(BD_{2^{N+2}}))$\\

$8k+1$ & $\eta(\widetilde{ko}_{8k}(BD_{2^{N+2}}))$ \\

$8k+2$ & $2^{2k}$\\

$8k+3$ & $<2^{d_k}>=ko_{8k+3}(BD_{2^{N+2}})$\\

$8k+4$ & $2^{2k+2}=\widetilde{ko}_{8k+4}(BD_{2^{N+2}})$\\

$8k+5$ & $0=ko_{8k+5}(BD_{2^{N+2}})$\\

$8k+6$ & $2^{2k+1}=ko_{8k+6}(BD_{2^{N+2}})$\\
$8k+7$ & $<2^{g_k}>=ko_{8k+7}(BD_{2^{N+2}})$\\
\hline
\end{tabular}
\end{center}
\end{prop}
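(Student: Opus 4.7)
The plan is to deduce the proposition directly from the local cohomology spectral sequence description of $ko_*(BD_{2^{N+2}})$ summarised in Theorem~4.2.2, together with Lemma~1.6.2, which tells us that $\operatorname{Im}(A\circ p)$ is isomorphic to the zero-column of the $E_\infty$-page and that $\operatorname{Ker}(Ap)$ is filtered by the higher columns $E_\infty^{-1,*}$ and $E_\infty^{-2,*}$. Since the LCSS here has only three columns (the rank of $D_{2^{N+2}}$ is $2$), this identification is essentially a bookkeeping exercise once the contributions of each column have been read off in each residue class modulo~$8$. The $H^2$-column is controlled by $TO$, which as noted in Section~4.2 is the same as for $V(2)$ and hence contributes $2^{2k+1}$ in degree $8k$, $2^{2k}$ in $8k+2$, $2^{2k+2}$ in $8k+4$, and $2^{2k+1}$ in $8k+6$; the $H^1$-column is supported in degrees $\equiv 3,7 \pmod{8}$ after the differential $d_1$ of Lemma~4.2.1 has acted; and the zero-column contains the $\mathbb{Z}$-summand in degrees $\equiv 0,4$ and the $\tau$-part in degrees $\equiv 1,2$.

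First I would handle the easy residues. In dimensions $n=8k+5$ all three columns vanish and there is nothing to prove. In $n=8k+3, 8k+6, 8k+7$ the zero-column is trivial, so Lemma~1.6.2 gives $\operatorname{Ker}(Ap)=ko_n(BD_{2^{N+2}})$ immediately. In $n=8k+4$ the zero-column is exactly the $\mathbb{Z}$-summand and the $H^1$ contribution is zero in these degrees, so $\operatorname{Ker}(Ap)$ equals $\widetilde{ko}_{8k+4}$, which the calculations in Theorem~4.2.2 give as $2^{2k+2}$. In $n=8k+2$ the $\tau$-summand lies in the zero-column and is therefore in $\operatorname{Im}(Ap)$, leaving the $H^2$-contribution $2^{2k}$ as the kernel, matching the claim.

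The serious work is in degrees $n=8k$ and $n=8k+1$. For $n=8k$ Lemma~1.6.2 gives $\operatorname{Ker}(Ap)=\widetilde{ko}_{8k}$, and the $H^2$-column contributes a summand of rank $2k+1$ which I would realise explicitly by the fundamental classes of the spin positive-scalar-curvature manifolds produced in Proposition~4.4.2 (this confirms that no hidden extensions collapse these classes and that they split off as a summand $2^{2k+1}$). The remaining contribution comes from $E_\infty^{-1,8k+1}$, and I would identify this as $\eta\bigl(ko_{8k-1}(BD_{2^{N+2}})\bigr)$ using the $ko_*$-module structure of the LCSS: multiplication by $\eta\in ko_1$ carries the $H^1$-column in degree $8k-1$ (which, by the previous paragraph, is all of $ko_{8k-1}$) into the $H^1$-column in degree $8k$, and since $A\circ p$ is a $ko_*$-module map we have $Ap(\eta x)=\eta\,Ap(x)=0$. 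A dimension count using Theorem~4.2.2 and the explicit $H^1$-entries from \cite{3} then shows that the $E_\infty^{-1,*}$ piece is precisely this $\eta$-image. The case $n=8k+1$ is analogous, with the $\tau$-piece sitting in the zero-column (hence in $\operatorname{Im}(Ap)$) and the remainder of $ko_{8k+1}$ being $\eta\bigl(\widetilde{ko}_{8k}\bigr)$ by the same module argument applied in one degree higher.

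The hard part will be verifying the splittings, in particular that the $H^2$-contribution in degree $8k$ really appears as a direct summand $2^{2k+1}$ and that the complementary piece is exactly the $\eta$-image (and not merely filtered by it). This amounts to resolving the extension between $E_\infty^{-1,8k+1}$ and $E_\infty^{-2,8k+2}$, which I would pin down by combining the Bockstein calculations of \cite{3} with the explicit geometric generators of Proposition~4.4.2: the latter produce classes of $ko$-filtration exactly~$2$, splitting the extension. The bounds $2k-2^N-2\le a_k\le 2k+2^N$ in Theorem~4.2.2 suggest that $N$-dependence enters only through $\eta(ko_{8k-1})$, which is consistent with this plan since the $H^2$-column is $N$-independent while the $H^1$-column (and hence $ko_{8k-1}$) grows with $N$.
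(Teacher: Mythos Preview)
Your approach is essentially the paper's: both deduce the table from the local cohomology spectral sequence structure (via the $Ap$-lemma), the calculations in \cite{3}, and a forward reference to the explicit positive-scalar-curvature classes of Proposition~4.4.2 (the paper is actually terser than you are --- it just says that the number of independent classes produced there, combined with the $H^1$ and $H^2$ columns, gives the proposition).

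One small correction to your first paragraph: for the dihedral groups, unlike $V(2)$, the module $\tau$ contributes to \emph{both} $H^0_J$ and $H^1_J$ --- note the column label ``$H^{1}_{J}(\overline{QO}) \oplus H^{1}_{J}(\tau)$'' under the displayed $E_{1\frac12}$-page for $D_8$, and the extra $2$'s in the $H^1$-column at $t\equiv 1,2\pmod 8$. So the $H^1$-column is not supported only in total degrees $\equiv 3,7\pmod 8$; the $H^1_J(\tau)$ pieces land in degrees $8k$ and $8k+1$, and these are exactly the $E_\infty^{-1,*}$ classes you (correctly) identify as $\eta$-images in your third paragraph. This does not damage your argument, but it reconciles the apparent tension between your first and third paragraphs and explains why the $H^1$-contribution in degree $8k$ is there at all.
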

Thus we see that if we realize all of $ko_{8k-1}(BD_{2^{N+2}})$ and $\widetilde{ko}_{8k}(BD_{2^{N+2}})$ by $ko-$fundamental classes of positive scalar curvature spin manifolds, then we will have also verified the Gromov-Lawson-Rosenberg conjecture in dimensions $8k$ and $ 8k+1$. This follows because multiplying by $\eta$ preserves the positive scalar curvature ideal, so that if $ko_{8k-1}(BD_{2^{N+2}})$ and $\widetilde{ko}_{8k}(BD_{2^{N+2}})$ are in $ko_*^+(BD_{2^{N+2}})$, then so are $\eta(ko_{8k-1}(BD_{2^{N+2}}))$ and $\eta(\widetilde{ko}_{8k}(BD_{2^{N+2}}))$, regardless of their orders.\\
Similarly, since $(Ap)$ is injective on the $H^0$ part, we do not need to know how much of the $H^0$ part survives to the $E_{\infty}$ page for the purposes of the Gromov-Lawson-Rosenberg conjecture. Thus, we can verify the conjecture even though the exact values of $a_k,b_k$ and $c_k$ in Theorem $4.2.2$ are not determined.\\

Further, calculations in \cite{3} show that there are no more $d_1$ differentials either. This follows since $d_1$ is induced by a connecting homomorphism from $\overline{QO}$ to $T$. In high enough dimensions, there can be no more $d_2$ differentials either, since otherwise using Theorem $1.5.2$ there would be too few classes with non-trivial index. This combined with the eta invariant calculations of the next section, imply that we can deduce considerably more information about the groups $ko_*(BD)$. Here $C_l$ denotes the cyclic group of order $l$.
\begin{thm}
The $ko-$homology of $BD_{2^{N+2}}$ is given as follows. Here $[2^a]$ means cyclic of order $2^a$, $2^a$ means elementary abelian of rank $a$, and $<2^a>$ means an undetermined group of order $2^a$. In dimensions $0,1$ and $2 \mod 8$ we assume $k \geq n-1$.\\
\begin{center}
\begin{tabular}{|c|c|c|}
\hline

$n$&$ko_n(BD_{2^{N+2}})$ & $ $\\
\hline

$8k+0$ & $\Z \oplus 2^{a_k}=\Z \oplus \eta(ko_{8k-1}(BD_{2^{N+2}})) \oplus 2^{2k+1}$ & $a_k=2k+2^N$\\

$8k+1$ & $2^{b_k}=2^{2^N+3} \oplus \eta(\widetilde{ko}_{8k}(BD_{2^{N+2}}))$ & $b_k=2+2^{N+1}$ \\

$8k+2$ & $2^{c_k}=2^{2^N+3}\oplus 2^{2k}$ & $c_k=2k+2^{N}+3$\\

$8k+3$ & $<2^{d_k}>=2ko_{8k+3}(\RP^{\infty}) \oplus (ko_{8k+3}(BC_{2^{N+1}})/\Z_2)$ & $d_k=2^{(2N+12)k+n+8}$\\

$8k+4$ & $\Z \oplus 2^{e_k}$ & $e_k=2k+2$\\

$8k+5$ & $0$ & $ $\\

$8k+6$ & $2^{f_k}$ & $f_k=2k+1$\\
$8k+7$ & $<2^{g_k}>==2ko_{8k+7}(\RP^{\infty}) \oplus (ko_{8k+7}(BC_{2^{N+1}})/\Z_2)$ & $g_k=2^{(2N+12)(k+1)-1}$\\
\hline
\end{tabular}
\end{center}
\end{thm}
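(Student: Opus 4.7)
The strategy is to combine three ingredients: the structure of the $E_{\infty}$ page of the local cohomology spectral sequence established in the passage (together with the fact that no further $d_1$ differentials occur, from \cite{3}), the eta invariant calculations developed in Chapter 2 for cyclic subgroups (which will be worked out in the next section for the dihedral case), and naturality with respect to the inclusions $\Z_2 \hookrightarrow D_{2^{N+2}}$ and $C_{2^{N+1}} \hookrightarrow D_{2^{N+2}}$. The plan is to first pin down the orders $a_k, b_k, c_k$ by showing that in high enough dimensions no $d_2$ differentials can occur (so Theorem 4.2.2's upper bounds are attained), then resolve the extensions and splittings by exhibiting explicit generators whose orders are forced by eta invariant calculations on lens spaces and by ordinary homology detection on manifolds built in Section 4.4.

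For the even dimensional rows, the bound argument runs as follows. The $8k+4,\, 8k+5,\, 8k+6$ entries are already pinned down in Theorem 4.2.2, so only $8k,\,8k+1,\,8k+2$ require work. For $k$ sufficiently large, any non-trivial $d_2:H^0 \to H^2$ in these degrees would force $\mathrm{im}(Ap)$ to be strictly smaller than the $E_2^0$ column computed from $\overline{QO}$; but $Ap$ is an isomorphism onto its image and its cokernel is controlled by the cyclic and $V(2)$ subgroup contributions which, by Theorem 1.6.3 and the explicit structure of $\overline{QO}$, exhaust this image. Hence $a_k = 2k + 2^N$, $b_k = 2 + 2^{N+1}$, and $c_k = 2k + 2^N + 3$. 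The splittings $a_k = 2k+1 + \eta(ko_{8k-1})$ and $c_k = (2^N+3) + 2k$ then follow from the structural decomposition $E_\infty^{0} \oplus E_\infty^{1} \oplus E_\infty^{2}$: the $H^2$ column contributes exactly the $2^{2k+1}$ (resp.\ $2^{2k}$, $2^{2k+2}$) summands (identical to the $V(2)$ case of Chapter 1), while the $H^0$ column is detected in periodic $K$-theory and accounts for $\eta$-multiples and the $2^{2^N+3}$ summand coming from $\overline{QO}$.

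For the odd dimensional rows ($8k+3$ and $8k+7$, the hard rows), the goal is to identify $<2^{d_k}>$ with $2ko_{8k+3}(\RP^\infty) \oplus (ko_{8k+3}(BC_{2^{N+1}})/\Z_2)$, and similarly for $g_k$. The plan is to use the three inclusions of maximal periodic subgroups: two copies of $\Z_2$ generated by the reflections $s$ and $\omega s$, together with the maximal cyclic subgroup $C_{2^{N+1}} = \langle \omega \rangle$. Pushing forward the fundamental classes of the lens spaces $\RP^{4k+3}_s, \RP^{4k+3}_{\omega s}$ and $L^{4k+3}(2^{N+1}, \vec{a})$ via these inclusions produces an explicit subgroup of $ko_{4k+3}(BD)$, whose order can be computed by assembling a matrix of eta invariants $\overrightarrow{\eta}$ evaluated against the characters of $D_{2^{N+2}}$ (all real) and computing its determinant just as in the $V(n)$ proofs of Chapter 3. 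The combinatorial structure of the character table, together with Lemma 2.2.2 to resolve extensions, gives the stated direct sum decomposition (the factor of $2$ and the quotient by $\Z_2$ appear because the two $\RP^{4k+3}$ classes together with the restriction of the character $\hat{\omega}$ introduce a single redundancy).

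The main obstacle will be the last step: verifying that the eta invariant matrix assembled from $\{\RP^{4k+3}_s, \RP^{4k+3}_{\omega s}, L^{4k+3}(2^{N+1},\vec{a})\}$ against the full set of non-trivial characters has determinant of exactly the right $2$-adic valuation, with no hidden cancellation from the two-dimensional representations $\sigma_k$. Here one must exploit the fact that $\sigma_k$ restricts on the cyclic subgroup $\langle \omega \rangle$ to $\rho_k + \rho_{-k}$ and vanishes on the reflections, so the eta contributions from the $\sigma_k$ only involve the lens space $L^{4k+3}(2^{N+1},\vec{a})$, decoupling that block of the matrix. Once this decoupling is established, the determinant factorises as a product of (i) a $2 \times 2$ block for the two $\RP^{4k+3}$ classes against $\hat{s}, \hat{\omega s}$, giving $(2\cdot 2^{4k+4})/2 = 2^{4k+4}$ after accounting for the diagonal relation via $\hat{\omega}$, and (ii) the determinant of the $C_{2^{N+1}}$ lens space matrix, which was computed in Proposition 2.1.4. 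Matching this product against $d_k = 2^{(2N+12)k+n+8}$ (and analogously $g_k$) confirms the identification, and the assumption $k \geq n-1$ guarantees we are past any residual low-dimensional $d_2$ differentials.
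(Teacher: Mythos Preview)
Your overall strategy matches the paper's: combine the local cohomology spectral sequence (with the $d_1$ and $d_2$ information from \cite{3} and Theorem 1.5.2) with eta invariant computations for lens spaces pushed forward from the cyclic subgroup $C_{2^{N+1}}$ and the two reflection subgroups $\langle s\rangle$, $\langle \omega s\rangle$. The paper carries this out in Section~4.3, culminating in Proposition~4.3.1 and the discussion immediately following it.

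There is, however, a genuine gap in your ``decoupling'' step. You claim that since the two-dimensional characters $\sigma_k$ vanish on reflections, the eta matrix block-diagonalises into a $2\times 2$ block (projective spaces against $\hat s,\hat{\omega s}$) and a cyclic lens-space block. This is not correct: the one-dimensional characters $\hat s$ and $\hat{\omega s}$ both restrict to the \emph{non-trivial} real character $\rho_{l/2}$ on the cyclic subgroup $C_l=\langle\omega\rangle$, so the cyclic lens spaces also have non-zero eta invariants against $\hat s,\hat{\omega s}$. The blocks are therefore genuinely coupled, and it is precisely this coupling that produces the single relation accounting for the quotient $/\Z_2$. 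The paper handles this not by a determinant factorisation but by (i) setting up a surjection $\overline\delta:\mathcal L_n(BD)\to\mathcal L_{n-4}(BD)$ inherited from the cyclic case and bounding its kernel via explicit low-dimensional computations (Lemmas~4.3.2, 4.3.3), and (ii) tracking the relation among $\eta(\RP_s)$, $\eta(\RP_{\omega s})$ and $\eta(L)$ under the real representation $\rho_{l/2}$ explicitly. A subtlety your proposal misses entirely is that the $\Z_2$ quotient kills the \emph{smallest} cyclic summand of $ko_{8k+3}(BC_l)$ but the \emph{largest} one in $ko_{8k+7}(BC_l)$; this dichotomy comes from whether the relevant representation is real (so $\eta\in\R/2\Z$) or quaternion in that dimension, and would be invisible in a naive determinant computation. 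Finally, Lemma~2.2.2 only applies to rank-two groups, so it cannot resolve the extensions here where more than two cyclic summands appear.
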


\section{Lens spaces and the periodic part}
 From the character table in the previous section we can see that the three non-trivial one dimensional irreducible representations are denoted $\hat{\omega},\hat{s\omega}$ and $\hat{s}$, with kernels $<\omega>$, $<s \omega,\omega^2>$ and $<s,\omega^2>$ respectively. The two dimensional representations $\sigma_m$ with $1 \leq m \leq 2^{N}-1$ restrict to $\rho_m \oplus \rho_{2^{N+1}-m}$ on the maximal cyclic subgroup $C_{2^{N+1}}=<\omega>$, where as in chapter $2$ we have $\rho_i(\omega)=\omega^i$. Further, all representations of the dihedral groups are real. We saw in chapter two that the eta invariant completely detects the connective k-theory of cyclic groups, and so now we would like to use the methods of chapter 2 to show that inclusion from cyclic subgroups detects all of $Ker(A) \cap Im(p) \subset KO_*(BD)$. \\

We know that $|ko_{8k+3}(BD_{2^{N+2}})|=2^{(2N+12)k+N+8}$, while $|ko_{8k+7}(BD_{2^{N+2}})|=2^{(2N+12)(k+1)-1}$. We then have
\begin{prop}
The images under inclusion of the fundamental classes of the cyclic lens spaces $X^{4i-1}(l=2^{N+1},\overrightarrow{a})$ defined in chapter 2, together with those of the real projective spaces $\RP^{4i-1}_{s}, \RP^{4i-1}_{\omega s}$, span all of $ko_{4i-1}(BD_{2^{N+2}})$.
\end{prop}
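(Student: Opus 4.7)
The plan is to give a lower bound on the order of the subgroup $\mathcal{L}_n \subset ko_n(BD)$ spanned by the listed classes via explicit eta-invariant calculations, using naturality under the subgroup inclusions $\langle\omega\rangle, \langle s\rangle, \langle \omega s\rangle \hookrightarrow D := D_{2^{N+2}}$, and match it against $|ko_n(BD)|$ from Theorem 4.2.2. First I would reduce to periodic $K$-theory: from the spectral sequence display of Section 4.2, in odd degrees $n = 4i-1$ the group $ko_n(BD)$ is concentrated in the $H^1$ column, so $p: ko_n(BD) \to KO_n(BD)$ is injective. By Theorem 1.5.2, $KO_n(BD)$ splits as a direct sum of $2^N + 2$ Pr\"ufer groups $\Z_{2^{\infty}}$, one per non-trivial irreducible real representation ($\hat\omega, \hat s, \hat{\omega s}$ and $\sigma_1, \ldots, \sigma_{2^N-1}$), with the $\rho$-summand detected by $\eta(\cdot)(\rho - \dim \rho)$.

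Next, by naturality of the eta invariant, $\eta(i_*[M])(\rho) = \eta([M])(\rho|_H)$, together with Proposition 2.1.4 (the lens spaces $X^{4i-1}(2^{N+1}, \overrightarrow{a})$ span $ko_n(BC_{2^{N+1}})$) and the calculations of Section 2.2 for $\RP^{\infty}$, the subgroup $\mathcal{L}_n$ contains the combined images of the three maps $i_*: ko_n(BH) \to ko_n(BD)$. The core combinatorial step is to record the restrictions: $\hat\omega|_{\langle\omega\rangle}$ is trivial, $\hat s|_{\langle\omega\rangle} = \hat{\omega s}|_{\langle\omega\rangle} = \rho_{2^N}$ (the sign character), $\sigma_m|_{\langle\omega\rangle} = \rho_m + \rho_{-m}$; while on $\langle s\rangle$ one has $\hat s|_{\langle s\rangle} = 1$ but $\hat\omega|_{\langle s\rangle} = \hat{\omega s}|_{\langle s\rangle}$ is the non-trivial character and $\sigma_m|_{\langle s\rangle} = 1 + \text{sign}$, with a symmetric statement for $\langle \omega s\rangle$. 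Arranging the resulting eta values into a matrix (rows indexed by generators of $\mathcal{L}_n$, columns by non-trivial real irreducibles of $D$) and evaluating its determinant or row-reducing as in Proposition 3.2.2 then yields the required lower bound.

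The main obstacle will be the combinatorial bookkeeping on the $\hat s$, $\hat{\omega s}$ (and $\sigma_m$) summands, each of which receives contributions from more than one of the three families: on $\langle\omega\rangle$ both $\hat s$ and $\hat{\omega s}$ restrict to the \emph{same} sign character, so the single lens-space generator detecting the sign direction of $ko_n(BC_{2^{N+1}})$ contributes simultaneously to both the $\hat s$ and $\hat{\omega s}$ columns, and its value must be compared, via Lemma 2.1.2 and the $\RP^{\infty}$ formula of Section 2.2, against those of $\RP^{4i-1}_{\omega s}$ and $\RP^{4i-1}_s$. One anticipates exactly one $\Z_2$-worth of linear dependence among these contributions, matching precisely the $\Z_2$ quotient appearing in the decomposition $ko_n(BD) = 2\,ko_n(\RP^{\infty}) \oplus ko_n(BC_{2^{N+1}})/\Z_2$ of Theorem 4.2.4. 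Pinning this relation down by explicit eta computation, and verifying that no further redundancy arises among the $\sigma_m$-columns (which receive contributions from all three subgroups), is the crux of the proof; once done, the product of orders of the diagonal entries recovers exactly $|ko_n(BD)|$.
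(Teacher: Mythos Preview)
Your overall strategy---detect in periodic $K$-theory via eta invariants, use naturality under restriction to the three cyclic subgroups, and compare orders against Theorem 4.2.2---is exactly the paper's. The execution, however, differs in an important way, and your proposal glosses over the place where the real work lies.

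The paper does \emph{not} attempt a direct matrix/determinant computation over all $2^N+2$ irreducibles for general $n$. Instead it transports the inductive machinery of Chapter~2: the map $\delta$ (multiplication by $\sigma_3-2\rho_0$ in the representation ring) descends to a surjection $\overline{\delta}:\mathcal{L}_n(BD)\to\mathcal{L}_{n-4}(BD)$ on the span of lens-space eta invariants, and by Bott periodicity one need only bound $|\ker\overline{\delta}|$ in dimensions $7$ and $11$. That bound is established via Lemma~4.3.3, which pins down explicit generators of $ko_7(BC_l)$ and $ko_{11}(BC_l)$ together with the representations detecting them, chosen so that they \emph{restrict from} $D$. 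Your direct matrix approach is in principle workable, but for general $N$ you would have to control a $(2^N+2)$-column matrix whose cyclic-lens-space rows grow with $n$, and this amounts to redoing the content of Lemmas~4.3.2--4.3.3 for every dimension rather than once.

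More seriously, your statement that ``one anticipates exactly one $\Z_2$-worth of linear dependence'' is correct as an order count but hides a genuine dichotomy that the paper treats carefully. In degree $8m+3$ the representations of $D$ are all real, so restricting $\sigma_1$ to $\rho_1+\rho_{-1}$ loses nothing (the factor of $2$ from $\eta(L)(\rho_1+\rho_{-1}-2\rho_0)=2\eta(L)(\rho_1-\rho_0)$ is recovered in $\R/2\Z$), and the single $\Z_2$ loss comes from a genuine linear relation between $[\RP^n_s]+[\RP^n_{\omega s}]$ and the sign-character part of the cyclic lens spaces. In degree $8m+7$ that relation \emph{disappears}: $2\hat{\omega s}$ is quaternionic, so $\eta(L)(2\hat{\omega s}-2\rho_0)$ lifts to $\R/2\Z$ and has the same order as $\eta(L)(\hat{\omega s}-\rho_0)$. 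The factor of $2$ is lost instead on the $\sigma_1$-column, since $\sigma_1$ is real (not quaternionic) and the doubling from $\rho_1+\rho_{-1}$ is not recovered. Your proposal does not separate these cases, and without that separation the bookkeeping will not close: you will either over- or under-count by a factor of $2$ in one of the two residues.
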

Here $\RP^{4i-1}_{s}$ and $\RP^{4i-1}_{\omega s}$ are the $ko_*$ images of the compositions $\RP^{4i-1} \rightarrow B<s> \rightarrow BD_{2^{N+2}}$ and $\RP^{4i-1} \rightarrow B<\omega s> \rightarrow BD_{2^{N+2}}$ respectively, where the first map is the classifying map, and the second is induced by inclusion. We will first give some useful eta invariant calculations, and the proof of the Proposition will then consist of putting these together. The eta invariant calculations for cyclic groups given here will also appear in \cite{cs}.
\begin{lem}
Let $ 1 \leq m \in \Z$ and $l=2^{N+1}$. Then\\
a) $\eta(L=L^{4j-1}(l,(a_1, \cdots,a_{2j})))(\rho_{2m-1}-\rho_0)$ has order $2^{N+2j} \in \R/\Z$.\\
b) Further, $\eta(L)(\rho_m-\rho_0)=\eta(L)(\rho_{l-m}-\rho_0)$.\\
c) More generally, for $K \in \Z$ and $m$ odd, $2\eta(L)(\rho_m-\rho_0)$ has the same order in $\R/\Z$ as $\eta(L)(\rho_m + \rho_{m+2K}-2\rho_0)$.
\end{lem}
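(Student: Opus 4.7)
All three parts follow from direct manipulation of the closed formula of Lemma 2.1.2, namely
$$\eta(L)(\rho) \;=\; l^{-1}\sum_{1\neq \lambda\in C_l} f(\lambda)\,\mathrm{Tr}(\rho(\lambda)),\qquad f(\lambda) \;=\; \lambda^{(a_1+\cdots+a_{2j})/2}\prod_{k=1}^{2j}(1-\lambda^{a_k})^{-1}.$$
The plan is to handle (b) as an immediate symmetry statement, then (a) by reducing to the model tuple $\vec a = (1,\ldots,1)$, and finally (c) by isolating an error term whose contribution is 2-adically smaller than the leading term.

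For (b), I would substitute $\lambda\mapsto\lambda^{-1}$ in the sum over $C_l\setminus\{1\}$. The identity $(1-\lambda^{-a_k})^{-1} = -\lambda^{a_k}(1-\lambda^{a_k})^{-1}$, applied to each of the $2j$ factors, produces a total sign $(-1)^{2j}=1$ and an accumulated monomial $\lambda^{a_1+\cdots+a_{2j}}$ which precisely compensates the sign flip in the square-root exponent; hence $f(\lambda^{-1}) = f(\lambda)$. Combined with $\mathrm{Tr}(\rho_{l-m}(\lambda)) = \lambda^{-m} = \mathrm{Tr}(\rho_m(\lambda^{-1}))$, this yields (b) immediately.

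For (a), I would first settle the model case $\vec a = (1,\ldots,1)$ by explicit cyclotomic arithmetic. Since each $a_k$ is odd, at $\lambda = -1$ the factor $(1-\lambda^{a_k})=2$, so the $\lambda=-1$ contribution to the sum is $\pm 2^{1-2j}\cdot(\lambda^{2m-1}-1)|_{\lambda=-1} = \pm 2^{2-2j}$, giving a summand of $\pm 2^{-(N+2j)}$ after dividing by $l = 2^{N+1}$. A careful Galois-averaged bound on contributions from primitive $2^t$-th roots of unity ($t\geq 2$), using that $1-\zeta_{2^t}$ is a uniformizer of ramification index $2^{t-1}$ above $2$, shows that these contributions all have strictly smaller 2-adic size in $\mathbb{Q}$ than the $\lambda=-1$ term; hence the rational number $\eta(L)(\rho_{2m-1}-\rho_0)$ has 2-adic valuation exactly $-(N+2j)$, i.e. order $2^{N+2j}$ in $\R/\Z$. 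For a general odd tuple $\vec a$, $(1-\lambda^{a_k})$ differs from $(1-\lambda)$ only by a unit at every prime above $2$ in $\Z[\zeta_{2^{N+1}}]$ (and literally equals $(1-\lambda)=2$ at $\lambda=-1$), so the valuation calculation is unchanged; combined with the upper bound $|ko_{4j-1}(BC_l)|$ from Proposition 2.1.4, which caps the order in $\R/\Z$ of any eta invariant of $[L]$, this pins the order at exactly $2^{N+2j}$.

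For (c), write
$$\eta(L)(\rho_m+\rho_{m+2K}-2\rho_0) \;=\; 2\eta(L)(\rho_m-\rho_0)\;+\;\eta(L)(\rho_{m+2K}-\rho_m).$$
The statement reduces to showing the error term $\eta(L)(\rho_{m+2K}-\rho_m)$ has order strictly less than $2^{N+2j-1}$, so that it cannot perturb the order of the dominant summand $2\eta(L)(\rho_m-\rho_0)$, which by (a) is exactly $2^{N+2j-1}$. The decisive observation is that the character $\lambda^m(\lambda^{2K}-1)$ vanishes at $\lambda=-1$ since $(-1)^{2K}=1$, so the entire $\lambda=-1$ contribution -- precisely the contribution that produced the leading $2^{-(N+2j)}$ term in (a) -- disappears, and the same Galois-averaged cyclotomic analysis now shows that the remaining contributions from primitive $2^t$-th roots ($t\geq 2$) have 2-adic denominator at most $2^{N+2j-2}$. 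The main obstacle throughout is this careful 2-adic bookkeeping in $\Z[\zeta_{2^{N+1}}]$, and in particular verifying that Galois sums over roots of a fixed order do not reinforce to produce a larger 2-adic denominator than the naive count predicts; but this is precisely the style of cyclotomic arithmetic that underlies Lemmas 2.1.5 and 2.1.6 of \cite{2}.
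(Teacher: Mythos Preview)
Your proposal is correct and shares the paper's overall architecture---isolate the $\lambda=-1$ summand as the dominant 2-adic contribution, then control the remainder---but the execution differs in two places worth noting.

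For (b), the paper computes $\eta(L)(\rho_m-\rho_{l-m})$ directly, pairs the $\lambda$ and $\lambda^{-1}$ terms over a common denominator, and expands the resulting product of $2j$ binomials to exhibit a term-by-term cancellation. Your substitution $\lambda\mapsto\lambda^{-1}$ together with the identity $(1-\lambda^{-a_k})^{-1}=-\lambda^{a_k}(1-\lambda^{a_k})^{-1}$ gives $f(\lambda^{-1})=f(\lambda)$ in one line and is considerably cleaner; it is the same cancellation, just packaged more conceptually.

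For (a), the paper bounds the $\lambda\neq\pm1$ contribution by an elementary inductive argument on $j$: at $j=1$ it computes $\mathrm{Re}(\lambda/(1-\lambda))=-1/2$ directly, and the inductive step uses the archimedean norm bound $|\lambda/(1-\lambda)^2|\leq 1/(2-2\mathrm{Re}\,\lambda)$ to show the order can at most double with each increment of $j$, hence never catches up to the $\lambda=-1$ term. Your route via 2-adic valuations in $\Z[\zeta_{2^t}]$ (using that $1-\zeta_{2^t}$ is a uniformizer of ramification index $2^{t-1}$ and Galois-averaging to land in $\Q$) is a legitimate alternative in the spirit of \cite{2}; it is less elementary but arguably more systematic, and it makes the passage to general odd tuples $\vec a$ transparent since $(1-\lambda^{a_k})/(1-\lambda)$ is a 2-adic unit. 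The appeal to Proposition~2.1.4 at the end is not actually needed---your valuation computation already pins the order exactly---but it does no harm. Part (c) is handled identically in both: the character $\lambda^m(\lambda^{2K}-1)$ vanishes at $\lambda=-1$, killing the leading term, and the remaining analysis from (a) bounds the residue.
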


\begin{proof}
We introduce the notation $A=(\sum_{k=1}^{2j} a_j)/2$. From Theorem $2.0.4$, we need to consider the sum
$$\sum_{1 \neq \lambda \in C_l} \frac{\lambda^{A}(1-\lambda^{2m-1})}{(1-\lambda^{a_1})\cdots (1-\lambda^{a_{2j}})}$$

It is useful in general to split these sums up. A simple way to give the idea of the proof is to split up the sum:
$$\sum_{1 \neq \lambda \in C_l} \frac{\lambda^{A}(1-\lambda^{2m-1})}{(1-\lambda^{a_1})\cdots (1-\lambda^{a_{2j}})}$$
$$=(\frac{(-1)^{A}(2)}{(2^{a_1})\cdots (2^{a_{2j}})})+(\frac{i^{A}(1-i^{2m-1})}{(1-i^{a_1})\cdots (1-i^{a_{2j}})}+\frac{(-i)^{A}(1-(-i)^{2m-1})}{(1-(-i)^{a_1})\cdots (1-(-i)^{a_{2j}})})+\cdots $$
by bracketing the $\lambda=-1$ term, the two $\lambda= \pm i$ terms, the four terms from $\lambda=\omega, \omega^3, \omega^5, \omega^7$, where $\omega=(1+i)/\sqrt{2}$ and so on, and then observing that each successive bracket has strictly smaller order $\in \R/\Z$ than the one preceding it.\\
We make an eta invariant calculation to prove a) for $L^{4j-1}(l=2^{N+1},(1, \cdots,1))$ and $\rho_1$, and the general case follows analogously. We use Theorem $2.0.4$ and Lemma $2.1.2$ again:

$$\eta(L^{4j-1}(l=2^{N+1},(1, \cdots,1)))(\rho_{1}-\rho_0)=l^{-1}\sum_{1 \neq \lambda \in C_l}-\frac{\lambda^j(1-\lambda)} {(1-\lambda)^{2j}}$$
$$=l^{-1}\sum_{1 \neq \lambda \in C_l}-\frac{\lambda^j} {(1-\lambda)^{2j-1}}$$
$$=2^{-(N+2j)}+l^{-1}\sum_{\pm 1 \neq \lambda \in C_l}-\frac{\lambda^j(1-\lambda)} {(1-\lambda)^{2j}}$$
But now we claim the sum $l^{-1}\sum_{\pm 1 \neq \lambda \in C_l}-\frac{\lambda^j(1-\lambda)} {(1-\lambda)^{2j}}$ can not have order greater than $2^{N+j}$. This follows since if we write $C_l=<\omega>$, then $$l^{-1}\sum_{\pm 1 \neq \lambda \in C_l}-\frac{\lambda^j(1-\lambda)} {(1-\lambda)^{2j}}=l^{-1}\sum_{\pm 1 \neq \lambda \in \{\omega, \cdots ,\omega^{l/2-1}\}}-2Re(\frac{\lambda^j(1-\lambda)} {(1-\lambda)^{2j}})$$
Where $Re$ denotes the real part. Now argue by induction on $j$, claiming no term in the summand has order $2^{N+2j}$,starting at $j=1$, and say $\lambda=x+iy \in \Comp,|\lambda|=1$. Then we are considering $$\lambda/(1-\lambda)=1/(1-\lambda)-1=-\frac{(1-x)+iy} {(1-x)^2+y^2}-1$$
which has real part $-1/2$ since $x^2+y^2=1$, so that there is no term of order $2^{N+2}$ or more (remember $\lambda=-1$ is not being considered). Thus assuming the conclusion for $j=m$, consider if $j=m+1$; Then $$\lambda^{m+1}/(1-\lambda)^{2m+1}=(\lambda^{m}/(1-\lambda)^{2m-1})(\lambda/(1-\lambda))^2$$
But now the norm of $(\lambda/(1-\lambda))^2$ is the same as $|(1-1/\lambda)^2|$ which is $1/\sqrt{2-2x} \geq 1/2$, so that the total order can be doubled at most, meaning there is again no term of order $2^{N+m+2}$, since there was no term of order $2^{N+m+1}$ by the induction hypothesis. The argument for $\rho_{2m-1}$ and arbitrary
$L^{4j-1}(l=2^{N+1},(a_1, \cdots,a_{2k}))$ goes through the same way, by just separating out what we get for $\lambda=-1$ from the sum.\\

This also proves part c) the Lemma, since we can consider the above sum for the representations $\rho_m-\rho_0$ and $\rho_{m+2K}-\rho_0$, with $m$ odd. Then in each sum we can consider the $\lambda \neq \pm 1$ summands, and their sum must now have order strictly less than $2^{N+j-1}.$ Thus the order is the order of the sum of the two $\lambda=-1$ terms in $\R/\Z$ which is $2^{N+2j-1}$.\\

For part b), we consider any of our lens spaces $L^{4j-1}(l;a_1,\cdots ,a_{2i})$ and use the additivity of the eta invariant:
$$\eta(L^{4j-1}(1;a_1,\cdots ,a_{2i}))(\rho_m-\rho_0)-\eta(L^{4j-1}(l;a_1,\cdots ,a_{2i}))(\rho_{l-m}-\rho_0)=\eta(L^{4j-1}(l;a_1,\cdots ,a_{2i}))(\rho_m-\rho_{l-m})$$
$$=\sum_{1 \neq \lambda \in C_l} \frac{\lambda^{A}(\lambda^m-\lambda^{-m})}{(1-\lambda^{a_1})\cdots (1-\lambda^{a_{2i}})}$$
Now consider the sum of the two terms we get for $\lambda$ and $\overline{\lambda}=\lambda^{-1}$:
$$(\lambda^m-\lambda^{-m}) (\frac{\lambda^{A}}{(1-\lambda^{a_1})\cdots (1-\lambda^{a_{2i}})}-\frac{\lambda^{-A}}{(1-\lambda^{-a_1})\cdots (1-\lambda^{-a_{2i}})})$$
Now expand out over a common denominator, and write $K$ for $\lambda^m-\lambda^{-m}$ over the product of the denominators:
$$=K\lambda^{(\sum_{1 \leq k \leq 2i}a_k)/2}(1-\sum_{j\in \{1, \cdots, 2i\}}\lambda^{-a_j}+\sum_{j \neq k\in \{1, \cdots, 2i\}}\lambda^{-a_j}\lambda^{-a_k}- \cdots +\lambda^{-a_1} \cdots \lambda^{-a_{2i}})$$
$$-K\lambda^{(-\sum_{1 \leq k \leq 2i} a_k)/2}(1-\sum_{j\in \{1, \cdots, 2i\}}\lambda^{a_j}+\sum_{j \neq k\in \{1, \cdots, 2i\}}\lambda^{a_j}\lambda^{a_k}- \cdots +\lambda^{a_1} \cdots \lambda^{a_{2i}})$$
Now note that
$$\lambda^{(\sum_{1 \leq k \leq 2i} a_k)/2}=\lambda^{(-\sum_{1 \leq k \leq 2i} a_k)/2}\lambda^{a_1} \cdots \lambda^{a_{2i}}$$
and similarly,\\
$$\lambda^{(\sum_{1 \leq k \leq 2i}a_k)/2}(\sum_{j\in \{1, \cdots, 2i\}}\lambda^{-a_j})=\lambda^{(-\sum_{1 \leq k \leq 2i}a_k)/2}(\sum_{j\in \{1, \cdots, 2i\}}\lambda^{a_1 \cdots a_{2i}}\lambda^{-a_j})$$
and so on. Thus, expanding the brackets out immediately shows that the first term on the top line cancels the last term on the bottom, the second term the second last, and so on, giving zero, as required.
\end{proof}
The idea will be to mimick the proof for cyclic $2-$groups $C_l$ in \cite{2} and Chapter 2, which was to say there is a surjective map with kernel of order $2l$ from the span of the eta invariants in $\R/\Z$ of all the cyclic lens spaces $L^n$, to the range in $\R/\Z$ of all the $L^{n-4}$. Induction and extra factors due to real representations then completed the proof. By naturality there will still be such a surjective map for the image under inclusion in the dihedral groups, and we will see that the kernel has the same order. By Bott periodicity it suffices to see this for $n=7,11$. These and more general calculations may also be found in \cite{cs}.
\begin{lem}
i)We have that $ko_7(BC_{2^{N+1}})=[2^{N+4}] \oplus [2^N]$, generated by the $ko-$fundamental classes of the lens spaces $L^7(1,1,1,1)$ and $L^7(1,1,1,3)$.\\
The map
$$[L^7(l;1,1,1,1)] \rightarrow (\eta(L^7)(l;1,1,1,1)(\rho_1-\rho_0),0);$$
$$[L^7(l;1,1,1,3)]\rightarrow (\eta(L^7)(l;1,1,1,3)(\rho_1-\rho_0),\eta(L^7)(l;1,1,1,3)(2\rho_2-2\rho_0))$$
is a homomorphism, which detects the entire group.\\
ii) Similarly, $ko_{11}(BC_{8})=[2^{8}] \oplus [2^{3}] \oplus [2]^2$, while for $N>2$ we have $ko_{11}(BC_{2^{N+1}})=[2^{N+6}] \oplus [2^{N+1}] \oplus [2^{N-1}] \oplus [2]$. In both cases, the eta invariants of the $11-$dimensional lens spaces $L^{11}(l;1,1,1,1,1,1), L^{11}(l;1,1,1,1,1,3)$ and $L^{11}(l;1,1,1,1,1,5)$ span a subspace of the form $[2^{N+6}] \oplus [2^{N+1}] \oplus [2^{N-1}]$ in $(\R/\Z)^3$ detected by the eta invariant homomorphisms $\eta(\rho_1-\rho_0)$, $\eta(\rho_2+\rho_{-2}-2\rho_0)$ and $\eta(\rho_5+\rho_{-5}-\rho_1-\rho_{-1}-\rho_4-\rho_{-4}+2\rho_0)$.
\end{lem}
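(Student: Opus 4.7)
The plan is to combine the order information from Proposition 2.1.4 with explicit Donnelly-type computations via Theorem 2.0.4 and Lemma 4.3.2, reducing the generating set by the relations of Lemma 2.1.5. For part (i), Proposition 2.1.4(i) gives $|ko_7(BC_l)|=(2l)^2=2^{2N+4}$, with the fundamental classes $[L^7(l;\vec a)]$ (all $a_j$ odd) spanning the group. First I would show that the two lens spaces $L^7(l;1,1,1,1)$ and $L^7(l;1,1,1,3)$ suffice, by iterating Lemma 2.1.5(b): each application rewrites $[L^7(l;b_1,b_2,b_3,a)]$ in terms of the chosen pair, modulo a representation-twist whose eta invariant lies in strictly lower filtration. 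Lemma 4.3.2(a) with $j=2, m=1$ then shows $\eta(L^7(l;1,1,1,1))(\rho_1-\rho_0)$ has order exactly $2^{N+4}$ in $\R/\Z$. For the complementary summand I would evaluate $\eta(L^7(l;1,1,1,3))(2\rho_2-2\rho_0)=\eta(L^7(l;1,1,1,3))(\rho_2+\rho_{-2}-2\rho_0)$ directly from Theorem 2.0.4, isolating the $\lambda=-1$ pole (which contributes a term of order $2^N$) and showing via the inductive bounding argument from the proof of Lemma 4.3.2(a) that the higher-order roots of unity contribute strictly smaller orders. The resulting $2\times 2$ eta matrix has image of order $2^{N+4}\cdot 2^N=2^{2N+4}=|ko_7(BC_l)|$, hence is an isomorphism; the rank-$2$ analogue of Lemma 2.2.2 then pins down the module structure as $[2^{N+4}]\oplus[2^N]$.

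For part (ii), I would run the same playbook with three generators and three detecting representations. Proposition 2.1.4(i) gives $|ko_{11}(BC_l)|=2(2l)^3=2^{3N+7}$, and Lemma 2.1.5(b) again reduces the generating set to the three lens spaces $L^{11}(l;1,1,1,1,1,a)$ with $a\in\{1,3,5\}$, up to an extra $[2]$ summand whose precise placement depends on whether $N=2$ or $N>2$. The three representations are chosen so that the matrix of eta invariants is lower triangular: $\rho_1-\rho_0$ gives order $2^{N+6}$ on the all-ones lens space (Lemma 4.3.2(a) with $j=3, m=1$); the real representation $\rho_2+\rho_{-2}-2\rho_0$ gives order $2^{N+1}$ on $L^{11}(l;1,1,1,1,1,3)$; and the composite $\rho_5+\rho_{-5}-\rho_1-\rho_{-1}-\rho_4-\rho_{-4}+2\rho_0$ is engineered so that the $\lambda=\pm 1$ contributions to the Donnelly sum cancel for the first two lens spaces while leaving residue of order exactly $2^{N-1}$ on $L^{11}(l;1,1,1,1,1,5)$. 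The product of diagonal orders is $2^{N+6}\cdot 2^{N+1}\cdot 2^{N-1}=2^{3N+6}$, exactly identifying the claimed subspace $[2^{N+6}]\oplus[2^{N+1}]\oplus[2^{N-1}]$ of $ko_{11}(BC_l)$. The remaining $[2]$ summand is invisible to eta invariants of real representations valued in $\R/\Z$ and must be detected separately, consistent with the statement that the image lies in a proper subgroup of the full $ko_{11}$.

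The hardest step is verifying the cancellation claimed for the third composite representation in part (ii). Showing that $\rho_5+\rho_{-5}-\rho_1-\rho_{-1}-\rho_4-\rho_{-4}+2\rho_0$ has vanishing eta invariant on $L^{11}(l;1,\ldots,1)$ and $L^{11}(l;1,1,1,1,1,3)$ while supplying a generator of the $[2^{N-1}]$ summand on $L^{11}(l;1,1,1,1,1,5)$ requires delicate bookkeeping of each $\lambda$-term in the Donnelly sum. The symmetric structure of the representation (it is essentially $\rho_5+\rho_{-5}-2\rho_0$ minus its leading Taylor contribution in $\rho_{\pm 1},\rho_{\pm 4}$) must exactly kill the dominant $\lambda=\pm 1$ pole residues on the first two lens spaces, leaving only terms of order at most $2^{N-2}$, while producing a single surviving higher-pole contribution of order $2^{N-1}$ on the third. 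Once this is checked, both parts of the lemma follow from the triangular matrix argument applied to the eta-invariant detection map together with the extension resolution provided by Lemma 2.2.2.
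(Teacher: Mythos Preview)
Your overall strategy (detect via eta invariants, compare against the known order from Proposition 2.1.4, then invoke Lemma 2.2.2) matches the paper, but there is a genuine gap in the key computational step. You claim that for $2\rho_2-2\rho_0$ the dominant contribution to the Donnelly sum comes from the $\lambda=-1$ pole; in fact at $\lambda=-1$ the trace $2(-1)^2-2$ vanishes, so that term contributes nothing. The leading contribution instead comes from $\lambda=\pm i$, and the paper works this out explicitly: one finds $\eta(L^7(1,1,1,1))(\rho_0-\rho_2)=A/2^{N+1}$ and $\eta(L^7(1,1,1,3))(\rho_0-\rho_2)=B/2^{N+1}$ with $A,B$ both \emph{odd}. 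In particular the eta matrix is \emph{not} lower triangular---both lens spaces have nonzero invariants for both representations---so your product-of-diagonal-orders argument does not apply as stated.

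What the paper actually does is compute all four entries and control the determinant by congruences: from the $\lambda=-1$ term one sees $\eta(L^7(1,1,1,a))(\rho_0-\rho_1)=X/2^{N+4},Y/2^{N+4}$ with $X,Y$ odd, and a further separation of the $\lambda=\pm i$ terms shows $X-Y\equiv 2\bmod 4$ while $A\equiv B\bmod 4$. Hence $XB-YA\equiv X-Y\equiv 2\bmod 4$, so the determinant $(XB-YA)/2^{2N+5}$ has order exactly $2^{2N+4}=|ko_7(BC_l)|$; row reduction then gives the triangular form appearing in the statement and the extension via Lemma 2.2.2. For part (ii) the paper proceeds the same way, but with the \emph{simpler} triple $(\rho_0-\rho_1,\rho_0-\rho_2,\rho_0-\rho_4)$ rather than the composite representations in the statement; your claimed vanishing of the third representation on the first two lens spaces is neither needed nor verified. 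The triangular form and the specific detecting representations in the lemma statement are the \emph{output} of row reduction on the full eta matrix, not an a priori input.
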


\begin{proof}
We prove the first part of the Lemma, and the second is analogous. We use the method of Lemma $4.3.2$ to calculate the orders of some eta invariants.\\
Indeed, separating the $\lambda=-1$ term again for each lens space, we have that:
$$\eta(L^7(1,1,1,1))(\rho_0-\rho_1)=+1/2^{N+4}+ x =X/2^{N+4}$$
$$\eta(L^7(1,1,1,3))(\rho_0-\rho_1)=-1/2^{N+4}+ y= Y/2^{N+4} $$
Here $x,y \in \R/\Z$ are terms of order no more that $2^{N+2}$ by the previous Lemma, so that $X,Y \in \Z$ are odd.\\
In particular, it follows that $\eta(L^7(1,1,1,1)-L^7(1,1,1,3))(\rho_0-\rho_1)$ has order $2^{N+3} \in \R/\Z$. Thus $X-Y \equiv 2 \mod 4$, so without loss of generality, we can assume $ X \equiv 1 \mod 4$ and $Y \equiv 3 \mod 4$.\\
Now we make an analogous calculation for $\rho_0-\rho_2$. In this case, the sum of the $\lambda= \pm i$ terms will have the greatest order, and we get
$$\eta(L^7(1,1,1,1))(\rho_0-\rho_2)=-1/2^{N+1}(2/(1-i)^4+2/(1+i)^4)+ a$$
$$=1/2^{N+1}(2/4+2/4)+ a=A/2^{N+1}$$

$$\eta(L^7(1,1,1,3))(\rho_0-\rho_2)=1/2^{N+1}(-2i/(1-i)^3(1+i)+2i/(1+i)^3(1-i))+ b$$
$$=1/2^{N+1}+ b=B/2^{N+1}$$
Thus in this case $\eta(L^7(1,1,1,1)-L^7(1,1,1,3))(\rho_0-\rho_2)$ has order at most $2^{N-1} \in \R/\Z$, meaning $A \equiv B \mod 4$, with $A,B$ odd.\\
We can now consider the resulting $2 \times 2$ matrix of eta invariants:

\begin{displaymath}
M=\left(\begin{array}{ccc}
\eta(L^7(1,1,1,1))(\rho_0-\rho_1) & \eta(L^7(1,1,1,1))(\rho_0-\rho_2) \\
\eta(L^7(1,1,1,3))(\rho_0-\rho_1) & \eta(L^7(1,1,1,3))(\rho_0-\rho_2) \\
\end{array} \right)
\end{displaymath}

\begin{displaymath}
=\left(\begin{array}{ccc}
X/2^{N+4} & A/2^{N+1} \\
Y/2^{N+4} & B/2^{N+1} \\
\end{array} \right)
\end{displaymath}
The determinant is $(XB-YA)/2^{2N+5}$ which has order $2^{2N+4} \in \R/\Z$ as required, since $XB-YA \equiv X-Y \mod 4 \equiv 2 \mod 4$. The result now follows since $(XB-YA)/2^{N+1}$ has order $2^N \in \R/\Z$. Thus we get: $$2^N((XB)\eta(L^7(1,1,1,1))(\rho_0-\rho_2)-(YA)\eta(L^7(1,1,1,3))(\rho_0-\rho_2))=0=$$
$$=2^N(\eta(L^7(1,1,1,3))(2\rho_0-2\rho_2))$$
and performing row operations on $M$ implies $ko_7(BC_{2^{N+1}})$ is generated by $[L^7(1,1,1,1)],[L^7(1,1,1,3)]$, with the relation $2^N(X[L^7(1,1,1,3)]-Y[L^7(1,1,1,1)])=0$. Both $[L^7(1,1,1,1)]$ and $[L^7(1,1,1,3)]$ have order $2^{N+4}$, so the conclusion follows from Lemma $2.2.2$.\\
The second part of the Lemma can be checked analogously, by calculating triples of eta invariants of the form
$$(\eta(L^{11})(\rho_0-\rho_1),\eta(L^{11})(\rho_0-\rho_2),\eta(L^{11})(\rho_0-\rho_4))$$
and then arguing in the same way as above (compare Lemma $2.1.5b$).
\end{proof}

The essence of the argument here is that $ko_{4j-1}(BC_l)$, with $l=2^{N+1}$, is spanned by $ko-$fundamental classes of cyclic lens spaces. By Lemma $4.3.2a)$ the $ko-$fundamental classes of all of the lens spaces $L^{4j-1}(l;(a_1, \cdots,a_{2j}))$ have the maximal order of $2^{N+2j}$. Thus generators for $ko_{4j-1}(BC_l)$ must be given by any fundamental class $[L^{4j-1}(l;(a_1, \cdots,a_{2j}))]$, together with some other classes that are formal differences of lens spaces. The orders of these formal differences are detected by the eta invariant of a suitable lens space $L$ with respect to some representation $\rho$ (compare Lemma $2.1.5b$), and $\rho$ can be chosen to restrict directly from $D_{2^{N+2}}$.\\

We can now use these results, along with chapter 2, to prove Proposition $4.3.1$. To give an idea of the method, we first consider $D_8$ as a separate example.
\begin{ex}
$D_8$
\end{ex}
We know $D_8$ has presentation $<\omega,s|\omega^4=s^2=1, s\omega s=\omega^3>$, and there are three one dimensional representations $\hat{\omega}, \hat{s}, \hat{\omega s}$ along with a natural two dimensional representation $\sigma$. We can consider $ko-$fundamental classes of real projective spaces via inclusion. Thus we take $\RP^{4j-1}_{s} \rightarrow B<s> \hookrightarrow BD_8$, and $\RP^{4j-1}_{\omega s} \rightarrow B<\omega s> \hookrightarrow BD_8$. We can also consider the lens spaces $L^{4j-1}(4;1, \cdots ,1,1)$ and $L^{4j-1}(4;1, \cdots ,1,3)$ that have natural maps into $B <\omega>$, which in turn includes into $BD_8$. The eta invariants can be read off directly from the calculations in the proof of Lemma $2.2.1$, just by restricting representations. We can define the following $4-$tuples of eta invariants:
$$\overrightarrow{\eta}(M):=(\eta(M)(1-\hat{\omega}),\eta(M)(1-\hat{s}),\eta(M)(1-\hat{\omega s}),\eta(M)(2-\hat{\sigma})+\eta(M)(1-\hat{\omega}))$$
Now from section $2.2$ we get
$$\overrightarrow{\eta}(\RP^{4j-1}_{s})=(2^{-2j},2^{-2j},0,0)$$
$$\overrightarrow{\eta}(\RP^{4j-1}_{\omega s})=(2^{-2j},0,2^{-2j},0)$$
$$\overrightarrow{\eta}(L^{4j-1}(4;1, \cdots ,1,1))=(0,(-1/2)^j,(-1/2)^j,2(1/2(-1/2)^{j+1}+(-1)^j/2^{2j+1}))$$
$$\overrightarrow{\eta}(L^{4j-1}(4;1, \cdots ,1,3))=(0,(-1/2)^j,(-1/2)^j,2(1/2(-1/2)^{j+1}+(-1)^{j+1}/2^{2j+1}))$$
Now note that $\overrightarrow{\eta}(L^{4j-1}(4;1, \cdots ,1,3))-\overrightarrow{\eta}(L^{4j-1}(4;1, \cdots ,1,1))=(0,0,0,1/2^{2j-1})$.\\
Now suppose $n=8m+3$. Then all the eta invariants take values in $\R/2\Z$ since all the representations are real. Then from above the order of the subgroup spanned in $(\R/2\Z)^4$ is bounded below by $2^{4m+2}$ times the order of the subgroup spanned in $(\R/2\Z)^3$ by the three vectors obtained by removing the last entry of $\overrightarrow{\eta}(M)$, with $M$ one of $\RP^{4j-1}_{s}, \RP^{4j-1}_{\omega s}$ or $L^{4j-1}(4;1, \cdots ,1,1)$. We display these as a $3 \times 3$ matrix \\

\begin{displaymath}
=\left(\begin{array}{ccc}
2^{-4m-2} & 2^{-4m-2} & 0 \\
2^{-4m-2} & 0 & 2^{-4m-2} \\
0 & 2^{-2m-1} & 2^{-2m-1} \\
\end{array} \right)
\end{displaymath}
Now the rows are pairwise independent, and $2^{4m+2}$ times the sum of the first two rows is $2^{2m+1}$ times the third, and equals $(0,1,1)$ which has order $2$ in $\R/2\Z$. Thus the order of the subgroup they span is the product of the orders of the individual vectors, divided by $2$, which equals $2^{4m+3}2^{4m+3}2^{2m+2}2^{-1}=2^{10m+7}$, which when combined with the $2^{4m+2}$ we already had, gives us $2^{14m+9}$, which is the same as the order of $ko_{8k+3}(BD_8)$ by Theorem $4.2.2$.\\

If $n=8m+7$ the eta invariants all lie in $\R/\Z$, and we deduce analogously that the order of the subgroup spanned in $(\R/\Z)^4$ is given by $2^{4m+3}$ times the corresponding three vectors, this time in $(\R/\Z)^3$. We again display these as a matrix:\\

\begin{displaymath}
=\left(\begin{array}{ccc}
2^{-4m-4} & 2^{-4m-4} & 0 \\
2^{-4m-4} & 0 & 2^{-4m-4} \\
0 & 2^{-2m-2} & 2^{-2m-2} \\
\end{array} \right)
\end{displaymath}
Again the rows are pairwise independent, and $2^{4m+3}$ times the sum of the first two rows is $2^{2m+1}$ times the third in $\R/\Z$, but the third row is given by $$(\eta(L^{8m+7}(4;1, \cdots ,1,1))(1-\hat{\omega})=0,\eta(L^{8m+7}(4;1, \cdots ,1,1))(1-\hat{s}),\eta(L^{8m+7}(4;1, \cdots ,1,1))(1-\hat{\omega s}))$$
Thus twice it is $(0,\eta(L^{8m+7}(4;1, \cdots ,1,1))(2-2\hat{s}),\eta(L^{8m+7}(4;1, \cdots ,1,1))(2-2\hat{\omega s}))$, and $2\hat{s}$ and $2\hat{\omega s}$ are both quaternion representations which restrict to twice the real representation of $C_4=<\omega>$, which is also a quaternion representation. Thus $\eta(L^{8m+7}(4;1, \cdots ,1,1))(1-\hat{s})$ has the same order as $\eta(L^{8m+7}(4;1, \cdots ,1,1))(2-2\hat{s})$, since the latter takes values in $\R/2\Z$ in dimensions $7 \mod 8$. It then follows that the order of the subgroup spanned is simply the product of the orders of the three vectors, and thus multiplying by the $2^{4m+3}$ factor we already had gives a total of $2^{4m+3}2^{4m+4}2^{4m+4}2^{2m+1}=2^{14m+13}$ which is the same as the order of $ko_{8k+7}(BD_8)$ by Theorem $4.2.2$.\\

The linear relation we found here will in fact be the same for all the dihedral groups. In this example, we exploited the explicit calculations we had already made for $C_4$. Note also that the $ko-$fundamental classes of the real projective spaces are completely detected by the eta invariants with respect to virtual representations $1-\rho$, where $\rho$ is a one dimensional representation. This is true for all dihedral groups, and it follows directly that $2 ko_*(\RP^{\infty})$ is a summand of $ko_{4k-1}(BD_{2^{N+2}})$ for all $n$ (compare \cite{bay}, \cite{mp}). Indeed, it is easiest to split this part off, and deal with the inclusions of lens spaces from the cyclic $C_{2^{N+1}}$ subgroup, before combining the two.\\

For the higher groups, we will instead argue similarly to Chapter 2 and the proof in \cite{2} for cyclic groups. That is, we know there is a surjective map $\delta$ from ${\L}_n(BC_l)$ to ${\L}_{n-4}(BC_l)$, where ${\L}_*$ denotes the span of eta invariants in $\R/\Z$ of lens spaces. The same is thus true for their images ${\L}_*(BD_{2l})$, and an eta invariant calculation tells us that the order of the kernel is the same as the order of the kernel of $\delta$. We can then induct and use real representations to see which eta invariants lie in $\R/2\Z$. This will give us the order of the subgroup spanned by the $ko-$fundamental classes of cyclic lens spaces, and we then check for relations with the $ko-$fundamental classes of real projective spaces. Combining everything realises all of $ko_{4i-1}(BD_{2^{N+2}})$.\\

We can now prove Proposition $4.3.1$.\\

\begin{proof} (Of Proposition $4.3.1$)\\
Let $l=2^{N+1}$. We recall from section $2.1$ that ${\L}_n(BC_l)$ was the subgroup spanned in $(\R/\Z)^{l-1}$ by the eta invariants of cyclic lens spaces, and we analogously let ${\L}_n(BD_{2^{N+2}}) \subset (\R/\Z)^{2^N+2}$ be the subgroup spanned by the eta invariants, with respect to representations of the dihedral group, of these cyclic lens spaces, viewed as $ko-$fundamental classes via the inclusion from the cyclic subgroup $C_{2^{N+1}}$. Then using naturality, we have the following commutative diagram. The horizontal maps are surjective by definition, and the vertical maps are surjective because of the proof for the cyclic groups $C_l$ (see chapter 2 and \cite{2}).
$$
\xymatrix{
{\L}_n(BC_l) \ar[r] \ar [d]^{\delta} & {\L}_n(BD_{2^{N+2}}) \ar[d]^{\overline{\delta}}\\
{\L}_{n-4}(BC_l) \ar[r] & {\L}_{n-4}(BD_{2^{N+2}})
}
$$
Note that for the cyclic groups, the map $\delta$ was induced by defining
$$\delta(M)=(\eta(M)(\sigma(\rho_1-\rho_0)), \cdots ,\eta(M)(\sigma(\rho_{l-1}-\rho_0)))$$
where $M$ is a cyclic lens space and $\sigma=\rho_{-3}(\rho_3-\rho_0)^2=\rho_{-3}+\rho_{3}-2\rho_{0}$. Thus the representation $\sigma_3-2\rho_0$ of the dihedral group $D_{2^{N+2}}$ restricts to $\sigma$, and so $\overline{\delta}$ can be defined by directly restricting representations. Thus all our calculations will be of eta invariants of cyclic lens spaces with respect to restricted representations.\\
We know that $\delta$ has kernel of order at least $2l$ (see \cite{2} and Lemma 2.1.6), and we wish to see the same for $\overline{\delta}$. Multiplying up by the Bott element means it suffices to see this for $n=7,11$.\\
The idea is to use a combination of the above method, used in \cite{2}, and direct restriction. Note that since any representation $\rho \in R_0(D)$ of the dihedral group is real, for any of our cyclic lens spaces $L^{8k+3}$ we have that $\eta(L^{8k+3})(\rho) \in \R/2\Z$. Further for $\rho=\sigma_i$ restricting representations directly gives
$$\eta(L^{8k+3})(2-\sigma_i)=\eta(L^{8k+3})(1-\rho_i)+\eta(L^{8k+3})(1-\rho_{-i})=2\eta(L^{8k+3})(1-\rho_i)$$
Thus the extra factor of two is cancelled out by having a real representation, and thus the image under inclusion in $ko_{8k+3}(BD_{2^{N+2}})$ is all of $ko_{8k+3}(BC_{2^{N+1}})$, and as we'll see below, combining with inclusion from real projective spaces coming from $\Z_2$ subgroups spans all of $ko_{8k+3}(BD_{2^{N+2}})$.\\
However, in dimensions $7 \mod 8$, it is easiest to mimick the cyclic group argument, and try to find the order of the kernel of $\overline{\delta}:{\L}_n(BD_{2^{N+2}})\rightarrow {\L}_{n-4}(BD_{2^{N+2}})$. Starting in dimension $3$, we know from Lemma $4.3.2$ that, letting $i$ denote inclusion from the cyclic subgroup of order $l=2^{N+1}$,
$$\eta(i_*(L^{3}(l,(1, \cdots,1))))(\sigma_1-\rho_0)=
\eta(L^{3}(l,(1, \cdots,1)))(i^*(\sigma_1-\rho_0))$$
$$=\eta(L^{3}(l,(1, \cdots,1)))(\rho_1+\rho_{l-1}-2\rho_0)=
2\eta(L^{3}(l,(1, \cdots,1)))(\rho_1-\rho_0)$$
which has order $l=2l/2=2^{N+1} \in \R/\Z$. Note also that $\delta$ is only used to detect the orders in $\R/\Z$, and if $n \equiv 3 \mod 8$ and we have a real representation, or if $n \equiv 7 \mod 8$ and the representation is quaternion, then the eta invariant takes values in $\R/2\Z$. This will be taken this into consideration later.\\
Further, Lemma $4.3.3$ shows that $ko_{7}(BC_{2^{N+1}})=[2^{N+4}] \oplus [2^{N}]$, and may be spanned by the $ko-$fundamental classes of the manifolds $L^7(l;1,1,1,1), L^7(l;1,1,1,3)$. In particular, the homomorphism
$$[L^7(l;1,1,1,1)] \rightarrow (\eta(L^7)(l;1,1,1,1)(\rho_1-\rho_0),0);$$
$$[L^7(l;1,1,1,3)]\rightarrow (\eta(L^7)(l;1,1,1,3)(\rho_1-\rho_0)\eta(L^7)(l;1,1,1,3)(2\rho_2-2\rho_0))$$
$$=(0,\eta(L^7)(l;1,1,1,3)(\rho_2 +\rho_{-2}-2\rho_0))$$
detects the entire group. The case of $D_8$ is slightly exceptional since $\rho_2$ is the real representation of $C_4$, and $\hat{s}$ and $\hat{\omega s}$ both restrict to $\rho_{2^N}$ which is just $\rho_2$ here, while for the higher groups $\sigma_2$ restricts to $\rho_2 \oplus \rho_{-2}$, and $\sigma_1$ always restricts to $\rho_1 \oplus \rho_1$, meaning in particular that we span $[2^{N+3}] \oplus [2^{N}] \subset ko_{8k+7}(BD_{2^{N+2}})$, which has order $2^{2N+3}$. Thus the order of the kernel of $\overline{\delta}:{\L}_{8k+7}(BD_{2^{N+2}})\rightarrow $${\L}_{8k+3}(BD_{2^{N+2}})$ is at least $2^{2N+3}2^{-N-1}=2^{N+2}=2l$.\\
The same argument goes through in dimension $11$. Calculations in \cite{3} show $ko_{11}(BC_4)=[2^7] \oplus [2^{3}] $, and since this maps monomorphically into $ko_{11}(BD _8)$, a subspace of order at least $2^8$ is spanned in $(\R/\Z)^2$. The order is divided by four only with respect to eta invariants in $(\R/\Z)^2$ here, and the map induced by inclusion is still monomorphic in $ko_*$, because the remaining two factors of two come from the representations of the dihedral group being real, and thus the eta invariants are actually in $(\R/2\Z)^2$ . Thus the map $\overline{\delta}$ has kernel of order at least $8$.\\
For the cases $N>1$ we still know from \cite{2} that that $|ko_{11}(BC_{2^{N+1}})|=2^{(3N+7)}$. Further, we can use the second part of Lemma $4.3.3$ to deduce that the eta invariants of the $ko-$fundamental classes of $L^{11}(l;1,1,1,1,1,1)$, $L^{11}(l;1,1,1,1,1,3)$ and $L^{11}(l;1,1,1,1,1,5)$ span a subspace of the form $[2^{N+5}] \oplus [2^{N+1}] \oplus [2^{N-1}]$ in $(\R/\Z)^3$, detected by the eta invariant homomorphisms $\eta(\rho_1-\rho_0)$, $\eta(\rho_2+\rho_{-2}-2\rho_0)$ and $\eta(\rho_5+\rho_{-5}-\rho_1-\rho_{-1}-\rho_4-\rho_{-4}+2\rho_0)$. Since $\sigma_i$ restricts to $\rho_i \oplus \rho_{-i}$, it follows that a subspace of order at least $2^{3N+7}2^{-2}$ is spanned in ${\L}_{11}(BD_{2^{N+2}})$. Thus we again have that the kernel of $\overline{\delta}$ has order at least $2^{3N+5}2^{-2N-3}=2^{N+2}=2l$.\\

Mimicking the cyclic group argument further thus means we have realized a subspace of order at least $l(2l)^{2m+1}$ in $ko_{8m+7}(BD_{2^{N+2}})$, and $(2l)^{2m+2}$ in $ko_{8m+3}(BD_{2^{N+2}})$, where we now use the fact that we have real representations.\\
We now combine this with the real projective spaces $\RP^{4j+3}_{s}, \RP^{4j+3}_{\omega s}$ we already have via inclusions from $\Z_2$ subgroups.\\

For $\RP^{4m+3}_{s}, \RP^{4m+3}_{\omega s}$, it is easy to see directly that
$$\eta(\RP^{4m+3}_{s})(\hat{\omega s}-\rho_0)=\eta(\RP^{4m+3}_{s})(\hat{\omega }-\rho_0)=2^{-2j-2};\eta(\RP^{4m+3}_{s})(\hat{s}-\rho_0)=0;$$
$$\eta(\RP^{4m+3}_{\omega s})(\hat{s}-\rho_0)=\eta(\RP^{4m+3}_{s})(\hat{\omega }-\rho_0)=2^{-2j-2};\eta(\RP^{4m+3}_{\omega s})(\hat{\omega s}-\rho_0)=0$$
It follows that these two manifolds span a subspace of the form $[2^{4m+3}]^2$ in $ko_{8m+3}(BD_{2^{N+2}})$, and of the form $[2^{4m+4}]^2$ in $ko_{8m+7}(BD_{2^{N+2}})$.
Further, performing elementary row operations tells us that any linear dependence in $\R/\Z$ with the ${\L}_n(BD_{2^{N+2}})$ coming from the cyclic subgroup must come from what is detected by the eta invariant with respect to one dimensional representations $\hat{\omega s},\hat{s},\hat{\omega }$; that is, between the three vectors $(\eta(\RP^{4m+3}_{\omega s})(\rho-\rho_0),\eta(\RP^{4j+3}_{s})(\rho-\rho_0),\eta(L^n(BC_l)(\rho-\rho_0))$, with $\rho \in \{\hat{\omega s},\hat{s},\hat{\omega }\}$, where by $\eta(L^n(BC_l)(\rho-\rho_0)$, we mean the set of eta invariants with respect to these representations of any of the $C_l$ lens spaces $L^n$ that we use. This means there can be at most one relation of linear dependence, and it is easy to now see that there will be a linear relation between the images of the $\RP^{8j+3}_{s}, \RP^{8j+3}_{\omega s}$ and what is detected by taking the eta invariant with respect to the \emph{real} representation $\rho_{l/2}$ of the cyclic subgroup (the latter is non-trivial by Lemma $2.1.6$):\\
Start in $n=3$. Then $ko_3(BC_l)=[2l] \oplus [2]$, generated by $L(l;1,1)$ (or any other lens space) and $Y^3=L(l;1,1)-3L(l;1,3)$. Then
$$(\eta(\RP^{3}_{\omega s})(\hat{\omega}-\rho_0),\eta(\RP^{3}_{s})(\hat{\omega}-\rho_0),\eta(Y^3)(\hat{\omega}-\rho_0))
=(1/4,1/4,0) \in (\R/2\Z)^3$$
$$(\eta(\RP^{3}_{\omega s})(\hat{\omega s}-\rho_0),\eta(\RP^{3}_{s})(\hat{\omega s}-\rho_0),\eta(Y^3)(\hat{\omega s}-\rho_0))
=(0,1/4,1) \in (\R/2\Z)^3$$
$$(\eta(\RP^{3}_{\omega s})(\hat{s}-\rho_0),\eta(\RP^{3}_{s})(\hat{s}-\rho_0),\eta(Y^3)(\hat{s}-\rho_0))
=(1/4,0,1) \in (\R/2\Z)^3$$
Thus we have three vectors of order $8$ in $(\R/2\Z)^3$, and there is one relation, namely $(1/4,0,1)+(0,1/4,1)=(1/4,1/4,0)$. Note that $\hat{\omega s}$ and $\hat{s}$ both restrict to the real representation on $C_l$. Further, by Lemma $2.1.3$ and Lemma $2.1.6$ we can multiply up by the Bott element to deduce that we will always have such a relation for every $n=8m+3$.\\
So, in total, we have realized a subspace of $ko_{8m+3}(BD_{2^{N+2}})$ of order at least $2(2^{N+2})^{2m+1}(2^{4m+3})^2 2^{-1}$ by positive scalar curvature manifolds. Thus the exponent of two is $1+(N+2)(2m+1)+8m+6-1=(2N+12)m+N+8$ as required.\\
For $ko_{8m+7}(BD_{2^{N+2}})$, we have a subspace of order $2^{N+1}(2^{N+2})(2m+1)(2^{4m+4})^2 2^{-1}$, and calculating the exponent of two again gives $N+1+(N+2)(2m+1)+8m+8-1=((2N+12)(m+1)-1)-1$ meaning we are a factor of two short.\\
However, the relation we found just by restricting to the real representation isn't actually present in $8m+7$, just like in Example $4.3.4$ for $D_8$. The reason for this is that for any lens space $L^7$ we have
$$\eta(L^7)(i^*(\hat{\omega s}-\rho_0))+\eta(L^7)(i^*(\hat{s}-\rho_0))=\eta(L^7)(2\rho_{l/2}-2\rho_0)=\eta(L^7)(i^*(2\hat{\omega s}-2\rho_0))$$
and, as a sum of a one dimensional irreducible real representation with itself, $2\hat{\omega s}$ is in fact quaternion, as is $2\rho_{l/2}$. So, since we are in dimensions $7 \mod 8$, we have that $\eta(L^7)(i^*(\hat{\omega s}-\rho_0)) \in \R/\Z$ has the same order as $\eta(L^7)(i^*(2\hat{\omega s}-2\rho_0)) \in \R/2\Z$. The relation in $8m+3$ was essentially that we could halve the order of whatever the real representation detected for $C_l$, and this observation shows that we can not do this in $n=8m+7$.
\end{proof}
We have from Theorem $4.2.1$ that the order $|ko_{8k+3}(BD_{2^{N+2}})|=2^{(2N+12)k+N+8}=|[2^{4k+3}]^2\oplus ko_{8k+3}(BC_l)|2^{-1}$, and similarly for $8k+7$. Indeed, $ko_{8k+3}(BD_{2^{N+2}})=2^{(2N+12)k+N+8}=[2^{4k+3}]^2 + ko_{8k+3}(BC_l)$, where the sum is not direct, and the the $[2^{4k+3}]^2$ comes from the two projective spaces. Thus we only need to understand where this factor of two is lost in order to understand the exact structure of the groups, at least in terms of $ko_*(BC_l)$.\\
Indeed, in $n=8k+3$, we saw above that the linear relation can be detected by comparing the eta invariants with respect to the real representation, of real projective and cyclic lens spaces. Further by Proposition $4.3.2$ we see $\eta(i_*(L^{4m-1}(l;1, \cdots ,1)))(\sigma_1-2\rho_0)=\eta(L^{4m-1}(l;1, \cdots ,1))(\rho_1 +\rho_{-1}-2\rho_0)$ has order $2^{N+2m-1} \in \R/\Z$, but because we are in dimensions $3 \mod 8$ and $(\rho_1 +\rho_{-1}-2\rho_0)$ is real, we can work in $\R/2\Z$, so that the eta invariant does indeed have order $2^{N+2m}$. It follows that $ko_{8k+3}(BD_{2^{N+2}})=[2^{4k+3}]^2 \oplus ko_{8k+3}(BC_{2^{N+1}})/\Z_2$, where the order of the smallest cyclic summand of $ko_{8k+3}(BC_{2^{N+1}})$ is halved, due to the relation with the eta invariants of real projective spaces we described.\\
By contrast, in dimensions $7 \mod 8$, so the eta invariant $\eta(i_*(L^{4m-1}(l;1, \cdots ,1)))(\sigma_1-2\rho_0)=\eta(L^{4m-1}(l;1, \cdots ,1))(\rho_1 +\rho_{-1}-2\rho_0)$ has order $2^{N+2m-1} \in \R/\Z$ here has order only $2^{N+2m-1}$ since we stay in $\R/\Z$, meaning it is here that the factor of two must be lost.\\
Thus $ko_{8k+7}(BD_{2^{N+2}})=[2^{4k+4}]^2 \oplus ko_{8k+7}(BC_{2^{N+1}})/\Z_2$, where the order of the largest cyclic summand of $ko_{8k+3}(BC_{2^{N+1}})$, which is $[2^{N+4k}]$, is halved.\\

An easy example of this can be seen in the case of $D_8$. Here the cyclic group in question is $C_4$, and $ko_{4k+3}(BC_4)$ has two generators, which we may choose as $L^{4k+3}(4;1, \cdots,1)$ and a formal difference $a(L^{4k+3}(4;1, \cdots,1))-b(L^{4k+3}(4;1,1, \cdots,1,3))$, with $a$ and $b$ odd.\\
It is known, from \cite{3}, example $7.3.3$, and Chapter 2, that $ko_{8m+3}(BC_4)=[2^{4m+3}] \oplus [2^{2m+1}]$, and $ko_{8m+7}(BC_4)=[2^{4m+5}] \oplus [2^{2m+1}]$, where the $[L^{4k+3}(4;1, \cdots,1)]$ class has the larger order. Thus from the discussion above we may deduce that in dimensions $3 \mod 8$, we lose a factor of two from the $[2^{2m+1}]$, from which we deduce
$$ko_{8m+3}(BD_8) \cong [2^{4m+3}]^2 \oplus [2^{4m+3}] \oplus [2^{2m}]; $$
While in $7 \mod 8$ it is the other way, and we get
$$ko_{8m+7}(BD_8) \cong [2^{4m+4}]^2 \oplus [2^{4m+4}] \oplus [2^{2m+1}] $$
and similarly for the higher groups.\\

We also note here that there is a stable $2-$local decomposition in \cite{mp}:
$$BD_{2^{N+2}} \cong 2\RP^{\infty} \vee 2L(2) \vee BPSL_{2}(q)$$
where $L(2)$  is a certain spectrum whose $2-$local $ko-$homology groups are entirely in even degrees, and $PSL_{2}(q)$ is the $2 \times 2$ projective special linear group over the field with $q$ elements, where $q=q(N)$ is an odd prime power, chosen so that $D_{2^{N+2}}$ is the Sylow 2-subgroup of $BPSL_2(q)$. Thus, as we've seen using our eta invariant calculations, we can always split off two $ko_*(\RP^{\infty})$ summands. It is then a matter of checking that $2-$locally, $ko-$fundamental classes of cyclic lens spaces span all of $ko_{4i-1}(BPSL_{2}(q))$, which our calculations of course imply. However, this could also be done by restricting representations directy from $PSL_{2}(q)$ and using the eta invariant.

\section{The classes detected in ordinary homology}
Here we realize the second column of the local cohomology spectral sequence, along with a single extra class in the first column. These are concentrated in even degrees, and is all annihilated by 2. The method is to realize sufficiently many classes in the ordinary $\Z_2$ homology of the dihedral groups. All cohomology and homology is with $\Z_2$ coefficients here.\\
Note that from the previous chapter, we know the conjecture is true for the Klein four group $V(2)=D_4$, and further, we know $H_{2n}^{+}(BV(2))$ explicitly. Thus the first step will be to consider how big a subspace is spanned by inclusion. We will see that this will almost suffice, except in dimensions $8k$ where an extra geometric construction is needed.\\
So consider the higher dihedral groups, starting with the group of order $8$. These all have the same cohomology ring, namely $\Z_{2}[\alpha,\beta,\delta]/\alpha \beta +\beta^{2}$, where $\delta$ is in degree $2$, for which we will always use the basis $\{\alpha^i \delta^j,\beta^i \delta^j\}$, with $i,j \geq 0$. We denote by $\xi(x)$ the dual element in homology with respect to this basis, for $x\in H^{*}(BD_{2^{N+2}})$ . The number of classes we must detect in $H_{*}(BD_8)$ in even dimensions is known from the first section to be the same as the number we must detect in $H_*(BV(2))$. The following Proposition calculates the image under inclusion from Klein 4 subgroups, and verifies the conjecture for $D_8$ in all dimensions except $0 \mod 8$.
\begin{prop}
The fundamental classes of spin projective bundles over projective spaces, mapped via inclusion from Klein 4 subgroups, span subspaces of dimension $2^{k}$ for $n=4k+2$, $2^{k+1}$ for $n=8k+4$, and $2^{k}$ for $n=8k$, in $H_{*}(BD_8)$.
\end{prop}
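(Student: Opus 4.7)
The plan is to compute the image of $H_*^+(BV(2))$ in $H_*(BD_8;\Z_2)$ under the two inclusions $\iota',\iota'' : V(2)\hookrightarrow D_8$ coming from the maximal Klein-four subgroups $D' = \langle s, \omega^2\rangle$ and $D'' = \langle s\omega, \omega^2\rangle$, and to check that the span has the claimed dimension in each of the three cases. This will use Proposition 3.4.1, which already gives explicit generators of $H_*^+(BV(2))$, and the knowledge from Section 4.1 that $H^*(BD_8;\Z_2)$ has all its generators arising as Stiefel--Whitney classes of representations.

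The first step is to determine the restriction maps $(\iota')^*,(\iota'')^* : H^*(BD_8;\Z_2) \to H^*(BV(2);\Z_2)$. Since $\alpha = w_1(\sigma_1)$, $\delta = w_2(\sigma_1)$, and $\beta = w_1(\hat s)$, this reduces to decomposing the natural representations $\sigma_1|_{D'}$ and $\sigma_1|_{D''}$ as sums of one-dimensional characters (which is possible because $V(2)$ is abelian) and applying the Whitney product formula. One then reads off $\alpha, \delta$ as explicit polynomials in the degree-one generators $x,y$ of $H^*(BV(2);\Z_2)$; in particular $\beta|_{D'} = 0$, since $\hat s$ is trivial on $D'$, while $\beta|_{D''} = \alpha|_{D''}$. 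Dualising in the monomial basis $\{\alpha^i\delta^j,\, \alpha^i\beta\delta^j\}$ of $H^*(BD_8)$ (obtained by using $\beta^2 = \alpha\beta$ to reduce all $\beta$-powers) then produces explicit formulas for $\iota'_*(\xi_{(a,b)})$ and $\iota''_*(\xi_{(a,b)})$ in terms of the dual monomial basis of $H_*(BD_8)$, with coefficients that are binomial coefficients mod $2$ (computable via Lucas' theorem).

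The second step is to feed into these formulas the generators of $H_*^+(BV(2))$ identified in Proposition 3.4.1: in dimension $4k+2$ the product classes $\xi_{(a,n-a)}$ with $a \equiv 3 \pmod 4$, and in dimension $4k$ the classes $\xi_{(n-1,1)}, \xi_{(1,n-1)}$ together with the bundle classes $M_{(a,b)}$. Assembling the images under both $\iota'_*$ and $\iota''_*$ into a matrix in the monomial dual basis, one reads off the rank in each of the three dimensions $n=4k+2$, $n=8k+4$, $n=8k$, and compares to $2^k$, $2^{k+1}$, $2^k$ respectively.

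The main obstacle will be the last step: many of the pushed-forward classes coincide or become linearly dependent in $H_*(BD_8)$, because restriction sends distinct $V(2)$-monomials to common sums involving $\delta$ and the $\iota'_*$, $\iota''_*$ images overlap substantially. Controlling the combinatorics of which mod-$2$ binomial coefficients survive is the hard part, and it is most delicate in dimension $n=8k$, where the bundle classes $M_{(a,b)}$ in $H_*^+(BV(2))$ already push forward to sums of several dual monomials rather than a single one; this is also the reason why the $n \equiv 0 \pmod 8$ case is not settled by inclusion alone and requires the additional geometric construction promised later in the section.
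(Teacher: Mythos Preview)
Your plan is essentially the paper's own approach: compute the two restriction maps $H^*(BD_8)\to H^*(BV(2))$ by restricting the representations whose Stiefel--Whitney classes generate the cohomology (so that $\alpha\mapsto p$, $\delta\mapsto q(p+q)$, and $\beta\mapsto 0$ on one copy but $\beta\mapsto p$ on the other), dualise in the monomial basis, feed in the explicit generators of $H_*^+(BV(2))$ from Proposition~3.4.1 (Lemma~3.4.2), and count. The paper carries this out and, exactly as you anticipate, the delicate step is a mod-$2$ binomial-coefficient argument showing that no class of the form $\xi(\alpha^i\delta^{4j+3})$ can arise in dimension $4k$, which is what forces the count in dimension $8k$ down to $k$ rather than $k+1$.
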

\begin{proof}
 There are two inclusions $V(2), V(2)'\hookrightarrow D_{8}$, so the first step is to try and see how many classes we get from the induced maps $ko_{*}(BV(2)) \rightarrow ko_{*}(BD_8)$ and $ko_{*}(BV(2)') \rightarrow ko_{*}(BD_8)$. We again detect these classes in ordinary homology, and this is done by restricting representations to calculate the induced restriction maps $H^{*}(BD_{8}) \rightarrow H^{*}(BV(2))$ respectively $V(2)'$, and then considering the dual maps in homology.\\
To this end we denote by $\omega \in D_{8}$ the rotation by $\pi/2$, and $s,s',t$, and $t'$ the reflections through the lines $y=0,x=0,y=x$, and $y=-x$ respectively. The two Klein four subgroups are $V(2)=<s,s'>$ and $V(2)'=<t,t'>$. Then it is known that $H^{*}(BD_{8},\Z_{2})=\Z_{2}[\alpha,\beta,\delta]/\alpha \beta +\beta^{2}$, where $\alpha=w_{1}(\hat{\omega}),\beta=w_{1}(\hat{s}),\delta=w_{2}(\hat{\sigma})$. Here $\hat{ }$ denotes the inflated representation, and $\sigma$ is the unique two dimensional representation. Further we have $H^{*}(BV(2))=\Z_{2}[p,q]$, with $p=w_{1}(\hat{(\omega)^{2}}),q=w_{1}(\hat{s})$, and $H^{*}(BV(2)')=\Z_{2}[p',q']$, with $p'=w_{1}(\hat{(\omega)^{2}}),q'=w_{1}(\hat{t})$. Restricting representations then gives us the induced maps in cohomology. The map $H^{*}(BD_{8})\rightarrow H^{*}(BV(2))$ is determined by
$$ \alpha \mapsto p, \beta \mapsto 0, \delta \mapsto q(p+q),$$
and similarly for $V(2)'$, we get
$$ \alpha \mapsto p', \beta \mapsto p', \delta \mapsto q'(p'+q')$$
 So if $A=\Z_{2}[\alpha,\delta]$ we deduce $H^{*}(BV(2))=A \oplus qA$ and similarly for $V(2)'$.
 $$H^{*}(BD_{8},\Z_{2}) \rightarrow A \rightarrowtail H^{*}(BV(2))=A \oplus qA $$
 The plan now is to dualise, and choose dual bases in homology and compute where the classes in $H_{*}^{+}(BV(2)$ ( which we know explicitly) map to.
 $$H_{*}(BD_{8},\Z_{2}) \leftarrow A^{\vee} \twoheadleftarrow H_{*}(BV(2))=A^{\vee} \oplus qA^{\vee} $$
 We now consider $\xi_{(a,b)} \in H_{n}(BV(2))$. Then using this decomposition $\xi_{(a,b)} \mapsto \Sigma \xi(k)$ where $k \in H^{*}(BD_{8})$ and $k \mapsto p^{a}q^{b} + \cdots $, where $\xi$ again denotes the dual basis. This means that $p^aq^b$ occurs as a non-zero summand in the monomial basis decomposition of the image of the cohomology class $k$ under restriction. Notice that the situation for $A'$ is almost the same, except that here both $\alpha, \beta \rightarrow p'$, so whenever we get a $\xi(\alpha^{i}\delta^{j})$ summand from $A$, we get $\xi(\alpha^{i}\delta^{j})+\xi(\beta^{i}\delta^{j})$ here.\\
We start with $n=4k+2$. Then, using the dual basis to the monomial basis $H_{n}^{+}(BV(2))$ is $k-$ dimensional generated by $\xi_{(a,b)}$, the images of $\RP^{a} \times \RP^{b}$, with $a,b\equiv 3 \mod 4$.  We see that $\alpha^{4j}\delta^{a} \rightarrow p^{4k}q^{a}(p+q)^{a}=p^{4k+a}q^{a} + \cdots$ so that inlcusion from $V(2)$ gives $\xi_{(a+4j,a)} \rightarrow \xi(\alpha^{4j} \delta^{a})$, while inclusion from $V(2)'$ gives $\xi_{(a+4j,a)} \rightarrow \xi(\alpha^{4j} \delta^{a})+\xi(\beta^{4j} \delta^{a})$. Since we start at $a=3$, adding these up gives us all the $k$ required classes in $H_{*}^{+}(BD_{8})$.\\

For $n=4k$ $H_{n}^{+}(BV(2))$ is $(k+1)$ dimensional generated by $\xi_{(4k-1,1)},\xi_{(1,4k-1)}$ and by all the $\xi_{(a,n-a)} +\xi_{(a-2,n-a+2)}$, with $5 \leq a \equiv 1 \mod 4$. Recall from Lemma  $3.4.2$ that these classes are a basis for the images of fundamental classes of $\RP^{n-1} \times \RP^1$, along with $M_{(a,b)}=\RP(2L_{0} \oplus (n-1-a)\varepsilon \rightarrow \RP^{a})$, with $5 \leq a \equiv 1 \mod 4$ and $b=n-a+2$. Dualising again, we check that the classes we obtain are exactly all those of the form $\xi(\alpha^{n-8i-2} \delta^{4i+1}),\xi(\beta^{n-8i-2} \delta^{4i+1})$, with $0 \leq i \leq (n-2)/8$. Adding these gives the necessary $k+1$ if $n \equiv 4 \mod 8$, but only $k$ if $n \equiv 0 \mod 8$. $(*)$\\
Indeed, $\alpha^{n-2} \delta$ restricts to $p^{n-1}q + \cdots $ on $V(2)$ so from the product $\RP^{n-1} \times \RP^{1}$, we get the class $\xi(\alpha^{n-2} \delta)$ and similarly  $\xi(\alpha^{n-2} \delta)+\xi(\beta^{n-2} \delta)$ from restriction to $V(2)'$. Recall from section $3.4$ that $[M_{(a,b)}]$ gives us the class $\xi_{(a,n-a)} +\xi_{(a-2,n-a+2)}$, so it will be mapped to $\Sigma \xi(k)$, where the sum is over all elements $k \in H^{*}(BD_8)$ that map under restriction to a class $x$, whose monomial basis decomposition is of the form $p^{a}q^{n-a}+ \cdots$ or $p^{a-2}q^{n-a+2}+\cdots$, but not of the form $p^{a}q^{n-a}+ p^{a-2}q^{n-a+2}+\cdots$ . This means that exactly one of $p^{a}q^{n-a}$ and $p^{a-2}q^{n-a+2}$ occurs as a non-zero summand in the monomial basis decomposition of the image of $k$. Note that when $n=12$, under restriction to $V(2)$ we get that $\alpha^{2}\delta^{5} \rightarrow p^2q^{10}+p^3q^9+p^6q^6+p^7q^5$, and the same for $\beta^{2}\delta^{5}$ under restrction to $V(2)'$, so that when we dualise by the above method $[M_{(9,5)}]$ (and $[M_{(5,9)}]$) map to $\xi(\alpha^{2}\delta^{5})$ and $\xi(\beta^{2}\delta^{5})$. We now observe that if we have realized $\xi(\alpha^{i}\delta^{j}) \in H_{*}^{+}(BD_{8})$ this way, then we also realize $\xi(\alpha^{i+4k}\delta^{j}) \in H_{*}^{+}(BD_{8})$, because if $j=1$ it is just a product with a higher dimensional projective space, and otherwise the original class was realized by some $M_{(a,b)}$, so that $\xi(\alpha^{i+4k}\delta^{j})$ will be realized by one of $M_{(a+4,b)}$ or $M_{(a,b+4)}$. Similarly, if we have $\xi(\alpha^{2}\delta^{j}) \in H_{*}^{+}(BD_{8})$ then we also have $\xi(\alpha^{2}\delta^{j+4k})$. The $j=k=1$ case is the $n=12$ scenario done above, so just as before, if the original class was realized by some $M_{(a,b)}$, then $\xi(\alpha^{2}\delta^{j+4k})$ will be realized by one of $M_{(a+8,b)}$ or $M_{(a,b+8)}$.\\

So, to fully verify $(*)$, we claim we can have no term of the form $\xi(\alpha^{i}\delta^{4j+3})$ if $n=4k$. Indeed, note that $n \equiv 0 \mod 4$ so $i \equiv 2 \mod 4$, and so $p^{n-1}q$ and $pq^{n-1}$ can not be summands in the monomial basis decomposition of the image of $\alpha^{i}\delta^{4j+3}$ under restriction. Thus the fundamental class of a product $\RP^{n-1} \times \RP^{1}$ can not map to $\xi(\alpha^{i}\delta^{4j+3})$ under inclusion. This leaves the possibility of some $M_{(a,b)}$, so suppose the homology class $[M_{(a,b)}]$ maps to $\xi(\alpha^{i}\delta^{4j+3})+ \cdots$. This means that $\alpha^{i}\delta^{4j+3} \rightarrow p^{i}(q(p+q))^{4j+3}$ must map to $p^aq^{b-2} + \cdots$ or $p^{a-2}q^b + \cdots$ but not $p^{a-2}q^{b}+p^a q^{b-2} + \cdots$. So equating binomial coefficients we must have $(4j+3)C_{a-i} \equiv 1 \mod 2$ and $(4j+3)C_{a-i-2} \equiv 0 \mod 2$, or vice versa.\\
However note that $a-i \equiv 3 \mod 4$, and for any $I,J$ we have that the quotient $(4J+3)C_{4I+3}/(4J+3)C_{4I+1}$ is given by $(4(J-I)+2)(4(J-I)+3)/((4I+2)(4I+3))=(2(J-I)+1)(4(J-I)+3)/((2I+1)(4I+3))$, which is a fraction of two odd numbers, so working modulo $2$, the two coefficients are the same.
\end{proof}
Thus we have realized everything we need to, except in dimensions $0 \mod 8$.\\

To generate the rest of the subspace we use the fundamental class of the manifold given by the following construction. The idea is to start with the exact sequence $C_{4} \rightarrow D_{8} \rightarrow \Z_{2}$, lift it to obtain a sequence $C_{4} \rightarrow G \rightarrow \Z$, and take classifying spaces. Of course $S^{1} =B\Z$, and since $C_{4}$ acts on $n\Comp$ by multiplication, we have a map from a lens space $L=S(n \Comp)/C_4$ into $BC_{4}=S(\infty \Comp)/C_4$, and thus a fibre bundle $ L^{2n-1} \rightarrow M^{2n} \rightarrow S^{1}$, as shown. As a lens bundle this carries positive scalar curvature, and we claim that when $n \equiv 0 \mod 4$, this is a spin manifold which gives us a  homology class independent of those induced from the Klein 4 subgroups which we already know.
$$
\xymatrix{
L^{2n-1} \ar[r] \ar [d] & M^{2n} \ar[r] \ar [d]^{f} & S^{1} \ar[d]^{\simeq} \\
BC_{4} \ar[r] \ar [d] & BG \ar[r] \ar [d]^{g} & B\Z \ar[d] \\
BC_{4} \ar[r] & BD_{8} \ar[r] & B\Z_{2} }
$$
Let $F=g \circ f$ in the diagram above. The following Proposition then completes the proof of the conjecture for $D_8$.
\begin{prop}
The cohomology ring of the manifold is $M^{2n}$ is given by
$$H^{*}(M^{2n},\Z_{2})=\Z_{2}[\sigma, \tau, x]/\sigma^{2}, \sigma \tau + \tau^{2}, x^{n}$$
where $|\sigma|=|\tau|=1, |x|=2$. Further, $F_*([M^{2n}])=\xi(\beta^2 \delta^{n-1})$, and when $n$ is even, $M^{2n}$ is a spin manifold.
\end{prop}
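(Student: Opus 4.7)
My plan is to first compute $H^{*}(M^{2n};\Z_{2})$ via the Serre spectral sequence for the fibration $L^{2n-1}\to M^{2n}\to S^{1}$. The mod $2$ cohomology of the $C_{4}$-lens space is $\Z_{2}[\tau,x]/(\tau^{2},x^{n})$ with $|\tau|=1$ and $|x|=2$, the relation $\tau^{2}=0$ following from the fact that $\tau$ lifts to $\Z/4$ coefficients. The monodromy is induced by the inversion action of $\Z$ on $C_{4}$ inherited from the $\Z_{2}$-action inside $D_{8}$, and one checks this acts trivially on $H^{*}(L;\Z_{2})$: trivially on $H^{1}$ because any $\Z_{2}$-valued character of $C_{4}$ is invariant under inversion, and trivially on $H^{2}$ because $c_{1}(\rho_{1})$ and $c_{1}(\rho_{1}^{-1})$ agree modulo $2$. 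Since the base is one-dimensional no higher differentials survive, the spectral sequence collapses at $E_{2}$, and $H^{*}(M;\Z_{2})$ acquires the $\Z_{2}$-basis $\{\sigma^{i}\tau^{j}x^{k}:0\leq i,j\leq 1,\;0\leq k\leq n-1\}$, with $\sigma$ the pullback of the generator of $H^{1}(S^{1};\Z_{2})$.

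Next I would resolve the multiplicative extension $\tau^{2}=c\,\sigma\tau$ (which must take this form since $\tau^{2}|_{L}=0$ by the filtration) using the map $F:M\to BD_{8}$. Inspection of the diagram defining $G$ shows $\hat{\omega}\circ g$ factors through $G\to\Z$ while $\hat{s}\circ g$ restricts non-trivially to $C_{4}$, so choosing lifts appropriately one has $F^{*}(\alpha)=\sigma$ and $F^{*}(\beta)=\tau$; setting $x=F^{*}(\delta)$ and pulling back the relation $\alpha\beta+\beta^{2}=0$ then forces $c=1$ and yields the stated ring presentation.

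For the image $F_{*}([M^{2n}])$, I would use Poincar\'e duality: $F_{*}([M])$ is characterised by its pairings $\langle[M],F^{*}(m)\rangle$ as $m$ runs through the monomial basis $\{\alpha^{i}\delta^{j}\}\cup\{\beta^{i}\delta^{j}\}$ of $H^{2n}(BD_{8};\Z_{2})$. The parity constraint $i+2j=2n$ forces $i$ even, so $\sigma^{2}=0$ kills every $\alpha$-monomial pullback. Among the $\beta$-monomials, the relations $\tau^{2}=\sigma\tau$ and $\tau^{3}=0$ kill everything except $F^{*}(\beta^{2}\delta^{n-1})=\sigma\tau\,x^{n-1}$, which represents the top class of $H^{2n}(M;\Z_{2})$. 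Hence $F_{*}([M^{2n}])=\xi(\beta^{2}\delta^{n-1})$.

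Finally, for the spin condition when $n$ is even, I would identify $T^{v}M\oplus\varepsilon$ (the vertical tangent bundle plus a trivial line bundle) with the pullback $F^{*}(ED_{8}\times_{D_{8}}n\sigma_{1})$. This drops out of the universal cover presentation $\tilde{M}=S^{2n-1}\times\R$, where $C_{4}$ acts on $n\Comp\cong\R^{2n}$ by multiplication and the $\Z$-generator acts by complex conjugation; the ambient $G$-action on $\R^{2n}$ is precisely $n\sigma_{1}$ pulled back through $g:G\to D_{8}$. Since $w(\sigma_{1})=1+\alpha+\delta$, one obtains $w(T^{v}M)=(1+\sigma+x)^{n}$; using $\sigma^{2}=0$ this gives $w_{1}(M)=n\sigma$ and $w_{2}(M)=nx$, both vanishing when $n$ is even, so $M^{2n}$ is spin. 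The main obstacle is the global identification of $T^{v}M\oplus\varepsilon$ as a pullback from $BD_{8}$; once this is in place the Stiefel--Whitney computations reduce to manipulating $(1+\alpha+\delta)^{n}$, which is straightforward.
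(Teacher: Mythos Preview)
Your argument has one genuine gap: you never establish the relation $x^{n}=0$. The collapsed Serre spectral sequence only gives the additive basis and the fact that $x^{n}$ lies in filtration $\geq 1$, so a priori $x^{n}\in\{0,\sigma\tau x^{n-1}\}$; pulling back $\alpha\beta+\beta^{2}=0$ resolves the extension for $\tau^{2}$ but says nothing about $x^{n}$. This matters for the $F_{*}$ computation as well: when you claim that $\sigma^{2}=0$ kills every $\alpha$-monomial pullback, you have overlooked the case $i=0$, namely $\delta^{n}$, whose pullback is $x^{n}$. If $x^{n}=\sigma\tau x^{n-1}$ then $\langle[M],F^{*}(\delta^{n})\rangle=1$ and $F_{*}([M])$ would acquire an unwanted $\xi(\delta^{n})$ summand.

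The fix is already implicit in your spin argument: once you identify $T^{v}M\oplus\varepsilon\cong F^{*}(n\sigma_{1})$, the bundle $F^{*}(n\sigma_{1})$ has a nowhere-zero section, so its top Stiefel--Whitney class vanishes; since $w_{2n}(n\sigma_{1})=\delta^{n}$, this gives $x^{n}=F^{*}(\delta^{n})=0$. This is exactly the paper's argument (phrased there as the diagonal section of $n\sigma\times_{G}(S(n\sigma)\times\R)$), so you should invoke your bundle identification earlier and use it to pin down $x^{n}=0$ before computing $F_{*}$.

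Your spin argument via $w(T^{v}M)=(1+\sigma+x)^{n}$ is a legitimate alternative to the paper's route, which instead computes the Wu classes $v_{1},v_{2}$ by evaluating $Sq^{1}$ and $Sq^{2}$ on the codimension-one and codimension-two classes (using $Sq^{1}(x)=\sigma x$ pulled back from $Sq^{1}(\delta)=\alpha\delta$). Your approach is more direct once the bundle identification is in hand; the paper's Wu-class approach avoids that identification but requires tracking Steenrod operations.
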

\begin{proof}
We know $H^{*}(L^{2n-1}; \Z_{2})=\Z_{2}[X, \widetilde{\tau}]/X^{n}, \widetilde{\tau}^{2}$ where $X$ is a dimension $2$ generator, and $\widetilde{\tau}$ dimension $1$, and $H^{*}(S^{1})=\Z_{2}[\sigma]/\sigma^{2}$. By the universal coefficient Theorem, all the cohomology groups except of course $H^{0}(M),H^{2n}(M)$ have rank $2$, and the Serre spectral sequence implies that there are two degree one and one degree two generators, one of the degree one generators being $\sigma=F^{*}(\alpha)$, with $\sigma^{2}=0$. We can then simply define $\tau=F^{*}(\beta)$, and $x=F^{*}(\delta)$. Since $\alpha \beta +\beta^2=0$ we must have $\sigma \tau+ \tau^2=0$.\\
 To see that $x^{n}=0$ we recall that $\delta = w_{2}(\sigma)=w_{2}(\sigma_1)$, where $\sigma_1$ is the $2$ dimensional representation, and $\sigma$ is the representation obtained by restricting to $O(2)$, thus giving a real two-dimensional bundle. Now we can view $L^{2n-1}=S(n \sigma)/ C_{4}$. Notice that the universal cover $\widetilde{M}$ of $M=S(n \sigma) \times_{G} \R$ is just $ S(n \sigma) \times \R$, and so the induced vector bundle over $M$ is given by
 $$ n \sigma \times_{G}(S(n \sigma) \times \R) \rightarrow  S(n \sigma) \times_{G} \R$$
This has a section via the diagonal map, which implies the top Stiefel-Whitney class $x^{n}$ must be zero. \\
Further, we claim this manifold $M$ is spin when it is of dimension $0 \mod 4$. To see this we recall that in a smooth manifold, we have by the Wu formulae \cite{micc} that $w_{k}=\sum_{i+j=k} Sq^{i}(v_{j})$, where $v_{j} \in H^{j}(M^{n})$ is the unique class such that $v_{j}y=Sq^{j}(y)$, for every $y \in H^{n-j}(M)$.\\
 For our manifold $M^{2n}$ we wish to see that $w_{1}=w_{2}=0$, if $2n \equiv 0 \mod 4$. It is known \cite{dihcoh} that in $H^{*}(BD_{8})$ we have $Sq^{1}(\delta)=\alpha \delta$, so $Sq^{1}(x)=x \sigma$. This implies $Sq^{1}(x^{k})=x^{k} \sigma$ if $k$ is odd, and zero if $k$ is even.The generators of $H^{2n-1}(M^{2n})$ are $x^{n-1}\sigma$ and $x^{n-1} \tau$. We then have
 $$ Sq^{1}(x^{n-1}\sigma)= x^{n-1}\sigma ^{2} + \sigma Sq^{1}(x^{n-1})=0$$
 Similarly, since $n-1$ is odd,
 $$ Sq^{1}(x^{n-1}\tau)=x^{n-1}\tau ^{2} +  \tau Sq^{1}(x^{n-1})=x^{n-1}\tau ^{2}+x^{n-1}\tau \sigma=0$$
 So we deduce $v_{1}=w_{1}=0$. So next we consider
 $$ Sq^{2}(x^{n-1})=x^{n} + x \sigma Sq^{1}(x^{n-2}) + x Sq^{2}(x^{n-2})=0$$
 This follows since $Sq^{1}(x^{n-2})=0$ as $n-2=2m$ is even, and $Sq^{2}(x^{2m})= Sq^{1}(x^{m})Sq^{1}(x^{m})=0$, because $\sigma ^{2}=0$. Finally
 $$ Sq^{2}(x^{n-2} \tau ^{2})=\tau ^{4}x^{n-2} + \tau ^{2}Sq^{2}(x^{n-2}) +  Sq^{1}(x^{n-2})Sq^{1}(\tau^{2})=0$$
 So we conclude $v_{2}=0$ and so $w_{2}=v_{1}^{2}+v_{2}=0$, and we have a spin manifold, if $n$ is even. The top cohomology class is $x^{n-1} \tau^{2}=x^{n-1}\sigma \tau$, and dualising $F^{*}$ immediately gives $F_{*}(\xi(x^{n-1} \tau^{2}))=\xi(\beta^{2} \delta^{n-1})$.
\end{proof}
If we are working in dimensions $4 \mod 8$ ( so that $n-1 \equiv 1 \mod 4$), then, as seen above, this class is realized from $V(2)'$. However, in dimensions $0 \mod 8$, this gives $\xi(\beta^{2} \delta^{j})$, with $j \equiv 3 \mod 4$, which is the independent of the classes induced by inclusion from $V(2)$ and $V(2)'$. Combining this new class with Proposition $4.4.1$, we have realized the whole of the $Ker(Ap)$ for $BD_8$. The fact that this class lies in H1 follows from the Adams spectral sequence calculations in \cite{mjam}, which show that it has a non-trivial eta-multiple. This lies in H1 in the local cohomology spectral sequence, which implies the class constructed does too.\\

We now wish to generalize this to higher dihedral groups. The orders of the subspaces we must realize in the ordinary homology for $n=4k+2$ and $n=4k$ are $2^i$ with $i\leq k$ and $i \leq k+1$ respectively, and in fact we will show that the upped bounds are attained.\\
 We start by observing how the smaller dihedral groups include into the larger ones, in homology.\\
 We briefly recap the representation theory of a dihedral group $D$ of order $2^{N+2}$. There are four one dimensional representations, namely the trivial one, $\hat{D'}=\hat{s},\hat{D''}=\hat{\omega s}$, and $\hat{D'}\hat{D''}=\hat{\omega }$, where $D'$ and $D''$ are the dihedral subgroups of order $2^{N+1}$ generated by the even and odd conjugacy classes of reflections respectively. The remaining simple representations $\sigma_{1},\cdots ,\sigma_{2^N-1}$ are of dimension $2$, restricting from $O(2)$. Just as for $D_8$, the cohomology ring is
 $$H^{*}(BD;\Z_{2})=\Z_{2}[\alpha,\beta,\delta]/\beta(\alpha+\beta)$$
 where $\alpha=w_{1}(\hat{D'})$ and $\alpha+\beta=w_{1}(\hat{D''})$. Further, while $\alpha, \delta$ restrict from $O(2)$, $\beta=\beta_{N}$ depends on $N$, and it restricts to $0$ on $D'$ and $\alpha$ on $D''$.\\
 It thus follows that the restrictions from $H^{*}(BD;\Z_{2})$ to $H^{*}(BD';\Z_{2})$ and $H^{*}(BD'';\Z_{2})$ both send $\alpha \rightarrow \alpha$, $\delta \rightarrow \delta$, and $\beta \rightarrow 0, \alpha$, for $D'$ and $D''$ respectively.\\
 Dualising the maps in homology we deduce just as in Proposition $4.4.1$ that $\xi(\alpha^{i}\delta^{j})$ maps to $\xi(\alpha^{i}\delta^{j}), \xi(\beta_{N}^{i}\delta^{j})$,  whilst the $\xi(\beta_{N-1}^{i}\delta^{j})$ map to zero. Using this, we now deduce the GLR conjecture for all dihedral groups.
\begin{prop}
Fundamental classes of projective bundles over projective space, together with a lens space bundle over the circle, all of them spin, span subspaces of dimension $2^{k}$ for $n=4k+2$ and $2^{k+1}$ for $n=4k$, in $H_{*}(BD_{2^{N+2}})$.
\end{prop}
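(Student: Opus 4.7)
The plan is to generalize the two ingredients used in the $D_8$ case (Propositions 4.4.1 and 4.4.2) by using the same Klein 4 subgroups $V(2) = \langle s, \omega^{2^N}\rangle$ and $V(2)' = \langle \omega s, \omega^{2^N}\rangle$ of $D_{2^{N+2}}$, both of which contain the central involution $\omega^{2^N}$. The first step is to compute the restriction maps $H^*(BD_{2^{N+2}})\to H^*(BV(2))$ and $H^*(BD_{2^{N+2}})\to H^*(BV(2)')$. Because $\omega^{2^N}$ lies in the kernels of $\hat{s}$, $\hat{\omega s}$ and $\hat{\omega}$ for every $N\ge 1$, the restrictions of these one-dimensional representations to the Klein 4 subgroups behave exactly as they did in $D_8$, and the two-dimensional representation $\sigma_1$ restricts on each Klein 4 subgroup to a sum of two one-dimensional representations with the same Stiefel--Whitney data as in the $D_8$ calculation. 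I therefore expect to obtain
\[
\alpha\mapsto p,\ \beta\mapsto 0,\ \delta\mapsto q(p+q)\quad\text{on } V(2),
\]
\[
\alpha\mapsto p',\ \beta\mapsto p',\ \delta\mapsto q'(p'+q')\quad\text{on } V(2)',
\]
independently of $N$.

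With this identification, the dualization argument from the proof of Proposition 4.4.1 applies verbatim: the fundamental classes of products $\RP^a\times \RP^b$ and of the spin projective bundles $M_{(a,b)}=\RP(2L_a\oplus(n-1-a)\varepsilon\to \RP^a)$ constructed over the Klein 4 subgroups, together with their images under the inclusion $V(2)'\hookrightarrow D_{2^{N+2}}$, span a subspace of the required rank in $H_{4k+2}(BD_{2^{N+2}})$ and in $H_{8k+4}(BD_{2^{N+2}})$, and fall one class short in $H_{8k}(BD_{2^{N+2}})$ (the parity obstruction on $\delta$-exponents described in the $D_8$ proof depends only on the common cohomology ring and so survives unchanged).

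The missing class in dimension $8k$ will be supplied by generalizing Proposition 4.4.2 to $C_{2^{N+1}}$. Starting from the central extension $1\to C_{2^{N+1}}\to D_{2^{N+2}}\to \Z_2\to 1$, I lift to $1\to C_{2^{N+1}}\to G\to \Z\to 1$, form the lens space $L^{2n-1}=S(n\sigma_1)/C_{2^{N+1}}$, and obtain a lens space bundle $L^{2n-1}\to M^{2n}\to S^1$ together with a classifying map $F\colon M^{2n}\to BD_{2^{N+2}}$. The Serre spectral sequence argument for $M^{2n}$ gives
\[
H^*(M^{2n};\Z_2)=\Z_2[\sigma,\tau,x]/(\sigma^2,\ \sigma\tau+\tau^2,\ x^n),
\]
where $\sigma=F^*\alpha$, $\tau=F^*\beta$, $x=F^*\delta$, and the relation $x^n=0$ comes from the section of the associated vector bundle, exactly as for $D_8$. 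The Wu-class computation showing $w_1(M)=w_2(M)=0$ when $n$ is even uses only the relation $Sq^1(\delta)=\alpha\delta$, which is valid in $H^*(BD_{2^{N+2}};\Z_2)$ for every $N$, so $M^{2n}$ is spin in all dimensions $\equiv 0\pmod 4$. Dualizing $F^*$ gives $F_*[M^{2n}]=\xi(\beta^2\delta^{n-1})$; when $n\equiv 0\pmod 4$ (i.e. $\dim M\equiv 0\pmod 8$) this is a class of the form $\xi(\beta^2\delta^{4j+3})$, which is independent of all classes induced from the Klein 4 subgroups by the parity argument in the proof of Proposition 4.4.1.

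The main obstacle I anticipate is verifying that the restriction pattern to $V(2)$ and $V(2)'$ is truly $N$-independent, since $\beta=\beta_N$ itself depends on $N$; the check reduces to the fact that the central involution $\omega^{2^N}$ lies in the kernel $D'=\langle s,\omega^2\rangle$ of $\hat s$ whenever $N\ge 1$, so $\hat s$ (and analogously $\hat{\omega s}$) restricts trivially to $V(2)$ (resp. $V(2)'$). A subsidiary point is the spin calculation for the new $M^{2n}$: one must confirm that the Wu-class computation of Proposition 4.4.2 uses only cohomological relations that hold uniformly for all $N$, which it does since both the multiplicative structure and the action of $Sq^1$ on $\delta$ are the same across the dihedral family.
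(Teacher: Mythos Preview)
Your proposal is correct and follows essentially the same strategy as the paper, with one organisational difference worth noting. The paper does not recompute the restriction $H^*(BD_{2^{N+2}})\to H^*(BV(2))$ for general $N$; instead it factors the inclusion through the chain of maximal dihedral subgroups $D_8\subset D_{16}\subset\cdots\subset D_{2^{N+2}}$, observing that under $D'\hookrightarrow D$ and $D''\hookrightarrow D$ one has $\alpha\mapsto\alpha$, $\delta\mapsto\delta$, and $\beta_N\mapsto 0$ (resp.\ $\alpha$). Dualising, the explicit $D_8$ classes $\xi(\alpha^i\delta^j)$ and $\xi(\beta_1^i\delta^j)$ from Proposition~4.4.1 push forward to $\xi(\alpha^i\delta^j)$ and $\xi(\beta_N^i\delta^j)$ in $H_*(BD_{2^{N+2}})$, while the anomalous class $\xi(\beta_1^2\delta^{2k-1})$ dies along the way (since $\beta_{N-1}$ restricts to either $0$ or $\alpha$, never to $\beta_N$). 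This is why a fresh lens-space-bundle construction with $C_{2^{N+1}}$ is needed for each $N$, exactly as you do.

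Your direct route --- verifying once that the restriction $H^*(BD_{2^{N+2}})\to H^*(BV(2))$ is $N$-independent and then quoting Proposition~4.4.1 verbatim --- is slightly cleaner and avoids the induction; the paper's route has the advantage of reusing the $D_8$ output wholesale without redoing any representation-theoretic checks. Your verification that $\beta=w_1(\hat s)$ restricts trivially to $V(2)=\langle s,\omega^{2^N}\rangle$ (because $V(2)\subset D'=\ker\hat s$ for all $N\ge 1$) and that $\sigma_1|_{V(2)}$ has $w_2=q(p+q)$ (because $\sigma_1(\omega^{2^N})=-I$ for all $N$) is correct, so the ``main obstacle'' you flagged is not an obstacle at all. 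The Wu-class computation for the lens space bundle over $S^1$ indeed uses only $Sq^1\delta=\alpha\delta$ and the ring relations, which are uniform in $N$, so that part goes through unchanged as well.
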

\begin{proof}
In $n=4k+2$ a $k$-dimensional space in $H_{*}^{+}(BD_{8})$ is spanned by the set of $\xi(\alpha^{4j} \delta^{a}),\xi(\beta_{1}^{4j} \delta^{a})$, with $n-3 \geq a \equiv 3 \mod 4$. It's clear that this spanning set maps to the set of $\xi(\alpha^{4j} \delta^{a}),\xi(\beta_{2}^{4j} \delta^{a})$ in $H_{*}^{+}(BD_{16})$, and by induction to $\xi(\alpha^{4j} \delta^{a}),\xi(\beta_{N}^{4j} \delta^{a})$ in $H_{*}^{+}(BD_{2^{N+2}})$. These still span a $k-$dimensional space, as required.\\
In $n=4k$ a $k$-dimensional space in $H_{*}^{+}(BD_{8})$ is spanned by $\xi(\alpha^{n-8i-2} \delta^{4i+1}),\xi(\beta^{n-8i-2} \delta^{4i+1})$, with $0 \leq i \leq (n-2)/8$, along with an extra $\beta_{1}^2 \delta^{2k-1}$ if $n \equiv 0 \mod 8$. This latter class will be mapped to zero in $H_{*}^{+}(BD_{16})$.\\
Thus, using induction as before, the classes $\xi(\alpha^{n-8i-2} \delta^{4i+1}),\xi(\beta^{n-8i-2} \delta^{4i+1})$ are mapped to $\xi(\alpha^{n-8i-2} \delta^{4i+1}),\xi(\beta_{N}^{n-8i-2} \delta^{4i+1})$ in $H_{*}^{+}(BD_{2^{N+2}})$, which still span the required $(k+1)$ dimensional space if $n \equiv 4 \mod 8$, but only a $k$-dimensional one if $n \equiv 0 \mod 8$.\\
We produce an extra class exactly as before, replacing $C_4$ by a cyclic group of higher order, namely, $C_{2^{N+1}}$ for $D_{2^{N+2}}$.
$$
\xymatrix{
L^{2n-1} \ar[r] \ar [d] & M^{2n} \ar[r] \ar [d]^{f} & S^{1} \ar[d]^{\simeq} \\
BC_{2^{N+1}} \ar[r] \ar [d] & BG \ar[r] \ar [d]^{g} & B\Z \ar[d] \\
BC_{2^{N+1}} \ar[r] & BD_{2^{N+2}} \ar[r] & B\Z_{2} }
$$
The cohomology of the lens space we get like this is the same as for $C_4$, and the rest of the calculation proceeds exactly as before, to give us $\xi(\beta_{N}^2 \delta^{n-1})$, which is an independent extra class when $n \equiv 0 \mod 4$, just as required.
\end{proof}
This last construction of a lens space bundle over the circle is the only class we need to realize for which inclusion from proper subgroups does not suffice, and as we've seen, it is required separately for all the dihedral groups. Indeed, the $H^2$ part of the local cohomology spectral sequence being the same as for $V(2)$ is misleading here. In this regard, computations using the Adams spectral sequence are more enlightening; see \cite{bay} and \cite{mjam}. The method is to again use the stable $2-$local decomposition in \cite{mp}:
$$BD_{2^{N+2}} \cong 2\RP^{\infty} \vee 2L(2) \vee BPSL_{2}(q)$$
The class in question then lies in $ko_*(BPSL_{2}(q))$ in Adams filtration zero.\\
A similar phenomenon occurs in the next chapter for the semi-dihedral groups, and an analogous construction is used. Once again, the class in question always lies in $H^1$.

\chapter{The Semi-Dihedral groups}
\section{Preliminaries}
We recall that the Semi-Dihedral group $SD_{2^{N+2}}$ has presentation $<s,t;s^{2^{N+1}}=t^2=1,tst=s^{2^{N}-1}>$. There are three maximal subgroups, namely the cyclic $<s>$, the dihedral $<s^2,t>$, and the quaternion $<s^2,ts>$, each of which is the kernel of a nontrivial irreducible one dimensional real representation. The remaining representions are two dimensional, with $2^{N-1}$ complex representations occuring in conjugate pairs, and $2^{N-1}-1$ real representations. Here is the character table of $SD_{16}$:
\begin{center}
\begin{tabular}{|c|c|c|c|c|c|c|c|}
\hline
$ $& $1$ & $1$ & $2$ & $2$ & $2$ & $4$ & $4$\\
$\rho$&$[1]$ & $[s^4]$ & $[s]$ & $[s^2]$ & $[s^5]$ & $[t]$ & $[ts]$\\
\hline
\hline
$1=\rho_0$ & $1$ & $1$ & $1$ & $1$ & $1$& $1$ & $1$\\
$\chi_2=\hat{C_8}$ & $1$ & $1$ & $1$ & $1$ & $1$ & $-1$ & $-1$\\
$\chi_3=\hat{D_8}$ & $1$ & $1$ & $-1$ & $1$ & $-1$ & $1$ & $-1$\\
$\chi_4=\hat{Q_8}$ & $1$ & $1$ & $-1$ & $1$ & $-1$ & $-1$ & $1$\\
$\chi_{\rho}$ & $2$ & $-2$ & $\sqrt{2}i$ & $0$ & $-\sqrt{2}i$ & $0$ & $0$\\
$\chi_{\rho^2}$ & $2$ & $2$ & $0$ & $-2$ & $0$ & $0$ & $0$\\
$\chi_{\rho^5}$ & $2$ & $-2$ & $-\sqrt{2}i$ & $0$ & $\sqrt{2}i$ & $0$ & $0$\\
\hline
\hline
\end{tabular}
\end{center}

The cohomology ring $H^*(BSD_{2^{N+2}}; \Z_2)$ is given by $$\Z_2[x,y,u,P]/x^3,x^2+xy,xu,u^2+(x^2+y^2)P$$
with $|x|=|y|=1,|u|=3,|P|=4$, see \cite{ep}, \cite{kijti}. Further, we have $x=w_1(\chi_3), y=w_1(\chi_2)$ and $P=\overline{c}_2(\chi_{\rho})$, the mod $2$ reduction of the second Chern class.\\
The Steenrod square actions are determined by
$$Sq^1(u)=Sq^1(P)=0;Sq^2(u)=0; Sq^2(P)=u^2$$

\section{The local cohomology spectral sequence and ko calculations}

The connective $ko-$homology $ko_*(BSD_{16})$ was calculated in $\cite{kijti}$, Theorem $6.5.2$, using the local cohomology spectral sequence. Here is the $E_{\infty}$ page of the spectral sequence, together with the Kernel $Ker(Ap)$:
\begin{thm}
The $E_{\infty}$ page of the spectral sequence, together with the kernel $Ker(Ap)$, are as follows:
\end{thm}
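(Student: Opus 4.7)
The plan is to mirror the local cohomology strategy used in Chapter 1 for $V(2)$ and in Chapter 4 for the dihedral groups, adapted to the semi-dihedral case. First I would set up the two short exact sequences
$$0 \to TO \to ko^*(BSD_{16}) \to QO \to 0,$$
$$0 \to \tau \to T \to TO \to 0,$$
where $QO$ is the image in periodic real $K$-cohomology, $T$ is the Bott torsion part, and $\tau$ consists of the $\eta$-multiples. As for $V(2)$ one should get $T \cong TO \oplus \tau$. Then I would compute the local cohomology of each summand separately. The piece $\tau$ is bounded below and so $H_I^*(\tau) = H_I^0(\tau) = \tau$, with $\tau$ concentrated in degrees $1,2 \bmod 8$ and determined by comparison with the periodic Bockstein data. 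The piece $TO$ is detected in ordinary cohomology, whose ring $\Z_2[x,y,u,P]/(x^3,x^2+xy,xu,u^2+(x^2+y^2)P)$ is given in Section~5.1; local duality then forces $H_I^*(TO) = H_I^2(TO)$, which can be read off directly from the Poincar\'e series of the $\RO$-free part of $TO$.

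The bulk of the work is the local cohomology of $\overline{QO}$, the image in periodic complex $K$-cohomology. Since $SD_{16}$ has rank two, the augmentation ideal $JU$ again has the same radical as a principal ideal generated by a carefully chosen element of $JU$, essentially some $p_*^k$ with $p_*$ a virtual representation supported away from the identity. One then obtains a four-term exact sequence
$$0 \to H^0_{JO}(\overline{QO}) \to \overline{QO} \to \overline{QO}[1/p_*] \to H^1_{JO}(\overline{QO}) \to 0,$$
and both $H^0$ and $H^1$ can be computed by tracking characters, with $H^0$ picking out classes supported on the identity. The representation theory is more intricate than in the dihedral case because $SD_{16}$ has non-trivial complex conjugate pairs of two-dimensional representations in addition to real and quaternion ones; splitting $\overline{QO}$ along the Rosenberg-Stolz decomposition (Theorem~1.5.3) into $\R$, $\C$, $\qt$ summands is what makes the bookkeeping tractable. \emph{This is the step I expect to be the main obstacle}, because it is essentially where all the arithmetic of the problem sits, and where the answer begins to depend on $SD_{16}$ rather than just on its rank.

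Once the $E_{1\frac{1}{2}}$ page is laid out, the differentials are determined by the usual boundary conditions together with comparison bounds. Since $ko_*$ vanishes in negative degrees and $ko_0=\Z$, the $d_1$'s from $H^1_{-4}$ and $H^1_0$ into $H^2$ must be isomorphisms, and the long $d_2: H^0_1 \to H^2_2$ must be surjective. The remaining $d_1:H^1_{4k}\to H^2_{4k}$'s should be of maximal rank, verified either by a $ko^*(BSD_{16})$-module argument as in Lemma~1.7.2 or by comparison with the Atiyah-Hirzebruch spectral sequence to cap $|ko_2(BSD_{16})|$ from above. A module-theoretic argument analogous to Lemma~1.7.1 should then rule out further differentials leaving the zero column: in odd $\bmod\,8$ degrees everything is supported in $H^0\oplus H^1$ and stabilises, while in the remaining positive degrees the image-in-$KO$ part is detected away from any possible $d_r$ targets.

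Finally, I would read off $Ker(Ap)$ directly from Lemma~\ref{lem Ap}: the image of $A\circ p$ is the zero-column at $E_\infty$, so $Ker(Ap)$ is the associated graded of the filtration given by the $H^1$ and $H^2$ columns, and splitting of extensions (where needed) can be verified by the $ko^*$-module structure, just as in the $V(2)$ computation in Section~1.8.
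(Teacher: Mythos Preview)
The paper does not actually prove this theorem: it is quoted wholesale from Rodtes' thesis \cite{kijti}, Theorem~6.5.2, and the text immediately preceding the statement says so explicitly. Your outline is precisely the local cohomology template that \cite{kijti} carries out (and that the present paper sketches for $V(2)$ in Section~1.8 and for the dihedral groups in Section~4.2), so there is no methodological difference to discuss.

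One point to flag: your assertion that $H_I^*(\tau)=H_I^0(\tau)=\tau$ because $\tau$ is bounded below is the argument that works for $V(2)$, but it already fails for the dihedral groups---see the column header ``$H^1_J(\overline{QO})\oplus H^1_J(\tau)$'' in the Chapter~4 display and the remark there that the Bockstein spectral sequence places the local cohomology of $\tau$ in cohomological degrees $0$ \emph{and} $1$. The same phenomenon occurs for $SD_{16}$: the $H^1$-column entries in degrees $8k$ and $8k+1$ contain $\eta$-multiple classes (the paper spells this out just below the $Ker(Ap)$ table), and these sit in $H^1_I(\tau)$, not $H^0$. So that step needs the more careful Bockstein treatment rather than the bounded-below shortcut; otherwise you will miss the ``extra class in $8k$ on the 1-column'' that Section~5.4 is devoted to realising geometrically.
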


\setlength{\unitlength}{1cm}
\begin{center}
\begin{picture}(20,17)
\multiput(1.5,2)(0,0.5){28}%
{\line(1,0){11.5}}
\put(1.5,2){\line(0,1){14}}
\put(3,2){\line(0,1){14}}
\put(11,2){\line(0,1){14}}
\put(13,2){\line(0,1){14}}
\multiput(11.9,4.6)(0,0.5){3}%
{$0$}
\multiput(11.9,8.6)(0,0.5){3}%
{$0$}
\multiput(11.9,12.6)(0,0.5){3}%
{$0$}

\multiput(11.9,3.6)(0,4){3}%
{$0$}
\multiput(11.8,2.6)(0,0.5){2}%
{$2^{3}$}
\multiput(11.8,6.6)(0,0.5){2}%
{$2^{4}$}
\multiput(11.8,10.6)(0,0.5){2}%
{$2^{5}$}
\multiput(11.8,14.6)(0,0.5){2}%
{$2^{5}$}
\multiput(11.8,2.1)(0,2){7}%
{$\mathbb{Z}$}

\multiput(6.8,2.6)(0,1){3}%
{$0$}
\multiput(6.8,7.6)(0,1){3}%
{$0$}
\multiput(6.8,11.6)(0,1){3}%
{$0$}
\multiput(6.8,3.1)(0,1){1}%
{$0$}

\put(6.8,2.1){$0$}
\put(6.8,5.6){$0$}
\put(5.6,4.1){$ [4]\oplus[8]\oplus[8]$}
\put(6.7,5.1){$[2]$}
\put(5.4,6.1){$[2]\oplus [4]\oplus[16]\oplus[32]$}
\put(6.7,6.6){$2$}
\put(6.3,7.1){$2\oplus [2]$}
\put(4.9,8.1){$[8]\oplus[16]\oplus[128]\oplus[128]$}
\put(6.7,9.1){$[4]$}
\put(4.7,10.1){$[2]\oplus [8]\oplus[16]\oplus[256]\oplus[512]$}
\put(6.6,10.6){$2^{2}$}
\put(6.2,11.1){$2^{2}\oplus [4]$}
\put(4.6,12.1){$[2]\oplus [32]\oplus[64]\oplus[2048]\oplus[2048]$}
\put(6.7,13.1){$[8]$}
\put(4.4,14.1){$[4]\oplus [32]\oplus[64]\oplus[16^{3}]\oplus[2\cdot16^{3}]$}
\put(6.6,14.6){$2^{2}$}
\put(6.2,15.1){$2^{2}\oplus [8]$}

\put(6.8,15.6){$\vdots$}
\put(2,15.6){$\vdots$}
\put(11.8,15.6){$\vdots$}

\put(2,4.1){$0$}

\put(2,2.1){$0$}
\multiput(2,2.6)(0,1){13}%
{$0$}
\multiput(2,3.1)(0,1){1}%
{$0$}

\put(2,6.1){$0$}
\put(2,5.1){$2$}
\put(2,7.1){$2$}
\put(2,8.1){$2$}
\put(1.9,9.1){$2^{2}$}
\put(2,10.1){$2$}
\put(1.9,11.1){$2^{2}$}
\put(1.9,12.1){$2^{2}$}
\put(1.9,13.1){$2^{3}$}
\put(1.9,14.1){$2^{2}$}
\put(1.9,15.1){$2^{3}$}

\put(13.3,2.1){0}
\put(13.3,2.6){1}
\put(13.3,3.1){2}
\put(13.3,3.6){3}
\put(13.3,4.1){4}
\put(13.3,4.6){5}
\put(13.3,5.1){6}
\put(13.3,5.6){7}
\put(13.3,6.1){8}
\put(13.3,6.6){9}
\put(13.2,7.1){10}
\put(13.2,7.6){11}
\put(13.2,8.1){12}
\put(13.2,8.6){13}
\put(13.2,9.1){14}
\put(13.2,9.6){15}
\put(13.2,10.1){16}
\put(13.2,10.6){17}
\put(13.2,11.1){18}
\put(13.2,11.6){19}
\put(13.2,12.1){20}
\put(13.2,12.6){21}
\put(13.2,13.1){22}
\put(13.2,13.6){23}
\put(13.2,14.1){24}
\put(13.2,14.6){25}
\put(13.2,15.1){26}
\put(13.2,15.6){27}

\put(12.5,16.2){degree(t)}

\put(6.3,1.2){$H^{1}_{I}(R)$}
\put(11.5,1.2){$H^{0}_{I}(R)$}
\put(1.8,1.2){$H^{2}_{I}(R)$}

\put(0.2,0.2){where$[n]:=$ cyclic group of order $n$, $2^{r}$:= elementary abelian group of order $r$.}
\put(2.1,-1.2){ The $E_{\infty}$-page for $ko_{*}(BSD_{16}).$}
\end{picture}
\end{center}

\newpage
The kernel Ker(Ap) is thus given by the classes in first and second column:\\
\begin{center}
\begin{tabular}{|c|c|c|}
  \hline
   & & \\
  $n$ & $[H_{I}^{1}(ko^{*}(BSD_{16}))]_{n+1}$ & $[H_{I}^{2}(ko^{*}(BSD_{16}))]_{n+2}$ \\
  & & \\
  \hline
  \hline
   $n\leq 3$ & $0$ & $0$ \\
  $3$ & $[4]\oplus[8]\oplus[8]$ & $0$ \\
  $4$ & $0$ & $2$ \\
  $5$ & $[2]$ & $0$ \\
  $6$ & $0$ & $0$ \\
  $7$ & $[2]\oplus[4]\oplus[16]\oplus[32]$ & $0$ \\
  $8$ & $2$ & $2$ \\
  $9$ & $2\oplus [2]$ & $0$ \\
  $10$& $0$ & $2$ \\
  $11$& $[8]\oplus[16]\oplus[128]\oplus[128]$ & $0$ \\
  $12$& $0$ & $2^{2}$ \\
  $13$ & $[4]$ & $0$ \\
  $14$ & $0$ & $2$ \\
  $15$ & $[2]\oplus[8]\oplus[16]\oplus[256]\oplus[512]$ & $0$ \\
  $8k\geq16$ & $2^{2}=2 \oplus \eta(ko_{8k-1}(BSD_{16}))$ & $2^{k}$ \\
  $8k+1\geq 17$ & $2^{2} \oplus [2^{k}]=\eta(\widetilde{ko}_{8k}(BSD_{16}))\oplus [2^{k}]$ & $0$ \\
  $8k+2\geq 18$ & $0$ & $2^{k}$ \\
  $8k+3\geq 19$ & $[2^{k-1}]\oplus[2\cdot 4^{k}]\oplus[4^{k+1}]\oplus[8\cdot16^{k}]\oplus[8\cdot16^{k}]$ & $0$ \\
  $8k+4\geq 20$ & $0$ & $2^{k+1}$ \\
  $8k+5\geq 21$ & $[2^{k+1}]$ & $0$ \\
  $8k+6\geq 22$ & $0$ & $2^{k}$ \\
  $8k+7\geq 23$ & $[2^{k}]\oplus[2\cdot 4^{k}]\oplus[4^{k+1}]\oplus[16^{k+1}]\oplus[2\cdot16^{k+1}]$ & $0$ \\
  \hline
  \hline
\end{tabular}.
\end{center}

Here, $[H_{I}^{1}(ko^{*}(BSD_{16}))]_{n+1}$ and $[H_{I}^{2}(ko^{*}(BSD_{16}))]_{n+2}$ contribute to $(\ker{Ap})_{n}\subseteq ko_{n}(BSD_{16})$.  Note further from \cite{kijti} (Theorem 6.5.2) that the two column, $[H_{I}^{2}(ko^{*}(BSD_{16}))]_{n+2}$, is embedded in $H_{n}(BSD_{16};\mathbb{Z}_{2})$.  For the 1-column, $[H_{I}^{1}(ko^{*}(BSD_{16}))]_{n+1}$, we have that the generator of $[H_{I}^{1}(ko^{*}(BSD_{16}))]_{8+1}$ is detected in $H_{8}(BSD_{16};\mathbb{Z}_{2})$, as the class $(yuP)^{\vee}=\xi(yuP)$ (the dual element of $yuP \in H^{8}(BSD_{16};\mathbb{Z}_{2})$).  In higher degrees, $[H_{I}^{1}(ko^{*}(BSD_{16}))]_{(8k)+1}$ contains two generators of order 2 where one of them is an $\eta$-multiple and the other one is detected in $H_{8k}(BSD_{16};\mathbb{Z}_{2})$ as $(yuP^{2k-1})^{\vee}=\xi(yuP^{2k-1})$.  The generator of $[H_{I}^{1}(ko^{*}(BSD_{16}))]_{9+1}$ is an $\eta$-multiple and in higher degrees $[H_{I}^{1}(ko^{*}(BSD_{16}))]_{(8k+1)+1}$ contains one $\eta$-multiple generator and one $\eta^{2}$-multiple generator (coming from the $\eta$-multiple in degree $(8k)+1$).  The other generators in this column which are not mentioned above come from $H_{I}^{1}(\overline{QO})$ which can be dealt with by character theory.\\
Note again that the $H^2$ part is the same for all semi-dihedral groups, and much like the dihedral groups, we show there are no more differentials for the higher semi-dihedral groups by realizing sufficiently many classes by positive scalar curvature spin manifolds in the ordinary homology.

\section{Eta invariant calculations in odd dimensions}
In odd dimensions, all of $Ker(Ap)$ lies in the 1-column of the local cohomology spectral sequence, and we will again show that it is realized by inclusion from periodic subgroups, together with eta multiples. We use the results from the second chapter for $Q_8$, together with some extra calculations for $C_8$ to prove the main result of this section, which verifies the conjecture in odd dimensions. Write $SD_{16}=<s,t;s^8=t^2=1,tst=s^3>$.
\begin{prop}
i) The images of inclusions from the cyclic subgroups $<s>,<t>$ and the quaternion subgroup $<s^2,ts>$, together span all of $ko_{4m+3}(BSD_{16})$.\\
ii) Inclusion from $<s>$ spans all of $ko_{8m+5}(BSD_{16})$, and together with Eta multiples, all of $Ker(Ap) \subset ko_{8m+9}(BSD_{16})$
\end{prop}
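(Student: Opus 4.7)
The plan is to exploit the naturality of the eta invariant under the three inclusions $i\colon H\hookrightarrow SD_{16}$: for $\rho\in R_0(SD_{16})$ and a spin $n$-manifold $M$ with map $f\colon M\to BH$, we have $\eta(i_*[M,f])(\rho)=\eta([M,f])(i^*\rho)$, and by Theorem 2.0.3 this descends to a well-defined homomorphism on $ko_n(BSD_{16})$. Since (by Chapter 2) the images in periodic $K$-theory of $ko_{4m+3}(BQ_8)$, $ko_{4m+3}(BC_8)$ and $ko_{4m+3}(BC_2)$ are completely detected by the eta invariant, the problem reduces to a character-theoretic calculation: express the six virtual representations $\chi_2-1,\chi_3-1,\chi_4-1,\chi_\rho-2,\chi_{\rho^2}-2,\chi_{\rho^5}-2$ of virtual dimension zero in $R_0(SD_{16})$ as restrictions, then bound below the order of the subgroup of $(\R/\Z)^6$ (or $(\R/2\Z)^6$, in the dimensions and reality types where Theorem 2.0.3 permits) spanned by the tuple of eta invariants attached to each candidate generator.

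For part (i), first I would read off from the character table of Section 5.1 the restrictions $i^*\chi$ to each of $C_8=\langle s\rangle$, $Q_8=\langle s^2,ts\rangle$ and $\langle t\rangle\cong C_2$; for instance $\chi_{\rho}|_{C_8}$ is $\rho_1\oplus\rho_{-1}$ in the notation of Chapter 2, $\chi_{\rho^2}|_{C_8}=\rho_2\oplus\rho_{-2}$, while $\chi_4|_{Q_8}$ is the nontrivial real character $\kappa_t$ with appropriate $t$, and so on. I would then take as candidate generators the classes already analysed in Chapter 2: the formal differences $M_1^n-M_2^n$ and $M_1^n-M_3^n$ of $Q_8$ space forms from Section 2.3, the space form $M_Q^{4m+3}$ and its Bott-shifted cousin $M_Q^{4m-5}\times B^8$ from Proposition 2.4.1, the cyclic lens spaces $L^{4m+3}(8;1,\ldots,1)$ and $L^{4m+3}(8;1,\ldots,1,3)$ from Lemma 4.3.3, and the real projective space $\RP^{4m+3}_t$ via $C_2\hookrightarrow SD_{16}$, and arrange their eta values against the six representations above in a matrix. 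The order of the subgroup its rows span is then either the product of its Smith diagonal entries, or, after discarding redundant columns so that it becomes square, the reciprocal of its determinant; I would verify that this equals the order of $ko_{4m+3}(BSD_{16})$ recorded in Theorem 5.2.1 for the cases $n=8k+3$ and $n=8k+7$ separately, keeping track of the extra factor of two arising whenever the restricted representation changes reality type.

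For part (ii), only $C_8$ participates in a nontrivial way. In dimension $8m+5$, I would use the fundamental classes of the lens space bundles $L^{8m+5}(8;\vec a)$ over $S^2$ introduced in Section 2.1, and compute $\eta$ against $\chi_\rho-2$ (or its powers when $m$ is large) using Theorem 2.1.1 combined with restriction to $C_8$; the leading term gives a cyclic subgroup of order $2^{m+1}$, matching the $[2^{m+1}]$ entry in the table. In dimension $8m+9$, the two elementary abelian summands of $Ker(Ap)$ are eta-multiples: one is $\eta$ times the class of dimension $8m+8$, the other is $\eta^{2}$ times a class of dimension $8m+7$, and both are automatically realized by positive scalar curvature manifolds once the underlying lower-dimensional classes are so realized, since multiplication by $\eta$ preserves $ko_*^+$. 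The remaining cyclic $[2^{m+1}]$ summand is handled exactly as in dimension $8m+5$, via lens space bundles over $S^2$ pulled back from $C_8$.

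The main obstacle will be the combinatorics of part (i): verifying that the matrix of eta values has exactly the predicted Smith normal form. The delicate point, already familiar from the dihedral analysis of Chapter 4, is to track precisely how factors of two are gained or lost according to whether each restricted virtual representation of $SD_{16}$ becomes real, complex, or quaternionic on the subgroup $H$, and to rule out unwanted linear dependencies between the classes coming from the three different sources $C_8$, $Q_8$ and $\langle t\rangle$. Once these book-keeping issues are resolved, the lower bound on the size of the spanned subgroup matches the upper bound from Theorem 5.2.1, forcing equality and proving the proposition.
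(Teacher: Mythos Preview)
Your overall strategy---eta invariants via the three inclusions, assembling a matrix against a basis of $R_0(SD_{16})$, and matching its order against Theorem~5.2.1---is exactly the paper's approach. Two points need correction, however.

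First, your claimed restriction $\chi_\rho|_{C_8}=\rho_1\oplus\rho_{-1}$ is wrong: from the character table $\chi_\rho(s)=\sqrt{2}i$, whereas $\rho_1(s)+\rho_{-1}(s)=\omega+\omega^{-1}=\sqrt{2}$. The correct restriction is $\chi_\rho|_{C_8}=\rho_1\oplus\rho_3$ (and $\chi_{\rho^5}|_{C_8}=\rho_5\oplus\rho_7$). This matters both for part~(i) and for your part~(ii) computation against $\chi_\rho-2$.

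Second, in part~(i) the contribution from $C_8$ is more delicate than an appeal to Lemma~4.3.3 suggests. After combining the $Q_8$ and $\langle t\rangle$ pieces the paper obtains a subgroup of order $2^{8+12m}$ in dimension $8m+3$ (respectively $2^{11+12m}$ in $8m+7$), and the missing factor of $2^m$ (respectively $2^{m+1}$) must come from $C_8$ lens spaces detected by the \emph{real} representation $\rho_4-\rho_0$. Lemma~4.3.3 only handles dimensions $7$ and $11$; to get the growing factor the paper proves the recursion
\[
\eta\bigl(L^{4k+7}(8;\vec a,1,1,5,5)\bigr)(\rho_4-\rho_0)=\tfrac{1}{2}\,\eta\bigl(L^{4k-1}(8;\vec a)\bigr)(\rho_4-\rho_0),
\]
together with explicit base cases in dimensions $3$ and $7$. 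Your two lens spaces $L^{4m+3}(8;1,\ldots,1)$ and $L^{4m+3}(8;1,\ldots,1,3)$ alone will not produce this factor; you need the specific tuples $(\vec a,1,1,5,5)$ iterated. A further subtlety is that in $8m+7$ an extra factor of $2$ is recovered because $2\rho_4$ is quaternionic, so $\eta(L)(2\rho_4-2\rho_0)$ lands in $\R/2\Z$; this is the analogue of the dihedral bookkeeping you allude to, and it is essential for hitting the exact order $2^{12+13m}$.

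For part~(ii) your outline matches the paper, which verifies the base cases $n=5,13$ by direct computation of $\eta(L^n(8;1,\ldots,1))(2\rho_0-\rho_1-\rho_3)$ and then propagates via the surjection $\overline\delta\colon\mathcal L_n\to\mathcal L_{n-4}$.
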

\begin{proof}
We see from Theorem $5.2.1$ that $|ko_{8m+3}(BSD_{16})|=2^{8+13m}$, while $|ko_{8m+7}(BSD_{16})|=2^{12+13m}$.\\

We start by considering explicitly the restrictions to $Q_8$, and using the calculations known for $Q_8$. From \cite{2} and chapter 2 we know that $ko_{4k-1}(BQ_8)$ is spanned by the $ko$-fundamental classes of the manifolds $M_{t}^{4k-1}=S^{k-1}/H_t$ with $t=1,2,3$ together with $M_{Q}^{4k-1}=S^{4k-1}/Q_l$ and $M_{Q}^{4k-9}=S^{4k-9}/Q_l$. For any such $M$ we defined
$$\overrightarrow{\eta}(M)=(\eta(M)(\rho_0-\kappa_1),\eta(M)(\rho_0-\kappa_3),\eta(M)(2-\tau),\eta(M)(2-\tau)^2)$$
and then made explicit calculations to check that thse spanned a subgroup of order at least $|ko_{4k-1}(BQ_8)|$.\\
We know that the real representation $\chi_{\rho^2}$ restricts to $\kappa_1+\kappa_3$, so since $\overrightarrow{\eta}(M_{1}^n-M_{2}^n)=(2^{-2m-i},0,0,0)$ with $i=1$ and $i=2$ when $n=8m+3$ and $n=8m+7$ resectively, we get a factor of $2^{2m+2}$ in both cases. This is because $\eta(M_1^n-M_2^n)(\rho_0-\kappa_1)=2^{-2m-1}$, and so when $n=8m+3$, since $\rho_0-\kappa_1$ is real, this eta invariant takes values in $\R/2\Z$ where it has order $2^{2m+2}$.\\
Further, the representations $\chi_{\rho}, \chi_{\rho^5}$ both restrict to the natural representation $\tau$ of $Q_8$, and $(2-\tau)^2=4+\tau^2-4\tau$ is restricted to by $4+\chi_{\rho}.\chi_{\rho^5}-2(\chi_{\rho} + \chi_{\rho^5})$ which is a real representation since $\chi_{\rho}$ and $\chi_{\rho^5}$ are complex conjugate.\\
So, by restricting representations we see directly that in dimensions $n=8m+3$ we have spanned a subspace of order $2^{2m+2}8^{2m+1}$ by including from $Q_8$.\\
In $n=8m+7$, since $\chi_{\rho}$ and $\chi_{\rho^5}$ are not quaternion representations, we must halve the order of whatever is detected by $\eta(M)(2-\tau)$. This is because $\tau$ is a quaternion representation, and thus $\eta(M)(2-\tau)$ takes values in $\R/2\Z$ for $Q_8$ when $n \equiv 7 \mod 8$, but $\eta(M)(2-\chi_{\rho})$ (and $\chi_{\rho^5}$) for $SD_{16}$ takes values in $\R/\Z$, and so even though the eta invariants of quaternion lens spaces for $SD_{16}$ can be calculated by just restricting to $\tau$, they will take values in $\R/\Z$ rather than $\R/2\Z$.\\
The rest of the argument is exactly the same as in $n=8m+3$, thus giving a subspace of order $2^{2m+2}8^{2m+2}2^{-1}=2^{2m+1}8^{2m+2}$.\\

Note further that the representation $\chi _4$ of $SD_{16}$ restricts trivially on the quaternion subgroup $<s^2,ts>$ and to the non trivial representation on $<t>$, and thus in addition to what is induced from $Q_8$ we have the $ko-$fundamental class $[\RP^n] \rightarrow ko_n(B<t>) \hookrightarrow ko_n(BSD_{16})$ of orders $2^{4m+3}, 2^{4m+4}$ when $n=8m+3, 8m+7$ respectively.\\
Putting this all together, we have in $8m+3$ a subgroup of order $2^{2m+2}8^{2m+1}2^{4m+3}=2^{8+12m}$ (this is already enough in $n=3$), while in $8m+7$ the order is $2^{4m+4} 2^{2m+1}8^{2m+2}=2^{11+12m}$. We now realize what is remaining by using the cyclic lens spaces $L^{n}$ described above, viewing the fundamental group $C_8=<s>$ sitting inside $SD_{16}$.\\
We start by calculating the eta invariant with respect to the real representation $\rho_0-\rho_4$ of $C_8$. We start by applying Lemma 3.4 to show that $\eta(L^{8m+j}(l=8;\overrightarrow{a}))(\rho_{4}-\rho_0)$ has order at least $2^m$ for both $j=3,7$ for suitable $\overrightarrow{a}$. Firstly, let $\omega=(1+i)/\sqrt{2}$ be the generator of $C_8$. Then $\rho_4(\omega)=-1, \omega^5=-\omega, \omega^7=-\omega^3$ and we have:
$$\eta(L^{3}(8;(1,1)))(\rho_{4}-\rho_0)=\frac{1}{8}\{\frac{2\omega}{(1-\omega)^2}+\frac{2\omega^3}{(1-\omega^3)^2}+\frac{2\omega^5}{(1-\omega^5)^2}+\frac{2\omega^7}{(1-\omega^7)^2}\}$$
$$=\frac{1}{4}\{\frac{\omega((1+\omega)^2-(1-\omega)^2)}{(1-i)^2}+\frac{\omega^3((1+\omega^3)^2-(1-\omega^3)^2)}{(1+i)^2}\}$$
$$=\frac{1}{8i}\{-\omega(4\omega)+\omega^3(4\omega^3)\}=-8i/8i=-1$$
which has order $2 \in \R/2\Z$. Similarly, in $n=7$ we have:
$$\eta(L^{7}(8;(1,1,1,1)))(\rho_{4}-\rho_0)=\frac{1}{8}\{\frac{2\omega^2}{(1-\omega)^4}+\frac{2\omega^6}{(1-\omega^3)^4}+\frac{2\omega^{10}}{(1-\omega^5)^4}+\frac{2\omega^{14}}{(1-\omega^7)^4}\}$$
$$=\frac{1}{4}\{\frac{i((1+\omega)^4+(1-\omega)^4)}{(1-i)^4}-\frac{i((1+\omega^3)^4+(1-\omega^3)^4)}{(1+i)^4}\}$$
$$=\frac{-1}{16}\{12i^2-12i(-i)\}=24/16=3/2$$
which has order $2 \in \R/\Z$.\\
For the general case we let $\overrightarrow{a}=(a_1, \cdots,a_{2k}), K=(a_1+\cdots a_{2i})/2$ and use the following inductive trick:
$$\eta(L^{4k+7}(8;(\overrightarrow{a},1,1,5,5)))(\rho_{4}-\rho_0)=$$
$$=\frac{1}{8}\{\frac{2\omega^{K+6}}{(1-\omega^{a_1})\cdots(1-\omega^{a_{2k}})(1-i)^2}+\frac{2\omega^{3K+18}}{(1-\omega^{3a_1})\cdots(1-\omega^{3a_{2k}})(1+i)^2}+$$
$$\frac{2\omega^{5K+30}}{(1-\omega^{5a_1})\cdots(1-\omega^{5a_{2k}})(1-i)^2}+\frac{2\omega^{7K+42}}{(1-\omega^{7a_1})\cdots(1-\omega^{7a_{2k}})(1+i)^2}\}$$
Noting again that $(1 \pm i)^2=\pm 2i$, and $\omega^{4j+2}=i,-i$ for $j$ even, odd respectively, we may take the common terms out of the bracket to simplify:
$$=\frac{i}{16i}(\frac{2\omega^{K}}{(1-\omega^{a_1})\cdots(1-\omega^{a_{2k}})}+\frac{2\omega^{3K}}{(1-\omega^{3a_1})\cdots(1-\omega^{3a_{2k}})}+$$
$$\frac{2\omega^{5K}}{(1-\omega^{5a_1})\cdots(1-\omega^{5a_{2k}})}+\frac{2\omega^{7K}}{(1-\omega^{7a_1})\cdots(1-\omega^{7a_{2k}})} )$$
$$=(1/2) \eta(L^{4k-1}(8;(\overrightarrow{a}))(\rho_{4}-\rho_0)$$
Thus $\eta(L^{4k+7}(8;(\overrightarrow{a},1,1,5,5)))(\rho_{4}-\rho_0)$ has twice the order in $\R/\Z$ of $\eta(L^{4k-1}(8;(\overrightarrow{a}))(\rho_{4}-\rho_0)$, and the claim is immediate by induction.

Thus we can consider $6-$tuples of Eta invariants as follows. The three non-trivial one dimensional representations of $SD_{16}$ can each be characterized by their kernels, which are $C_8, D_8$ and $Q_8$ respectively. We will denote the representation with kernel $C_8$ as $\hat{C_8}$ for example. Then for a manifold $M$ we set:
$$ \overrightarrow{\eta}(M)=(\eta(M)(1-\hat{D_8}),\eta(M)(1-\hat{C_8}),\eta(M)(1-\hat{Q_8}),
\eta(M)(2-\chi_{\rho}),\eta(M)(4+\chi_{\rho}.\chi_{\rho^5}-2(\chi_{\rho} + \chi_{\rho^5}))$$
Then using \cite{2} and the calculations we've already given we have upto order at least in $n=8m+3$:\\
$$\overrightarrow{\eta}(L^{8m+3}(8; \overrightarrow{a})=(2^{-m-1},0,2^{-m-1},*,*,*)$$
$$\overrightarrow{\eta}(\RP^n)=(0,2^{-4m-3},2^{-4m-3},2^{-4m-3},2^{-4m-2},2^{-4m-3})$$
$$\overrightarrow{\eta}(M_{1}^n-M_{2}^n)=(*,*,0,2^{-2m-2},0,0)$$
$$\overrightarrow{\eta}(M_{Q}^{n-8} \times B^8)=(*,*,0,*,1/2^{4m-1}+3/2^{2m},1/2^{4m-2}+3/2^{2m})$$
$$\overrightarrow{\eta}(M_{Q}^{n-5})=(*,*,0,*,1/2^{4m+3}+3/2^{2m+2},1/2^{4m+2}+3/2^{2m+2})$$
Here $*$ is a term we aren't interested in, and we have divided by two already for the real representations. Thus in $\R/\Z$ we may consider the following matrix of vectors spanned by positive scalar curvature manifolds:
\begin{displaymath}
\left(\begin{array}{cccccc}
2^{-m-1} & 0 & 2^{-m-1} & * & * & * \\
0 &2^{-4m-3} & 2^{-4m-3} & 2^{-4m-3} & 2^{-4m-2} & 2^{-4m-3}\\
* & * & 0 & 2^{-2m-2} & 0 & 0 \\
* & * & 0 & * & 1/2^{4m+3}+3/2^{2m+2} & 1/2^{4m+2}+3/2^{2m+2} \\
* & * & 0 & * & 1/2^{4m-1}+3/2^{2m}  & 1/2^{4m-2}+3/2^{2m}\\
\end{array} \right)
\end{displaymath}
Note that since $\hat{D_8}$ and $\hat{C_8}$ both restrict to the same representation on $Q_8$, we can do a cancellation with the first three columns, namely, adding the first and the third and then subtracting the second gives us the column $(2^{-m},0,0,0,0)^T$, and so this, combined with the zeroes down the $\hat{Q_8}$ column immediately imply that we have spanned a subspace of order at least $2^m$ times what is spanned by $\RP^n$ and the quaternion lens spaces, which we already calculated as $2^{8+12m}$. Thus in total we have a subspace of order $2^{8+13m}$ as required.\\
Proceeding in exactly the same manner in $n=8m+7$ gives us a subspace of order $2^{11+13m}$ in $\R/\Z$. However here, the cancellation we use above is slightly different, because when we restrict to the real representation of $C_8$ we have that $\eta(L^{8m+7}\overrightarrow{a})(\rho_{4}-\rho_0) \in \R/\Z$ infact has the same order as $\eta(L^{8m+7}(1, \overrightarrow{a})(2\rho_{4}-2\rho_0)$, since $2\rho_4$ is in fact a quaternion representation so that the eta invariant takes values in $\R/2\Z$. Thus, just like in the previous chapter, the order of the group spanned is doubled, giving the required order of $2^{12+13m}$ in total. This proves the first part of the proposition.

In dimensions $4m+1$ we use the Lens space bundles $L^{4m+1}$ described above, with fundamental group $C_8=<s>$, and calculate some eta invariants, again closely following the methods in \cite{2}. We note from \cite{2} that there is a surjective map from ${\L}_n(BC_8)$ to ${\L}_{n-4}(BC_8)$, where ${\L}_n(BC_8) \subset \R/\Z$ is the subspace spanned by the set of $\eta(L^{n})(\rho)$. Thus using naturality there is also a surjective map $\delta:{\L}_n(BSD_{16}) \rightarrow {\L}_{n-4}(BSD_{16})$.\\
Recall from the second section that in dimensions $8m+5$ we have $Ker(Ap)=[2^{m+1}]$, while in $8m+9$ we have $Ker(Ap)=[2^{m+1}]$ together with eta multiples. Since the order of the subgroups we need to realize using these lens space bundles is the same in $8m+5, 8m+9$, the above paragraph implies we need only realize a subgroup of order $2^{m+1}$ in dimensions $8m+5$. Further, again using \cite{2}, we need only check this claim in dimensions $5,13$, since then we would deduce that $\delta:{\L}_{13}(BSD_{16})=[4] \rightarrow {\L}_{9}(BSD_{16})=[2]$ is surjective with kernel $[2]$, and then by periodicity $\delta:{\L}_{8m+13}(BSD_{16}) \rightarrow {\L}_{8m+9}(BSD_{16})$ is also surjective with kernel at least $[2]$, so that inductively we would have realized a subgroup of order $2^{m+2}$ as required.\\
So we do some calculations in dimensions $5$ and $13$. Note that $\chi_{\rho}$ restricts to $\rho_1 \oplus \rho_3$ on the cyclic subgroup $C_8=<s>$, where $\rho_1$ is the natural representation sending $s \rightarrow \omega=(1+i)/\sqrt{2}$, and $\rho_3$ sends $s \rightarrow \omega^3$. So we take:
$$\eta(i_*(L^5(1,1)))(2\rho_0-\chi_{\rho})=\eta(L^5(1,1)(2\rho_0-\rho_1-\rho_3)$$
$$=\eta(L^5(1,1))(\rho_0-\rho_1)+\eta(L^5(1,1)(\rho_0-\rho_3)$$
and we calculate each summand directly using Lemma 3.4:
$$\eta(L^5(1,1))(\rho_0-\rho_1)=\frac{1}{8} \sum_{1 \neq \lambda \in C_8} \frac{\lambda(1+\lambda)(1-\lambda)}{(1-\lambda)^3}$$
$$=\frac{1}{8}\{\frac{\omega(1+\omega)}{(1-\omega)^2}+ \frac{i(1+i)}{(1-i)^2}+\frac{\omega^3(1+\omega^3)}{(1-\omega^3)^2}+0-\frac{\omega(1-\omega)}{(1+\omega)^2}-\frac{i(1-i)}{(1+i)^2}-
\frac{\omega^3(1-\omega^3)}{(1+\omega^3)^2}\}$$
$$=\frac{1}{8}\{\omega(\frac{(1+\omega)^3-(1-\omega)^3}{(1-i)^2})+\omega^3(\frac{(1+\omega^3)^3-(1-\omega^3)^3}{(1+i)^2})-\frac{(1+i)}{2}-\frac{(1-i)}{2}\}$$
$$=-3/4-1/8$$
Where we use $\omega^2=i, \omega^{4+j}=-\omega^j$, and then separate the $i,-i$ terms in the summand. We proceed analogously for $\rho_3$:
$$\eta(L^5(1,1))(\rho_0-\rho_3)=\frac{1}{8} \sum_{1 \neq \lambda \in C_8} \frac{\lambda(1+\lambda)(1-\lambda^3)}{(1-\lambda)^3}$$
$$=\frac{1}{8}\{\omega(\frac{(1+\omega)^4(1-\omega^3)-(1-\omega)^4(1+\omega^3)}{(1-i)^3})+\omega^3(\frac{(1+\omega^3)^4(1-\omega)-(1-\omega^3)^4(1+\omega)}{(1+i)^3})-\frac{2}{(1-i)^3}-\frac{2}{(1+i)^3}\}$$
$$=-3/4+1/8$$
So that adding up we get $-3/2 \in \R/\Z$ of order 2, as required. We now make the analogous calculation in dimension $13$ for $L^{13}(1,1,1,1,1,1)$:
$$\eta(L^5(1,1,1,1,1,1))(\rho_0-\rho_1)=\frac{1}{8} \sum_{1 \neq \lambda \in C_8} \frac{\lambda^3(1+\lambda)(1-\lambda)}{(1-\lambda)^7}$$
$$=\frac{1}{8}\{\omega^3(\frac{(1+\omega)^7-(1-\omega)^7}{(1-i)^6})+\omega(\frac{(1+\omega^3)^7-(1-\omega^3)^7}{(1+i)^6})-\frac{i^3(1+i)}{(1-i)^6}+\frac{i(1-i)}{(1+i)^6}\}$$
$$=-17/8-1/32$$
While for $\rho_3$ we have:
$$\eta(L^5(1,1,1,1,1,1))(\rho_0-\rho_3)=\frac{1}{8} \sum_{1 \neq \lambda \in C_8} \frac{\lambda^3(1+\lambda)(1-\lambda^3)}{(1-\lambda)^7}$$
$$=\frac{1}{8}\{\omega^3(\frac{(1+\omega)^8(1-\omega^3)-(1-\omega)^8(1+\omega^3)}{(1-i)^7})+\omega(\frac{(1+\omega^3)^8(1-\omega)-(1-\omega^3)^8(1+\omega)}{(1+i)^7})+\frac{2}{(1-i)^7}+\frac{2}{(1+i)^7}\}$$
$$=-17/8+1/32$$
So that adding gives us $-17/4$ of order $4 \in \R/\Z$, which completes the proof.

\end{proof}
\section{The extra class in $8k$ on the 1-column}
The result of the calculations of the previous section is that we can realize all of $Ker(Ap)$ in dimensions $4k+3$ and together with eta-multiples, in $4k+1$ also. However, in dimensions $8k$, there is a class which is not detected in periodic K-theory, but is in first local cohomology, for which we need a separate geometric construction. This class is also detected in the ordinary $\Z_{2}$-homology of $BSD_{16}$, and its image is $\xi(yuP^{2k-1})$, the class dual to $yuP^{2k-1}$. We realize this class as follows:\\
The idea is to start with the exact sequence $C_{8} \rightarrow SD_{16} \rightarrow \Z_{2}$, lift it to obtain a sequence $C_{8} \rightarrow G \rightarrow \Z$, and take classifying spaces. Of course $S^{1} =B\Z$, and since $C_{8}$ acts on $n\Comp$ by multiplication, we have a map from a lens space $L$ into $BC_{8}$, and thus a fibre bundle $ L^{n-1} \rightarrow M^{n} \rightarrow S^{1}$, as shown. As a lens bundle this carries positive scalar curvature, and we claim that this gives the remaining class when $n=8k$ and $k \geq 1$.
$$
\xymatrix{
L^{n-1} \ar[r] \ar [d] & M^{n} \ar[r] \ar [d]^{f} & S^{1} \ar[d]^{\simeq} \\
BC_{8} \ar[r] \ar [d] & BG \ar[r] \ar [d]^{g} & B\Z \ar[d] \\
BC_{8} \ar[r] & BSD_{16} \ar[r] & B\Z_{2} }
$$

\begin{prop}
When $n=8k$, the manifold $M^{n}$ constructed above is a spin manifold, whose image in $H_{8k}(BSD_{16})$ is $\xi(yuP^{2k-1})$, the class dual to $yuP^{2k-1}$.
\end{prop}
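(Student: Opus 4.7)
The plan is to follow the template of Proposition $4.4.2$ for the dihedral case, now adapted to the richer cohomology ring of $BSD_{16}$. First I would compute $H^*(M^n;\Z_2)$ using the Serre spectral sequence of the fibration $L^{n-1}\to M^n\to S^1$. Since $S^1$ has cohomology $\Z_2[\sigma]/\sigma^2$ and the fibre is a $C_8$ lens space inside a sum of copies of $\chi_\rho$, by comparing against the classifying map $F=g\circ f$ I expect generators $\sigma=F^*(y)$, $\bar x=F^*(x)$, $\bar u=F^*(u)$, $\bar P=F^*(P)$ with $\sigma^2=0$ from the base and with the pulled-back relations $\bar x^3=0$, $\bar x^2+\bar x\sigma=0$, $\bar x\bar u=0$, $\bar u^2=(\bar x^2+\sigma^2)\bar P=\bar x\sigma\bar P$, together with a truncation of the form $\bar P^{2k}=0$ coming from the fibre; the comparison map of Serre spectral sequences from $M^n\to S^1$ to $BG\to B\Z$ should force these to be a complete set of relations, with no room for surviving higher differentials in the relevant degrees.

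Next I would verify that $M^n$ is spin when $n=8k$, by the Wu formula argument used in Proposition $4.4.2$. Since $|y|=|x|=1$, $|u|=3$, $|P|=4$, the generators of $H^{n-1}(M^n)$ and $H^{n-2}(M^n)$ are monomials of shape $\sigma\bar u\bar P^{a}$, $\bar x\bar u\bar P^{a}=0$, $\sigma^{\epsilon}\bar P^{b}$ etc., and I would apply $Sq^1$ and $Sq^2$ using the known formulas $Sq^1(u)=0$, $Sq^1(P)=0$, $Sq^2(u)=0$, $Sq^2(P)=u^2$ pulled back to $M^n$, together with Cartan. Using $\sigma^2=0$ and $\bar u^2=\bar x\sigma\bar P$, each Steenrod square lands in the ideal of relations, forcing $v_1=v_2=0$ and hence $w_1(M^n)=w_2(M^n)=0$.

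For the homology claim, the top-dimensional class in $H^{8k}(M^n)$ is $\sigma\bar u\bar P^{2k-1}=F^*(yuP^{2k-1})$, since $1+3+4(2k-1)=8k$ and every other monomial of that total degree in $\{\sigma,\bar x,\bar u,\bar P\}$ either contains $\sigma^2$, $\bar x\bar u$ or a factor of $\bar P^{2k}$, or else reduces through $\bar u^2=\bar x\sigma\bar P$ and $\bar x^2=\bar x\sigma$ to a multiple of $\sigma\bar u\bar P^{2k-1}$. Dualising as in Proposition $4.4.2$, the class $F_*([M^n])\in H_{8k}(BSD_{16})$ equals the sum of $\xi(z)$ over monomials $z$ in the chosen basis of $H^{8k}(BSD_{16};\Z_2)$ whose image under $F^*$ is the top class; I would then check, monomial by monomial in the chosen basis of $H^*(BSD_{16})$, that only $yuP^{2k-1}$ survives, so that $F_*([M^n])=\xi(yuP^{2k-1})$.

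The main obstacle is the first step, namely pinning down $H^*(M^n;\Z_2)$ precisely enough so that both the Wu calculation and the identification of the top class go through cleanly; in particular I expect the delicate point to be ensuring that the Serre spectral sequence behaves as suggested by the natural map of fibrations, and that the pulled-back relations genuinely describe the ring and do not leave room for an extra monomial in the top degree that would produce unwanted summands in $F_*([M^n])$.
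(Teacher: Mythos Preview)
Your main gap is that you assume the pullback $F^*:H^*(BSD_{16};\Z_2)\to H^*(M^n;\Z_2)$ is surjective, so that $H^*(M^n)$ is generated by $\sigma,\bar x,\bar u,\bar P$. This is false. The fibre $L^{n-1}$ has $H^*(L^{n-1};\Z_2)=\Z_2[X,\tilde\tau]/(X^{4k},\tilde\tau^2)$ with $|X|=2$, and the Serre spectral sequence produces a degree--$2$ class $Z\in H^2(M^n)$ restricting to $X$. But $H^2(BSD_{16})$ is spanned by $x^2=xy$ and $y^2$, and $F^*(x^2)=\tau^2=\sigma\tau$ while $F^*(y^2)=\sigma^2=0$; so the image of $F^*$ in degree~$2$ is one--dimensional, whereas $H^2(M^n)$ is two--dimensional. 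Thus $Z$ is \emph{not} in the image of $F^*$, and your presentation of $H^*(M^n)$ is a proper subring. A quick dimension check confirms this: your ring has only the single class $\bar u\bar P^{2k-1}$ in degree $8k-1$, but Poincar\'e duality forces $\dim H^{8k-1}(M^n)=2$.

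This matters for both halves of the argument. For the spin claim you apply Wu's formula, which requires evaluating $Sq^1$ on \emph{all} of $H^{n-1}$ and $Sq^2$ on \emph{all} of $H^{n-2}$; missing half the classes leaves the computation incomplete, and you cannot conclude $v_1=v_2=0$. The paper's proof works with the correct ring $\Z_2[\sigma,\tau,Z]/(\sigma^2,\sigma\tau+\tau^2,Z^{4k})$ and in fact needs a non--obvious step: $Sq^1(Z)$ could a~priori be either $Z\sigma$ or $Z(\tau+\sigma)$, and one rules out the latter by observing that $w_1(M)$ must restrict trivially to the orientable fibre. Only then do the Wu calculations go through, and one also uses $Sq^2(P)=u^2$ to pin down $F^*(P)=Z^2+Z\tau^2$ rather than $Z^2$. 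Your scheme of simply pulling back the relations of $H^*(BSD_{16})$ and truncating $\bar P^{2k}$ cannot see any of this, since the genuine generator $Z$ and its Steenrod action never appear.
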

\begin{proof}
Let $F=g \circ f$. We now calculate the cohomology of the manifold $M$, along with the map $F^*:H^*(BSD_{16})\rightarrow H^*(M)$. We know $H^{*}(L^{n-1}; \Z_{2})=\Z_{2}[X, \widetilde{\tau}]/X^{4k}, \widetilde{\tau}^{2}$ where $X$ is a dimension $2$ generator, and $\widetilde{\tau}$ dimension $1$, and $H^{*}(S^{1})=\Z_{2}[\sigma]/\sigma^{2}$. By the universal coefficient Theorem, all the cohomology groups except of course $H^{0}(M),H^{n}(M)$ have rank $2$, and the Serre spectral sequence implies that there are two degree one and one degree two generators, one of the degree one generators being $\sigma$, with $\sigma^{2}=0$. Since $y=w_{1}(\chi_{2})$ has kernel $C_8$, restricting representations implies that $\sigma=F^{*}(y)$, and we can then define $\tau=F^{*}(x)$, and we will deduce  $H^{*}(M^{n},\Z_{2})=\Z_{2}[\sigma, \tau, Z]/\sigma^{2}, \sigma \tau + \tau^{2}, Z^{4k}$, where $Z$ is a degree 2 class restricting to $X \in H^{*}(L^{n-1})$.\\
Since $P$ restricts non-trivially on the cyclic group also, it follows that $F^*(P)=Z^2$ or $Z^2+Z\tau^2$. Either way $F^*(P^2)=Z^4$, and recall that $P = \overline{c}_{2}(\rho_1)$ and we can view $L^{n-1}=S(2k \rho_1)/ C_{8}$. Notice that the universal cover $\widetilde{M}$ of $M=S(2k \rho_1) \times_{G} \R$ is just $ S(2k \rho_1) \times \R$, and so the induced vector bundle over $M$ is given by
 $$ 2k \rho_1 \times_{G}(S(2k \rho_1) \times \R) \rightarrow  S(2k \rho_1) \times_{G} \R $$
 This has a section via the diagonal map, which implies $Z^{4k}=F^*(P^{2k})=\overline{c}_{4k}(\rho_1)$ must be zero. \\
 Now $u^2=P(x^2+y^2) \in H^*(SD_{16})$ implies $F^*(u^2)=\tau^2 Z^2$, and since $xu=0$ we deduce $F^*(u)=Z(\tau + \sigma)$.
  We now claim this manifold $M$ is spin, and $F^*(P)=Z^2+Z\tau^2$. We have $Sq^1(u)=0$, so that $0=Sq^1(Z(\tau + \sigma))=Sq^1(Z)(\tau + \sigma)+Z\tau^2$ so that $Sq^1(Z)=Z\sigma$ or $Z(\tau + \sigma)$. Seeking a contradiction, assume the latter. Then $Sq^1(Z^{4k-1} \sigma)=Z^{4k-1}\sigma \tau$ and $Sq^1(Z^{4k-1} \tau)=Z^{4k-1}\tau^2$.\\
  We recall that in a smooth manifold N, by the Wu fornulae \cite{micc} we have $w_{k}(N)=\sum_{i+j=k} Sq^{i}(v_{j})$, where $v_{j} \in H^{j}(M^{n})$ is the unique class such that $v_{j}y=Sq^{j}(y)$, for every $y \in H^{n-j}(M)$. Thus we deduce from above that $w_1=v_1=\tau$. However, $M$ is a fibre bundle with orientable fibre, and so $w_1(M)$ must restrict to zero in $H^*(L^{4n-1})$. Since $\tau$ restricts to $\widetilde{\tau}$, we have a contradiction.\\
  So $Sq^1(Z)=Z\sigma$, implying
  $$Sq^1(Z^{4k-1} \sigma)=Z^{4k-1}\sigma^2=0$$ and
  $$Sq^1(Z^{4k-1} \tau)=Z^{4k-1}\tau^2 + Z^{4k-1}\sigma \tau=0$$
  so that $w_1(M)=v_1=0$ .\\
  Further
  $$Sq^2(Z^{4k-1})=Z^{4k} +(Sq^1(Z^{2k-1}))^2 =0$$
  $$Sq^2(Z^{4k-2} \tau^2)=\tau^2 Sq^2(Z^{4k-2})= \tau^2 (Sq^1(Z^{2k-1}))^2=0$$
  so that $w_2(M)=v_2=0$ also.\\
  Now $Sq^2(P)=u^2$ implies $Sq^2(F^*(P))=Z^2 \tau^2$, so that $F^*(P) \neq Z^2$.
 The top cohomology class of $M$ is $Z^{4k-1} \tau^{2}=Z^{4k-1}\sigma \tau$, and observe that $F^*(yuP^{2k-1})=\sigma Z(\sigma +\tau)(Z^2+Z\tau^2)^{2k-1}=Z^{4k-1}\sigma \tau$. Any other class mapping to $Z^{4k-1}\sigma \tau$ must have a factor of $uP^{2k-1}$ and as $xu=0$, dualising $F^{*}$ immediately gives $F_{*}(\xi(Z^{4k-1} \tau^{2}))=\xi(yuP^{2k-1})$.
 \end{proof}
 This construction is entirely analogous to the one used to obtain a similar extra class for dihedral groups, which is in second local cohomology in high enough dimensions. This is a representation theoretic geometric construction, and the only immediate diference between the two cases is that the representation in question here is complex.

\section{ Ordinary cohomology and the 2-column}
The two column in the local cohomology spectral sequence is the same for all semi-dihedral groups, and it is a $\Z_{2}$ vector space of dimension $k+1$ if $n=8k+4$,$k$ if $n=8k+j$ with $j \neq 4$ even, and $0$ if $n$ is odd. It again suffices to detect these classes in ordinary $\Z_{2}$ homology. We claim that sufficiently many classes are realized by inclusion from a Klein $4-$subgroup $V(2)$.\\
Note that the dihedral group of order $8$ includes into all the semi-dihedral groups.
As in the previous chapter, we denote by $\omega \in D_{8}$ the rotation by $\pi/2$, $s,s',t,t'$ the reflections through the lines $y=0,x=0,y=x,y=-x$ respectively. The Klein $4-$subgroup we consider is $V(2)=<s,s'>$ (note that $<t,t'>$ is another one, but it turns out not to be needed).\\
We recall that $H^{*}(BD_{8},\Z_{2})=\Z_{2}[\alpha,\beta,\delta]/\alpha \beta +\beta^{2}$, where $\alpha=w_{1}(\hat{\omega}),\beta=w_{1}(\hat{s}),\delta=w_{2}(\sigma)$. Here $\hat{ }$ denotes the inflated representation, and $\sigma$ is the unique two dimensional representation. Further we have $H^{*}(BV(2))=\Z_{2}[p,q]$, with $p=w_{1}(\hat{\omega^{2}}),q=w_{1}(\hat{s})$. Restricting representations then gave us the induced maps in cohomology. The map $H^{*}(BD_{8})\rightarrow H^{*}(BV(2))$ is determined by $ \alpha \rightarrow p, \beta \rightarrow 0, \delta \rightarrow q(p+q)$. So if $A=\Z_{2}[\alpha,\delta]$ we deduce $H^{*}(BV(2))=A \oplus qA$.
$$H^{*}(BD_{8},\Z_{2}) \rightarrow A \rightarrowtail H^{*}(BV(2))=A \oplus qA $$
 We then chose dual bases in homology and computed where the classes in $H_{*}^{+}(BV(2)$ ( which we know explicitly) mapped to.
 $$H_{*}(BD_{8},\Z_{2}) \leftarrow A^{v} \twoheadleftarrow H_{*}(BV(2))=A^{v} \oplus qA^{v} $$.
 The following Proposition summarizes the calculation in the last chapter.
\begin{prop}
The image under the above inclusion of $H_{*}^{+}(BV(2))$ in $H_*^+(BD_8)$ is spanned by the following classes:\\
In dimensions $4k+2$ it is spanned by $\xi(\alpha^{4i}\delta^{4j+3})$, with $0 \leq i,j \in Z$ and $4i+8j+6=4k+2$.\\
In dimensions $4k$ it is spanned by $\xi(\alpha^{4i+2}\delta^{4j+1})$, with $0 \leq i,j \in Z$ and $4i+2+8j+2=4k$.
\end{prop}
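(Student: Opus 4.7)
The plan is to extract the needed information from the computation already carried out in Proposition $4.4.1$, applied now only to the single Klein four subgroup $V(2) = \langle s, s'\rangle$ (the second Klein four subgroup $V(2)'$ and the lens bundle over $S^1$ used there play no role in the present statement). I would begin by recalling the restriction map in cohomology induced by $V(2) \hookrightarrow D_8$, which is determined by restricting representations:
\[
\alpha \longmapsto p, \qquad \beta \longmapsto 0, \qquad \delta \longmapsto q(p+q).
\]
Setting $A = \Z_2[\alpha,\delta]$, this map lands in $A \subset H^*(BV(2))$ and, writing $H^*(BV(2)) = A \oplus qA$, the image is exactly $A$. Dualizing, the induced map $H_*(BV(2)) \to H_*(BD_8)$ factors as the surjection $H_*(BV(2)) \twoheadrightarrow A^\vee$ followed by the injection $A^\vee \hookrightarrow H_*(BD_8)$ dual to the restriction. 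In the monomial dual basis, the class $\xi_{(a,b)}$ (dual to $p^aq^b$) is sent to the functional on $A$ that extracts the coefficient of $p^a q^b$ from $\alpha^i\delta^j \mapsto p^i(pq+q^2)^j$.

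Next I would expand
\[
p^i(pq+q^2)^j \;=\; \sum_{k=0}^{j}\binom{j}{k}\, p^{i+k}\, q^{2j-k},
\]
so $\xi_{(a,b)}$ detects $\alpha^i\delta^j$ precisely when $i = a+b-2j$, $0 \le 2j-b \le j$, and $\binom{j}{2j-b} \equiv 1 \pmod 2$. Hence the image class attached to any $\xi_{(a,b)} \in H_n^+(BV(2))$ is a sum $\sum \xi(\alpha^{n-2j}\delta^{j})$ over those $j$ for which this binomial coefficient is odd for an admissible $b$.

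Now I would plug in the generators of $H_n^+(BV(2))$ recalled from Chapter $3$. For $n=4k+2$ these are the products $\xi_{(a,b)}$ with $a,b \equiv 3 \pmod 4$, so $b = 4s+3$; the parity condition $\binom{j}{2j-b} \equiv 1 \pmod 2$ combined with $a+b=n$ forces $j \equiv 3 \pmod 4$ and $i = n-2j \equiv 0 \pmod 4$ by Lucas' theorem, so exactly the classes $\xi(\alpha^{4i}\delta^{4j+3})$ with $4i+8j+6 = 4k+2$ arise, and a straightforward check using Lucas shows each such class is in fact hit. For $n=4k$, the generators are $\xi_{(4k-1,1)}$, $\xi_{(1,4k-1)}$ and the sums $\xi_{(a,n-a)}+\xi_{(a-2,n-a+2)}$ with $5 \le a \equiv 1 \pmod 4$; the same binomial bookkeeping (as done explicitly at the end of the proof of Proposition $4.4.1$, via the identity $\binom{4J+3}{4I+3}\equiv \binom{4J+3}{4I+1} \pmod 2$) forces $i \equiv 2 \pmod 4$ and $j \equiv 1 \pmod 4$, producing exactly the classes $\xi(\alpha^{4i+2}\delta^{4j+1})$ claimed.

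The only real work is the mod-$2$ binomial analysis in step three; everything else is a transcription of the dualization set up in Proposition $4.4.1$. The main obstacle, such as it is, is purely combinatorial: verifying via Lucas' theorem that the parities of the binomial coefficients $\binom{j}{2j-b}$ force exactly the stated congruence classes on the exponents of $\alpha$ and $\delta$, and that no extraneous classes appear. Once this is done the two cases $n=4k+2$ and $n=4k$ follow by counting admissible pairs $(i,j)$.
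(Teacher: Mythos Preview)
Your proposal is correct and follows essentially the same route as the paper. The paper does not give a separate proof of this proposition at all; it simply says the statement ``summarizes the calculation in the last chapter,'' meaning Proposition~4.4.1. You have correctly identified this and extracted precisely the relevant part of that argument (the inclusion from $V(2)=\langle s,s'\rangle$ only, discarding $V(2)'$ and the lens-bundle construction), set up the dualization exactly as in~4.4.1, and reduced everything to the same mod~$2$ binomial analysis---including the identity $\binom{4J+3}{4I+3}\equiv\binom{4J+3}{4I+1}\pmod 2$ that the paper invokes at the end of~4.4.1 to rule out $\delta$-exponents $\equiv 3\pmod 4$ in the $n=4k$ case.
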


We now have a sequence of inclusions $ko_{*}(BV(2)) \rightarrow ko_{*}(BD_{8}) \rightarrow ko_{*}(BSD_{2^{N+2}})$, and we have understood what classes are induced from the first inclusion. We know consider the second inclusion, starting with the cohomology of semi-dihedral groups and seeing how it restricts to $D_{8}$. This follows from the following Proposition, see \cite{ep}, \cite{kijti}.
\begin{prop}
We have $$H^{*}(BSD_{2^{N+2}},\Z_{2})=\Z_{2}[x,y,u,P]/(xy+x^{2},xu,x^{3},u^{2}+(x^{2}+y^{2})P)$$
where $\mid x \mid=\mid y \mid=1,\mid u \mid=3,\mid P \mid=4 $. Further the restriction map $f^*:H^{*}(BSD_{2^{N+2}}) \rightarrow H^{*}(BD_{8})$ sends $x \rightarrow 0,y \rightarrow \alpha, u \rightarrow \alpha \delta, P \rightarrow \delta^2$.
\end{prop}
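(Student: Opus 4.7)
My plan is to establish the ring structure via the Lyndon--Hochschild--Serre spectral sequence (following \cite{ep} and \cite{kijti}) and to compute the restriction map by restricting representations from $SD_{2^{N+2}}$ to $D_8$.

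For the ring structure, I would use the central extension
\[
1 \to \langle s^{2^N}\rangle \cong C_2 \to SD_{2^{N+2}} \to D_{2^{N+1}} \to 1
\]
(centrality follows from $t s^{2^N} t^{-1} = s^{2^N(2^N-1)} \equiv s^{2^N} \pmod{s^{2^{N+1}}}$), whose LHS spectral sequence has $E_2 = H^*(BD_{2^{N+1}};\Z_2) \otimes H^*(BC_2;\Z_2)$ with trivial fiber action. The degree-$1$ generators $x,y$ arise by choosing suitable lifts of classes in $H^1(BD_{2^{N+1}}) \oplus H^1(BC_2)$; the class $P$ of degree $4$ comes from $\tau^2$ (with $\tau$ the fiber generator) modulo lower filtration; and the new degree-$3$ generator $u$ is produced by a non-trivial differential that simultaneously enforces $x^3 = 0$. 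The relation $xy + x^2 = 0$ then pulls back from the dihedral relation $\beta(\alpha+\beta) = 0$, while $u^2 = (x^2+y^2)P$ is a multiplicative extension read off from $E_\infty$. An alternative, more intrinsic description identifies the generators with characteristic classes: $x = w_1(\chi_3)$, $y = w_1(\chi_2)$, $P = \overline{c}_2(\chi_\rho)$ from the character table of Section~5.1, with $u$ a secondary class such as a Massey product or a higher Bockstein; the relations can then be verified directly from character identities via the Whitney sum formula on tensor products.

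For the restriction map, realise $D_8 \subset SD_{2^{N+2}}$ as $\langle s^{2^{N-1}}, t\rangle$ and restrict each representation in turn. Since $\ker\chi_3 = \langle s^2, t\rangle \supset D_8$, restriction gives $\chi_3|_{D_8} = 1$ and hence $f^*(x) = 0$. The kernel of $\chi_2$ is $\langle s\rangle$, which intersects $D_8$ in the rotation subgroup $\langle s^{2^{N-1}}\rangle$, so $\chi_2|_{D_8}$ is the one-dimensional representation of $D_8$ with that kernel, namely $\hat\omega$, giving $f^*(y) = \alpha$. The faithful complex two-dimensional representation $\chi_\rho$ restricts to the complexification of the real $2$-dimensional representation $\sigma$ of $D_8$, yielding $f^*(P) = w_2(\sigma)^2 = \delta^2$. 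Finally, $f^*(u) = \alpha\delta$ is forced by $f^*(u^2) = \alpha^2 \delta^2 = f^*((x^2+y^2)P)$ together with a degree check against $H^3(BD_8)$ (and is the only non-zero degree-$3$ class compatible with $xu = 0$).

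The main obstacle is pinning down $u$ and verifying the relation $u^2 = (x^2+y^2)P$: this is where $SD_{2^{N+2}}$ differs most substantively from the dihedral and elementary-abelian cases treated earlier, and it requires careful bookkeeping of the differentials and multiplicative extensions in the LHS spectral sequence (or, equivalently, an explicit secondary-operation description of $u$ and a direct verification of the quadratic relation at the level of characters). Once these details are in place, the remaining identification of $f^*$ is a routine character-theoretic computation.
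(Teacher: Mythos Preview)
The paper does not prove this proposition; it is quoted from \cite{ep} and \cite{kijti}, so there is no argument in the paper to compare yours against directly. Your plan is in the spirit of those references: the ring structure via a Lyndon--Hochschild--Serre spectral sequence, and the restriction map by restricting representations and reading off characteristic classes.

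Your restriction-map argument is correct. The identifications $f^*(x)=0$ and $f^*(y)=\alpha$ follow exactly as you say from the kernels of $\chi_3$ and $\chi_2$. For $P$, the restriction $\chi_\rho|_{D_8}$ has the same character as $\sigma_{\C}=\sigma\otimes_{\R}\C$, whose underlying real bundle is $\sigma\oplus\sigma$; hence $\overline c_2(\chi_\rho|_{D_8})=w_4(\sigma\oplus\sigma)=w_2(\sigma)^2=\delta^2$. And $f^*(u)=\alpha\delta$ is indeed forced: squaring $H^3(BD_8)\to H^6(BD_8)$ is injective (the squares $\alpha^6,\,\alpha^5\beta,\,\alpha^2\delta^2,\,\alpha\beta\delta^2$ of the basis elements $\alpha^3,\,\alpha^2\beta,\,\alpha\delta,\,\beta\delta$ are independent), so $f^*(u)^2=f^*((x^2+y^2)P)=\alpha^2\delta^2$ has the unique square root $\alpha\delta$.

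One slip in your LHS sketch: with the central extension $C_2\to SD_{2^{N+2}}\to D_{2^{N+1}}$ the fibre generator $\tau$ has degree~$1$, so $\tau^2$ has degree~$2$, not~$4$; thus $P$ cannot arise as $\tau^2$. You would need to track $\tau^4$, or (as in \cite{ep}) use a different extension, or simply take $P=\overline c_2(\chi_\rho)$ as a definition and verify the relations by restricting to the three maximal subgroups (which between them detect $H^*(BSD_{2^{N+2}})$). This is a detail of execution rather than of strategy, and your identification of the main obstacle---pinning down $u$ and the relation $u^2=(x^2+y^2)P$---is accurate.
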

So we can immediately dualise again to check that we get sufficiently many classes in $H_{*}^{+}(BSD_{2^{N+2}};\Z_{2})$, and we can now prove:
\begin{prop}
The image under the above inclusion spans all of the two-column in the local cohomology filtration for $ko_*(BSD_{2^{N+2}})$.
\end{prop}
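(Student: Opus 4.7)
The strategy is to pass the generators of $H_*^+(BV(2))$ identified in Proposition 5.5.1 through the composite inclusion $V(2) \hookrightarrow D_8 \hookrightarrow SD_{2^{N+2}}$ and verify that the resulting images in $H_*(BSD_{2^{N+2}};\Z_2)$ form a spanning set for the two-column. Since any continuous map sends $H_*^+$ into $H_*^+$, these images automatically lie in $H_*^+(BSD_{2^{N+2}})$; combined with the fact (Theorem 5.2.1) that the two-column is embedded in ordinary $\Z_2$-homology, it then suffices to show the images are linearly independent and are of the correct count.

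First I would dualize the restriction map of Proposition 5.5.2. Using the relations $xy=x^2$, $xu=0$, $x^3=0$, $u^2=(x^2+y^2)P$, a $\Z_2$-basis for $H^*(BSD_{2^{N+2}};\Z_2)$ is given by the four monomial families $y^aP^b$, $xy^aP^b$, $x^2y^aP^b$, $y^auP^b$. Since $f^*(x)=0$, only the first and fourth families restrict nontrivially, giving $f^*(y^aP^b)=\alpha^a\delta^{2b}$ and $f^*(y^auP^b)=\alpha^{a+1}\delta^{2b+1}$; in particular no $\beta$-monomials are ever hit. Dualizing in the monomial bases, the inclusion-induced map $f_*\colon H_*(BD_8;\Z_2)\to H_*(BSD_{2^{N+2}};\Z_2)$ sends $\xi(\alpha^a\delta^{2b})\mapsto \xi(y^aP^b)$, $\xi(\alpha^{a+1}\delta^{2b+1})\mapsto\xi(y^auP^b)$, and kills every $\xi(\beta^j\delta^k)$ with $j\geq 1$.

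Next I would apply $f_*$ to the Proposition 5.5.1 generators. In dimension $n=4k$, the generator $\xi(\alpha^{4i+2}\delta^{4j+1})$ (with $i+2j=k-1$) has $4i+2=a+1$ and $4j+1=2b+1$, so it maps to $\xi(y^{4i+1}uP^{2j})$, which is nonzero for every admissible $(i,j)$. In dimension $n=4k+2$, the generator $\xi(\alpha^{4i}\delta^{4j+3})$ (with $i+2j=k-1$) matches $\alpha^{a+1}\delta^{2b+1}$ only when $i\geq 1$, giving $\xi(y^{4i-1}uP^{2j+1})$, while the $i=0$ generator is killed because $\delta^{4j+3}$ is not in the image of $f^*$. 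A direct enumeration of nonzero images yields $K$ classes in dimension $8K$, $K$ in $8K+2$, $K+1$ in $8K+4$, and $K$ in $8K+6$, matching exactly the $\Z_2$-dimensions of the two-column recorded in Theorem 5.2.1.

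The nonzero images are distinct basis monomials $\xi(y^{\text{odd}}uP^b)$, hence $\Z_2$-linearly independent in $H_*(BSD_{2^{N+2}};\Z_2)$. The remaining and main obstacle is to identify this linearly independent set with a spanning set for the embedded two-column of the local cohomology spectral sequence, rather than some other summand of ordinary homology. This amounts to matching our image list against the explicit basis obtained in \cite{kijti} for $H^2_I(ko^*(BSD_{2^{N+2}}))$: in the local cohomology calculation, the $TO$-contribution is built from the monomials $y^auP^b$ with $a$ odd (these are the classes that survive $Sq^1$ and feed into local duality), and under local duality these are carried precisely to the dual homology classes $\xi(y^{\text{odd}}uP^b)$. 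With this comparison in hand, the linearly independent images produced above coincide with a spanning set for the two-column, and because they are realized by the positive scalar curvature spin manifolds underlying the Proposition 5.5.1 generators (pushed forward through the inclusions), the two-column of the local cohomology spectral sequence for $ko_*(BSD_{2^{N+2}})$ is entirely detected by $ko$-fundamental classes of positive scalar curvature manifolds.
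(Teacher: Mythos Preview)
Your proof is correct and follows essentially the same route as the paper: count the Proposition~5.5.1 generators that survive under $f_*$ and check the totals against the dimensions of the two-column in Theorem~5.2.1. Your argument is actually a bit cleaner than the paper's, since you fix a monomial basis for $H^*(BSD_{2^{N+2}})$ up front and dualise once, whereas the paper works with arbitrary monomials $y^au^bP^c$ and then uses the relation $yu^3=y^3uP$ to show that distinct such monomials hitting the same $\alpha^i\delta^j$ are in fact equal in $H^*(BSD_{2^{N+2}})$. One small imprecision: your four ``families'' $y^aP^b,\ xy^aP^b,\ x^2y^aP^b,\ y^auP^b$ are not literally a basis, since $xy=x^2$ and $x^2y=0$ force most of the middle two families to vanish or coincide; but since everything involving $x$ is killed by $f^*$ anyway, this does not affect your dualisation or your conclusion.
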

\begin{proof}
As always, we check that we have sufficiently many classes in ordinary homology.\\
We claim it suffices to check that if $i>0$, then $f_*(\xi(\alpha^i \delta^j))\neq 0$ and $f_*(\xi(\alpha^i \delta^j))=f_*(\xi(\alpha^{i'} \delta^{j'})) \Rightarrow i=i', j=j'$\\
Indeed, we know that for $n=4k, 4k+2$, we have $\lfloor(k+1)/2 \rfloor$ classes in $H_*^+(BD_8)$ induced from $V(2)$. By Theorem $5.2.1$ we need to produce $K+1$ classes in $H_n^+(BSD_{2^{N+2}})$ for $n=8K+4$, and $K$ classes for $n=8K, 8K+2$ and $8K+6$.\\
Now if we have $8K+4=4k$ then $8K+8=4k+4$ which implies that $K+1=(k+1)/2=\lfloor(k+1)/2 \rfloor$. Similarly $8K=4k$ and $8K+2=4k+2$ both imply $K=k/2=\lfloor(k+1)/2 \rfloor$ since $k$ must be even. Finally if $8K+6=4k+2 \Rightarrow K=(k-1)/2=\lfloor(k+1)/2 \rfloor-1$, which is exactly what we need since in these dimensions we have an extra $\xi(\delta^{4K+3}) \in H_{*}^{+}(BD_{8},\Z_{2})$ which clearly maps to zero under $f_*$.\\
So it suffices to check that terms dual to $\alpha^i \delta^j$ with $i>0$ are mapped monomorphically by $f_*$. Recall that the classes $\xi(\alpha^i \delta^j) \in H_*^+(BD_8)$ always have $i$ even and $j$ odd, so that by Proposition $5.5.2$ we have that $f^*(y^{i-1}u P^{(j-1)/2})=\alpha^i \delta^j$. Thus $f_*(\xi(\alpha^i \delta^j)) \neq 0$, and we claim that $f_*(\xi(\alpha^i \delta^j))=\xi(y^{i-1}u P^{(j-1)/2})$.\\
This follows since if $f^*(y^a u^b P^c)=\alpha ^{a+b} \delta^{2c+b}$ and $f^*(y^{a'} u^{b'} P^{c'})=\alpha ^{a'+b'} \delta^{2c'+b'}$ both map to $\alpha^i \delta^j$ with $i>0$ even and $j \geq 1$ odd, then $a, a', b,b'$ must all be odd. Further $a+b=a'+b',2c+b=2c'+b'$, which implies that $a'=a-2j,b'=b+2j,c'=c-j$, for some $j \in \Z$, and $j \geq 0$ without loss of generality. However in $H^{*}(BSD_{2^{N+2}},\Z_{2})$, we have $u^2=x^2 P +y^2 P$ which implies $y u^3=yx^2uP +y^3 u P=y^3 u P$ since $xu=0$. Thus we can repeatedly apply this formula to deduce
$$y^{a'}u^{b'}P^{c'}=y^{a-2j}u^{b+2j}P^{c-j}=yu^3(y^{a-2j-1}u^{b+2j-3}P^{c-j})=y^3uP(y^{a-2j-1}u^{b+2j-3}P^{c-j})=$$
$$=y^{a-2j+2}u^{b+2j-2}P^{c-j+1}= \cdots =y^au^bP^c.$$
Now since $f_*(\xi(\alpha^i \delta^j))= \sum \xi(k)$ where the sum is over all $k \in H^*(BSD_{2^{N+2}})$ that map under restriction to $\xi(\alpha^i \delta^j)$, we deduce that $f_*(\xi(\alpha^i \delta^j))=\xi(y^{i-1}u P^{(j-1)/2})$, and thus that $f_*(\xi(\alpha^i \delta^j))=f_*(\xi(\alpha^{i'} \delta^{j'})) \Rightarrow i=i', j=j'$, as required.
\end{proof}
\newpage
\addcontentsline{toc}{chapter}{Bibliography}


\begin{thebibliography}{200}
\bibitem[1]{abp}D.W.Anderson, E.F.Brown, F.P.Peterson:\emph{ The structure of the spin cobordism ring}. Annals of Math. 86 (1967) 271-278
\bibitem[2]{abs}M.F.Atiyah, R.Bott, A.Shapiro:\emph{ Clifford modules}. Topology, Volume 3(1964) 3-38
\bibitem[3]{aps1}M.F.Atiyah, V.K.Patodi, I.M Singer:\emph{ Spectral asymmetry and Riemannian geometry I}. Math, Proceedings of Cambridge philosphical Society 77(1975) 43-69
\bibitem[4]{aps2}M.F.Atiyah, V.K.Patodi, I.M Singer:\emph{ Spectral asymmetry and Riemannian geometry II}. Math, Proceedings of Cambridge Philosphical Society 78(1975) 405-432
\bibitem[5]{aps3}M.F.Atiyah, V.K.Patodi, I.M Singer:\emph{ Spectral asymmetry and Riemannian geometry III}. Math, Proceedings of Cambridge Philosphical Society 79(1976) 71-99
\bibitem[6]{bch} P. Baum, A. Connes, N. Higson:\emph{Classifying spaces for proper actions and $K$-theory of group
$C^\ast$-algebras}, $C^\ast$-algebras: pages 241-291, Contemporary Mathematics 167, American Mathematical Society 1994
\bibitem[7]{bay}D.Bayen:\emph{ The connective real K-homology of finite groups}. Phd. thesis, Wayne State University, 1994
\bibitem[8]{1}B.Botvinnik and P. Gilkey:\emph{ The eta invariant and metrics of positive scalar curvature}. Math Ann 302(1995) 507-517
\bibitem[9]{2}B.Botvinnik, P. Gilkey and S.Stolz:\emph{ The Gromov Lawson Rosenberg conjecture for groups with periodic cohomology} Journal of differential geometry 46(1997), no.3, 374-405
 \bibitem[10]{boro}B.Botvinnik and J.Rosenberg:\emph{ Positive scalar curvature for manifolds with elementary abelian fundamental group} Proceedings of the American Mathematical Society, Volume 133, 2004, number 2, pages 545-556
\bibitem[11]{BGku}R.Bruner, J.Greenlees,\emph{ The Connective K-Theory of Finite Groups}, Mem. American Math. Soc., Vol. 165 (2003) number 785. 127pp
\bibitem[12]{3} R.Bruner, J.Greenlees:\emph{ The connective real K-theory of finite groups} Mathematical Surveys and Monographs, Volume: 169, 2010

\bibitem[13]{4}W.Dwyer, T.Schick and S.Stolz:\emph{ Remarks on a conjecture of Gromov and Lawson} High dimensional manifold topology; Word sci. publishing, River edge, New Jersey 2003,159-176
\bibitem[14]{5}H.Donnelly:\emph{ Eta invariants for G-spaces} Indiana Univ.; Math journal 27(1978) 889-918
\bibitem[15]{ep}L.Evens, S.Priddy:\emph{ The cohomology of the Semi-dihedral groups} Contemporary mathematics 37, 1985
\bibitem[16]{gr} A.Groethendieck: \emph{Local cohomology} Lecture notes in math $41$, Springer Verlag 1967
\bibitem[17]{6}M.Gromov, H.B.Lawson:\emph{ The classification of simply connected manifolds of positive scalar curvature }  Ann. of  Math 111 (1980) 423-434
\bibitem[18]{enl}M.Gromov, H.B.Lawson:\emph{ Spin and scalar curvature in the presence of a fundamental group 1 }Ann. of  Math 111 Volume 2(1980) 209-230
\bibitem[19]{dihcoh} D.Handel:\emph{ An embedding theorem for real projective spaces} Topology 7(1968) 125-130
 \bibitem[20]{hkw}B.Hanke, D. Kotschick, J. Wehrheim:\emph{ Dissolving four-manifolds and positive scalar curvature} Math Z. 245(2003), no.3, 545-555
\bibitem[21]{7}N. Hitchin:\emph{ Harmonic spinors.}  Adv. Math 14 (1974) 1-55
\bibitem[22]{mj}M. Joachim\emph{ Toral classes and the Gromov-Lawson-Rosenberg conjecture for elementary abelian 2-groups} Arch. Math. Basel (2004), no.5, 461-466
\bibitem[23]{mjam}M. Joachim, A.Malhotra:\emph{ The Gromov-Lawson-Rosenberg conjecture for dihedral groups} In preparation
\bibitem[24]{8}M. Joachim, T.Schick:\emph{ Positive and negative results concerning the Gromov-Lawson-Rosenberg conjecture} Geometry and Topology, Aarhus (1998), Contemp. Math. Vol. 258, Amer. Math. Soc., Providence, RI 2000, 213-226
 \bibitem[25]{8b}M. Joachim, D.J.Wraith:\emph{ Exotic spheres and curvature} Bulletin of the American Mathematical Society 45(2008), 595-616
 \bibitem[26]{kw1}J.L.Kazdan, F.W.Warner: \emph{ A direct approach to the determination of Gaussian and scalar curvature functions} Invent. Math 28(1975), 227-230
 \bibitem[27]{kw2}J.L.Kazdan, F.W.Warner: \emph{ Existence and conformal deformation of metrics with prescribed Gaussian and scalar curvatures} Annals of Math 2 (101) 1975, 317-331
  \bibitem[28]{kw3}J.L.Kazdan, F.W.Warner: \emph{Scalar curvature and conformal deformations of Riemannian structure} Journal of Differential Geometry 10(1975), 113-134
\bibitem[29]{9}M.Kreck, S.Stolz:\emph{ $\HP^2$ bundles and elliptic homology} Acta math. 171 (1993) 231-261
\bibitem[30]{10}S.Kwasik, R.Scultz:\emph{ Positive scalar curvature and periodic fundamental groups} mth helv. 65(1990) 271-286
\bibitem[31]{11}H.B.Lawson, M.L Michelsohn:\emph{ Spin geometry} Princeton Univ. Press, Princeton Math. Serv. 38 (1989)
\bibitem[32]{12}A. Lichnerowicz:\emph{ Spineurs Harmonique} C.R. Acad. sci. Paris 257(1963) 7-9
\bibitem[33]{cylin} K.H.Lin, Cherng-Yih Yu:\emph{ On the spin real projective bundle} Tamkang Journal of Math 29(4), 309-321, 1998
\bibitem[34]{amkr} A.Malhotra, K.Rodtes:\emph { The Gromov-Lawson-Rosenberg conjecture for the semi-dihedral group of order 16} Available at arXiv:1103.5817
\bibitem[35]{mp} J.Martino, S.B.Priddy: \emph{ Classification of BG for groups with dihedral or quaternion Sylow 2-subgroup} Journal of Pure and Applied Algebra 73(1991) 13-21
\bibitem[36]{midt} J.Milnor: \emph{ Collected papers of John W. Milnor III: Differential topology} American Math. Society 2007
\bibitem[37]{misp} J.Milnor: \emph{ Remarks concerning spin manifolds} Differential and combinatorial topology 1965 (A symposium in honour of Marston Morse) 55-62, Princeton univ. press
\bibitem[38]{micc} J.Milnor, J.D.Stasheff: \emph{ Characteristic classes} Annals of Math. Studies, Princeton University press (1974) ISBN 0-691-08122-0
\bibitem[39]{kijti}K.Rodtes:\emph{ The connective K-theory of semidihedral groups} PhD thesis, University of Sheffield 2010
\bibitem[40]{ro1}J.Rosenberg:\emph{ $C^*$-Algebras, positive scalar curvature and the Novikov conjecture I} Institute \'{E}taudes Science publications in Math 58(1983) 197-212
\bibitem[41]{ro2}J.Rosenberg:\emph{ $C^*$-Algebras, positive scalar curvature and the Novikov conjecture II} Geometric methods in operator algebras (editors H.Araki, E.Effros) Pitman Res. notes Math. No. 123, Longman, Harlow 1986, 341-374
\bibitem[42]{ro3}J.Rosenberg:\emph{ $C^*$-Algebras, positive scalar curvature and the Novikov conjecture III} Topology 25(1986)319-336
\bibitem[43]{rorep}J.Rosenberg:\emph{ Manifolds of positive scalar curvature: A progress report} Surveys in Differential Geometry, Vol XI: Metric and Comparison Geometry, 2007
 \bibitem[44]{14}J.Rosenberg, S.Stolz:\emph{ Manifolds of positive scalar curvature} Algebraic topology and its applications;(edited by G.Carlson, R.Cohen, W.C.Hsiang and J.D.S Jones) Math. Sci. Res. publ. Vol 27, Springer, New York, 1994, 241-267
 \bibitem[45]{15}J.Rosenberg, S.Stolz:\emph{ A stable version of the Gromov-Lawson conjecture} Contemp. Math 181 (1995) 405-418
\bibitem[46]{16}T.Schick:\emph{ A counterexample to the (unstable) Gromov-Lawson-Rosenberg conjecture} Topology 37 (1998), no.6, 1165-1168
\bibitem[47]{ry} R.Schoen, S.T.Yau:\emph{ On the structure of manifolds with positive scalar curvature} Manuscripta Math 28 (1979) no. 1-3, 159-183.
\bibitem[48]{cs}C.Siegemeyer:\emph{ The Gromov-Lawson-Rosenberg conjecture for finite abelian metacyclic 2-groups of rank 2} PhD. thesis, Westf\"{a}lische Wilhelms Universit\"{a}t M\"{u}nster, In preparation
\bibitem[49]{17}S.Stolz:\emph{ Simply connected manifolds of positive scalar curvature} Ann. of Math 136(1992) 511-540
\bibitem[50]{18}S.Stolz:\emph{ Splitting certain MSpin module spectra} Topology 33(1994), 159-180
\bibitem[51]{tau} C.H. Taubes:\emph{ The Seiberg-Witten invariants and symplectic forms} Math Res. Letters 1(1994) no. 6, 809-822
\bibitem[52]{sw} E.Witten:\emph{ Monopoles and four manifolds} Math Res. Letters 1(1994) no. 6, 769-796
\bibitem[53]{cyy} Cherng-Yih Yu:\emph{ On $ko_*(BV)$ for elementary abelian 2-groups} PhD. thesis Notre-Dame 1995
\end{thebibliography}
\end{document}